\newtheorem{theorem}{Theorem}
\newtheorem*{thm_announce}{Theorem}
\theoremstyle{plain}
\newtheorem*{acknowledgement}{Acknowledgement}
\newtheorem{conjecture}{Conjecture}
\newtheorem{corollary}[theorem]{Corollary}
\newtheorem*{def_announce}{Definition}
\newtheorem{definition}[theorem]{Definition}
\newtheorem{lemma}[theorem]{Lemma}
\newtheorem{problem}{Question}
\newtheorem{proposition}[theorem]{Proposition}
\theoremstyle{remark}
\newtheorem{remark}[theorem]{Remark}
\newtheorem{dream}[theorem]{Dream}
\newtheorem{example}[theorem]{Example}
\numberwithin{equation}{section}
\numberwithin{theorem}{section}
\begin{document}
\title[Automorphic side of the Artin map]{On the automorphic side of the $K$-theoretic\linebreak Artin symbol}
\author{Peter Arndt}
\address{Mathematical Institute of the Heinrich-Heine-Universit\"{a}t D\"{u}sseldorf,
Universit\"{a}tsstr. 1, 40225 D\"{u}sseldorf, Germany}
\email{peter.arndt@uni-duesseldorf.de}
\author{Oliver Braunling}
\address{Freiburg Institute for Advanced Studies (FRIAS), University of Freiburg, 79104
Freiburg im\ Breisgau, Germany}
\email{oliver.braeunling@math.uni-freiburg.de}
\thanks{The second author was supported by DFG GK1821 \textquotedblleft Cohomological
Methods in Geometry\textquotedblright\ and a Junior Fellowship at the Freiburg
Institute for Advanced Studies (FRIAS)}
\subjclass[2000]{ Primary 22B05; Secondary 19D10, 11R37}

\begin{abstract}
Clausen has constructed a homotopical enrichment of the Artin reciprocity
symbol in class field theory. On the Galois side, Selmer $K$-homology replaces
the abelianized Galois group, while on the automorphic side the $K$-theory of
locally compact vector spaces replaces classical idelic objects. We supply
proofs for some predictions of Clausen regarding the automorphic side.

\end{abstract}
\maketitle

This article proves certain predictions of Clausen's programme to enrich class
field theory homotopically \cite{clausen}. We recall the motivation: For a
field $F$, let $F^{\operatorname*{ab}}$ denote its maximal abelian extension.
Classical class field theories can all be phrased in terms of a reciprocity
map. Depending on what type of field we deal with, these take the following
forms:%
\begin{equation}%
\begin{array}
[c]{rcl}%
\mathbb{Z}\longrightarrow\operatorname*{Gal}(F^{\operatorname*{ab}}/F) &
\qquad & \text{if }F\text{ is a finite field,}\\
F^{\times}\longrightarrow\operatorname*{Gal}(F^{\operatorname*{ab}}/F) &
\qquad & \text{if }F\text{ is a local field,}\\
\mathbb{A}^{\times}/F^{\times}\longrightarrow\operatorname*{Gal}%
(F^{\operatorname*{ab}}/F) & \qquad & \text{if }F\text{ is a number field,}%
\end{array}
\label{lint_1}%
\end{equation}
where $\mathbb{A}$ denotes the ad\`{e}les. The left hand side is called the
\textit{automorphic side}, while the right hand side is the \textit{Galois
side}. Two natural questions: (A)~Is~there a uniform explanation/definition
for what group should go on the automorphic side for a more general field $F$?
(B)~Is~there an enrichment of the reciprocity map which might see more on the
Galois side than just the abelianization of the absolute Galois group?

Clausen's recent programme \cite{clausen} proposes and proves that there
should be a uniform (and moreover homotopically enriched) formulation for all
these reciprocity maps on the same footing:\ He constructs a map%
\[%
\begin{array}
[c]{rcl}%
K(\mathsf{LCA}_{F})\longrightarrow\left.  dK^{\operatorname*{Sel}}\right.
(F) &  & \text{for any }F\text{ as above,}%
\end{array}
\]
where the left hand side denotes the non-connective algebraic $K$-theory
spectrum of the category $\mathsf{LCA}_{F}$ of locally compact $F$-vector
spaces, and $\left.  dK^{\operatorname*{Sel}}\right.  $ denotes Clausen's
so-called `Selmer $K$-homology spectrum' of the category of finite-dimensional
$F$-vector spaces. His reciprocity map is a functorial morphism between these
spectra. Such a morphism induces a morphism between the respective homotopy
groups, and in degree one it specializes to%
\[
\pi_{1}K(\mathsf{LCA}_{F})\longrightarrow\pi_{1}\left.
dK^{\operatorname*{Sel}}\right.  (F)\text{.}%
\]
The vision is that upon unravelling these $\pi_{1}$-groups, this map recovers
the respective map of Equation \ref{lint_1}, uniformly for all three types of
fields $F$. This gives great candidates to answer the above questions:
(A)~In~general the $K$-theory of some category of locally compact modules
$\mathsf{LCA}_{F}$ should uniformly give the correct automorphic side, and
this might conjecturally hold for much broader types of fields $F$, or
schemes...; (B)~The~abelianized Galois group gets replaced by a richer object
which knows more than just the abelianization of the Galois group, but also
further Galois cohomological data.\medskip

Clausen constructs this homotopically enriched Artin symbol. His first task
was then to prove that his map recovers classical class field theory: In all
three cases above, the Galois side matches perfectly, i.e. it is uniformly
$\pi_{1}\left.  dK^{\operatorname*{Sel}}\right.  (F)\cong$
$\operatorname*{Gal}(F^{\operatorname*{ab}}/F)$.\ Regarding the automorphic
side, he shows that there are canonical maps $\psi$ such that the compositions%
\begin{equation}%
\begin{array}
[c]{rcl}%
\mathbb{Z}\overset{\psi}{\longrightarrow}\pi_{1}K(\mathsf{LCA}_{F}%
)\longrightarrow\operatorname*{Gal}(F^{\operatorname*{ab}}/F) & \qquad &
\text{if }F\text{ is a finite field,}\\
F^{\times}\overset{\psi}{\longrightarrow}\pi_{1}K(\mathsf{LCA}_{F}%
)\longrightarrow\operatorname*{Gal}(F^{\operatorname*{ab}}/F) & \qquad &
\text{if }F\text{ is a local field,}\\
\mathbb{A}^{\times}/F^{\times}\overset{\psi}{\longrightarrow}\pi
_{1}K(\mathsf{LCA}_{F})\longrightarrow\operatorname*{Gal}%
(F^{\operatorname*{ab}}/F) & \qquad & \text{if }F\text{ is a number field,}%
\end{array}
\label{lint_2}%
\end{equation}
indeed give exactly the classical reciprocity maps of Equation \ref{lint_1}.
He then predicts that the maps $\psi$ should be isomorphisms. The main
motivation for this paper was to prove this:

\begin{thm_announce}
For finite fields, $p$-adic local fields, and number fields, the maps $\psi$
in Equation \ref{lint_2} are isomorphisms.\footnote{For the $p$-adic local
fields we must assume the scalar action of the field $F$ on the locally
compact vector space to be continuous in the $p$-adic topology.}
\end{thm_announce}

See Theorem \ref{thm_Main_Agreement}. While we originally only wanted to prove
this claim, we obtain this now as a rather tiny corollary of much stronger
results. Firstly, we give a complete description of the $K$-theory spectra on
the automorphic side in all three cases. With a minor modification it confirms
predictions by Clausen:

\begin{thm_announce}
Suppose $F$ is

\begin{enumerate}
\item a finite field, then $K(\mathsf{LCA}_{F})\overset{\sim}{\longrightarrow
}\Sigma K(F)$;

\item a $p$-adic local field, then $K(\mathsf{LCA}_{F,\operatorname*{top}%
})\overset{\sim}{\longrightarrow}K(F)$;

\item a number field, then there is a fiber sequence%
\begin{equation}
K(F)\longrightarrow K(\mathsf{LCA}_{F,ab})\longrightarrow K(\mathsf{LCA}%
_{F})\text{,}\label{lint_W1}%
\end{equation}
where $\mathsf{LCA}_{F,ab}$ is the category of adelic blocks (explained below).
\end{enumerate}

These results are not just valid for non-connective $K$-theory, but for any
localizing invariant $K:\operatorname*{Cat}_{\infty}^{\operatorname*{ex}%
}\rightarrow\mathsf{A}$ in the sense of \cite{MR3070515} with values in a
stable $\infty$-category $\mathsf{A}$.
\end{thm_announce}

So, for example, this also describes the Hochschild homology over $F$,
topological cyclic homology, etc. of $\mathsf{LCA}_{F}$. In the $p$-adic local
case the `$\operatorname*{top}$' subscript indicates that we demand that the
$F$-scalar action is continuous with respect to the $p$-adic topology. We
refer to Proposition \ref{prop_KLCA_for_finite_fields}, Corollary
\ref{cor_Main_KThy_LocalCase} and Theorem \ref{thm_MainKOfLCAF} for the
proofs. Above, $\mathsf{LCA}_{F,ab}$ denotes the category of \emph{adelic
blocks}, a concept which we introduce in this paper. It can be described as follows:

\begin{def_announce}
$\mathsf{LCA}_{F,ab}$ is the fully exact subcategory of $\mathsf{LCA}_{F}$
whose objects can be presented as a direct sum $A\oplus B$, where

\begin{enumerate}
\item the underlying additive group of $A$ is a finite-dimensional real vector
space, and

\item the underlying additive group of $B$ is topologically torsion.
\end{enumerate}

We call these objects \emph{adelic blocks}.
\end{def_announce}

Equivalently, the summand $B$ can be characterized as those modules which are
totally disconnected and their Pontryagin dual is also totally disconnected.
In the number field case, the $K$-theory of $\mathsf{LCA}_{F}$ thus reduces to
computing the $K$-theory of $\mathsf{LCA}_{F,ab}$. To handle the latter, we
show that the objects in this category all have the shape of restricted
products. Then, using that $K$-theory commutes with filtering colimits and
(and this is crucial!) with infinite products of categories, we prove the following:

\begin{thm_announce}
Let $F$ be a number field. Let $\widehat{F}_{P}$ (resp. $\widehat{\mathcal{O}%
}_{P}$) denote the local field of $F$ at a place $P$ (resp. its ring of
integers in the case of a finite place). For all integers $n$, there is a
canonical isomorphism of $K$-theory groups%
\[
K_{n}(\mathsf{LCA}_{F,ab})\cong\left\{  \left.  (\alpha_{P})_{P}\in\prod
_{P}K_{n}(\widehat{F}_{P})\right\vert
\begin{array}
[c]{l}%
\alpha_{P}\in K_{n}(\widehat{\mathcal{O}}_{P})\text{ for all but finitely}\\
\text{many among the finite places}%
\end{array}
\right\}  \text{.}%
\]
Here the product is taken over all places $P$, both finite and infinite, and
the condition makes sense since $K_{n}(\widehat{\mathcal{O}}_{P})\subseteq
K_{n}(\widehat{F}_{P})$ is naturally a subgroup.
\end{thm_announce}

See Theorem \ref{thm_MainStructureThmOnKLCAFab}. We actually get an analogous
description of the entire spectrum, but for the moment let us just deal with
the above formulation. For $K_{1}$, we immediately get that%
\[
K_{1}(\mathsf{LCA}_{F,ab})\cong\mathbb{A}^{\times}\text{,}%
\]
i.e. the $K_{1}$-group is isomorphic to the id\`{e}les. Sequence \ref{lint_W1}
then shows that $K_{1}(\mathsf{LCA}_{F})\cong\mathbb{A}^{\times}/F^{\times}$,
so $\pi_{1}K(\mathsf{LCA}_{F})$ is indeed the id\`{e}le class group. We should
note that there are a number of subtle issues hidden here: First of all, we
have%
\[
K(\mathsf{LCA}_{F,ab})\neq K(\mathbb{A})\text{,}%
\]
i.e. the $K$-theory spectrum of adelic blocks is\textit{\ truly different
}from the $K$-theory of the ring of ad\`{e}les of the number field, see
Example \ref{example_KAdifferentFromKAdelicBlocks}. Just to illustrate some
further subtleties, suppose $F=\mathbb{Q}$. For any cardinal $\omega$ the
union%
\[
\bigcup_{n\geq1}\frac{1}{p^{n}}\underset{\omega}{\prod}\mathbb{Z}_{p}%
\qquad\subseteq\qquad\underset{\omega}{\prod}\mathbb{Q}_{p}\text{,}%
\]
topologized such that $\prod_{\omega}\mathbb{Z}_{p}$ is a compact clopen, is
locally compact and algebraically a $\mathbb{Q}$-vector space. Yet, the scalar
action of $\mathbb{Q}$ is continuous if and only if $\omega<\infty$. So there
is a huge gap between the uniquely divisible groups in $\mathsf{LCA}%
_{\mathbb{Z}}$ and groups in $\mathsf{LCA}_{\mathbb{Q}}$. See Example
\ref{example_Robertson}. In a different direction, for any prime number $p$,
we may pick a cardinal $\omega_{p}$ and form the restricted product%
\[
\left.  \prod\nolimits^{\prime}\right.  (\mathbb{Q}_{p},\mathbb{Z}%
_{p})^{\omega_{p}}\text{,}%
\]
where the factor $(\mathbb{Q}_{p},\mathbb{Z}_{p})$ is featured with
$\omega_{p}$ copies. For any choices of $\omega_{p}$, this is an object of
$\mathsf{LCA}_{\mathbb{Z}}$, in particular it is always locally compact.
However, it is only an object of $\mathsf{LCA}_{\mathbb{Q}}$ if all cardinals
$\omega_{p}$ are finite. All objects which come from finitely generated
$\mathbb{A}$-modules, say with $g$ generators, have the uniform bound
$\omega_{p}\leq g$. This causes the discrepancy between $K(\mathbb{A}) $ and
$K(\mathsf{LCA}_{\mathbb{Q},ab})$, where no such uniform bound occurs. This is
not just a difference between the categories; it also affects the $K$-theory.
The resemblance of $K_{n}(\mathsf{LCA}_{F,ab})$ to the ad\`{e}les, i.e. `as if
$\omega_{p}=1$' for all $p$ then comes from the familiar rank stabilization
phenomenon in $K$-theory, i.e. in the case at hand that%
\[
\operatorname*{GL}\nolimits_{n}(\mathbb{Q}_{p})/[\operatorname*{GL}%
\nolimits_{n}(\mathbb{Q}_{p}),\operatorname*{GL}\nolimits_{n}(\mathbb{Q}%
_{p})]\cong\mathbb{Q}_{p}^{\times}%
\]
irrespective of the allowed ranks $n$.

It remains to prove that adelic blocks actually always need to have this form,
a result in the tradition of the works of Braconnier and Vilenkin on
topological torsion groups:

\begin{thm_announce}
Let $F$ be a number field and let $\mathcal{O}$ be its ring of integers. Every
object $G\in\mathsf{LCA}_{F,ab}$ is isomorphic to a restricted product%
\[
G\cong\bigoplus_{\sigma\in I}\mathbb{R}_{\sigma}\oplus\underset{P}{\left.
\prod\nolimits^{\prime}\right.  }(G^{P},G^{P}\cap C)\text{,}%
\]
where $I$ is a finite list of real and complex places, $P$ runs over all
finite places of $F$, $G^{P}$ denotes the topological $P$-torsion elements of
$G$, and $C$ is an arbitrary compact clopen $\mathcal{O}$-submodule of $G$. Moreover,

\begin{enumerate}
\item $G^{P}\cap C$ is a compact clopen $\mathcal{O}$-submodule of $G^{P}$;

\item each $G^{P}$ is a finite-dimensional $\widehat{F}_{P}$-vector space,
where the vector space structure is canonically given by $P$-adically
completing the $F$-vector space structure of $G^{P}$;

\item $G^{P}\cap C$ is a full rank $\widehat{\mathcal{O}}_{P}$-lattice inside
$G^{P}$;

\item $G$ is canonically a topological $\mathbb{A}$-module, where $\mathbb{A}
$ denotes the ad\`{e}les of $F$ as a locally compact ring.
\end{enumerate}
\end{thm_announce}

See Theorem \ref{thm_StrongBracVilenkin}. Understanding the adelic blocks
dominates the structure theory of the entire category $\mathsf{LCA}_{F}$:

\begin{thm_announce}
Let $G\in\mathsf{LCA}_{F}$ be arbitrary. Then there exists a direct sum
decomposition%
\[
G\cong V\oplus A\oplus D\text{,}%
\]
where $V$ is a compact $F$-vector space, $A$ is an adelic block, and $D$ is a
discrete $F$-vector space. Pontryagin duality exchanges the full subcategories
of compact vs. discrete $F$-vector spaces, while adelic blocks are closed
under Pontryagin duality. Even more strongly, all adelic blocks are self-dual,
i.e. $A\simeq A^{\vee}$ (non-canonically).
\end{thm_announce}

See Theorem \ref{thm_PrincipalStructureTheoremForLCAF}. Let us comment a
little further on the method of the paper: In a previous paper the variant for
the ring of integers, $\mathsf{LCA}_{\mathcal{O}}$, i.e. locally compact
$\mathcal{O}$-modules, was considered. The main result was a fiber sequence%
\[
K(\mathcal{O})\longrightarrow K(\mathbb{R})^{r}\oplus K(\mathbb{C}%
)^{s}\longrightarrow K(\mathsf{LCA}_{\mathcal{O}})\text{,}%
\]
where $r$ is the number of real places of $F$, and $s$ the number of complex
places, \cite{obloc}. This statement generalizes a theorem due to Clausen, who
established the case of the rational number field $\mathbb{Q}$, \cite{clausen}%
. We adapt the method of \cite{obloc}. \textit{Loc. cit.} most proofs were
based on decomposing a given locally compact $\mathcal{O}$-module $G$ as%
\[
V\oplus C\hookrightarrow G\twoheadrightarrow D
\]
with $V$ an $\mathcal{O}$-module whose underlying additive group is
topologically isomorphic to some real vector space, $C$ a compact
$\mathcal{O}$-module, $D$ discrete. However, this basic structural
decomposition is not available in $\mathsf{LCA}_{F}$ since $C$ or $D$ rarely
happen to be closed under scalar multiplication by all of $F$. A principal
technical ingredient of \textit{loc. cit.} was to show that the compactly
generated $\mathcal{O}$-modules form a left $s$-filtering subcategory of
$\mathsf{LCA}_{\mathcal{O}} $. In this paper, this will be replaced by the
category of \textquotedblleft direct sums of quasi-adelic blocks with a
finite-dimensional discrete $F$-vector space\textquotedblright. As in
\textit{loc. cit.}\ this category is quite precisely tailored to have the
relevant left $s$-filtering property. Incidentally, all objects in this
category admit a presentation as%
\[
G=\bigcup_{n\geq1}\frac{1}{n}C\text{,}%
\]
where $C\in\mathsf{LCA}_{\mathcal{O}}$ is a compactly generated $\mathcal{O}
$-module. So, there are strong parallels between these choices. Nonetheless,
practically all of the details change in comparison to \textit{loc. cit.} in
some way. For example, all compact modules in $\mathsf{LCA}_{F}$ are connected
and injective objects, quite unlike the compact $\mathcal{O}$-modules, where
we can find objects like $\mathbb{Z}_{p}$, which are both totally
disconnected, and clearly not an injective object since%
\[
\mathbb{Z}_{p}\hookrightarrow\mathbb{Q}_{p}\twoheadrightarrow\mathbb{Q}%
_{p}/\mathbb{Z}_{p}%
\]
is an exact sequence in $\mathsf{LCA}_{\mathbb{Z}}$. If $\mathbb{Z}_{p}$ was
injective, this sequence would have to split, which is false. Generally, all
objects like $\mathbb{R}$ or the $\mathbb{Q}_{p}$ happen to be more
interesting in $\mathsf{LCA}_{F}$. In $\mathsf{LCA}_{\mathcal{O}}$ they all
decompose into a compact and a discrete part, for $\mathbb{Q}_{p}$ as spelled
out above, and for $\mathbb{R}$ as in%
\[
\mathbb{Z}\hookrightarrow\mathbb{R}\twoheadrightarrow\mathbb{T}\text{,}%
\]
where $\mathbb{T}$ is the circle group (Note that the place of the compact
resp. discrete part is reversed from the non-archimedean cases). In
$\mathsf{LCA}_{F}$ none of these decompositions exist. This, in a way,
explains why we get the rather rich adelic $K$-theory part $K(\mathsf{LCA}%
_{F,ab})$ in the statement of our main theorem. Indeed, all local fields,
archimedean or non-archimedean, are irreducible objects in $\mathsf{LCA}_{F}$.

\begin{acknowledgement}
We, along with N. Hoffmann and M. Spitzweck, were at the University of
G\"{o}ttingen at the time when their paper \cite{MR2329311} was born. We were
fascinated right away by its underlying dreams, and this text was a wonderful
opportunity to return to this circle of ideas. We also thank Dustin Clausen,
Brad Drew and Matthias Wendt for taking a lot of time to discuss, and a number
of e-mails.
\end{acknowledgement}

\section{Preparations}

For us, a ring will always be a commutative unital ring. We write
\textquotedblleft list\textquotedblright\ with the meaning that repetitions
are allowed. Given a ring $R$, we tacitly equip it with the discrete topology
and we write $\mathsf{LCA}_{R}$ for the category of \emph{locally compact }%
$R$\emph{-modules}. That is, objects are $R$-modules $M$ along with the datum
of a locally compact topology on the underlying additive group $(M;+)$, and we
demand that for every $r\in R$ the scalar multiplication $M\overset{r\cdot
}{\rightarrow}M$ is continuous with respect to this topology. The morphisms of
$\mathsf{LCA}_{R}$ are continuous $R$-module homomorphisms.

Let $\mathbb{T}:=\mathbb{R}/\mathbb{Z}$ denote the circle as an object of
$\mathsf{LCA}_{\mathbb{Z}}$. We record an observation basically due to
Hoffmann and Spitzweck:

\begin{proposition}
\label{prop_LCARisQuasiAbelianExact}The category $\mathsf{LCA}_{R}$ is
quasi-abelian. In particular, it has a canonical exact category structure.
Moreover, there exists a canonical exact anti-equivalence of exact categories%
\[
(-)^{\vee}:\mathsf{LCA}_{R}^{op}\overset{\sim}{\longrightarrow}\mathsf{LCA}%
_{R}\text{,}\qquad\qquad G\longmapsto\operatorname*{Hom}\nolimits_{cts}%
(G,\mathbb{T})
\]
such that the natural reflexivity morphism $\eta_{G}:G\overset{\sim
}{\rightarrow}G^{\vee\vee}$ is an isomorphism. On the level of the underlying
additive groups, this agrees with ordinary Pontryagin duality.\ In particular,
it sends compact $R$-modules to discrete $R$-modules, and conversely. This
duality turns $\mathsf{LCA}_{R}$ into an exact category with duality in the
sense of \cite[Definition 2.1]{MR2600285}.
\end{proposition}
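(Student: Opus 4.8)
The plan is to reduce everything to the classical case $R=\mathbb{Z}$, using that the forgetful functor $U\colon\mathsf{LCA}_R\to\mathsf{LCA}_{\mathbb{Z}}$ creates all the structure in sight. First I would note that $\mathsf{LCA}_R$ is evidently additive, and that for an $R$-linear continuous map $f\colon M\to N$ the kernel $f^{-1}(0)$ with the subspace topology and the cokernel $N/\overline{f(M)}$ with the quotient topology carry obvious $R$-module structures and satisfy the relevant universal properties in $\mathsf{LCA}_R$, since the factorizations forced by those universal properties exist already as maps of topological groups and are then automatically $R$-linear. Hence $U$ creates kernels and cokernels, and since it plainly creates finite (co)products it creates all finite limits and colimits; moreover a morphism of $\mathsf{LCA}_R$ is a strict monomorphism (resp. strict epimorphism) precisely when $U$ of it is a closed embedding (resp. an open surjection). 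Now $\mathsf{LCA}_{\mathbb{Z}}$ is quasi-abelian by the work of Hoffmann and Spitzweck \cite{MR2329311}, i.e. strict monomorphisms are stable under cobase change and strict epimorphisms under base change. Transporting these two stability statements along $U$ — the pushouts and pullbacks in question exist in $\mathsf{LCA}_R$ and are computed by $U$, which also reflects strictness — shows $\mathsf{LCA}_R$ is quasi-abelian. A quasi-abelian category carries a canonical exact structure, whose conflations are the kernel--cokernel pairs consisting of a strict monomorphism and a strict epimorphism, and this is the asserted exact category structure.

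For the duality I would set $G^{\vee}:=\operatorname{Hom}_{cts}(G,\mathbb{T})$ with the compact-open topology and let $R$ act via $(r\cdot\varphi)(x):=\varphi(rx)$. This is well defined because $x\mapsto rx$ is continuous, it is an $R$-module action because $R$ is commutative, and it is continuous on $G^{\vee}$ because precomposition with a fixed continuous map is continuous for the compact-open topology (the preimage of a subbasic open $\{\varphi:\varphi(K)\subseteq U\}$ is $\{\varphi:\varphi(rK)\subseteq U\}$, with $rK$ compact). Thus $G^{\vee}\in\mathsf{LCA}_R$, with underlying group the ordinary Pontryagin dual, and for continuous $R$-linear $g$ the map $g^{\vee}=(-)\circ g$ is again $R$-linear, so we obtain a functor $(-)^{\vee}\colon\mathsf{LCA}_R^{op}\to\mathsf{LCA}_R$ lying over the classical Pontryagin functor. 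That functor sends closed embeddings to open surjections and vice versa and preserves short strict exact sequences; since exactness in $\mathsf{LCA}_R$ is detected by $U$, the functor $(-)^{\vee}$ is exact. The evaluation map $\eta_G\colon G\to G^{\vee\vee}$, $x\mapsto(\varphi\mapsto\varphi(x))$, is $R$-linear — one reads $\eta_G(rx)=r\cdot\eta_G(x)$ off the definitions of the two actions — and its underlying group map is the Pontryagin--van Kampen reflexivity isomorphism, so being an $R$-linear homeomorphism it is an isomorphism in $\mathsf{LCA}_R$; hence $(-)^{\vee}$ is an anti-equivalence. It exchanges compact and discrete $R$-modules because the classical duality does and $U$ is conservative on these subcategories, and the triangle identity $(\eta_G)^{\vee}\circ\eta_{G^{\vee}}=\operatorname{id}_{G^{\vee}}$ is inherited from the classical case (it may be checked after applying $U$), so $(\mathsf{LCA}_R,(-)^{\vee},\eta)$ is an exact category with duality in the sense of \cite[Definition 2.1]{MR2600285}.

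I do not expect a genuine obstacle here: the whole argument is the observation that every piece of structure — the exact structure, the dual object, evaluation, exactness of $(-)^{\vee}$ — either descends along $U$ or lifts against it from the classical facts about $\mathsf{LCA}_{\mathbb{Z}}$. The points that do need a little care are the continuity of the $R$-action on $G^{\vee}$ in the compact-open topology, and the verification that $U$ genuinely \emph{creates} the finite limits and colimits entering the definition of quasi-abelianness, so that the base-change and cobase-change stability of strict morphisms can be imported verbatim from $\mathsf{LCA}_{\mathbb{Z}}$.
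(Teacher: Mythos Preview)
Your proposal is correct and follows exactly the approach the paper intends: the paper does not give a self-contained proof but simply refers to \cite{MR2329311} and \cite{obloc}, and those references carry out precisely the reduction to $\mathsf{LCA}_{\mathbb{Z}}$ via the forgetful functor that you describe. Your write-up in fact supplies the details (creation of finite limits/colimits by $U$, continuity of the $R$-action on $G^{\vee}$ in the compact-open topology, $R$-linearity of $\eta_G$) that the paper leaves implicit.
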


We refer to \cite{MR2329311} and \cite{obloc} for details. We also use the
shorthand $\mathsf{LCA}:=\mathsf{LCA}_{\mathbb{Z}}$, which is literally
equivalent to the category of locally compact abelian groups of
\cite{MR2329311}. In this case the duality is literally Pontryagin duality.

\begin{example}
We stress that the duality on $\mathsf{LCA}_{R}$ is very different from the
more conventional duality functor $G\mapsto\operatorname*{Hom}\nolimits_{R}%
(G,R)$. For example, $\mathbb{Z}^{\vee}=\mathbb{T}$, and unless $R$ is a
finite ring, $R$ itself (with the discrete topology) is never self-dual in
$\mathsf{LCA}_{R}$.
\end{example}

When we speak of exact categories, we use the conventions of B\"{u}hler's
survey \cite{MR2606234}. As part of this, morphisms denoted by an arrow
\textquotedblleft$\hookrightarrow$\textquotedblright\ are admissible monics,
whereas the arrow \textquotedblleft$\twoheadrightarrow$\textquotedblright%
\ indicates an admissible epic. Moreover, we shall make crucial use of the
following concepts due to Schlichting \cite{MR2079996}, \cite[\S 2.2]%
{MR3510209}:

\begin{definition}
[Schlichting]Suppose $\mathsf{D}$ denotes an exact category.

\begin{enumerate}
\item We call a full subcategory $\mathsf{C}\subset\mathsf{D}$ \emph{left
special} if for every object $Z\in\mathsf{C}$ and every admissible epic
$G\twoheadrightarrow Z$ in the category $\mathsf{D}$ one can find a
commutative diagram%
\[%
\xymatrix{
X \ar@{^{(}->}[r] \ar[d] & Y \ar@{->>}[r] \ar[d] & Z \ar[d]^{\operatorname
{id}} \\
F \ar@{^{(}->}[r] & G \ar@{->>}[r] & Z,
}%
\]
where the rows are exact, and the objects in the top row lie in $\mathsf{C}$.

\item We call a full subcategory $\mathsf{C}\subset\mathsf{D}$ \emph{left
filtering} if every morphism $G\rightarrow X$ in $\mathsf{D}$ such that
$G\in\mathsf{C}$ can be factored as%
\[%
\xymatrix{
G \ar[r] \ar[rd] & X \\
& Z, \ar@{^{(}->}[u]
}%
\]
where $Z\in\mathsf{C}$.
\end{enumerate}

We call $\mathsf{C}\subset\mathsf{D}$ \emph{left }$s$\emph{-filtering}, if it
is simultaneously left filtering and left special.
\end{definition}

The above definition does not use the original wording of \textit{loc. cit.},
but a convenient reformulation due to B\"{u}hler. A proof that both
definitions agree is given in \cite[Appendix A]{MR3510209}. The punchline
behind these definitions is the following basic principle: If $\mathsf{C}%
\subset\mathsf{D}$ is left $s$-filtering, then a quotient exact category
$\mathsf{D}/\mathsf{C}$ exists. See \cite[Proposition 1.16]{MR2079996}. On the
derived level quotients would exist far more generally, but we want to have
concrete exact category models for them.

\section{Basic structure results}

Let $F$ be a number field and $\mathcal{O}$ its ring of integers. Every
locally compact $F$-vector space is in particular a locally compact
$\mathcal{O}$-module.

\begin{lemma}
\label{lemma_LCAFtoLCAO}The natural functor $\mathsf{LCA}_{F}\rightarrow
\mathsf{LCA}_{\mathcal{O}}$ is fully faithful, exact and reflects exactness.
\end{lemma}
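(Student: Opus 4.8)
The plan is to unwind the three claims — fully faithful, exact, and reflects exactness — separately, using that each is essentially a statement about the forgetful functor and the comparison with the underlying additive groups via Pontryagin duality.

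First I would address full faithfulness. The functor $\mathsf{LCA}_F\to\mathsf{LCA}_\mathcal{O}$ is the identity on underlying locally compact abelian groups; it only forgets that scalar multiplication extends from $\mathcal{O}$ to $F$. Faithfulness is then immediate, since a continuous $F$-linear map that vanishes as an $\mathcal{O}$-linear map is zero. For fullness, suppose $f\colon G\to H$ is a continuous $\mathcal{O}$-module homomorphism between objects of $\mathsf{LCA}_F$. For any $r=a/b\in F$ with $a\in\mathcal{O}$, $b\in\mathcal{O}\setminus\{0\}$, and any $g\in G$, multiplication by $b$ is a continuous bijection $H\to H$ with continuous inverse (the object $H$ being an $F$-vector space), so $b\cdot f(r\cdot g)=f(b r\cdot g)=f(a\cdot g)=a\cdot f(g)=b\cdot(r\cdot f(g))$, and cancelling the invertible $b$ gives $f(r\cdot g)=r\cdot f(g)$. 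Hence $f$ is automatically $F$-linear, so fullness holds.

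Next, exactness and reflection of exactness. Recall that in a quasi-abelian category the admissible short exact sequences are the kernel–cokernel pairs; concretely, by Proposition \ref{prop_LCARisQuasiAbelianExact} and the standard description in \cite{MR2329311}, a sequence $G'\to G\to G''$ in $\mathsf{LCA}_R$ is exact precisely when it is a short exact sequence of underlying locally compact abelian groups with the strict/topological conditions (the first map a closed embedding onto the kernel, the second an open surjection). Since these conditions refer only to the underlying topological abelian groups and the functor $\mathsf{LCA}_F\to\mathsf{LCA}_\mathcal{O}$ leaves those underlying groups untouched, a sequence in $\mathsf{LCA}_F$ is exact if and only if its image in $\mathsf{LCA}_\mathcal{O}$ is exact — which simultaneously proves that the functor is exact and that it reflects exactness. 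The only point needing a word is that kernels and cokernels of a morphism in $\mathsf{LCA}_F$ agree with those formed in $\mathsf{LCA}_\mathcal{O}$; this follows from full faithfulness together with the fact that the kernel and (Pontryagin-dual) cokernel of an $F$-linear map, computed topologically, inherit an $F$-vector space structure for which the structure maps are continuous, so they already live in $\mathsf{LCA}_F$ and represent the same universal objects.

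I do not expect a serious obstacle here; the subtlety, such as it is, lies in being careful that the exact structure on $\mathsf{LCA}_R$ is the one induced by its quasi-abelian structure and that this is detected on underlying locally compact groups, so that no genuinely new admissible exact sequences or new (co)kernels appear when passing from $\mathcal{O}$ to $F$. Once that bookkeeping is in place, all three assertions are formal consequences of the functor being the identity on underlying objects and morphisms being automatically $F$-linear.
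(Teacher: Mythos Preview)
Your proof is correct and follows essentially the same approach as the paper: faithfulness is immediate, fullness is shown by a divisibility/cancellation argument (the paper writes elements of $F$ as $\alpha/n$ with $n\in\mathbb{Z}_{\geq1}$ rather than $a/b$ with $b\in\mathcal{O}\setminus\{0\}$, but the idea is identical), and exactness plus reflection of exactness are reduced to the fact that the exact structure is detected on underlying locally compact abelian groups. Your additional remarks about kernels and cokernels agreeing are sound but not strictly needed, since the paper simply invokes that exactness in $\mathsf{LCA}_R$ is by definition tested in $\mathsf{LCA}$.
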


\begin{proof}
\textit{(Faithful)} The functor is faithful since maps are uniquely determined
by the map of underlying additive groups.\textit{\ (Full)} If $G,H\in
\mathsf{LCA}_{F}$ and $f:G\rightarrow H$ is an $\mathcal{O}$-module
homomorphism, we have $nf(\frac{1}{n}g)=f(g)$ for all $n\geq1$. We deduce
$f(\frac{1}{n}g)=\frac{1}{n}f(g)$ using the $F$-vector space structure of $H$,
and since all elements of $F$ can be written as $\alpha/n$ with $\alpha
\in\mathcal{O}$, $n\geq1$, we obtain that $f$ is $F$-linear. Thus, the functor
is full.\textit{\ (Exactness) }Given any exact sequence of
plain\ (untopologized) $F$-vector spaces, it is by definition of the exact
structure on $\mathsf{LCA}_{F}$ exact iff the underlying sequence of additive
groups is exact in $\mathsf{LCA}$, so exactness and reflection of exactness
are tautological.
\end{proof}

We recall from \cite{obloc} that for every real or complex embedding
$\nu:F\rightarrow\mathbb{R}$ (resp. $\mathbb{C}$) we write $\mathbb{R}_{\nu}$
for a copy of $\mathbb{R}$ (resp. $\mathbb{C}$), turned into a topological $F
$-vector space by the scalar multiplication%
\[
\alpha\cdot x:=\sigma(\alpha)x\qquad\text{for}\qquad\alpha\in F\text{, }%
x\in\mathbb{R}_{\nu}\text{.}%
\]
Thus, $\mathbb{R}_{\nu}\in\mathsf{LCA}_{F}$. Note that if $\nu,\overline{\nu}$
are complex conjugate embeddings, then complex conjugation induces an
isomorphism $\mathbb{R}_{\nu}\cong\mathbb{R}_{\overline{\nu}}$. Thus, we may
unambiguously speak of $\mathbb{R}_{\nu}$ even if $\nu$ is given plainly as a
real or complex place.

\begin{lemma}
\label{Lem_InjProjInLCAF}Regarding injective and projective objects, we observe:

\begin{enumerate}
\item Let $\nu$ be a real or a complex place of $F$. Then the object
$\mathbb{R}_{\nu}$ is both projective and injective in $\mathsf{LCA}_{F}$.

\item All discrete $F$-vector spaces in $\mathsf{LCA}_{F}$ are projective.

\item All compact $F$-vector spaces in $\mathsf{LCA}_{F}$ are injective.
\end{enumerate}
\end{lemma}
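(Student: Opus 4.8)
The plan is to prove the three claims separately, exploiting Pontryagin duality to get (3) for free from (2). For item (1), observe that $\mathbb{R}_{\nu}$, as an additive topological group, is a finite-dimensional real vector space, hence it is a divisible, connected, locally compact group. I would first establish projectivity. Given an admissible epic $G \twoheadrightarrow \mathbb{R}_{\nu}$ in $\mathsf{LCA}_F$ and a map $\mathbb{R}_{\nu} \to H$ (or rather, to lift a map $\mathbb{R}_\nu \to Q$ along $G \twoheadrightarrow Q$), the key point is that on the level of underlying locally compact abelian groups, an admissible epic with target $\mathbb{R}^n$ always splits: indeed $\mathbb{R}^n$ is a projective object in $\mathsf{LCA}_{\mathbb{Z}}$ because the kernel $K \hookrightarrow G \twoheadrightarrow \mathbb{R}^n$ yields, after choosing a continuous section at the level of a neighborhood of $0$ and using divisibility plus the structure theory of locally compact groups (every LCA group is $\mathbb{R}^a \oplus$ (a group with compact open subgroup)), an honest continuous splitting of the sequence. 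Then I must upgrade a $\mathbb{Z}$-linear (or $\mathbb{R}$-linear) splitting to an $F$-linear one: here I use that $F$ acts on $\mathbb{R}_\nu$ through the embedding $\nu$, so $\mathbb{R}_\nu$ is a module over the field $\widehat{F}_\nu \in \{\mathbb{R},\mathbb{C}\}$, and averaging / linear-algebra over that field turns any additive section into an $\widehat{F}_\nu$-linear, a fortiori $F$-linear, one. This gives projectivity; injectivity of $\mathbb{R}_\nu$ then follows because $\mathbb{R}_\nu^{\vee} \cong \mathbb{R}_{\overline{\nu}}\cong \mathbb{R}_\nu$ (the dual of $\mathbb{R}$ is $\mathbb{R}$, and the induced $F$-action is via the conjugate embedding, which as noted gives an isomorphic object), and the exact anti-equivalence $(-)^\vee$ of Proposition \ref{prop_LCARisQuasiAbelianExact} exchanges projectives and injectives.

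For item (2), let $D$ be a discrete $F$-vector space, i.e. an ordinary $F$-vector space with the discrete topology. To show $D$ is projective, take an admissible epic $G \twoheadrightarrow Q$ and a map $f : D \to Q$. Because $D$ is discrete, $f$ is automatically continuous for any choice of set-theoretic lift; so the only issue is to lift $f$ to an $F$-linear, continuous map $D \to G$. Pick an $F$-basis $(e_i)_{i \in I}$ of $D$. For each $i$, since $G \twoheadrightarrow Q$ is an admissible epic in the quasi-abelian category $\mathsf{LCA}_F$, it is in particular a surjection of underlying sets (an admissible epic in $\mathsf{LCA}_R$ is an open continuous surjection), so choose $g_i \in G$ mapping to $f(e_i)$. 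Defining $\widetilde{f}(\sum \alpha_i e_i) := \sum \alpha_i g_i$ gives an $F$-linear lift; it is continuous because $D$ carries the discrete topology. Hence $D$ is projective. Dually, item (3): a compact $F$-vector space $V$ has $V^\vee$ a discrete $F$-vector space (Proposition \ref{prop_LCARisQuasiAbelianExact}), hence projective by (2), hence $V \cong V^{\vee\vee}$ is injective via the anti-equivalence $(-)^\vee$.

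I expect the main obstacle to be the splitting statement underlying (1): namely, that an admissible monic $K \hookrightarrow G$ in $\mathsf{LCA}_F$ with cokernel (topologically) $\mathbb{R}^n$ admits a continuous $F$-linear retraction. The set-theoretic and even $\mathbb{Z}$-linear splitting requires the genuine structure theory of locally compact abelian groups — one reduces to the case where $G$ itself has a compact open subgroup or splits off a Euclidean part, then uses that $\mathbb{R}^n$ is connected and divisible and torsion-free to push any compact-open or totally-disconnected obstruction to zero, producing a continuous section of the underlying LCA-sequence. The promotion from a continuous additive section to a continuous $F$-linear one is then pure linear algebra over $\widehat{F}_\nu$: a continuous additive map $\mathbb{R}^n \to G$ that is a section can be post-composed/averaged against the $\widehat F_\nu$-action (using that $\widehat F_\nu$ is $\mathbb{R}$ or $\mathbb{C}$, so one may literally integrate over the unit circle in the complex case, or simply note an additive continuous self-map of a finite-dimensional real vector space is automatically $\mathbb{R}$-linear) to become $\widehat F_\nu$-linear, hence $F$-linear through $\nu$. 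Once this splitting lemma is in hand, projectivity and (by duality) injectivity of $\mathbb{R}_\nu$ drop out, and the remaining items are the short arguments sketched above.
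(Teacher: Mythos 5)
Your arguments for items (2) and (3) are fine and follow essentially the same route as the paper: pick preimages of a basis and use discreteness for continuity in (2), and dualize for (3).

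Your proof of item (1), however, has a genuine gap in the step that upgrades a splitting from additive to $F$-linear. You correctly invoke the classical fact that $\mathbb{R}^n$ is projective/injective in $\mathsf{LCA}_{\mathbb{Z}}$, which produces a continuous additive section $s_0\colon\mathbb{R}_\nu\to G$. But the ``averaging against the $\widehat F_\nu$-action'' idea does not apply: the ambient object $G$ is only an $F$-module, not an $\widehat F_\nu$-module, so there is no $\widehat F_\nu$-action on $G$ to average against, and ``$\widehat F_\nu$-linearity of $s_0$'' is not even a well-posed notion. Moreover the desired conclusion genuinely fails for a generic additive section. Take $F=\mathbb{Q}(\sqrt2)$, let $\nu_1,\nu_2$ be its two real places, set $G=\mathbb{R}_{\nu_1}\oplus\mathbb{R}_{\nu_2}$, and consider the projection $G\twoheadrightarrow\mathbb{R}_{\nu_1}$. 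Every map $s(x)=(x,cx)$ with $c\in\mathbb{R}$ is a continuous additive (indeed $\mathbb{R}$-linear) section, but $s(\nu_1(\alpha)x)=(\nu_1(\alpha)x,c\nu_1(\alpha)x)$ while $\alpha\cdot s(x)=(\nu_1(\alpha)x,\nu_2(\alpha)cx)$, so $F$-linearity forces $c(\nu_1(\alpha)-\nu_2(\alpha))=0$ for all $\alpha$, i.e. $c=0$. Thus ``any additive section'' is not $F$-linear; finding the $F$-linear one is the whole content of the statement and needs an actual argument.

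The paper sidesteps this by working in $\mathsf{LCA}_{\mathcal{O}}$ from the start. It cites Moskowitz (and \cite[Corollary 2.18]{obloc}) for the nontrivial fact that $\mathbb{R}_\nu$ is injective \emph{in} $\mathsf{LCA}_{\mathcal{O}}$, which produces an $\mathcal{O}$-linear continuous lift directly. Then the fullness of $\mathsf{LCA}_F\to\mathsf{LCA}_{\mathcal{O}}$ (Lemma \ref{lemma_LCAFtoLCAO}, a short divisibility argument) promotes any $\mathcal{O}$-linear map between $F$-vector spaces to an $F$-linear one for free. This is precisely the step your sketch is missing: you start from $\mathbb{Z}$-linear data, and the jump from $\mathbb{Z}$-linear to $\mathcal{O}$-linear (or $F$-linear) is not automatic and not achievable by averaging. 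If you want to keep your approach, you would need to prove $\mathcal{O}$-linear injectivity of $\mathbb{R}_\nu$ rather than appeal to the $\mathbb{Z}$-linear statement plus a handwave.
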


\begin{proof}
(1) This is quite immediate: Let us check injectivity. As $\mathbb{R}_{\nu}$
is injective in $\mathsf{LCA}_{\mathcal{O}}$ \cite{MR1620000} (see also
\cite[Corollary 2.18]{obloc}), a lift $\tilde{g}$ exists in the category
$\mathsf{LCA}_{\mathcal{O}}$,%
\[%
\xymatrix{
0 \ar[r] & G_1 \ar[r] \ar[d]_{g} & G_2 \ar[r] \ar@{-->}[dl]^{\tilde{g}}
& {G_2}/{G_1} \ar[r] & 0\\
& \mathbb{R}_{\nu},
}%
\]
Since both $G_{2}$ and $\mathbb{R}_{\nu}$ are $F$-vector spaces, the fullness
of $\mathsf{LCA}_{F}\rightarrow\mathsf{LCA}_{\mathcal{O}}$ (Lemma
\ref{lemma_LCAFtoLCAO}) implies that $\tilde{g}$ is also a lift in
$\mathsf{LCA}_{F}$. The property of being projective is proven dually. (2) Let
$W$ be a discrete $F$-vector space. Given a surjection $f:G\twoheadrightarrow
H$ and a morphism $g:W\rightarrow H$, by the surjectivity of $f$ we can find a
preimage of a basis of the set-theoretic image of $g$ in $G$. Define a lift
$\tilde{g}:W\rightarrow G$ by mapping $W/\ker g$ to these lifts. As we only
deal with discrete vector spaces, this lifts to a map $\tilde{g}$ defined on
all of $W$. As $W$ is discrete, any $F$-linear map originating from it is
necessarily continuous, so $\tilde{g}$ is a lift in $\mathsf{LCA}_{F}$. (3)
follows from Pontryagin duality.
\end{proof}

Next, we record some basic observations regarding the structure of objects in
$\mathsf{LCA}_{F}$.

\begin{definition}
\label{def_QuasiAdelicBlock}A \emph{quasi-adelic block} is an object
$Q\in\mathsf{LCA}_{F}$ which possesses a compact clopen $\mathcal{O}%
$-submodule $C $ such that it can be written as%
\[
Q=\bigoplus_{\sigma\in I}\mathbb{R}_{\sigma}\oplus H\qquad\text{with}\qquad
H=\bigcup_{n\geq1}\frac{1}{n}C\text{,}%
\]
for some finite list $I$ of real and complex places.

\begin{enumerate}
\item We call it an \emph{adelic block} if $C$ can be chosen such that
$\bigcap_{n\geq1}nC=0$.

\item We call it \emph{vector-free} if $I$ can be chosen empty, $I=\varnothing
$.
\end{enumerate}

Let $\mathsf{LCA}_{F,qab}$ be the full subcategory of $\mathsf{LCA}_{F}$ whose
objects are quasi-adelic blocks, $\mathsf{LCA}_{F,ab}$ the full subcategory of
adelic blocks, $\mathsf{LCA}_{F,vfab}$ the full subcategory of vector-free
adelic blocks.
\end{definition}

\begin{example}
The ad\`{e}les $\mathbb{A}$ are an adelic block.
\end{example}

\begin{remark}
We shall later prove a variant of the Braconnier--Vilenkin theorem in the
context of $\mathsf{LCA}_{F}$ and it will show that adelic blocks really more
or less look like variations of the ad\`{e}les of a number field, whence the
name. See Theorem \ref{thm_StrongBracVilenkin} below.
\end{remark}

\begin{definition}
A \emph{vector }$\mathcal{O}$\emph{-module} is an object in $\mathsf{LCA}_{F}
$ which is isomorphic to $\bigoplus_{\sigma\in I}\mathbb{R}_{\sigma}$ for some
finite list $I$ of real and complex places.
\end{definition}

It is equivalent to speak of objects $X\in\mathsf{LCA}_{F}$ whose underlying
additive group $(X;+)$ is isomorphic to $\mathbb{R}^{n}$ in\ $\mathsf{LCA}$
for some $n$. See \cite[Proposition 2.8]{obloc} for the proof. Recall that the
finiteness condition does not play the crucial r\^{o}le here: A product of
copies of $\mathbb{R}$ is locally compact if and only if it is finite.

We begin with a crude decomposition result for general objects in
$\mathsf{LCA}_{F}$:

\begin{theorem}
\label{thm_StructLCAF}Let $G\in\mathsf{LCA}_{F}$ be arbitrary. Then there
exists a clopen quasi-adelic block $Q\hookrightarrow G$ such that $G\cong
Q\oplus D$, where $D$ is a discrete $F$-vector space.
\end{theorem}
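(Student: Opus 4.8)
The plan is to reduce the statement to the structure theory already available for $\mathsf{LCA}_{\mathcal{O}}$ via Lemma \ref{lemma_LCAFtoLCAO}, and then to upgrade a decomposition of the underlying $\mathcal{O}$-module to an $F$-linear one. First I would view $G$ as an object of $\mathsf{LCA}_{\mathcal{O}}$ and invoke the basic structural decomposition there: $G$ contains a clopen subobject $V\oplus C$ (with $V$ a vector $\mathcal{O}$-module and $C$ compact) such that the quotient $G/(V\oplus C)$ is discrete. This is the classical Braconnier/vector-group splitting used in \cite{obloc}. The point is that, because $V$ is a vector $\mathcal{O}$-module, the $F$-action automatically preserves it (Lemma \ref{lemma_LCAFtoLCAO} and the remarks preceding Theorem \ref{thm_StructLCAF} identify vector $\mathcal{O}$-modules with objects whose underlying group is $\mathbb{R}^n$, and the $F$-scalar action on such is forced), so I may assume $V$ is an $F$-submodule and split it off first using that $V=\bigoplus_{\sigma\in I}\mathbb{R}_\sigma$ is injective in $\mathsf{LCA}_F$ by Lemma \ref{Lem_InjProjInLCAF}(1). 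This leaves us to treat an object $G'$ with $G'$ an extension of a discrete $F$-vector space by a compact $\mathcal{O}$-module $C$.

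The next step is to produce, inside $G'$, a clopen \emph{vector-free quasi-adelic block}, i.e. to exhibit a compact clopen $\mathcal{O}$-submodule and show that $F\cdot C$ (equivalently $\bigcup_{n\ge1}\tfrac1n C$) is clopen in $G'$ and has the shape $H=\bigcup_{n\ge1}\frac1n C$ demanded in Definition \ref{def_QuasiAdelicBlock}. Here $C$ is the compact clopen $\mathcal{O}$-submodule supplied above; since $\mathcal{O}\hookrightarrow F$ is $\bigcup_{n\ge1}\frac1n\mathcal{O}$, the $F$-submodule generated by $C$ is exactly $\bigcup_{n\ge1}\frac1n C$, a countable increasing union of translates/dilates of the compact clopen $C$, hence open; and it is a subobject of $G'$ in $\mathsf{LCA}_F$ by construction. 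Call it $H$; then $H$ is a vector-free quasi-adelic block (with the chosen $C$), and I claim $Q:=V\oplus H$ is the desired clopen quasi-adelic block. The quotient $G/Q \cong G'/H$ is then a quotient of the discrete vector space $G'/C$, hence itself a discrete $F$-vector space $D$.

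Finally, to get the \emph{direct sum} decomposition $G\cong Q\oplus D$ rather than just a short exact sequence $Q\hookrightarrow G\twoheadrightarrow D$, I invoke Lemma \ref{Lem_InjProjInLCAF}(2): discrete $F$-vector spaces are projective in $\mathsf{LCA}_F$, so the admissible epic $G\twoheadrightarrow D$ admits a section, giving the splitting. (Equivalently one could split $V$ off using its injectivity and then split $D$ off using projectivity; the order does not matter.) The main obstacle I expect is the middle step: verifying carefully that the clopen compact $\mathcal{O}$-submodule $C$ extracted from $\mathsf{LCA}_{\mathcal{O}}$-theory can be chosen so that $F\cdot C=\bigcup_{n}\frac1n C$ is still open in the $F$-module $G$ and genuinely sits as an admissible subobject — in other words that passing from the $\mathcal{O}$-structure to the $F$-structure does not destroy local compactness of the pieces — together with checking that $G/(F\cdot C)$ is discrete (it is a quotient of the discrete $\mathcal{O}$-module $G/C$, but one must confirm the $F$-module quotient topology agrees). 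Everything else is a routine assembly of the injectivity/projectivity facts of Lemma \ref{Lem_InjProjInLCAF} with the exactness properties of Lemma \ref{lemma_LCAFtoLCAO}.
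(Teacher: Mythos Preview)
Your proposal is correct and follows essentially the same route as the paper: pass to $\mathsf{LCA}_{\mathcal{O}}$, use Levin's structure result to extract the clopen subobject $\bigoplus_{\sigma\in I}\mathbb{R}_\sigma\oplus C$, replace $C$ by its $F$-saturation $H=\bigcup_{n\ge 1}\tfrac{1}{n}C$, and split off the resulting discrete quotient using Lemma~\ref{Lem_InjProjInLCAF}(2). The only cosmetic difference is that you split off $V=\bigoplus_\sigma\mathbb{R}_\sigma$ first via injectivity, whereas the paper keeps $V$ and $H$ together as $Q=V+H$ and checks $V\cap H=0$ directly; also, the ``main obstacle'' you flag is in fact immediate (each $\tfrac{1}{n}C$ is open since $\tfrac{1}{n}$ acts as a homeomorphism on $G\in\mathsf{LCA}_F$, so $H$ is open, hence the quotient is discrete), and the paper's detour through Pontryagin's Open Mapping Theorem to verify continuity of $\tfrac{1}{n}$ on $H$ is redundant for the same reason.
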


\begin{proof}
Firstly, regard $G$ as an object in $\mathsf{LCA}_{\mathcal{O}}$. By Levin's
structure result, \cite[Theorem 2]{MR0310125}, there exists an exact sequence%
\[
\bigoplus_{\sigma\in I}\mathbb{R}_{\sigma}\oplus C\hookrightarrow
G\twoheadrightarrow D\qquad\text{in}\qquad\mathsf{LCA}_{\mathcal{O}}%
\]
with $C$ a compact clopen $\mathcal{O}$-submodule in $G$, $D$ discrete, and
$I$ a finite list of real and complex places. Define $H:=\bigcup_{n\geq1}%
\frac{1}{n}C$ inside $G$. Since $G$ is a topological $F$-vector space, all
elements of $F^{\times}$ act as homeomorphisms on $G$, so all $\frac{1}{n}C$
are also compact and clopen $\mathcal{O}$-submodules. As $H$ is a union of
open $\mathcal{O}$-submodules, $H$ is itself an open submodule, thus clopen,
and since every element $a\in F$ can be written as $b/n$ with $b\in
\mathcal{O}$ and $n\geq1$ an integer, $H$ is also an $F$-vector subspace of
$G$. We claim that it is a topological $F$-submodule: To this end, note that
for all $a\in\mathcal{O}\setminus\{0\}$, the multiplication map $m_{a}%
:H\rightarrow H$ is continuous, it is surjective since $H$ is an $F$-vector
space, and $H$ is clearly $\sigma$-compact. Thus, by Pontryagin's Open Mapping
Theorem \cite[Theorem 3]{MR0442141}, $m_{a}$ is an open map. This shows that
multiplication with $\frac{1}{n}$, which is the inverse map $m_{n}^{-1}$, is
continuous.\footnote{See Example \ref{example_Robertson} to see that this is
by no means true anyway.} Thus, $H$ is a clopen topological $F$-submodule of
$G$. Define%
\[
Q:=\bigoplus_{\sigma\in I}\mathbb{R}_{\sigma}+H\subseteq G
\]
in $G$. This is an open $F$-submodule of $G$ (since the summand $H$ is open),
thus clopen. It is actually a direct sum: If $x$ lies in the intersection
$\bigoplus_{\sigma\in I}\mathbb{R}_{\sigma}\cap H$, then for $n$ sufficiently
large $nx$ lies in $\bigoplus_{\sigma\in I}\mathbb{R}_{\sigma}\cap C$, but
then we must have $nx=0$, proving $x=0$ as we are in an $F$-vector space, i.e.
there are no non-trivial torsion elements. Hence, we have constructed an exact
sequence $Q\hookrightarrow G\twoheadrightarrow D $ in $\mathsf{LCA}_{F}$, but
since $D$ is discrete (as $Q$ was open), it is a projective object by Lemma
\ref{Lem_InjProjInLCAF}, and thus the admissible epic admits a section in
$\mathsf{LCA}_{F}$. This proves the claim.
\end{proof}

In the previous proof we exploited that the scalar action of $\mathbb{Q}%
^{\times}$ is by homeomorphisms. Having this property is by no means automatic
for $\mathbb{Q}$-vector spaces which admit a locally compact topology:

\begin{example}
[\cite{MR0217211}, \cite{MR1173767}]\label{example_Robertson}We follow an idea
of L. Robertson. Fix a prime number $p$ and let $\omega$ be any cardinal.
Define, just as an untopologized abelian group,%
\begin{equation}
G:=\bigcup_{n\geq1}\left(  \frac{1}{p^{n}}\prod_{\omega}\mathbb{Z}_{p}\right)
\qquad\subseteq\qquad\prod_{\omega}\mathbb{Q}_{p}\text{.}\label{l_M1}%
\end{equation}
Equivalently: We consider the product of $\omega$ copies of $\mathbb{Q}_{p}$.
Then we let $G$ be the subgroup of those sequences $\underline{\lambda
}=(\lambda_{i})$ such that there exists some $N\geq1$ with%
\[
p^{N}\cdot\underline{\lambda}=(p^{N}\lambda_{i})\in\prod_{\omega}%
\mathbb{Z}_{p}\text{.}%
\]
It is clear that $G$ is an abelian group, and indeed a $\mathbb{Q}$-vector
space. Next, we topologize $G$ by declaring its subgroup $\prod_{\omega
}\mathbb{Z}_{p}$ a compact open. Then $G$ is locally compact, so
$G\in\mathsf{LCA}$. If $\omega$ is finite, then $G\cong\prod_{\omega
}\mathbb{Q}_{p}$, and moreover $G\in\mathsf{LCA}_{\mathbb{Q}}$ is an adelic
block. Consider the case of $\omega$ infinite: \textit{Assume} multiplication
by $\frac{1}{p}$ is continuous. Then multiplication by all $p^{n}$
($n\in\mathbb{Z}$) is a homeomorphism. Thus, as $\prod_{\omega}\mathbb{Z}_{p}$
is a compact open in the topology of $G$, it follows that all sets $\frac
{1}{p^{n}}\prod_{\omega}\mathbb{Z}_{p}$ are compact opens. Hence, by Equation
\ref{l_M1} the group $G$ is $\sigma$-compact. But all quotients of a $\sigma
$-compact group are also $\sigma$-compact, so $G/\prod_{\omega}\mathbb{Z}_{p}$
is $\sigma$-compact. However, as $\prod_{\omega}\mathbb{Z}_{p}$ is also an
open subgroup, this quotient is also discrete. A discrete $\sigma$-compact
group is countable. Now, inside this quotient we find%
\[
\frac{1}{p}\prod_{\omega}\mathbb{Z}_{p}\left/  \prod_{\omega}\mathbb{Z}%
_{p}\right.  \subset G\left/  \prod_{\omega}\mathbb{Z}_{p}\right.
\]
and the group on the left is isomorphic to $\prod_{\omega}\mathbb{F}_{p}$.
However, since $\omega$ is infinite, this is an uncountable set. It has
cardinality $2^{\omega}$ (indeed, for $p=2$ the set $\prod_{\omega}%
\mathbb{F}_{2}$ is literally the power set of a set of cardinality $\omega$).
Thus, we have found an uncountable set inside a supposedly countable group,
contradiction. It follows that $G\in\mathsf{LCA}_{\mathbb{Z}}$, that it is
algebraically a $\mathbb{Q}$-vector space, but it is not an object of
$\mathsf{LCA}_{\mathbb{Q}}$. In particular, $G$ is not an adelic block.
\end{example}

\begin{definition}
We write $\mathsf{LCA}_{F,qab+fd}$ for the full subcategory of $\mathsf{LCA}%
_{F}$ whose objects admit a presentation as $G\cong Q\oplus D$, where $Q$ is
quasi-adelic and $D$ a finite-dimensional discrete $F$-vector space.
\end{definition}

\begin{lemma}
\label{lemma_nomapsfromqabtodiscrete}Suppose $D\in\mathsf{LCA}_{F}$ is discrete.

\begin{enumerate}
\item Every morphism $h:G\rightarrow D$ from $G\in\mathsf{LCA}_{F,qab}$ must
be the zero map.

\item Every morphism $h:G\rightarrow D$ from $G\in\mathsf{LCA}_{F,qab+fd}$ has
finite-dimensional image.
\end{enumerate}
\end{lemma}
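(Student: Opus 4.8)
The plan is to deduce (2) from (1) and to prove (1) by splitting a quasi-adelic block $Q=\bigoplus_{\sigma\in I}\mathbb{R}_{\sigma}\oplus H$, with $H=\bigcup_{n\geq1}\tfrac1n C$ for a compact clopen $\mathcal{O}$-submodule $C$, into its two summands and treating each separately. First I would dispatch the archimedean part by connectedness: each $\mathbb{R}_{\sigma}$ is connected, hence so is the finite direct sum $\bigoplus_{\sigma\in I}\mathbb{R}_{\sigma}$, while $D$ is discrete; a continuous homomorphism sends a connected group onto a connected subgroup of a discrete group, and such a subgroup is necessarily $\{0\}$. So $h$ annihilates $\bigoplus_{\sigma\in I}\mathbb{R}_{\sigma}$, and it remains to check $h(H)=0$.

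For the subgroup $H$, the key input is that $D$ is torsion-free, since it is an $F$-vector space and hence, by restriction of scalars, a $\mathbb{Q}$-vector space. Now $h(C)$ is the continuous image of a compact group, so it is a compact subgroup of the discrete group $D$, hence finite; a finite subgroup of a torsion-free group is trivial, so $h(C)=0$. The same reasoning applies verbatim to each $\tfrac1n C$: because $Q\in\mathsf{LCA}_{F}$, multiplication by $\tfrac1n\in F$ is continuous on $Q$, so $\tfrac1n C$ is again a compact subgroup; therefore $h(\tfrac1n C)$ is a finite subgroup of $D$, hence zero. Since $H=\bigcup_{n\geq1}\tfrac1n C$, this gives $h(H)=0$, and combined with the archimedean part, $h(Q)=0$. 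This proves (1).

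For (2), I would write $G\cong Q\oplus D'$ with $Q$ quasi-adelic and $D'$ a finite-dimensional discrete $F$-vector space. Part (1) forces $h$ to vanish on $Q$, so $h(G)=h(D')$, which is a quotient of $D'$ and hence a finite-dimensional $F$-vector space, as claimed.

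The argument is short and essentially formal; the one spot demanding care — and the only place where the hypothesis $G\in\mathsf{LCA}_{F}$ (rather than merely $G\in\mathsf{LCA}_{\mathcal{O}}$) is truly used — is the claim that each $\tfrac1n C$ is compact. Example \ref{example_Robertson} shows that for a locally compact $\mathbb{Q}$-vector space which is only an $\mathcal{O}$-module the sets $\tfrac1n C$ need not even be relatively compact, so this input cannot be dropped; everything else is soft topology (compactness, connectedness) together with torsion-freeness of $D$.
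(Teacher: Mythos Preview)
Your proof is correct and follows essentially the same approach as the paper: split off the vector $\mathcal{O}$-module part by connectedness, then use that $h(C)$ is compact in a discrete group, hence finite, hence zero by torsion-freeness of the $F$-vector space $D$; part (2) is deduced from (1) exactly as you do. The only cosmetic difference is that the paper, after establishing $h(C)=0$, concludes $h(H)=0$ immediately from $F$-linearity of $h$ (namely $h(\tfrac{1}{n}c)=\tfrac{1}{n}h(c)=0$), rather than re-running the compactness argument on each $\tfrac{1}{n}C$ separately.
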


\begin{proof}
(1) Since $G$ is quasi-adelic, we can write $G=\bigoplus_{\sigma\in
I}\mathbb{R}_{\sigma}\oplus H$ with $H=\bigcup_{n\geq1}\frac{1}{n}C$, where
$I$ is a finite list of real and complex places and $C$ a compact clopen
$\mathcal{O}$-submodule of $G$. Since each $\mathbb{R}_{\sigma}$ is connected,
but $D$ discrete, the restriction of $h$ to the vector $\mathcal{O}$-module
summand must be zero. On the other hand, the image $h(C)$ is a compact
$\mathcal{O}$-submodule of $D$. As $D$ is discrete, $h(C)$ must be finite.
Hence, it is a torsion $\mathcal{O}$-module inside the $F$-vector space $D$,
and therefore $h(C)=0$. It follows that $h=0$. (2) The morphism has the shape%
\[
h:Q\oplus D^{\prime}\longrightarrow D\text{,}%
\]
where $Q$ is quasi-adelic, $D^{\prime}$ finite-dimensional discrete and by
part (1) of the proof, $\left.  h\mid_{Q}\right.  =0$, so the image of $h$
equals $h(D^{\prime})$ and since $D^{\prime}$ is finite-dimensional, so must
be $h(D^{\prime})$.
\end{proof}

\begin{example}
Compact $F$-vector spaces are injective objects in $\mathsf{LCA}_{F}$ by Lemma
\ref{Lem_InjProjInLCAF}, but not projective. To see this, consider the
ad\`{e}le sequence $F\hookrightarrow\mathbb{A}\twoheadrightarrow\mathbb{A}/F$,
where $\mathbb{A}/F\simeq F^{\vee}$ is a compact $F$-vector space. If it was
projective, the quotient map would split, so $\mathbb{A}\simeq F\oplus
F^{\vee}$. In particular, so would the initial map, giving a projection map
$\pi:\mathbb{A}\twoheadrightarrow F$, but by Lemma
\ref{lemma_nomapsfromqabtodiscrete} every such map must be the zero map,
giving a contradiction. Elaborating on this theme, the ad\`{e}le sequence
being non-split can be used as a characterization of the ad\`{e}les, see the
work of Levin \cite{MR0310125}.
\end{example}

\begin{proposition}
\label{prop_LCAFqab_leftfilt}Suppose $G\in\mathsf{LCA}_{F}$. For every
morphism $f:G^{\prime}\rightarrow G$ such that $G^{\prime}\in\mathsf{LCA}%
_{F,qab}$ (resp. $\mathsf{LCA}_{F,qab+fd}$), there exists a clopen submodule
$\tilde{Q}\in\mathsf{LCA}_{F,qab}$ (resp. $\mathsf{LCA}_{F,qab+fd}$) inside
$G$ and the dashed arrow making the diagram%
\[%
\xymatrix{
& \tilde{Q} \ar@{^{(}->}[d] \\
G^{\prime} \ar[r]_{f} \ar@{-->}[ur] & G
}%
\]
commute. In particular, the category $\mathsf{LCA}_{F,qab}$ (resp.
$\mathsf{LCA}_{F,qab+fd}$) is left filtering in $\mathsf{LCA}_{F}$.
\end{proposition}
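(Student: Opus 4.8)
The plan is to build the clopen submodule $\tilde Q$ directly inside $G$ as the "$F$-span of the image of a compact part of $G^{\prime}$" and then invoke Theorem \ref{thm_StructLCAF} to split off the discrete part. First I would reduce to the quasi-adelic case: if $G^{\prime}\in\mathsf{LCA}_{F,qab+fd}$, write $G^{\prime}=Q^{\prime}\oplus D^{\prime}$ with $Q^{\prime}$ quasi-adelic and $D^{\prime}$ finite-dimensional discrete; the image $f(D^{\prime})$ is a finite-dimensional discrete $F$-vector subspace of $G$ (its discreteness following because $D^{\prime}$ is discrete and $f$ is continuous), so if I can handle $Q^{\prime}$ with a clopen quasi-adelic block $Q_{0}\hookrightarrow G$ through which $\left.f\right|_{Q^{\prime}}$ factors, then $Q_{0}+f(D^{\prime})$ is again clopen (it contains the open set $Q_{0}$) and lies in $\mathsf{LCA}_{F,qab+fd}$, and $f$ factors through it. So the heart of the matter is $G^{\prime}$ quasi-adelic.

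For the quasi-adelic case, write $G^{\prime}=\bigoplus_{\sigma\in I}\mathbb{R}_{\sigma}\oplus H^{\prime}$ with $H^{\prime}=\bigcup_{n\geq1}\tfrac1n C^{\prime}$ and $C^{\prime}$ a compact clopen $\mathcal O$-submodule. Apply Theorem \ref{thm_StructLCAF} to $G$ itself to get $G\cong Q\oplus D$ with $Q$ a clopen quasi-adelic block and $D$ discrete; composing $f$ with the projection to $D$ gives a morphism $G^{\prime}\to D$ from a quasi-adelic object, which is zero by Lemma \ref{lemma_nomapsfromqabtodiscrete}(1). Hence $f$ factors through the clopen submodule $Q\subseteq G$, and we may as well assume $G=Q$ is itself quasi-adelic, say $G=\bigoplus_{\tau}\mathbb{R}_{\tau}\oplus H$ with $H=\bigcup_{m}\tfrac1m C$ for a compact clopen $\mathcal O$-submodule $C$. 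Now $f(C^{\prime})$ is a compact $\mathcal O$-submodule of $G$; since $C$ is clopen, finitely many translates already show $f(C^{\prime})\subseteq \tfrac1m C$ for some $m$ (using compactness of $f(C^{\prime})$ against the open cover by the $\tfrac1m C$), hence $f(C^{\prime})\subseteq H$. Because $f$ is $F$-linear, $f(H^{\prime})=f(\bigcup_n\tfrac1n C^{\prime})=\bigcup_n\tfrac1n f(C^{\prime})\subseteq H$. It remains to control the vector summand: each $\mathbb{R}_{\sigma}$ is connected, so $f(\mathbb{R}_{\sigma})$ is a connected $F$-submodule of $G$, whose projection to the totally disconnected part $H$ is trivial modulo... — more carefully, $f(\mathbb{R}_{\sigma})$ maps into the connected component of the identity of $G$, which is $\bigoplus_{\tau}\mathbb{R}_{\tau}$ (a vector $\mathcal O$-module, hence connected with totally disconnected quotient $H$, using that each $\tfrac1m C$ is totally disconnected). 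Therefore $f(\bigoplus_\sigma\mathbb{R}_\sigma)\subseteq\bigoplus_\tau\mathbb{R}_\tau$, and altogether $f(G^{\prime})\subseteq \bigoplus_\tau\mathbb{R}_\tau\oplus H=G$ lands inside a submodule $\tilde Q$ which we may take to be $G$ itself, or better: set $\tilde Q:=\bigoplus_\tau\mathbb{R}_\tau+ H_0$ where $H_0=\bigcup_n\tfrac1n f(C^\prime)$ — but $H_0\subseteq H$ and $H$ is already clopen in $G$, so the cleanest choice is simply $\tilde Q:=G$ when $G$ is quasi-adelic. In the general case $\tilde Q$ is the clopen quasi-adelic (+fd) block produced above.

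The last assertion, that $\mathsf{LCA}_{F,qab}$ (resp. $\mathsf{LCA}_{F,qab+fd}$) is left filtering in $\mathsf{LCA}_{F}$, is then immediate from Schlichting's definition: given any $f:G^{\prime}\to X$ with $G^{\prime}$ in the subcategory, the factorization just constructed exhibits $f$ as $G^{\prime}\to\tilde Q\hookrightarrow X$ with $\tilde Q$ in the subcategory and $\tilde Q\hookrightarrow X$ an admissible monic (a clopen submodule inclusion is admissible monic in $\mathsf{LCA}_F$).

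I expect the main obstacle to be the bookkeeping around the vector $\mathcal O$-module summands: one must be careful that "the connected component of the identity equals the vector part'' and that its complementary part $H$ is genuinely totally disconnected, so that a connected subgroup has trivial image in $H$; this is where the structure theory (Levin's theorem via Theorem \ref{thm_StructLCAF}, together with the fact that each $\tfrac1n C$ is a totally disconnected compact group) is really used. A secondary subtlety is checking that $f(C^{\prime})\subseteq\tfrac1m C$ for some single $m$ — this needs compactness of $f(C^{\prime})$ together with the fact that the $\tfrac1m C$ form an increasing open exhaustion of $H$, so that $f(C^{\prime})$, being compact and contained in the open set $H$ (which we establish first by a connectedness/compactness argument forcing the $H^\prime$-part into $H$), is contained in one of them.
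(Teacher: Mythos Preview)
Your argument for the quasi-adelic case is correct and coincides with the paper's: decompose $G\cong Q\oplus D$ via Theorem~\ref{thm_StructLCAF}, observe that the composite $G'\to D$ vanishes by Lemma~\ref{lemma_nomapsfromqabtodiscrete}(1), and take $\tilde Q=Q$. (The extra analysis you sketch about connected components and $H$ being totally disconnected is both unnecessary and in places incorrect---for a general quasi-adelic block the part $H$ need not be totally disconnected, e.g.\ $H=\mathbb A/F$ is compact and connected---but this does no harm since you correctly note that $\tilde Q=Q$ already suffices.)

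The reduction step for the $\mathsf{LCA}_{F,qab+fd}$ case contains a genuine gap. You assert that $f(D')$ is a \emph{discrete} subspace of $G$ ``because $D'$ is discrete and $f$ is continuous'', but this implication is false: take $D'=F$ and $G=\mathbb R_\sigma$ for a real place $\sigma$; the image is dense, not discrete. Your justification that $Q_0+f(D')\in\mathsf{LCA}_{F,qab+fd}$ therefore breaks down as written. The conclusion is nevertheless true and easily repaired: since $Q_0$ is open in the $F$-submodule $Q_0+f(D')$, the quotient $(Q_0+f(D'))/Q_0$ is discrete, and it is finite-dimensional as an image of $D'$; by projectivity (Lemma~\ref{Lem_InjProjInLCAF}) the resulting short exact sequence splits, giving $Q_0+f(D')\cong Q_0\oplus(\text{finite-dim.\ discrete})\in\mathsf{LCA}_{F,qab+fd}$.

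The paper avoids this detour by not splitting $G'$ at all. It applies Theorem~\ref{thm_StructLCAF} to $G$ as you do, but then invokes Lemma~\ref{lemma_nomapsfromqabtodiscrete}(2) directly on the composite $h\colon G'\to D$: its image is finite-dimensional, so one passes to $D/\operatorname{im}(h)$ and takes $\tilde Q:=\ker\bigl(G\twoheadrightarrow D/\operatorname{im}(h)\bigr)$. Noether's lemma together with projectivity of $\operatorname{im}(h)$ then yields $\tilde Q\cong\hat Q\oplus\operatorname{im}(h)$. This handles both cases uniformly (the qab case is the degenerate $\operatorname{im}(h)=0$) and never requires controlling the subspace topology on $f(D')$ inside $G$.
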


\begin{proof}
We prove this for $G^{\prime}\in\mathsf{LCA}_{F,qab+fd}$ [and discuss the
variation for $G^{\prime}\in\mathsf{LCA}_{F,qab}$ in square brackets].
Consider $f:G^{\prime}\rightarrow G$. We apply Theorem \ref{thm_StructLCAF} to
$G$ and obtain a commutative diagram%
\begin{equation}%
\xymatrix{
& \hat{Q} \ar@{^{(}->}[d] \\
G^{\prime} \ar[r]_{f} \ar[dr]_{h} & G \ar@{->>}[d] \\
& D \text{,}
}%
\label{lcw1}%
\end{equation}
where the downward exact sequence is split, $\hat{Q}$ is quasi-adelic and $D$
discrete. By Lemma \ref{lemma_nomapsfromqabtodiscrete} the morphism $h$ has
finite-dimensional image. Post-composing the vertical quotient map with
$D\twoheadrightarrow D/\operatorname*{im}(h)$, and defining $\tilde{Q}$ as the
kernel of the composite map, we obtain a new commutative diagram%
\begin{equation}%
\xymatrix{
& \tilde{Q} \ar@{^{(}->}[d] \\
G^{\prime} \ar@{-->}[ur] \ar[r]_{f} \ar[dr]_{0} & G \ar@{->>}[d] \\
& D/{\operatorname{im}(h)} \text{.}
}%
\label{lcw2}%
\end{equation}
By the universal property of kernels, there is a morphism $c:\hat
{Q}\rightarrow\tilde{Q}$ (intuitive slogan: the new kernel is at least as big
as the kernel beforehand):%
\[%
\xymatrix{
\hat{Q} \ar[r]^{c} \ar@{^{(}->}[dr] & \tilde{Q} \ar@{^{(}->}[d] \\
& G\text{.}
}%
\]
Since $\mathsf{LCA}_{F}$ is quasi-abelian, it is idempotent complete and thus
by \cite[dual to Proposition 7.6]{MR2606234} this morphism $c$ must be an
admissible monic. Hence, we get admissible monics $\hat{Q}\overset
{c}{\hookrightarrow}\tilde{Q}\hookrightarrow G$ and by\ Noether's Lemma,
\cite[Lemma 3.5]{MR2606234}, there is an exact sequence%
\[
\tilde{Q}/\hat{Q}\hookrightarrow G/\hat{Q}\twoheadrightarrow G/\tilde
{Q}\text{.}%
\]
Inspecting Diagrams \ref{lcw1} and \ref{lcw2}, the latter two terms are just
$D\twoheadrightarrow D/\operatorname*{im}(h)$ and therefore $\tilde{Q}/\hat
{Q}\cong\operatorname*{im}(h)$. Hence, we get the short exact sequence%
\[
\hat{Q}\hookrightarrow\tilde{Q}\twoheadrightarrow\operatorname*{im}(h)\text{.}%
\]
Since $\operatorname*{im}(h)$ is discrete, it is a projective object by Lemma
\ref{Lem_InjProjInLCAF} and therefore the admissible epic splits, giving us an
isomorphism $\tilde{Q}\cong\hat{Q}\oplus\operatorname*{im}(h)$. Since $\hat
{Q}$ is quasi-adelic and $\operatorname*{im}(h)$ a finite-dimensional discrete
$F$-vector space, we conclude that $\tilde{Q}\in\mathsf{LCA}_{F,qab+fd}$. [If
$G^{\prime}\in\mathsf{LCA}_{F,qab}$, then Lemma
\ref{lemma_nomapsfromqabtodiscrete} shows that $\operatorname*{im}(h)=0$, and
thus we obtain the stronger conclusion that $\tilde{Q}\cong\hat{Q}$, and thus
$\tilde{Q}\in\mathsf{LCA}_{F,qab}$.]\newline
\end{proof}

\begin{proposition}
\label{prop_lcafqab_extensionclosed}The category $\mathsf{LCA}_{F,qab}$ (resp.
$\mathsf{LCA}_{F,qab+fd}$) is extension-closed in $\mathsf{LCA}_{F}$.
\end{proposition}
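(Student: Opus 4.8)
The plan is to deduce this from Theorem~\ref{thm_StructLCAF} together with Lemma~\ref{lemma_nomapsfromqabtodiscrete}. Suppose we are given a conflation $Q'\hookrightarrow G\twoheadrightarrow Q''$ in $\mathsf{LCA}_{F}$ with $Q',Q''\in\mathsf{LCA}_{F,qab}$ (resp. $\mathsf{LCA}_{F,qab+fd}$); we must show that $G$ lies in the same subcategory. First I would apply Theorem~\ref{thm_StructLCAF} to $G$, producing a clopen quasi-adelic block $\tilde{Q}\hookrightarrow G$ together with a splitting $G\cong\tilde{Q}\oplus D$ for a discrete $F$-vector space $D$, and I write $\pi\colon G\twoheadrightarrow D$ for the associated retraction. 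The entire content of the argument is then to bound $D$: in the $\mathsf{LCA}_{F,qab}$ case we must show $D=0$, and in the $\mathsf{LCA}_{F,qab+fd}$ case that $\dim_{F}D<\infty$; once this is known, $G\cong\tilde{Q}\oplus D$ is visibly of the required shape.

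The key step is to squeeze $D$ between two quotients each controlled by Lemma~\ref{lemma_nomapsfromqabtodiscrete}. Let $\iota\colon Q'\hookrightarrow G$ denote the admissible monic and set $W:=\operatorname{im}(\pi\circ\iota)\subseteq D$. Since $Q'\in\mathsf{LCA}_{F,qab}$ (resp. $\mathsf{LCA}_{F,qab+fd}$) and $D$ is discrete, Lemma~\ref{lemma_nomapsfromqabtodiscrete} yields $W=0$ (resp. $\dim_{F}W<\infty$). Next, the composite $G\xrightarrow{\pi}D\twoheadrightarrow D/W$ kills $Q'$ by construction of $W$, hence by the universal property of the cokernel $Q''=\operatorname{coker}(\iota)$ it factors as a continuous surjection $\bar{\pi}\colon Q''\twoheadrightarrow D/W$. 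Applying Lemma~\ref{lemma_nomapsfromqabtodiscrete} a second time, now to $Q''$, gives $D/W=0$ (resp. $\dim_{F}(D/W)<\infty$). Combining the two estimates, $D=0$ (resp. $\dim_{F}D=\dim_{F}W+\dim_{F}(D/W)<\infty$), so $G\cong\tilde{Q}\oplus D\in\mathsf{LCA}_{F,qab}$ (resp. $\mathsf{LCA}_{F,qab+fd}$), as required.

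I do not expect a serious obstacle; the delicate points are purely bookkeeping. One should observe that $W$, equipped with the subspace topology from the discrete module $D$, is itself a discrete $F$-vector space, so that $W\hookrightarrow D$ is an admissible monic and $D/W$ is again discrete, which is what legitimates applying Lemma~\ref{lemma_nomapsfromqabtodiscrete} to $\bar{\pi}$. One should also note that the factorization $\bar{\pi}$ genuinely lives in $\mathsf{LCA}_{F}$: this is nothing but the universal property of the admissible epic $G\twoheadrightarrow Q''$ (equivalently, of the quotient topology), applied to the continuous map $G\to D/W$ that vanishes on $Q'$. Finally, it is the honest direct-sum splitting furnished by Theorem~\ref{thm_StructLCAF} that allows us to treat $\pi$ as a genuine retraction, and hence to argue with actual submodules of $G$ rather than with mere subquotients.
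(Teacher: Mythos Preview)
Your argument is correct. Both proofs ultimately rest on Theorem~\ref{thm_StructLCAF} and Lemma~\ref{lemma_nomapsfromqabtodiscrete}, but they organize the bookkeeping differently. The paper first invokes the left-filtering Proposition~\ref{prop_LCAFqab_leftfilt} to factor $G'\hookrightarrow G$ through a clopen $\tilde{Q}\in\mathsf{LCA}_{F,qab+fd}$ already containing the image of $G'$, then uses Noether's Lemma to obtain an exact sequence $\tilde{Q}/G'\hookrightarrow G''\twoheadrightarrow D$, so that a \emph{single} application of Lemma~\ref{lemma_nomapsfromqabtodiscrete} to $G''\twoheadrightarrow D$ bounds $D$; this also requires checking that the lift $G'\to\tilde{Q}$ is an admissible monic, which costs an appeal to idempotent completeness. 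You instead apply Theorem~\ref{thm_StructLCAF} directly to $G$ and then bound $D$ in two steps, first $W=\pi(Q')$ via $Q'$ and then $D/W$ via $Q''$. Your route is a little more elementary---no Noether's Lemma, no admissible-monic verification---at the price of invoking Lemma~\ref{lemma_nomapsfromqabtodiscrete} twice; the paper's route recycles machinery that is needed elsewhere anyway.
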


\begin{proof}
Suppose%
\[
G^{\prime}\hookrightarrow G\twoheadrightarrow G^{\prime\prime}%
\]
is an exact sequence in $\mathsf{LCA}_{F}$. We prove the claim for the
situation $G^{\prime},G^{\prime\prime}\in\mathsf{LCA}_{F,qab+fd}$ [and discuss
the variation for $G^{\prime},G^{\prime\prime}\in\mathsf{LCA}_{F,qab}$ in
square brackets]. We apply Proposition \ref{prop_LCAFqab_leftfilt} to
$G^{\prime}\hookrightarrow G$ and obtain a commutative diagram%
\begin{equation}%
\xymatrix{
& \tilde{Q} \ar@{^{(}->}[d] \\
G^{\prime} \ar@{^{(}->}[r] \ar@{-->}[ur]^-{y} \ar[dr]_{0} & G \ar@
{->>}[d] \ar@{->>}[r] & G^{\prime\prime} \\
& D\text{,}
}%
\label{lcca1}%
\end{equation}
where $\tilde{Q}\in\mathsf{LCA}_{F,qab+fd}$ [resp. $\tilde{Q}\in
\mathsf{LCA}_{F,qab}$] and $D$ is discrete since it is the quotient by
something open. By \cite[dual to Proposition 7.6]{MR2606234} the lift $y$ must
be an admissible monic in $\mathsf{LCA}_{F}$. Hence, we get admissible monics
$G^{\prime}\hookrightarrow\tilde{Q}\hookrightarrow G$ and by\ Noether's Lemma
(\cite[Lemma 3.5]{MR2606234}), there is an exact sequence $\tilde{Q}%
/G^{\prime}\hookrightarrow G/G^{\prime}\twoheadrightarrow G/\tilde{Q}$. By
Diagram \ref{lcca1} we have $G/G^{\prime}\cong G^{\prime\prime}$ and
$G/\tilde{Q}\cong D$, so this sequence is isomorphic to%
\begin{equation}
\tilde{Q}/G^{\prime}\hookrightarrow G^{\prime\prime}\twoheadrightarrow
D\text{.}\label{lcw3}%
\end{equation}
However, since $G^{\prime\prime}\in\mathsf{LCA}_{F,qab+fd}$ and $D$ is
discrete, Lemma \ref{lemma_nomapsfromqabtodiscrete} shows that the admissible
epic has finite-dimensional image. Being an epic, we learn that $G/\tilde
{Q}\cong D$ is a finite-dimensional discrete vector space. In particular,
again by Lemma \ref{Lem_InjProjInLCAF}, it is a projective object, so the
exact sequence $\tilde{Q}\hookrightarrow G\twoheadrightarrow G/\tilde{Q}$
splits, i.e. we obtain an isomorphism%
\[
G\cong\tilde{Q}\oplus G/\tilde{Q}\text{,}%
\]
where $\tilde{Q}\in\mathsf{LCA}_{F,qab+fd}$ and $G/\tilde{Q}$ is a
finite-dimensional discrete $F$-vector space. Thus, $G\in\mathsf{LCA}%
_{F,qab+fd}$, proving our claim. [If we have $G^{\prime},G^{\prime\prime}%
\in\mathsf{LCA}_{F,qab}$, then we have $\tilde{Q}\in\mathsf{LCA}_{F,qab}$.
Moreover, in Equation \ref{lcw3} the middle term $G^{\prime\prime}$ is
quasi-adelic and $D$ discrete, so by Lemma \ref{lemma_nomapsfromqabtodiscrete}
the admissible epic must be the zero map. Being an epic, this forces that
$D=0$ and hence the vertical sequence in Diagram \ref{lcca1} implies that
$\tilde{Q}\hookrightarrow G$ is an isomorphism. Since we already know that
$\tilde{Q}\in\mathsf{LCA}_{F,qab}$, this proves $G\in\mathsf{LCA}_{F,qab}$,
finishing the proof.]
\end{proof}

\begin{corollary}
The full subcategory $\mathsf{LCA}_{F,qab}$ (resp. $\mathsf{LCA}_{F,qab+fd}$)
is a fully exact subcategory of $\mathsf{LCA}_{F}$, in particular it carries a
canonical induced exact category structure.
\end{corollary}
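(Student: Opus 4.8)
The plan is to invoke the standard fact that an extension-closed full additive subcategory of an exact category inherits a canonical exact structure, so that essentially all of the work has already been carried out in Proposition~\ref{prop_lcafqab_extensionclosed}. Recall from \cite[\S 10]{MR2606234} that a \emph{fully exact subcategory} of an exact category $\mathsf{E}$ is a full additive subcategory $\mathsf{C}\subseteq\mathsf{E}$ which is closed under extensions; by \cite[Lemma 10.20]{MR2606234} such a $\mathsf{C}$ is then itself an exact category, where a short sequence in $\mathsf{C}$ is declared exact precisely when it is exact in $\mathsf{E}$, and in particular the inclusion $\mathsf{C}\hookrightarrow\mathsf{E}$ is an exact functor and a sequence in $\mathsf{C}$ is exact if and only if it is exact in $\mathsf{E}$. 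Hence it suffices to check that $\mathsf{LCA}_{F,qab}$ and $\mathsf{LCA}_{F,qab+fd}$ are full additive subcategories of $\mathsf{LCA}_{F}$ which are closed under extensions.

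Fullness is immediate from the definitions, and since both defining conditions are manifestly invariant under isomorphism, the subcategories are closed under isomorphisms. As for additivity: each of the two subcategories contains the zero object (take $I=\varnothing$ and $C=0$, so that $H=0$ and $Q=0$, and $D=0$), and each is closed under finite direct sums. For $\mathsf{LCA}_{F,qab}$: if $Q_{i}=\bigoplus_{\sigma\in I_{i}}\mathbb{R}_{\sigma}\oplus\bigcup_{n\geq1}\tfrac{1}{n}C_{i}$ for $i=1,2$ with each $C_{i}$ a compact clopen $\mathcal{O}$-submodule of $Q_{i}$, then $C_{1}\oplus C_{2}$ is again a compact clopen $\mathcal{O}$-submodule of $Q_{1}\oplus Q_{2}$, and $Q_{1}\oplus Q_{2}=\bigoplus_{\sigma\in I_{1}\sqcup I_{2}}\mathbb{R}_{\sigma}\oplus\bigcup_{n\geq1}\tfrac{1}{n}(C_{1}\oplus C_{2})$, so $Q_{1}\oplus Q_{2}$ is again quasi-adelic. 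For $\mathsf{LCA}_{F,qab+fd}$ one additionally uses that a direct sum of two finite-dimensional discrete $F$-vector spaces is again such. (Closure under finite direct sums in fact also follows formally once extension-closedness is known, since $Q_{1}\oplus Q_{2}$ sits in an exact sequence $Q_{1}\hookrightarrow Q_{1}\oplus Q_{2}\twoheadrightarrow Q_{2}$ in $\mathsf{LCA}_{F}$, but the explicit description above is just as quick.)

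Finally, closure under extensions in $\mathsf{LCA}_{F}$ is precisely the content of Proposition~\ref{prop_lcafqab_extensionclosed}. Therefore $\mathsf{LCA}_{F,qab}$ and $\mathsf{LCA}_{F,qab+fd}$ are fully exact subcategories of $\mathsf{LCA}_{F}$, and \cite[Lemma 10.20]{MR2606234} endows each with its canonical induced exact category structure, for which the inclusion into $\mathsf{LCA}_{F}$ is exact. There is no genuine obstacle here: the only non-formal input, namely the extension-closedness, has already been established, and the remaining verifications are a routine bookkeeping check of additivity.
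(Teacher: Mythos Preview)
Your proof is correct and follows essentially the same approach as the paper: both deduce the corollary from Proposition~\ref{prop_lcafqab_extensionclosed} via \cite[Lemma 10.20]{MR2606234}. The paper's proof is a single sentence citing extension-closedness, while you have spelled out the (routine) verification that the subcategories are additive, which is harmless but not strictly needed since Lemma 10.20 in B\"uhler's survey only requires extension-closedness.
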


\begin{proof}
This follows from being closed under extensions, cf. \cite[Lemma
10.20]{MR2606234}.
\end{proof}

\begin{remark}
While being fully exact categories, neither of them is closed under Pontryagin duality.
\end{remark}

\begin{example}
\label{Example_LCAFqabDoesNotHaveKernels}Unlike $\mathsf{LCA}_{F}$, the exact
category $\mathsf{LCA}_{F,qab}$ is not quasi-abelian. To this end, consider
the ad\`{e}le sequence $F\hookrightarrow\mathbb{A}\twoheadrightarrow
\mathbb{A}/F$. Then $\mathbb{A}$ and $\mathbb{A}/F$ are quasi-adelic. If
$\mathsf{LCA}_{F,qab}$ was closed under kernels, $F$ would be quasi-adelic
itself. We apply Lemma \ref{lemma_nomapsfromqabtodiscrete} to the identity map
of $F$ and conclude $F=0$, which is a contradiction.
\end{example}

Even though kernels do not exist in $\mathsf{LCA}_{F,qab}$ in general,
idempotents have kernels:

\begin{proposition}
\label{prop_LCAqabIsIdempotentComplete}The exact category $\mathsf{LCA}%
_{F,qab}$ (resp. $\mathsf{LCA}_{F,qab+fd}$) is idempotent complete, i.e. every
idempotent has a kernel.
\end{proposition}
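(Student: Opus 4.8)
The plan is to show that $\mathsf{LCA}_{F,qab}$ (resp. $\mathsf{LCA}_{F,qab+fd}$) is closed under taking kernels of idempotent endomorphisms inside the ambient quasi-abelian category $\mathsf{LCA}_{F}$, which suffices because $\mathsf{LCA}_{F}$ is itself idempotent complete (being quasi-abelian, hence weakly idempotent complete, and in fact idempotent complete as it has all kernels). So let $e:Q\to Q$ be an idempotent with $Q\in\mathsf{LCA}_{F,qab}$; we must show that $\ker(e)$, computed in $\mathsf{LCA}_{F}$, again lies in $\mathsf{LCA}_{F,qab}$. Since $e$ is idempotent, in $\mathsf{LCA}_{F}$ we get a direct sum decomposition $Q\cong \ker(e)\oplus \operatorname{im}(e)$ with both summands clopen submodules of $Q$. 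Thus everything reduces to the following statement: if a quasi-adelic block $Q$ decomposes as a topological direct sum $Q\cong A\oplus B$ in $\mathsf{LCA}_{F}$, then each of $A$ and $B$ is again quasi-adelic (and the analogous statement with "$+fd$" appended).

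The key step is to analyze such a decomposition via the invariants that define a quasi-adelic block. First I would handle the vector part: write $Q=\bigoplus_{\sigma\in I}\mathbb{R}_\sigma\oplus H$ with $H=\bigcup_{n\ge 1}\frac1n C$ for a compact clopen $\mathcal{O}$-submodule $C$. The maximal connected subgroup (identity component) $Q^0$ is precisely $\bigoplus_{\sigma\in I}\mathbb{R}_\sigma$, because $H$, being a union of open compact-or-not totally disconnected pieces, has trivial identity component — more precisely $C$ is compact and, by the Braconnier–Vilenkin-type analysis to come (or directly, since $\bigcap_n nC$ need not vanish, we instead use that $C$ has an open profinite-by-finite-dimensional-$\mathbb{R}^0$... ) — cleanest is: the identity component is a functorial, additive subfunctor, so $Q^0\cong A^0\oplus B^0$, and $Q^0\cong\bigoplus_{\sigma\in I}\mathbb{R}_\sigma$ is a vector $\mathcal{O}$-module; since direct summands of vector $\mathcal{O}$-modules are again vector $\mathcal{O}$-modules (they are exactly the objects whose underlying group is $\mathbb{R}^n$, by \cite[Proposition 2.8]{obloc}, and $\mathbb{R}^n\cong\mathbb{R}^k\oplus\mathbb{R}^{n-k}$ forces the summands to have this shape), we get $A^0$ and $B^0$ are vector $\mathcal{O}$-modules. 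Splitting them off, we reduce to the vector-free case $Q=H=\bigcup_n\frac1n C$ with $Q\cong A\oplus B$, and we must show $A,B$ are each of this shape.

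For the vector-free case: $C$ is a compact clopen $\mathcal{O}$-submodule of $Q=A\oplus B$. Set $C_A:=C\cap A$ and $C_B:=C\cap B$ under the projections — more usefully, the projections $p_A(C), p_B(C)$ are compact (continuous image of compact), and they are open in $A$, $B$ respectively because $C$ is open in $Q$ and the projections are open maps. Hence $p_A(C)$ is a compact clopen $\mathcal{O}$-submodule of $A$, and likewise for $B$. It then remains to check $A=\bigcup_{n\ge1}\frac1n p_A(C)$: the inclusion "$\supseteq$" is clear since $A\subseteq Q$ is an $F$-subspace, and "$\subseteq$" holds because for $a\in A$ we have $a\in Q=\bigcup_n\frac1n C$, so $na\in C$ for some $n$, whence $na=p_A(na)\in p_A(C)$ as $na\in A$. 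Thus $A\in\mathsf{LCA}_{F,qab}$, and symmetrically $B$. Reassembling with the vector parts finishes the quasi-adelic case. For the "$+fd$" variant, one additionally tracks a finite-dimensional discrete summand $D$; since $D$ is discrete its identity component is trivial and its only compact $\mathcal{O}$-submodule is $0$ (as in Lemma \ref{lemma_nomapsfromqabtodiscrete}), so in the decomposition $Q\oplus D\cong A'\oplus B'$ the discrete parts split off compatibly and the finite-dimensionality is inherited by subquotients, so each of $A',B'$ lands in $\mathsf{LCA}_{F,qab+fd}$.

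The main obstacle I anticipate is the bookkeeping in the vector-free step: one must be careful that $p_A(C)$ is genuinely a \emph{clopen} $\mathcal{O}$-submodule of $A$ (openness uses that projections onto direct summands are open maps, which holds since $A,B$ are topological direct summands) and that it is an honest submodule (it is the image of the $\mathcal{O}$-module $C$ under the $\mathcal{O}$-linear projection, so this is automatic). Once those points are nailed down, the argument is essentially formal, and the passage back through the identity-component splitting and the finite-dimensional summand is routine.
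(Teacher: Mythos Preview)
Your vector-free step is correct and clean: if $H=\bigcup_{n}\frac{1}{n}C$ with $C$ a compact clopen $\mathcal{O}$-submodule and $H\cong A\oplus B$ in $\mathsf{LCA}_{F}$, then $p_{A}(C)$ is a compact clopen $\mathcal{O}$-submodule of $A$ and $A=\bigcup_{n}\frac{1}{n}p_{A}(C)$, so $A$ is again (vector-free) quasi-adelic. The problem is the reduction to this case. You assert that the identity component $Q^{0}$ of a quasi-adelic block equals the vector $\mathcal{O}$-module part $\bigoplus_{\sigma\in I}\mathbb{R}_{\sigma}$, and then that direct summands of $Q^{0}$ are vector $\mathcal{O}$-modules. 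This is false. Take $Q=\mathbb{A}/F$: this is a compact $F$-vector space, hence quasi-adelic with $I=\varnothing$ and $C=H=Q$, and by Lemma~\ref{lemma_LCAFCIsConnected} it is connected, so $Q^{0}=Q\neq 0$. In general the vector-free part $H$ has identity component $H^{0}=C^{0}$, which contains the compact connected $F$-vector space $\bigcap_{n}nC$; so $Q^{0}=\bigoplus_{\sigma}\mathbb{R}_{\sigma}\oplus H^{0}$ need not be a vector $\mathcal{O}$-module, and you cannot split it off to reduce to the vector-free situation. (A minor aside: the summands $A,B$ of an idempotent splitting are closed but typically not clopen, e.g.\ $\mathbb{R}\subset\mathbb{R}^{2}$; this does not matter for the argument, but the word ``clopen'' there is incorrect.)

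The paper sidesteps this entirely. Instead of dissecting $Q$, it applies the crude decomposition Theorem~\ref{thm_StructLCAF} directly to the unknown summand $A$, obtaining $Q'\hookrightarrow A\twoheadrightarrow D$ with $Q'$ quasi-adelic and $D$ discrete. The composite $G\cong A\oplus B\twoheadrightarrow A\twoheadrightarrow D$ is a map from an object of $\mathsf{LCA}_{F,qab+fd}$ (resp.\ $\mathsf{LCA}_{F,qab}$) to a discrete object, so by Lemma~\ref{lemma_nomapsfromqabtodiscrete} its image is finite-dimensional (resp.\ zero); since the composite is epic, $D$ itself is finite-dimensional (resp.\ zero). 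Projectivity of $D$ then gives $A\cong Q'\oplus D\in\mathsf{LCA}_{F,qab+fd}$ (resp.\ $\mathsf{LCA}_{F,qab}$). This argument never needs to identify the connected component or to align the two direct-sum decompositions of $Q$, which is exactly where your approach runs into trouble. Your hands-on projection idea could likely be rescued by first invoking Proposition~\ref{prop_DecomposeQuasiAdelicBlock} to split off the compact $F$-vector space part (after which the identity component really is the vector $\mathcal{O}$-module part, by Lemma~\ref{lemma_StructVectorFreeAdelicBlock}), but that result appears later in the paper, and in any case the route via Theorem~\ref{thm_StructLCAF} and Lemma~\ref{lemma_nomapsfromqabtodiscrete} is shorter.
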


We prove this also for $\mathsf{LCA}_{F,qab+fd}$, since it causes no extra
effort to handle it as well. Nevertheless, a little below we shall show that
the latter category indeed has \textit{all} kernels.

\begin{proof}
Suppose $G\in\mathsf{LCA}_{F,qab+fd}$ and $p:G\rightarrow G$ is an idempotent,
$p^{2}=p$. We work in the bigger category $\mathsf{LCA}_{F}$. It is
quasi-abelian and thus idempotent complete. Hence, in this bigger category the
idempotents $p$ and $1-p$ have a kernel, call them $A$ and $B$, and we get a
corresponding direct sum decomposition%
\begin{equation}
G\cong A\oplus B\qquad\text{in}\qquad\mathsf{LCA}_{F}\text{.}\label{laa6}%
\end{equation}
By Theorem \ref{thm_StructLCAF} there is an exact sequence%
\begin{equation}
Q\hookrightarrow A\overset{q}{\twoheadrightarrow}D\label{laa5}%
\end{equation}
in $\mathsf{LCA}_{F}$ with $Q$ quasi-adelic and $D$ discrete. Thus, we get the
diagram%
\[%
\xymatrix{
& Q \oplus B \ar@{^{(}->}[d] \\
G \ar[r]^-{\simeq} \ar[dr]_{h} & A \oplus B \ar@{->>}[d]^{q} \\
& D
}%
\]
and by Lemma \ref{lemma_nomapsfromqabtodiscrete} the image of $h$ is
finite-dimensional. As the quotient map $q$ is an epic, it follows that $D$ is
itself a finite-dimensional discrete $F$-vector space. Hence, as $D$ is also a
projective object by Lemma \ref{Lem_InjProjInLCAF}, Equation \ref{laa5}
implies that $A$ is isomorphic to $Q\oplus D$ with $D$ finite-dimensional
discrete, so $A\in\mathsf{LCA}_{F,qab+fd}$. Now, repeat the same argument for
$B$. Thus, both sides of the isomorphism in Equation \ref{laa6} lie in
$\mathsf{LCA}_{F,qab+fd}$ and since this is a full subcategory, this
isomorphism in $\mathsf{LCA}_{F}$ is actually an isomorphism already in
$\mathsf{LCA}_{F,qab+fd}$. [If $G\in\mathsf{LCA}_{F,qab}$, Lemma
\ref{lemma_nomapsfromqabtodiscrete} implies that $h$ must be the zero map, so
in this case $D=0$. Hence, the conclusion of the proof even shows that $A=Q$,
so $A\in\mathsf{LCA}_{F,qab}$, and analogously for $B $.]
\end{proof}

A very similar argument shows that quasi-adelic blocks are closed under
cokernels in $\mathsf{LCA}_{F}$:

\begin{lemma}
\label{lemma_Fqab_closed_under_cokernels}The full subcategory $\mathsf{LCA}%
_{F,qab}$ is closed under cokernels of admissible monics in $\mathsf{LCA}_{F}
$.
\end{lemma}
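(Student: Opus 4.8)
The plan is to mimic, essentially verbatim, the argument already used for idempotents in Proposition \ref{prop_LCAqabIsIdempotentComplete}, only now starting from an arbitrary admissible monic $G'\hookrightarrow G$ with $G,G'\in\mathsf{LCA}_{F,qab}$ and showing that its cokernel $G''=G/G'$ again lies in $\mathsf{LCA}_{F,qab}$. First I would apply Theorem \ref{thm_StructLCAF} to the object $G''$ to obtain a split exact sequence $Q\hookrightarrow G''\twoheadrightarrow D$ in $\mathsf{LCA}_F$ with $Q$ quasi-adelic and $D$ discrete; thus $G''\cong Q\oplus D$. The task reduces to proving that the discrete $F$-vector space $D$ is finite-dimensional and, in fact, that $D=0$, so that $G''\cong Q$ is quasi-adelic.

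The key step is to produce the composite morphism $G\twoheadrightarrow G''\cong Q\oplus D\twoheadrightarrow D$. Since $G\in\mathsf{LCA}_{F,qab}$ and $D$ is discrete, Lemma \ref{lemma_nomapsfromqabtodiscrete}(1) forces this composite to be the zero map. But it is a composite of admissible epics (the quotient map $G\twoheadrightarrow G''$, an isomorphism, and a projection), hence itself an admissible epic; being simultaneously zero and epic, its target must vanish, i.e. $D=0$. Therefore $G''\cong Q$ is quasi-adelic, as desired.

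The only point requiring a little care is that the projection $G''\cong Q\oplus D\twoheadrightarrow D$ is genuinely an admissible epic in $\mathsf{LCA}_F$ and that the class of admissible epics is closed under composition — both of which are standard in any exact category (see \cite{MR2606234}), the latter being part of the axioms. One should also note that Lemma \ref{lemma_nomapsfromqabtodiscrete}(1) is stated for a \emph{morphism} $G\to D$, not just for admissible epics, so no further hypothesis is needed. I do not expect a real obstacle here: the entire content is the interplay between Theorem \ref{thm_StructLCAF} (which strips off the discrete part) and Lemma \ref{lemma_nomapsfromqabtodiscrete}(1) (which kills it, using that the source is quasi-adelic), exactly as in the bracketed variant at the end of the proof of Proposition \ref{prop_LCAqabIsIdempotentComplete}.
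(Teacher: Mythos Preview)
Your proposal is correct and is essentially the paper's own proof: apply Theorem \ref{thm_StructLCAF} to the cokernel $G''$, compose the quotient map $G\twoheadrightarrow G''$ with the projection to the discrete part $D$, and use Lemma \ref{lemma_nomapsfromqabtodiscrete}(1) together with the fact that this composite is an epic to conclude $D=0$. The paper's argument is identical in content (it does not even explicitly invoke the splitting $G''\cong Q\oplus D$, only the epic $G''\twoheadrightarrow D$, but this is immaterial).
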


\begin{proof}
Let $f:G^{\prime}\hookrightarrow G$ be an admissible monic with respect to the
exact structure of $\mathsf{LCA}_{F}$, but with $G^{\prime},G\in
\mathsf{LCA}_{F,qab}$. Then in $\mathsf{LCA}_{F}$ we have an exact sequence
$G^{\prime}\hookrightarrow G\twoheadrightarrow G^{\prime\prime}$ with
$G^{\prime\prime}\in\mathsf{LCA}_{F}$ and applying Theorem
\ref{thm_StructLCAF} we obtain a commutative diagram%
\[%
\xymatrix{
& & Q \ar@{^{(}->}[d] \\
G^{\prime} \ar@{^{(}->}[r] & G \ar@{->>}[r] \ar[dr]_{g} \ar@{-->}[ur]^{h}
& G^{\prime\prime} \ar@{->>}[d] \\
& & D\text{,}
}%
\]
where $Q$ is quasi-adelic and $D$ discrete. By Lemma
\ref{lemma_nomapsfromqabtodiscrete} the morphism $g$ must be the zero map, but
since it is also an epic, it follows that $D=0$. Hence, $G^{\prime\prime}\cong
Q$, and thus $G^{\prime\prime}$ is quasi-adelic.
\end{proof}

We have already seen in Example \ref{Example_LCAFqabDoesNotHaveKernels} that
the exact category $\mathsf{LCA}_{F,qab}$ does not have all kernels. We shall
prove that this issue disappears in the slightly bigger category
$\mathsf{LCA}_{F,qab+fd}$.

\begin{theorem}
[Kernel Theorem]\label{thm_KernelOfqabfds}Suppose $f:G\rightarrow G^{\prime}$
with $G\in\mathsf{LCA}_{F,qab+fd}$ and $G^{\prime}\in\mathsf{LCA}_{F}$. Then
the kernel of $f$ in $\mathsf{LCA}_{F}$ lies in the full subcategory
$\mathsf{LCA}_{F,qab+fd}$.
\end{theorem}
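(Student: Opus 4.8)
The plan is to analyze the kernel directly by means of the crude structure result. Write $K:=\ker f$ for the kernel taken in $\mathsf{LCA}_F$; since $\mathsf{LCA}_F$ is quasi-abelian this is an admissible monic $K\hookrightarrow G$. Applying Theorem \ref{thm_StructLCAF} to $K$ we fix a decomposition $K\cong Q_0\oplus D_0$ with $Q_0$ a clopen quasi-adelic block and $D_0$ a discrete $F$-vector space, so the whole statement reduces to proving $\dim_F D_0<\infty$. First I would place $D_0$ inside a controlled ambient object. Writing $G\cong Q\oplus D$ with $Q$ quasi-adelic and $D$ finite-dimensional discrete, Lemma \ref{lemma_nomapsfromqabtodiscrete}(1) forces the composite $Q_0\hookrightarrow K\hookrightarrow G\twoheadrightarrow D$ to vanish, so $Q_0\subseteq Q$ inside $G$; by the dual of \cite[Proposition 7.6]{MR2606234} the inclusion $Q_0\hookrightarrow Q$ is then an admissible monic between quasi-adelic blocks, whence $Q/Q_0$ is again quasi-adelic by Lemma \ref{lemma_Fqab_closed_under_cokernels}. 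Noether's Lemma \cite[Lemma 3.5]{MR2606234} applied to $Q_0\hookrightarrow Q\hookrightarrow G$ together with the projectivity of $D$ (Lemma \ref{Lem_InjProjInLCAF}) yields $G/Q_0\cong (Q/Q_0)\oplus D$, and the same lemma applied to $Q_0\hookrightarrow K\hookrightarrow G$ exhibits $D_0\cong K/Q_0$ as an admissible monic into $G/Q_0$. Writing the quasi-adelic block as $Q/Q_0=V\oplus H$ with $V=\bigoplus_{\sigma\in I}\mathbb{R}_\sigma$ ($I$ finite) and $H=\bigcup_{n\geq1}\tfrac1n C$ for a compact $\mathcal{O}$-submodule $C$, we have reduced everything to the following crux: a discrete $F$-vector space admissibly embedded in $V\oplus H\oplus D$ is finite-dimensional.

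For this crux I would argue as follows. Since $D_0\cap C$ is a discrete subgroup of the compact group $C$ it is finite, hence zero as $D_0$ is torsion-free; feeding this into $D_0\cap H\hookrightarrow H/C$ and using that $H/C=\bigcup_n\tfrac1nC/C$ is a torsion group gives $D_0\cap H=0$. Therefore the composite $q\colon D_0\hookrightarrow V\oplus H\oplus D\twoheadrightarrow V\oplus D$ (projection killing the summand $H$) is injective. Now set $\Lambda:=\{x\in D_0: s(x)\in C\}$, where $s\colon D_0\to H$ is the $H$-component of the embedding; the induced injection $D_0/\Lambda\hookrightarrow H/C$ into a torsion group shows $D_0/\Lambda$ is torsion, hence $\Lambda\otimes_{\mathbb{Z}}\mathbb{Q}\xrightarrow{\sim}D_0$ and $\operatorname{rk}_{\mathbb{Z}}\Lambda=\dim_{\mathbb{Q}}D_0$. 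On the other hand $\Lambda$ is discrete (a subspace of the discrete group $D_0$) and its image under the embedding lies in the closed subset $V\times C\times D$, so $\Lambda$ is a discrete, hence closed, subgroup of $V\times C\times D$; projecting off the compact factor $C$, and using compactness of $C$ to pass to a convergent subnet, one sees that $q(\Lambda)\subseteq V\oplus D$ is still discrete. As a discrete subgroup of $V\oplus D\cong\mathbb{R}^{k}\times F^{m}$ (with $k=\dim_{\mathbb{R}}V$, $m=\dim_F D$) it has finite $\mathbb{Z}$-rank: its intersection with $\mathbb{R}^{k}$ is free of rank $\leq k$, and the quotient embeds into $F^{m}\cong\mathbb{Q}^{m[F:\mathbb{Q}]}$ (as abelian group), of rank $\leq m[F:\mathbb{Q}]$. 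Since $q|_{\Lambda}$ is injective this bounds $\dim_{\mathbb{Q}}D_0=\operatorname{rk}_{\mathbb{Z}}\Lambda\leq k+m[F:\mathbb{Q}]<\infty$, so a fortiori $\dim_F D_0<\infty$ and $K\cong Q_0\oplus D_0\in\mathsf{LCA}_{F,qab+fd}$, as claimed.

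The routine bookkeeping — admissibility of the various monics, exactness of the sequences produced by Noether's Lemma, and idempotent completeness of the exact categories involved — I would dispatch exactly as in the earlier propositions. The real content, and the step I expect to be the main obstacle, is the crux: the key realization is that one must \emph{not} look at the image of all of $D_0$ in $V\oplus D$ (which need not be discrete — witness the dense embedding $\mathbb{Q}\subseteq\mathbb{R}$ lurking inside the adelic picture), but only at the ``integral'' part $\Lambda=s^{-1}(C)$, which is simultaneously of full rank in $D_0$ and, thanks to the compactness of $C$, maps to a genuinely discrete, hence finite-rank, subgroup of the euclidean-plus-discrete object $V\oplus D$. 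Once this is isolated, everything else is an assembly of the structural results already established.
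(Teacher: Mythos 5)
Your proof is correct, and it takes a genuinely different route from the one in the paper. The paper's proof works \emph{bottom-up}: it never applies the structure theorem to the kernel itself, but instead writes $G$ as an increasing union of compactly generated $\mathcal{O}$-modules $\bigoplus_\sigma\mathbb{R}_\sigma\oplus\frac1nC\oplus\frac1n\mathcal{O}^{\oplus\dim D}$, shows the kernels $K_n$ of the restricted maps satisfy $K_n=\frac1nK_1$, invokes Levin's classification of compactly generated objects of $\mathsf{LCA}_{\mathcal{O}}$ to put $K_1$ in the form $\bigoplus_\nu\mathbb{R}_\nu\oplus\bigoplus_{J\in\mathcal{I}}J\oplus\tilde C$ with $\mathcal{I}$ \emph{finite}, and then passes to the union $\hat K=\bigcup\frac1nK_1$ to read off that the discrete contribution $F^{\#\mathcal{I}}$ is finite-dimensional. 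You instead work \emph{top-down}: apply Theorem~\ref{thm_StructLCAF} directly to $K$ to get $K\cong Q_0\oplus D_0$, reduce by general nonsense (Noether's Lemma, projectivity of $D$, closure of $\mathsf{LCA}_{F,qab}$ under cokernels) to placing $D_0$ as a closed discrete subgroup of $V\oplus H\oplus D$, and then prove the key finiteness by a bare-hands rank estimate: intersecting with the compactum $C$ kills the $H$-direction up to torsion, the sublattice $\Lambda=s^{-1}(C)$ has full $\mathbb{Q}$-rank in $D_0$, and projecting a discrete subgroup off a compact factor keeps it discrete, so $q(\Lambda)$ lands discretely in $\mathbb{R}^k\times F^m$ and has $\mathbb{Z}$-rank at most $k+m[F:\mathbb{Q}]$. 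What the paper's approach buys is a completely explicit normal form for the kernel (a vector group plus finitely many copies of $F$ plus a quasi-adelic piece), obtained by leaning on Levin's heavy structural machinery; what yours buys is independence from Levin's classification of compactly generated $\mathcal{O}$-modules, at the cost of doing the projection-off-a-compact-factor argument by hand. Both the factoring through $Q$ via Lemma~\ref{lemma_nomapsfromqabtodiscrete} and the invocation of the dual of B\"uhler's Proposition~7.6 are used correctly, and the claim that a discrete subgroup of $G\times K$ with $K$ compact projects to a discrete subgroup of $G$ (closedness plus the compactness/subnet argument) is sound.
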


An equivalent formulation is that the full subcategory $\mathsf{LCA}%
_{F,qab+fd}$ is closed under subobjects.

\begin{proof}
By assumption $G$ has a presentation as $G\cong Q\oplus D$ with $Q$
quasi-adelic and $D$ a finite-dimensional discrete $F$-vector space. Write%
\[
G=\underset{Q}{\underbrace{\bigoplus_{\sigma\in I}\mathbb{R}_{\sigma}%
\oplus\bigcup_{n\geq1}\frac{1}{n}C}}\oplus\underset{D}{\underbrace
{\bigoplus_{w=1}^{\dim_{F}D}\bigcup_{n\geq1}\frac{1}{n}\mathcal{O}}}\text{,}%
\]
where $I$ is a finite list of real and complex places, $C$ a compact clopen
$\mathcal{O}$-submodule of $Q$, and in the last summand we exploit that
$\bigcup_{n\geq1}\frac{1}{n}\mathcal{O}=\mathcal{O}\otimes_{\mathbb{Z}%
}\mathbb{Q}=F$. For the first two summands we have just used the definition of
quasi-adelic blocks. For each $n\geq1$ we define%
\begin{equation}
K_{n}:=\ker\left(  \bigoplus_{\sigma\in I}\mathbb{R}_{\sigma}\oplus\frac{1}%
{n}C\oplus\bigoplus_{w=1}^{\dim_{F}D}\frac{1}{n}\mathcal{O}\overset
{f}{\longrightarrow}G^{\prime}\right)  \text{,}\label{lcwa1}%
\end{equation}
where the kernel is taken in $\mathsf{LCA}_{\mathcal{O}}$ (note that the
morphism is the restriction of $f$ to an $\mathcal{O}$-submodule, so we can
only regard it in $\mathsf{LCA}_{\mathcal{O}}$ anyway). Since the category
$\mathsf{LCA}_{\mathcal{O}}$ is quasi-abelian, Proposition
\ref{prop_LCARisQuasiAbelianExact}, this kernel exists. We claim that%
\[
K_{n}=\frac{1}{n}K_{1}\text{,}%
\]
where the multiplication with $\frac{1}{n}$ is understood as using the
$F$-vector space structure of $G$, and we refer to the equality of
$\mathcal{O}$-submodules inside $G$. The proof is trivial: Every $x\in K_{n}$
has the shape $x=r+\frac{1}{n}c$ with $r\in\bigoplus_{\sigma\in I}%
\mathbb{R}_{\sigma}$ and $c\in C\oplus\mathcal{O}^{\oplus\dim_{F}D}$. Rewrite
it as $x=\frac{1}{n}(n\cdot r+c)$, with $nr+c\in K_{1}$, so $K_{n}%
\subseteq\frac{1}{n}K_{1}$, and conversely rewrite $\frac{1}{n}(r+c)\in
\frac{1}{n}K_{1}$ as $(\frac{1}{n}r)+\frac{1}{n}c\in K_{n}$. Next, define%
\[
\hat{K}:=\ker\left(  G\overset{f}{\longrightarrow}G^{\prime}\right)
\]
in $\mathsf{LCA}_{\mathcal{O}}$ (we get the same object if we take the kernel
in $\mathsf{LCA}_{F}$ and apply the forgetful functor). Clearly $\hat{K}$ is
the union of all $K_{n}$, so by the previous observation we get%
\begin{equation}
\hat{K}=\bigcup_{n\geq1}\frac{1}{n}K_{1}\label{lcwa2}%
\end{equation}
in $\mathsf{LCA}_{\mathcal{O}}$. Next, inspecting Equation \ref{lcwa1} more
closely for $n=1$, we observe that the source of $f$,%
\[
X:=\bigoplus_{\sigma\in I}\mathbb{R}_{\sigma}\oplus C\oplus\bigoplus
_{w=1}^{\dim_{F}D}\mathcal{O}\text{,}%
\]
is compactly generated, i.e. it lies in $\mathsf{LCA}_{\mathcal{O},cg}$
\cite{obloc} (take a compact unit ball in the vector group part $U$, and a
basis $b_{1},\ldots,b_{\dim_{F}D}$ of $\mathcal{O}$; then the union of the
compact set $U\times C\times\{b_{1},\ldots,b_{\dim_{F}D}\}$ with its negative
is a compact symmetric neighbourhood of the identity which compactly generates
all of $X$). The quality to be compactly generated only depends on the
underlying LCA group. The kernel $K_{1}$ is a subobject of $X$, and thus
itself compactly generated by \cite[Theorem 2.6 (2)]{MR0215016}. Thus, by
Levin's structure result, \cite[Proposition 1]{MR0310125}, there exists an
isomorphism%
\[
K_{1}\cong\bigoplus_{\nu\in I^{\prime}}\mathbb{R}_{\nu}\oplus\bigoplus
_{J\in\mathcal{I}}J\oplus\tilde{C}\qquad\text{in}\qquad\mathsf{LCA}%
_{\mathcal{O}}\text{,}%
\]
where $I^{\prime}$ is a finite list of real and complex places, $\mathcal{I} $
is a finite(!) list of ideals of $\mathcal{O}$, and $\tilde{C}$ a compact
$\mathcal{O}$-module. From Equation \ref{lcwa2} we obtain%
\[
\hat{K}=\bigcup_{n\geq1}\frac{1}{n}K_{1}=\bigoplus_{\nu\in I^{\prime}%
}\mathbb{R}_{\nu}\oplus\bigoplus_{J\in\mathcal{I}}\left(  \bigcup_{n\geq
1}\frac{1}{n}J\right)  \oplus\bigcup_{n\geq1}\frac{1}{n}\tilde{C}\text{.}%
\]
We have $\bigcup_{n\geq1}\frac{1}{n}J=J\otimes_{\mathbb{Z}}\mathbb{Q}\cong F$,
and since $\mathcal{I}$ is a \textit{finite} list of ideals, this simplifies
to%
\[
\cong\bigoplus_{\nu\in I^{\prime}}\mathbb{R}_{\nu}\oplus F^{\#\mathcal{I}%
}\oplus H\text{,}%
\]
where $H:=\bigcup_{n\geq1}\frac{1}{n}\tilde{C}$ is clearly quasi-adelic. We
observe that $\hat{K}\in\mathsf{LCA}_{F,qab+fd}$. This finishes the proof.
\end{proof}

\begin{definition}
We write $\mathsf{LCA}_{F,C}$ for the full subcategory of $\mathsf{LCA}_{F}$
of compact $F$-vector spaces.
\end{definition}

\begin{lemma}
\label{lemma_LCAFCIsConnected}Every object in $\mathsf{LCA}_{F,C}$ is connected.
\end{lemma}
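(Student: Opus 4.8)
The plan is to deduce the topological statement (connectedness) from a purely algebraic one (divisibility) via Pontryagin duality. Let $V\in\mathsf{LCA}_{F,C}$. As an abstract abelian group $V$ is divisible: for every integer $n\geq1$ the scalar $\tfrac1n\in F$ acts, so multiplication by $n$ is bijective on the underlying additive group. Dually, by Proposition~\ref{prop_LCARisQuasiAbelianExact} the Pontryagin dual $V^{\vee}$ is a discrete $F$-vector space, hence a torsion-free abelian group. Now I would invoke the classical fact that a compact abelian group is connected if and only if its Pontryagin dual is torsion-free (equivalently, it is totally disconnected iff its dual is torsion). Since $V^{\vee}$ is torsion-free, $V$ is connected, which is the claim.

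Should one prefer to avoid citing that duality dictionary, here is the self-contained variant I would use instead. It suffices to show that $V$ has no proper open subgroup, because in a compact abelian group the identity component coincides with the intersection of all open subgroups. So let $U\subseteq V$ be an open subgroup; by compactness $[V:U]$ is finite. But $V$ is divisible, hence so is the quotient $V/U$, and a finite divisible abelian group is trivial. Therefore $U=V$, so $V$ is connected.

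I do not anticipate a genuine obstacle here — this is a soft lemma. The only thing requiring a little care is to keep the algebraic and topological layers separate: divisibility is an algebraic consequence of the $F$-module structure, while connectedness is topological, and the bridge is either the torsion-free/connected duality correspondence or the finiteness of the index of an open subgroup in a compact group. One should also note that invoking ordinary Pontryagin duality is legitimate, since the duality of Proposition~\ref{prop_LCARisQuasiAbelianExact} refines it on the level of underlying additive groups.
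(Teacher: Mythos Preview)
Your first argument is essentially identical to the paper's proof: the paper observes that $V^{\vee}$ is a discrete $F$-vector space, hence torsion-free, and then invokes the standard duality fact that a compact abelian group is connected iff its dual is torsion-free (citing \cite[Corollary~4 to Theorem~30]{MR0442141}). Your alternative self-contained argument via divisibility and open subgroups is a pleasant extra, but not needed for matching the paper.
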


\begin{proof}
Note that such an object $V$ is compact, so $V$ is connected if and only if
$V^{\vee}$ is torsion-free \cite[Corollary 4 to Theorem 30]{MR0442141}.
However, $V^{\vee}$ is a discrete $F$-vector space and thus clearly
torsion-free. Hence, $V$ is connected.
\end{proof}

\begin{example}
This differs drastically from the category of compact $\mathcal{O}$-modules
$\mathsf{LCA}_{\mathcal{O},C}$. For example, for $F=\mathbb{Q}$ the latter
contains $\mathbb{Z}_{p}$, which is neither an injective object, nor
connected. In fact it is totally disconnected.
\end{example}

Next, the quasi-adelic blocks have a further decomposition:

\begin{proposition}
\label{prop_DecomposeQuasiAdelicBlock}Let $Q$ be a quasi-adelic block.

\begin{enumerate}
\item Then $Q$ has a canonical decomposition%
\[
Q\cong V\oplus A\qquad\text{in}\qquad\mathsf{LCA}_{F}\text{,}%
\]
where $V$ is a compact $F$-vector space and $A$ an adelic block. Concretely,
$V=\bigcap_{n\geq1}nC$, where $C$ is chosen as in Definition
\ref{def_QuasiAdelicBlock}.

\item Moreover, there is also a canonical decompositon $A\cong\bigoplus
_{\sigma\in I}\mathbb{R}_{\sigma}\oplus A^{\prime}$, where $I$ is a finite
list of real and complex places, and $A^{\prime}$ is a vector-free adelic block.
\end{enumerate}
\end{proposition}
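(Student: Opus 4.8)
The plan is to prove part (1) first, then derive part (2) as a further splitting of the summand $A$. Recall from Definition~\ref{def_QuasiAdelicBlock} that $Q\cong\bigoplus_{\sigma\in I}\mathbb{R}_{\sigma}\oplus H$ with $H=\bigcup_{n\geq1}\tfrac{1}{n}C$ for some compact clopen $\mathcal{O}$-submodule $C$. Since $\bigoplus_{\sigma\in I}\mathbb{R}_{\sigma}$ is already a vector $\mathcal{O}$-module, the decomposition of part (1) will really live inside $H$: I would set $V:=\bigcap_{n\geq1}nC$ and show $H\cong V\oplus A$ with $A$ a vector-free adelic block. First I would verify that $V$ is a closed $\mathcal{O}$-submodule of $C$ (intersection of closed sets), hence compact, and that it is an $F$-subspace of $H$: given $x\in V$ and $m\geq1$, for every $n$ write $x=n\cdot y_n$ with $y_n\in C$; one checks $\tfrac{1}{m}x$ still lies in every $nC$ by reindexing, using that multiplication by units of $F$ is a homeomorphism of $H$ (as established in the proof of Theorem~\ref{thm_StructLCAF}). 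So $V$ is a compact $F$-vector subspace; by Lemma~\ref{lemma_LCAFCIsConnected} it is automatically connected, and by Lemma~\ref{Lem_InjProjInLCAF}(3) it is an injective object of $\mathsf{LCA}_F$.

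Next I would produce the complement. Because $V\hookrightarrow H$ is an admissible monic (it is the kernel of a suitable map, or simply a closed subgroup inclusion with compact source — note $V$ is clopen-free but still admissible by quasi-abelianness of $\mathsf{LCA}_F$), and $V$ is injective, the inclusion splits: $H\cong V\oplus A$ where $A:=H/V$. It remains to check $A$ is a vector-free adelic block. Its image $\overline{C}$ of $C$ is a compact clopen $\mathcal{O}$-submodule of $A$ (continuous image of a compact clopen, and clopen because $V\subseteq C$ so the quotient map restricted to $C$ is open onto $\overline C$, which is open in $A$ since $C$ is open in $H$). One then identifies $A=\bigcup_{n\geq1}\tfrac{1}{n}\overline C$, using that the quotient map commutes with multiplication by $\tfrac{1}{n}$. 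Finally, the adelic condition $\bigcap_{n\geq1}n\overline C=0$: an element of $\bigcap_n n\overline C$ lifts, modulo $V$, into $\bigcap_n nC$ after adjusting by elements of $V\subseteq\bigcap_n nC$; chasing this shows the class is zero. Since $I$ was taken empty for the $H$-part, $A$ is vector-free. This proves part (1), with $Q\cong(\bigoplus_{\sigma}\mathbb{R}_\sigma\oplus V)\oplus A$; absorbing $\bigoplus_\sigma\mathbb{R}_\sigma$ either into the vector part or keeping it separate gives the stated form (here the cleanest bookkeeping is to let the ``$V$'' of the statement be the compact $F$-vector space $\bigcap_{n\geq1}nC$ exactly as asserted, and to note $\bigoplus_\sigma \mathbb R_\sigma$ is reassigned to $A$).

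For part (2): starting from an adelic block $A$, by Definition~\ref{def_QuasiAdelicBlock} write $A\cong\bigoplus_{\sigma\in I}\mathbb{R}_\sigma\oplus A''$ where $A''=\bigcup_{n\geq1}\tfrac{1}{n}C'$ with $\bigcap_n nC'=0$; this is literally the definition once we observe that the ``vector part'' of any adelic block can be split off — the summand $\bigoplus_\sigma\mathbb R_\sigma$ is a direct summand by construction, and $A''$ inherits the condition $\bigcap_n nC'=0$, so it is vector-free. Setting $A':=A''$ finishes part (2). The canonicity claims — that $V=\bigcap_{n\geq1}nC$ is independent of the choice of $C$ and that the decompositions are functorial — I would justify by characterizing $V$ intrinsically: it is the maximal compact $F$-subspace of $Q$ (equivalently, the connected component, or the sum of all compact subobjects), which pins it down without reference to $C$; similarly the vector part $\bigoplus_\sigma\mathbb{R}_\sigma$ in (2) is the ``archimedean part'' and is canonical by an analogous maximality argument.

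The main obstacle I anticipate is the topological bookkeeping around the quotient $A=H/V$: one must be careful that $\overline C$ is genuinely \emph{clopen} in $A$ (openness is easy from $C$ open; closedness needs that $A$ is Hausdorff and $\overline C$ compact, which is fine) and that the identity $A=\bigcup_n\tfrac1n\overline C$ together with $\bigcap_n n\overline C=0$ both survive passage to the quotient — the latter is the delicate point, since $\bigcap$ does not commute with quotients in general, and one genuinely uses $V=\bigcap_n nC\subseteq C$ to make the lifting argument close up. Everything else is a direct application of the injectivity of compact $F$-vector spaces (Lemma~\ref{Lem_InjProjInLCAF}) and the homeomorphism property of scalar multiplication by $F^\times$ on objects of $\mathsf{LCA}_F$.
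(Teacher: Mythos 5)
Your proof is correct and follows essentially the same strategy as the paper: define $V:=\bigcap_{n\ge 1}nC$, show it is a compact $F$-subspace, split it off using that compact $F$-vector spaces are injective (Lemma~\ref{Lem_InjProjInLCAF}), and verify the quotient is adelic by the lifting argument that shows $\bigcap_n n\overline C=0$. You work directly inside $H$ rather than doing the paper's induction over the number $N=\#I$ of archimedean summands, which is a mild streamlining. One small slip in your canonicity remark: $V$ is \emph{not} the connected component of $Q$ — the connected component is $V\oplus\bigoplus_{\sigma}\mathbb{R}_{\sigma}$, since the archimedean summands of the adelic block are also connected; your other characterization of $V$ as the maximal compact $F$-subspace is the one that works (no vector-free adelic block has a nonzero compact $F$-subspace, and real vector groups have no nontrivial compact subgroups).
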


\begin{proof}
As $Q$ is a quasi-adelic block, following the notation of Definition
\ref{def_QuasiAdelicBlock} we may write it as $Q=\bigoplus_{\sigma\in
I}\mathbb{R}_{\sigma}\oplus H$ for $I$ a finite list of real and complex
places. Define $N:=\#I$.\newline\textit{(Step 1)\ }We first prove the claim
for $N=0$: Define $V:=\bigcap_{n\geq1}nC$. As $C$ is compact, each $nC$ is
compact, and thus $V $ is a closed subgroup. We have $V\subseteq C$, so it is
a closed subset of a compactum, thus itself compact. Each $v\in V$ can be
written as $v=n\cdot c_{n}$ for all $n\geq1$ and $c_{n}\in C$ by assumption,
and so for all $\alpha\in\mathcal{O}$, $\alpha v=n\cdot(\alpha c_{n})$ with
$\alpha c_{n}\in C$, i.e. $V$ is a closed $\mathcal{O}$-submodule. Similarly,
we get $\frac{1}{m}v=nc_{nm}$ for all $m\geq1$, so it is a closed
$F$-submodule of $Q$. Thus, we get an exact sequence $V\hookrightarrow
Q\overset{q}{\twoheadrightarrow}A$ in $\mathsf{LCA}_{F}$ and it remains to
show that $A$ is an adelic block. As the quotient map $q$ is surjective and
$Q=\bigcup_{n\geq1}\frac{1}{n}C$, we get
\[
A=\bigcup_{n\geq1}\frac{1}{n}q(C)\text{.}%
\]
Moreover, $q(C)$ is also compact and since quotient maps are open, $q(C)$ is
also clopen. Finally, let $x\in\bigcap_{n\geq1}nq(C)$. For a lift $\tilde{x}$
of $x$ to $Q$ this means that for all $n\geq1$ there exists $c_{n}\in C$ and
$v_{n}\in V$ such that $\tilde{x}=nc_{n}+v_{n}$. For all $v_{n}\in V$ there
exists $w_{n}\in C$ such that $v_{n}=nw_{n}$, hence $\tilde{x}=n(c_{n}+w_{n})$
with $c_{n},w_{n}\in C$. Thus, $\tilde{x}\in V$, i.e. $x=0$ in $A$. It follows
that $A$ is an adelic block.\newline\textit{(Step 2) }It remains to deal with
the cases $N\geq1$. We do this by induction over $N$. Suppose all cases with
$\#I<N$ are settled. Since $N\geq1$, $Q$ has a summand of the shape
$\mathbb{R}_{\nu}$, say $Q=\tilde{Q}\oplus\mathbb{R}_{\nu}$, and by induction
hypothesis, we have a decomposition $\tilde{V}\hookrightarrow\tilde
{Q}\twoheadrightarrow\tilde{A} $ as in our claim and thus%
\[
\tilde{V}\hookrightarrow\tilde{Q}\oplus\mathbb{R}_{\nu}\twoheadrightarrow
\tilde{A}\oplus\mathbb{R}_{\nu}%
\]
meets the requirements of our claim, and moreover $\tilde{V}=\bigcap_{n\geq
1}nC=V$ did not change.\newline\textit{(Step 3)} Thus, we obtain a sequence
$V\hookrightarrow Q\overset{q}{\twoheadrightarrow}A$ with $A$ adelic and $V$ a
compact $F$-vector space. By Lemma \ref{Lem_InjProjInLCAF} the object $V$ is
injective, so the admissible monic admits a section. Hence, we get an
isomorphism $Q\cong V\oplus A$ as in our claim.\newline\textit{(Step 4)}\ In
order to obtain the alternative presentation, note that all vector
$\mathcal{O}$-module summands in the adelic block $A$ are projective objects
(Lemma \ref{Lem_InjProjInLCAF}), so the surjection onto them splits, and hence
they are direct summands of $A$.
\end{proof}

Next, we recall a classical concept of locally compact abelian groups. Its
variant for $\mathcal{O}$-modules was introduced by Levin.

\begin{definition}
[\cite{MR0310125}]\label{def_TopTors}$G$ is a\emph{\ topological torsion}
$\mathcal{O}$-module if it is the union of all its compact $\mathcal{O}%
$-submodules, and the intersection of all its compact $\mathcal{O}$-submodules
is trivial.
\end{definition}

We can relate this property to adelic blocks.

\begin{lemma}
\label{lemma_StructVectorFreeAdelicBlock}Suppose $G\in\mathsf{LCA}_{F,ab}$ is
a vector-free adelic block. Then the following holds:

\begin{enumerate}
\item $G$ and $G^{\vee}$ are topological torsion $\mathcal{O}$-modules.

\item $G$ and $G^{\vee}$ are totally disconnected.
\end{enumerate}
\end{lemma}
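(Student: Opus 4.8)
The plan is to prove Lemma \ref{lemma_StructVectorFreeAdelicBlock} by unravelling the definition of a vector-free adelic block and then invoking the structure theory of $\mathsf{LCA}_{\mathcal{O}}$ from \cite{obloc} and Pontryagin duality. Since $G$ is vector-free, Definition \ref{def_QuasiAdelicBlock} gives a compact clopen $\mathcal{O}$-submodule $C \subseteq G$ with $G = \bigcup_{n\geq 1}\frac{1}{n}C$ and, because $G$ is moreover an \emph{adelic} block, $C$ can be chosen so that $\bigcap_{n\geq 1} nC = 0$.

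For part (1), I would argue as follows. Every $\frac{1}{n}C$ is a compact clopen $\mathcal{O}$-submodule of $G$ (each element of $F^{\times}$ acts as a homeomorphism, as in the proof of Theorem \ref{thm_StructLCAF}), so $G$ is the union of compact $\mathcal{O}$-submodules. For the intersection condition: if $K \subseteq G$ is any compact $\mathcal{O}$-submodule, then since $C$ is open, $K \cap C$ has finite index in $K$, so $K \subseteq \frac{1}{m}C$ for a suitable $m$ (using that $K$ is an $\mathcal{O}$-module and that multiplication by integers eventually carries a set of coset representatives into $C$); conversely one wants to show $\bigcap_{K} K = 0$. An element $x$ in the intersection of all compact $\mathcal{O}$-submodules lies in particular in $\overline{\mathcal{O}x}$ — but more to the point, I would show directly that for each $n$, the submodule $nC$ is compact, hence contains $\bigcap_K K$, forcing $\bigcap_K K \subseteq \bigcap_n nC = 0$. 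Thus $G$ is a topological torsion $\mathcal{O}$-module. For $G^{\vee}$: under Pontryagin duality (Proposition \ref{prop_LCARisQuasiAbelianExact}) the compact clopen submodule $C \hookrightarrow G$ dualizes to an admissible epic $G^{\vee} \twoheadrightarrow C^{\vee}$ with discrete cokernel's dual, i.e. an open compact submodule $(G/C)^{\vee} \hookrightarrow G^{\vee}$ — wait, more carefully: $C$ compact clopen means $C^{\vee}$ is discrete and $G^{\vee}/C^{\perp} \cong C^{\vee}$; the annihilator $C^{\perp} = (G/C)^{\vee}$ is compact open in $G^{\vee}$ since $G/C$ is discrete (as $C$ is open). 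So $G^{\vee}$ also has a compact open submodule, and the relations $G = \bigcup \frac{1}{n}C$, $\bigcap nC = 0$ dualize to the corresponding relations making $G^{\vee}$ a topological torsion $\mathcal{O}$-module; I would spell out that $(\frac1n C)^{\perp} = n\cdot C^{\perp}$ and $(nC)^{\perp} = \frac1n C^{\perp}$ under the $F$-module structure, so that $\bigcup_n \frac1n C^{\perp} = (\bigcap_n nC)^{\perp} = 0^{\perp} = G^{\vee}$ and $\bigcap_n nC^{\perp} = (\bigcup_n \frac1n C)^{\perp} = G^{\perp} = 0$.

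For part (2), total disconnectedness of $G$: the connected component of the identity $G_0$ is a closed connected subgroup; since $C$ is open, $G_0 \subseteq C$, so $G_0$ is a compact connected $\mathcal{O}$-module. But $G$ has no nonzero compact $F$-vector subspace and no nonzero divisible compact part — more directly, $G_0$ being connected and compact, its dual is torsion-free, yet $G_0$ as a closed $\mathcal{O}$-submodule of the topological torsion module $G$ must satisfy $G_0 \subseteq \bigcap_K K = 0$ by part (1) (it is itself compact!). Hence $G_0 = 0$ and $G$ is totally disconnected. Applying the same reasoning to $G^{\vee}$, which is again a vector-free adelic block's dual and a topological torsion $\mathcal{O}$-module by part (1), gives that $G^{\vee}$ is totally disconnected as well.

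The main obstacle I anticipate is the bookkeeping in part (1): showing that \emph{every} compact $\mathcal{O}$-submodule $K$ of $G$ is contained in some $\frac{1}{m}C$ (so that the union $\bigcup_n \frac1n C$ really exhausts all compact submodules and the intersection over \emph{all} compact submodules collapses to $\bigcap_n nC$), rather than just handling the cofinal family $\{\frac1n C\}$. This requires knowing that $K \cap C$ is open in $K$ and of finite index, and that the finitely many coset representatives of $K/(K\cap C)$ are torsion modulo $C$ in the appropriate sense — which follows because $K$ is compact and $G/C$ is discrete, so the image of $K$ in $G/C$ is finite, hence annihilated by some integer $N$, giving $NK \subseteq C$ and thus $K \subseteq \frac1N C$. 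Once that is in hand, everything else is a routine transport through Pontryagin duality.
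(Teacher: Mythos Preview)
Your overall strategy is sound and more self-contained than the paper's. The paper verifies that $G$ is topological torsion directly from the families $\{\tfrac{1}{n}C\}$ and $\{nC\}$, just as you do, but then simply invokes Robertson's results \cite[(3.15) Theorem, (3.16) Corollary]{MR0217211} to conclude that $G$ and $G^{\vee}$ are totally disconnected and that $G^{\vee}$ is again topological torsion. Your explicit Pontryagin-duality computation with $C^{\perp}$ for the $G^{\vee}$ part of (1), and your connected-component argument for (2), avoid that external reference; this buys self-containment at the cost of a bit more bookkeeping.

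There is, however, a genuine gap in your argument for (2). You write that ``$G_0 \subseteq \bigcap_K K = 0$ by part (1) (it is itself compact!)''. Compactness of $G_0$ is the wrong property: being \emph{one} of the compact $\mathcal{O}$-submodules only yields $\bigcap_K K \subseteq G_0$, the reverse containment. What you need is that each $nC$ is \emph{open} (multiplication by $n$ is a homeomorphism on the $F$-vector space $G$), so that the connected set $G_0$ lies in every clopen $nC$, whence $G_0 \subseteq \bigcap_n nC = 0$. This is exactly why the paper's own proof stresses that $\bigcap_n nC = 0$ exhibits a family of \emph{open} submodules with trivial intersection. (Relatedly, Definition~\ref{def_TopTors} as printed---``the intersection of all its \emph{compact} $\mathcal{O}$-submodules is trivial''---is vacuous, since $\{0\}$ qualifies; the operative condition, visible in the paper's proof, concerns open submodules.)

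Finally, your anticipated ``main obstacle'' is not one. Showing that every compact $\mathcal{O}$-submodule $K$ sits inside some $\tfrac{1}{m}C$ is unnecessary here: for the union condition it already suffices that $G=\bigcup_n \tfrac{1}{n}C$ with each $\tfrac{1}{n}C$ compact, and for the intersection you only need that each $nC$ is among the allowable submodules. Your cofinality argument is correct, though, and reappears later in the paper as Lemma~\ref{lemma_LatticeSys1}.
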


\begin{proof}
By definition we have a compact clopen $\mathcal{O}$-submodule $C$ in $G$. As
$G$ is a continuous $F$-vector space, all elements of $F^{\times}$ act as
homeomorphisms on $G$. Thus, all $\mathcal{O}$-submodules $\frac{1}{n}C$ are
also clopen and compact. It follows that the union of all compact
$\mathcal{O}$-submodules is all of $G$. Moreover, $\bigcap_{n\geq1}nC=0$
implies that there is a collection of open submodules which intersect
trivially, thus the totality of all open submodules intersects trivially.
Hence, $G$ is topologically torsion in the sense of Definition
\ref{def_TopTors}. The underlying LCA group of a topological torsion module is
a topological torsion group in the sense that $\lim_{n\rightarrow\infty}n!g=0
$ for all $g\in G$. By a theorem of Robertson this implies that both $G$ and
$G^{\vee}$ are totally disconnected, \cite[(3.15)\ Theorem]{MR0217211} and
\cite[(3.16)\ Corollary]{MR0217211} implies that $G^{\vee}$ is also
topological torsion. This proves (1) and (2).
\end{proof}

Later, we shall prove the converse implication as well, see Proposition
\ref{prop_TopTorsionIsVectorFreeAdelicBlock}.

\begin{example}
We had shown in Proposition \ref{prop_LCAFqab_leftfilt} that the category of
quasi-adelic blocks is left filtering in $\mathsf{LCA}_{F}$. Adelic blocks are
\emph{not} left filtering. To this end, consider the morphism $f:\mathbb{A}%
\twoheadrightarrow\mathbb{A}/F$. The ad\`{e}les are an adelic block, so if
adelic blocks $\mathsf{LCA}_{F,ab}$ are left filtering, then there exists a
factorization%
\[
\mathbb{A}\overset{a}{\longrightarrow}A\hookrightarrow\mathbb{A}/F\text{,}%
\]
where $A$ is an adelic block. However, $\mathbb{A}/F$ is a compact $F$-vector
space, so all closed $F$-vector subspaces are themselves compact $F$-vector
spaces. Hence, the adelic block $A$ is vector-free and thus by Lemma
\ref{lemma_StructVectorFreeAdelicBlock} we learn that $A$ is totally
disconnected. It follows that the arrow $a$ maps the connected component
$\bigoplus_{\sigma\in I}\mathbb{R}_{\sigma}$ of the ad\`{e}les to zero, and
this contradicts the fact that the kernel of the composition $\ker
(\mathbb{A}\twoheadrightarrow\mathbb{A}/F)=F$ is much smaller.
\end{example}

\begin{proposition}
\label{prop_LCAFCLeftFilteringInLCAFQab}The full subcategory $\mathsf{LCA}%
_{F,C}$ is left $s$-filtering in $\mathsf{LCA}_{F,qab}$.
\end{proposition}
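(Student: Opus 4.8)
The plan is to verify the two defining properties of a left $s$-filtering subcategory---left filtering and left special---directly, using the structural results already established for $\mathsf{LCA}_{F,C}$ and $\mathsf{LCA}_{F,qab}$. Recall from Lemma \ref{lemma_LCAFCIsConnected} that every object of $\mathsf{LCA}_{F,C}$ is connected and from Lemma \ref{Lem_InjProjInLCAF} that it is injective in $\mathsf{LCA}_{F}$, and from Proposition \ref{prop_DecomposeQuasiAdelicBlock} that every quasi-adelic block $Q$ decomposes canonically as $Q\cong V\oplus A$ with $V=\bigcap_{n\ge 1}nC$ a compact $F$-vector space and $A$ an adelic block, where moreover $A\cong\bigoplus_{\sigma\in I}\mathbb{R}_\sigma\oplus A'$ with $A'$ a vector-free adelic block that is totally disconnected by Lemma \ref{lemma_StructVectorFreeAdelicBlock}. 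Thus the connected component of the identity of $Q$ is precisely $V\oplus\bigoplus_{\sigma\in I}\mathbb{R}_\sigma$, and $V$ is the maximal \emph{compact} connected $F$-subspace of $Q$.

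For \textbf{left filtering}, let $f:W\to Q$ be a morphism in $\mathsf{LCA}_{F,qab}$ with $W\in\mathsf{LCA}_{F,C}$. Since $W$ is compact, its image $f(W)$ is a compact $F$-subspace of $Q$; being connected (as a continuous image of a connected space), $f(W)$ lands in the connected component $V\oplus\bigoplus_{\sigma\in I}\mathbb{R}_\sigma$ of $Q$. But $f(W)$ is compact, so it lands in the maximal compact connected subspace, namely $V$. Hence $f$ factors through the admissible monic $V\hookrightarrow Q$ with $V\in\mathsf{LCA}_{F,C}$, which is exactly the required factorization.

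For \textbf{left special}, let $Z\in\mathsf{LCA}_{F,C}$ and let $G\twoheadrightarrow Z$ be an admissible epic in $\mathsf{LCA}_{F,qab}$. Since $Z$ is injective in $\mathsf{LCA}_{F}$ (Lemma \ref{Lem_InjProjInLCAF}), the admissible epic splits, giving $G\cong F_0\oplus Z$ for some $F_0\in\mathsf{LCA}_{F}$; by Theorem \ref{thm_KernelOfqabfds} (or directly by closure of $\mathsf{LCA}_{F,qab}$ under summands, which follows from idempotent completeness, Proposition \ref{prop_LCAqabIsIdempotentComplete}) we get $F_0\in\mathsf{LCA}_{F,qab}$. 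We then take the top row to be $F_0\hookrightarrow F_0\oplus Z\twoheadrightarrow Z$ with the identity on $Z$, and the vertical maps the inclusion $F_0\hookrightarrow G$, the given isomorphism $G\cong F_0\oplus Z$ composed appropriately, and $\mathrm{id}_Z$. This is not quite in the shape demanded by the definition, so instead I would argue as follows: the split mono $Z\hookrightarrow G$ together with the surjection $G\twoheadrightarrow Z$ already exhibits $G\cong F_0\oplus Z$, and we take the diagram with top row $F_0\hookrightarrow G\twoheadrightarrow Z$ (the obvious maps through the splitting) and bottom row the same, with vertical identities except the left one; since $F_0\in\mathsf{LCA}_{F,qab}$ and $Z\in\mathsf{LCA}_{F,C}$, both top-row objects other than $G$ lie in $\mathsf{LCA}_{F,C}$---wait, $F_0$ need not be compact. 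The correct top row has objects in $\mathsf{LCA}_{F,C}$: here we should instead use that $Z$ itself is injective, so taking $X=0$, $Y=Z$, the split section $Z\hookrightarrow G$ and $\mathrm{id}:Z\twoheadrightarrow Z$ with the left two vertical maps zero provides the diagram $0\hookrightarrow Z\twoheadrightarrow Z$ over $F\hookrightarrow G\twoheadrightarrow Z$ after replacing the middle row's mono by the kernel of $G\twoheadrightarrow Z$; concretely $F=\ker(G\twoheadrightarrow Z)$, and we need only check $0\to Z$ maps compatibly, which it does.

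The \textbf{main obstacle} is the bookkeeping in the left special axiom: one must produce a diagram whose \emph{entire} top row consists of objects of $\mathsf{LCA}_{F,C}$, and $\ker(G\twoheadrightarrow Z)$ generally is \emph{not} a compact $F$-vector space even when $G$ is a quasi-adelic block. The resolution is that injectivity of $Z$ forces the epic to split, so one may legitimately take the top row to be $0\hookrightarrow Z\xrightarrow{\mathrm{id}} Z$ (all objects compact $F$-vector spaces, hence in $\mathsf{LCA}_{F,C}$), with the middle vertical map the splitting $s:Z\hookrightarrow G$ and the left vertical map $0\to\ker(G\twoheadrightarrow Z)$; commutativity of the right square is $\mathrm{id}_Z=\mathrm{id}_Z$, and commutativity of the left square is trivial. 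This is the one place where the injectivity established in Lemma \ref{Lem_InjProjInLCAF} is genuinely essential, and it is precisely what makes $\mathsf{LCA}_{F,C}$ behave so much better here than the compact $\mathcal{O}$-modules do inside $\mathsf{LCA}_{\mathcal{O}}$.
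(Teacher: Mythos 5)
Your left-filtering argument is fine (it factors through the maximal compact summand $V$ of $Q$ rather than through $\operatorname{im}(f)$ as the paper does; both work, the paper's is shorter and doesn't even require $Q$ to be quasi-adelic). But the left-special argument contains a genuine error.

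You write ``Since $Z$ is injective in $\mathsf{LCA}_F$ (Lemma \ref{Lem_InjProjInLCAF}), the admissible epic splits.'' This confuses injectivity with projectivity: an injective object splits admissible monics \emph{out of} it, while splitting admissible epics \emph{onto} it is what projectivity gives. Compact $F$-vector spaces are explicitly \emph{not} projective in $\mathsf{LCA}_F$ --- the paper gives the ad\`{e}le sequence $F\hookrightarrow\mathbb{A}\twoheadrightarrow\mathbb{A}/F$ as a counterexample, with $\mathbb{A}/F$ compact and the sequence non-split. So your claimed splitting does not follow from Lemma \ref{Lem_InjProjInLCAF}.

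The intended conclusion --- that any admissible epic $G\twoheadrightarrow Z$ in $\mathsf{LCA}_{F,qab}$ with $Z$ compact does split --- happens to be true, but it is a substantial fact, not a one-liner. The missing content is precisely the bulk of the paper's Step 2: decompose $G\cong V\oplus A$ via Proposition \ref{prop_DecomposeQuasiAdelicBlock}, show via the totally-disconnected-versus-connected argument (Lemmas \ref{lemma_LCAFCIsConnected} and \ref{lemma_StructVectorFreeAdelicBlock}) that $C/\operatorname{im}(h)=0$, i.e.\ that the restriction $h:V\twoheadrightarrow Z$ is already surjective. Only after that is established can one deduce a splitting (using that $\mathsf{LCA}_{F,C}$, being dual to $\mathsf{Vect}_F$, is semisimple), or alternatively just take $X:=\ker(h)$ and write the diagram with top row $X\hookrightarrow V\twoheadrightarrow Z$ as the paper does. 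By skipping this argument entirely, your proof is circular at the crucial step.
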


\begin{proof}
\textit{(Step 1)}\ It is clear that $\mathsf{LCA}_{F,C}$ is left filtering:
Given any $f:C\rightarrow G$ with $C\in\mathsf{LCA}_{F,C}$ and $G\in
\mathsf{LCA}_{F}$, the set-theoretic image of the compactum $C$ is again
compact, and thus necessarily closed in $G$ since the latter in Hausdorff.
This yields the required factorization%
\[%
\xymatrix{
& \operatorname{im}(f) \ar@{^{(}->}[d] \\
C \ar[r]_{f} \ar[ur] & G\text{.}
}%
\]
\textit{(Step 2) }Next, we show that the inclusion of categories is left
special. Suppose $G\twoheadrightarrow C$ is an admissible epic in
$\mathsf{LCA}_{F,qab}$ with $C\in\mathsf{LCA}_{F,C}$. Proposition
\ref{prop_DecomposeQuasiAdelicBlock} produces a commutative diagram, as
depicted below on the left%
\[%
\xymatrix{
V \ar@{^{(}->}[d] \ar[dr]^{h} \\
G \ar@{->>}[r] \ar@{->>}[d] & C \\
A
}%
\qquad\qquad\qquad\qquad%
\xymatrix{
V \ar@{^{(}->}[d] \ar[dr]^{0} \\
G \ar@{->>}[r] \ar@{->>}[d] & C/{\operatorname{im}(h)} \\
A \ar@{-->>}[ur]_{w}
}%
\]
with $V$ a compact $F$-vector space and $A$ an adelic block. As $V$ is
compact, its image under $h$ is closed and we can quotient it out, giving the
new diagram depicted on the right. The dashed arrow $w$ exists by the
universal property of cokernels. Moreover, it is an admissible epic in
$\mathsf{LCA}_{F,qab}$ by \cite[Proposition 7.6]{MR2606234}. To apply the
latter, we need to know that $\mathsf{LCA}_{F,qab}$ is idempotent complete,
which we had shown in Proposition \ref{prop_LCAqabIsIdempotentComplete}. It
follows that in $\mathsf{LCA}_{F,qab}$ there exists an exact sequence%
\[
K\hookrightarrow A\overset{w}{\twoheadrightarrow}C/\operatorname*{im}%
(h)\text{,}%
\]
where $K\in\mathsf{LCA}_{F,qab}$ is the kernel of $w$. As $A$ is an adelic
block, we can present it as $\bigoplus_{\sigma}\mathbb{R}_{\sigma}%
\oplus\bigcup_{n\geq1}\frac{1}{n}\tilde{C}$, where $\sigma$ runs through a
finite list of real and complex places, while $\tilde{C}$ is a compact clopen
$\mathcal{O}$-submodule of $A$ such that $\bigcap_{n\geq1}n\tilde{C}=0$. In
the quasi-abelian category $\mathsf{LCA}_{\mathcal{O}}$ we may define
\[
K_{n}:=\ker\left(  \bigoplus_{\sigma}\mathbb{R}_{\sigma}\oplus\frac{1}%
{n}\tilde{C}\overset{w}{\twoheadrightarrow}C/\operatorname*{im}(h)\right)
\text{.}%
\]
Then $K=\bigcup_{n\geq1}K_{n}$, and moreover $K_{n}=\frac{1}{n}K_{1}$ (this is
precisely as in the proof of Theorem \ref{thm_KernelOfqabfds}). As
$\bigoplus_{\sigma}\mathbb{R}_{\sigma}\oplus\frac{1}{n}\tilde{C}$ is compactly
generated and $K_{1}$ is a closed subgroup, $K_{1}$ is also a compactly
generated $\mathcal{O}$-module, \cite[Theorem 2.6 (2)]{MR0215016}. As in the
proof of Theorem \ref{thm_KernelOfqabfds} we obtain an isomorphism%
\begin{equation}
K\cong\bigoplus_{\nu\in I^{\prime}}\mathbb{R}_{\nu}\oplus F^{N}\oplus
H\text{,}\label{lpp5}%
\end{equation}
where $H:=\bigcup_{n\geq1}\frac{1}{n}\widehat{C}$ is quasi-adelic (for
$\widehat{C}$ some compact clopen $\mathcal{O}$-submodule, see the cited
proof), $N\geq0$ some integer and $I^{\prime}$ a finite list of real and
complex places. Firstly, we claim that $N=0$. To see this, consider the
projector $\pi:K\twoheadrightarrow F^{N}$ on the respective summand. As
$K\in\mathsf{LCA}_{F,qab}$, the morphism $\pi$ maps from a quasi-adelic block
to something discrete. Hence, $\pi=0$ by Lemma
\ref{lemma_nomapsfromqabtodiscrete}. Since $\pi$ was surjective, this forces
$N=0$. Furthermore, we claim that $\bigcap_{n\geq1}n\widehat{C}=0$. Suppose
not. Then $K$ has a compact $F$-vector space summand (by Proposition
\ref{prop_DecomposeQuasiAdelicBlock}), which is connected by Lemma
\ref{lemma_LCAFCIsConnected}. Thus, under the inclusion%
\[
K\hookrightarrow A=\bigoplus_{\sigma}\mathbb{R}_{\sigma}\oplus\bigcup_{n\geq
1}\frac{1}{n}\tilde{C}%
\]
this summand must map to $\bigoplus_{\sigma}\mathbb{R}_{\sigma}$ (which is the
conneced component of the image since the latter is an adelic block, so the
vector-free summand is totally disconnected by Lemma
\ref{lemma_StructVectorFreeAdelicBlock}). However, as this summand is also
compact, its image in $\bigoplus_{\sigma}\mathbb{R}_{\sigma}$ must be trivial.
Being an injection, this means that no such summand can exist, proving our
claim. It follows that $K$ is an adelic block. Using the same connectedness
argument, $\bigoplus_{\nu\in I^{\prime}}\mathbb{R}_{\nu}$ in $K$ (as in
Equation \ref{lpp5}), being connected, can only map to $\bigoplus_{\sigma
}\mathbb{R}_{\sigma}$. However, $K\hookrightarrow A\overset{w}%
{\twoheadrightarrow}C/\operatorname*{im}(h)$ is an exact sequence and the
quotient is compact, which forces by a simple dimension argument that the map
of connected components $\bigoplus_{\nu}\mathbb{R}_{\nu}\rightarrow
\bigoplus_{\sigma}\mathbb{R}_{\sigma}$ must be an isomorphism, for otherwise
the quotient would be non-compact. It follows that%
\[
\bigcup_{n\geq1}\frac{1}{n}\tilde{C}%
\]
alone surjects onto $C/\operatorname*{im}(h)$. However, being a vector-free
adelic block, this is totally disconnected (again Lemma
\ref{lemma_StructVectorFreeAdelicBlock}), and all quotient groups of totally
disconnected groups are totally disconnected.\newline The category of compact
$F$-vector spaces is Pontryagin dual to discrete $F$-vector spaces, and thus
an abelian category: As $C/\operatorname*{im}(h)$ is connected (Lemma
\ref{lemma_LCAFCIsConnected}), $h$ is surjective and then $h $ must be an
admissible epic. Moreover, the kernel $X:=\ker(h)$ exists in the category, and
is itself compact. We obtain the commutative diagram%
\[%
\xymatrix{
X \ar@{^{(}->}[r] \ar[d] & V \ar@{->>}[r]^{h} \ar[d] & C \ar[d]^{\operatorname
{id}} \\
F \ar@{^{(}->}[r] & G \ar@{->>}[r] & C,
}%
\]
which shows that $\mathsf{LCA}_{F,C}$ is left special. Being left special and
left filtering, our claim is proven.
\end{proof}

While the full subcategory $\mathsf{LCA}_{F,C}$ is left $s$-filtering in
$\mathsf{LCA}_{F,qab}$ by the previous result, this is not true in
$\mathsf{LCA}_{F}$ or $\mathsf{LCA}_{F,qab+fd}$.

\begin{example}
\label{Example_LCAFCNotLeftSpecial}While $\mathsf{LCA}_{F,C}$ remains left
filtering in $\mathsf{LCA}_{F}$, it is not left special in $\mathsf{LCA}_{F}$.
To see this, consider the diagram%
\[%
\xymatrix{
& V \ar@{->>}[r] \ar[d]_{w} & {\mathbb{A}}/F \ar@{=}[d] \\
F \ar@{^{(}->}[r] & \mathbb{A} \ar@{->>}[r] & {\mathbb{A}}/F
}%
\]
The lower row is the ad\`{e}le sequence. The quotient $\mathbb{A}/F\simeq
F^{\vee}$ is a compact $F$-vector space. Suppose $\mathsf{LCA}_{F,C}$ is left
special in $\mathsf{LCA}_{F}$. Then there exists an object $V\in
\mathsf{LCA}_{F,C}$ and an arrow $w$ making the above diagram commute. The
ad\`{e}les have the shape%
\[
\mathbb{A}\cong\left.  \prod\nolimits^{\prime}\right.  (\widehat{F}%
_{P},\widehat{\mathcal{O}}_{P})\oplus\bigoplus_{\sigma\in S}\mathbb{R}%
_{\sigma}\text{,}%
\]
where $S$ is the list of all real and complex places of $F$, $P$ runs through
all finite places, $\widehat{F}_{P}$ denotes the local field at $P$, and
$\widehat{\mathcal{O}}_{P}$ its ring of integers. Since $V$ is connected
(Lemma \ref{lemma_LCAFCIsConnected}), so is its image $w(V)$ inside
$\mathbb{A}$. The projection of the image to the finite ad\`{e}les, which are
topological torsion and thus totally disconnected, must be trivial. Hence, $w
$ factors as $w:V\overset{v}{\rightarrow}\bigoplus_{\sigma\in S}%
\mathbb{R}_{\sigma}\hookrightarrow\mathbb{A}$. However, $V$ is also compact,
so the image of $v$ in the vector $\mathcal{O}$-module is compact. However, a
real vector space has no non-trivial compact subgroups. We deduce that $w$ is
the zero map. Contradiction.
\end{example}

Let us briefly recall an important exact sequence in $\mathsf{LCA}%
_{\mathcal{O}}$.

\begin{definition}
Let $S$ denote the set of all infinite places of $F$. If $J$ denotes any
non-zero ideal of $\mathcal{O}$, there is a short exact sequence%
\begin{equation}
J\hookrightarrow\bigoplus_{\sigma\in S}\mathbb{R}_{\sigma}\twoheadrightarrow
\mathbb{T}_{J}\text{,}\label{lMinkSeq}%
\end{equation}
where the first arrow is plainly the inclusion along the Minkowski embedding,
and $\mathbb{T}_{J}$ is our preferred notation for the respective quotient. We
call it the \emph{Minkowski sequence} of $J$.
\end{definition}

For $J=(1)$, we recover the usual Minkowski embedding of the ring of integers.
The notation $\mathbb{T}_{J}$ is supposed to stress that the underlying
additive group of $\mathbb{T}_{J}$ is indeed a torus.

We come to the central result of the entire section.

\begin{theorem}
\label{thm_qabfdIsLeftSpecial}The full subcategory $\mathsf{LCA}_{F,qab+fd}$
is left $s$-filtering in $\mathsf{LCA}_{F}$.
\end{theorem}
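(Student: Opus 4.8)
The plan is to verify the two defining conditions of a left $s$-filtering subcategory---left filtering and left special---for $\mathsf{LCA}_{F,qab+fd}\subset\mathsf{LCA}_{F}$, drawing on essentially everything built up in the section. The left filtering property is already done: it is exactly the ``resp.'' case of Proposition \ref{prop_LCAFqab_leftfilt}, which provides, for any $f:G'\to G$ with $G'\in\mathsf{LCA}_{F,qab+fd}$, a clopen subobject $\tilde Q\in\mathsf{LCA}_{F,qab+fd}$ of $G$ through which $f$ factors; in particular $\operatorname{im}(f)$ sits inside such a $\tilde Q$, and since by the Kernel Theorem \ref{thm_KernelOfqabfds} the category $\mathsf{LCA}_{F,qab+fd}$ is closed under subobjects, $\operatorname{im}(f)$ itself lies in $\mathsf{LCA}_{F,qab+fd}$, giving the required factorization $G'\to\operatorname{im}(f)\hookrightarrow G$.

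The real work is left specialness. So I would start with an admissible epic $G\twoheadrightarrow Z$ in $\mathsf{LCA}_{F}$ with $Z\in\mathsf{LCA}_{F,qab+fd}$, and must produce a diagram with exact rows whose top row lies entirely in $\mathsf{LCA}_{F,qab+fd}$ and maps down to $F'\hookrightarrow G\twoheadrightarrow Z$ with identity on $Z$. First decompose $Z\cong Q_Z\oplus D_Z$ with $Q_Z$ quasi-adelic and $D_Z$ finite-dimensional discrete, and then further, by Proposition \ref{prop_DecomposeQuasiAdelicBlock}, $Q_Z\cong V_Z\oplus A_Z$ with $V_Z$ a compact $F$-vector space and $A_Z$ an adelic block; and write $A_Z\cong\bigoplus_\sigma\mathbb{R}_\sigma\oplus A_Z'$ with $A_Z'$ vector-free. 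The strategy is to handle each type of summand of $Z$ by lifting it ``minimally'' through $G$: the discrete summand $D_Z$ lifts because discrete $F$-vector spaces are projective (Lemma \ref{Lem_InjProjInLCAF}), so that summand can be split off trivially, reducing to the case $Z$ quasi-adelic. For $Z$ quasi-adelic we want to build the left special square inside the quasi-abelian ambient category: apply Theorem \ref{thm_StructLCAF} to $G$ to write $G\cong Q_G\oplus D_G$, use Lemma \ref{lemma_nomapsfromqabtodiscrete} to see the composite $Q_G\to G\to Z$ hits only the vector and adelic parts up to finite-dimensional discrepancy, and then pull back along $G\twoheadrightarrow Z$. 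The key technical step is to take the preimage of a well-chosen compact clopen $\mathcal{O}$-submodule $C_Z$ of $Z$ (the one witnessing $Z$'s quasi-adelicity) together with a vector-group lift, and intersect/adjust it so that the resulting subobject $Y\subseteq G$ is itself quasi-adelic-plus-finite-dimensional; concretely $Y$ should be of the form $\bigoplus\mathbb{R}_\sigma\oplus\bigcup_n\frac1n(\text{compact clopen})$, with the compact clopen taken to be a compactly generated preimage as in the proofs of Theorems \ref{thm_KernelOfqabfds} and Proposition \ref{prop_LCAFCLeftFilteringInLCAFQab}. One then sets $F'=\ker(Y\twoheadrightarrow Z)$ and invokes the Kernel Theorem \ref{thm_KernelOfqabfds} to conclude $F'\in\mathsf{LCA}_{F,qab+fd}$, so the whole top row $F'\hookrightarrow Y\twoheadrightarrow Z$ lives in $\mathsf{LCA}_{F,qab+fd}$.

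Where the bookkeeping gets delicate is in producing the surjection $Y\twoheadrightarrow Z$ with $Y$ genuinely in $\mathsf{LCA}_{F,qab+fd}$: a naive preimage of a compact clopen submodule of $Z$ under $G\twoheadrightarrow Z$ need not be compact (the kernel may be huge and non-compact, e.g. discrete-infinite-dimensional or vector), so one cannot just take $\bigcup_n \frac1n(\text{preimage})$. The fix is to first replace $G$ by the clopen quasi-adelic-plus-finite summand $Q_G\oplus(\text{f.d. discrete})$ supplied by Theorem \ref{thm_StructLCAF} and Lemma \ref{lemma_nomapsfromqabtodiscrete} (since maps from quasi-adelic blocks to the discrete quotient $D_G$ have finite-dimensional image, one can enlarge to absorb it), thereby arranging that $G$ itself is in $\mathsf{LCA}_{F,qab+fd}$; then $G\twoheadrightarrow Z$ is a map inside $\mathsf{LCA}_{F,qab+fd}$, its kernel is in $\mathsf{LCA}_{F,qab+fd}$ by the Kernel Theorem, and one simply takes $Y=G$, $F'=\ker(G\twoheadrightarrow Z)$, with the top row equal to the bottom row. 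This is the cleanest route: the whole content is (a) reduce from arbitrary $G$ to $G\in\mathsf{LCA}_{F,qab+fd}$ using Theorem \ref{thm_StructLCAF} plus the projectivity of finite-dimensional discrete spaces and Lemma \ref{lemma_nomapsfromqabtodiscrete}, and (b) close the diagram using Theorem \ref{thm_KernelOfqabfds}. I expect step (a)---showing that one may assume the source of the admissible epic is itself a quasi-adelic-plus-finite-dimensional object, which requires carefully splitting off and absorbing the discrete part of $G$ compatibly with the map to $Z$---to be the main obstacle; once that is in place, left specialness and hence the whole theorem follow formally.
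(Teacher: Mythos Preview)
Your ``cleanest route''---pass to a subobject $Y\subseteq G$ with $Y\in\mathsf{LCA}_{F,qab+fd}$ still surjecting onto $Z$, then take $X=\ker(Y\twoheadrightarrow Z)$ via the Kernel Theorem---is a valid strategy, and in fact simpler than what the paper does. But your justification of step~(a) is not right as written. The parenthetical ``since maps from quasi-adelic blocks to the discrete quotient $D_G$ have finite-dimensional image'' invokes Lemma~\ref{lemma_nomapsfromqabtodiscrete} with the wrong target: there is no map from a quasi-adelic block to $D_G$ anywhere in the picture, and nothing you have said bounds how much of $D_G$ is needed to hit~$Z$. The reduction does work, but for a different reason. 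With $G=Q_G\oplus D_G$ from Theorem~\ref{thm_StructLCAF} (where $Q_G$ is \emph{clopen}) and $Z=Q_Z\oplus D_Z$, the admissible epic $f$ is an open map, so $f(Q_G)$ is an open $F$-subspace of $Z$. Lemma~\ref{lemma_nomapsfromqabtodiscrete}(1) applied to $Q_G\to D_Z$ gives $f(Q_G)\subseteq Q_Z$, hence $f(Q_G)$ is clopen in $Q_Z$ and $Q_Z/f(Q_G)$ is discrete. Now apply Lemma~\ref{lemma_nomapsfromqabtodiscrete}(1) a \emph{second} time, to the surjection $Q_Z\twoheadrightarrow Q_Z/f(Q_G)$, to force this quotient to vanish: thus $f(Q_G)=Q_Z$. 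A finite-dimensional $D_G'\subseteq D_G$ then suffices to cover the finite-dimensional $D_Z$, and $Y:=Q_G\oplus D_G'$ is $\sigma$-compact, so $f|_Y$ is an admissible epic by Pontryagin's Open Mapping Theorem. The missing ingredient in your sketch was this second application of Lemma~\ref{lemma_nomapsfromqabtodiscrete}, which is what actually forces $Q_G$ alone to cover $Q_Z$.

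The paper takes a substantially more laborious route and never observes that one can simply restrict to a subobject of $G$. Instead it builds an auxiliary object $V\oplus\widehat{C}$ \emph{externally}: here $\widehat{C}=\bigcup_{n\geq 1}\tfrac{1}{n}\tilde{C}$ for a compact clopen $\tilde{C}$ extracted from $f^{-1}(C)$, while $V$ is a fresh vector $\mathcal{O}$-module engineered (via the Minkowski sequence) to cover the torus part of a certain compact quotient $C/\operatorname{im}(h)$. The map $V\oplus\widehat{C}\to G$ is \emph{not} an inclusion, and surjectivity of the composite onto $H$ is checked by an explicit element chase. Your argument sidesteps all of this machinery by exploiting two facts the paper does not use here: that $Q_G$ is clopen in $G$, and that admissible epics are open. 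Both proofs finish the same way, invoking Theorem~\ref{thm_KernelOfqabfds} to place the kernel in $\mathsf{LCA}_{F,qab+fd}$.
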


\begin{proof}
Left filtering was already proven as part of Proposition
\ref{prop_LCAFqab_leftfilt}, so it only remains to verify that it is also left
special.\newline\textit{(Step 1) }Let $G\in\mathsf{LCA}_{F}$ be arbitrary.
Suppose for the moment that $H$ is a vector-free object in $\mathsf{LCA}%
_{F,qab}$. Let $f:G\twoheadrightarrow H$ be given. We have a presentation
$H=\bigcup_{n\geq1}\frac{1}{n}C$ with $C$ a compact clopen $\mathcal{O}%
$-submodule of $H$. Consider $f^{-1}(C)\subseteq G$. As $C$ is open in $H$,
$f^{-1}(C)$ is open in $G$. As $f$ is an admissible epic, it is an open map,
and thus the restriction $\tilde{f}:f^{-1}(C)\rightarrow H$ is an open map as
well. Moreover, it is clearly surjective and hence $\tilde{f}$ is an
admissible epic in $\mathsf{LCA}_{\mathcal{O}}$. Using \cite[Theorem
2.15]{obloc} we may choose a splitting $f^{-1}(C)\cong\bigoplus_{\nu\in
I^{\prime}}\mathbb{R}_{\nu}\oplus W$ such that $W\in\mathsf{LCA}_{\mathcal{O}%
}$ has no vector $\mathcal{O}$-module summand, and further we obtain a certain
commutative diagram%
\begin{equation}%
\xymatrix{
& \tilde{C} \ar@{^{(}->}[d]_{i} \ar@{->}[dr]^{h} \\
f^{-1}(C) \ar@{=}[r] & {\bigoplus_{\nu\in I^{\prime}}\mathbb{R}_{\nu}} \oplus
W \ar@{->>}[r]_-{\tilde{f} } \ar@{->>}[d] & C \\
& {\bigoplus_{\nu\in I^{\prime}}\mathbb{R}_{\nu}} \oplus D.
}%
\label{ld9}%
\end{equation}
Here $\tilde{C}$ is a compact clopen in $W$, and $D$ its discrete quotient.
Since $\tilde{C}$ is compact, its set-theoretic image $\operatorname*{im}(h)$
is compact and thus we obtain a new diagram%
\begin{equation}%
\xymatrix{
\tilde{C} \ar@{^{(}->}[d]_{i} \ar[dr]^{0} \\
f^{-1}(C) \ar@{->>}[r]_-{\tilde{f} } \ar@{->>}[d] & C/{\operatorname{im}(h)}
\\
{\bigoplus_{\nu\in I^{\prime}}\mathbb{R}_{\nu}} \oplus D \ar@{-->>}[ur]_{w},
}%
\label{lFig_Quot}%
\end{equation}
where $w$ exists by the universal property of cokernels, and $w$ is an
admissible epic by \cite[Prop. 7.6]{MR2606234} (which uses that $\mathsf{LCA}%
_{F}$ is quasi-abelian and thus idempotent complete). By the classification of
admissible quotients of vector groups in $\mathsf{LCA}$, \cite[Corollary 2 to
Theorem 7]{MR0442141},\ it follows that the underlying additive group of
$C/\operatorname*{im}(h)$ is isomorphic to $\mathbb{R}^{a}\oplus\mathbb{T}%
^{b}\oplus D^{\prime}$ for suitable finite $a,b$ and $D^{\prime}$ discrete. On
the other hand, $C$ is a compact $\mathcal{O}$-module, and so must be all its
quotients, and then we must necessarily have $a=0$ and $D^{\prime}$ finite.
Hence, $C/\operatorname*{im}(h)$ has no small subgroups as an object in
$\mathsf{LCA}_{\mathbb{Z},nss}$ and by \cite[Proposition 2.13 (1)]{obloc} we
can only have%
\begin{equation}
C/\operatorname*{im}(h)\cong\underline{\bigoplus_{J\in\mathcal{I}}%
\mathbb{T}_{J}}\oplus D^{\prime}\label{ld6}%
\end{equation}
for a suitable \textit{finite} list $\mathcal{I}$ of ideals $J$ (the underline
plays no r\^{o}le yet, we shall use it below). We define a morphism%
\begin{equation}
\pi:V:=\bigoplus_{J\in\mathcal{I}}\left(  \bigoplus_{\mu\in S}\mathbb{R}_{\mu
}\right)  \twoheadrightarrow\underline{\bigoplus_{J\in\mathcal{I}}%
\mathbb{T}_{J}}\hookrightarrow C/\operatorname*{im}(h)\text{,}\label{ld7}%
\end{equation}
where $S$ denotes the finite list of all real and complex places of $F$ as
follows: The map $\pi$ is the identity on the $\mu$-summands, and the dualized
Minkowski embedding on the $J$-summands (as in Equation \ref{lMinkSeq}):%
\[%
\xymatrix{
V \ar[dr]^{\pi} \\
f^{-1}(C) \ar@{->>}[r]_-{\tilde{f} } & C/{\operatorname{im}(h)}. \\
}%
\]
As $V$ is a vector $\mathcal{O}$-module, it is a projective object in
$\mathsf{LCA}_{\mathcal{O}}$ by \cite[Theorem 2.17]{obloc} and thus $\pi$ can
be lifted along the admissible epic $\tilde{f}$ in\ Diagram \ref{lFig_Quot}.
We call this lift $\widehat{\pi}$. Joint with $i:\tilde{C}\hookrightarrow
f^{-1}(C)$ of that diagram, we obtain the new diagram%
\[%
\xymatrix{
V \oplus\tilde{C} \ar[d]_{\hat{\pi} + i} \ar[dr]^{\pi} \\
f^{-1}(C) \ar@{->>}[r]_-{\tilde{f} } & C/{\operatorname{im}(h)}. \\
}%
\]
Define%
\begin{equation}
\widehat{C}:=\bigcup_{n\geq1}\frac{1}{n}\tilde{C}\label{ld10}%
\end{equation}
in $G$. As $\tilde{C}$ is an open $\mathcal{O}$-submodule of $f^{-1}(C)$, and
the latter is open in $G$, $\tilde{C}$ is open in $G$, so $\widehat{C}$ is a
union of opens and therefore a clopen $\mathcal{O}$-submodule of $G$.
Moreover, since all $\alpha^{-1}$ for $\alpha\in\mathcal{O}\setminus\{0\} $
can be written as $\beta/n$ for $\beta\in\mathcal{O}$ and $n\in\mathbb{Z}%
_{\geq1}$, it follows that $\widehat{C}$ is actually an open $F$-submodule of
$G$. Define a morphism%
\[
f\hat{\pi}\oplus f:V\oplus\widehat{C}\longrightarrow H
\]
as follows: On $V$ we map via $\hat{\pi}$ to $f^{-1}(C)$, which lies in $G$,
so applying $f$ makes sense. We could just as well have written $\tilde{f}%
\hat{\pi}$. Next, $\widehat{C}$ is defined as a subgroup of $G$, so $f$ is
also defined. We claim that $f\hat{\pi}\oplus f$ is a surjective continuous
$F$-linear map: Linearity and continuity are clear: It is composed from
continuous $\mathcal{O}$-module homomorphisms, and all modules in it are
$\mathcal{O}$-divisible. It remains to check surjectivity:\ Let $x\in H$ be
given. Then there exists some $n\in\mathbb{Z}_{\geq1}$ such that $nx\in C$.
Consider its image in $C/\operatorname*{im}(h)$. As $D^{\prime}$ is finite,
there exists some $N\in\mathbb{Z}_{\geq1}$ such that $Nnx$ lies in the
underlined summands in Equation \ref{ld6} (e.g., the cardinality
$N:=\#D^{\prime}$ will do). Since $\pi$ in Equation \ref{ld7} surjects onto
this summand, and since $\hat{\pi}$ is a lift of $\pi$ along $\tilde{f}$,
there exists some $v\in V$ with $\tilde{f}\hat{\pi}v\equiv Nnx$ in
$C/\operatorname*{im}(h)$. Thus,%
\[
f\hat{\pi}v-Nnx\in\operatorname*{im}(h)\text{.}%
\]
We return to Diagram \ref{ld9}. We obtain that there exists some $\tilde{c}%
\in\tilde{C}$ with $f\hat{\pi}v-Nnx=h(\tilde{c})$. Hence, $x=\hat{\pi}\left(
\frac{1}{Nn}v\right)  -\frac{1}{Nn}\tilde{c}$. Here $\frac{1}{Nn}v\in V $ as
$V$ is an $F$-vector space, and $\frac{1}{Nn}\tilde{c}\in\widehat{C}$ by
Equation \ref{ld10}. This proves surjectivity. Next, we claim that $f\hat{\pi
}\oplus f$ is an open map: This can be checked on the level of the underlying
additive groups in $\mathsf{LCA}$. But $V$ is a vector group, so
$V\oplus\widehat{C}$ is $\sigma$-compact. By Pontryagin's Open Mapping Theorem
\cite[Theorem 3]{MR0442141} this implies that $f\hat{\pi}\oplus f$ is open.
Hence, $f\hat{\pi}\oplus f$ is an admissible epic in $\mathsf{LCA}_{F}$. Thus,
defining%
\[
A:=\ker\left(  f\hat{\pi}+f:V\oplus\widehat{C}\twoheadrightarrow H\right)
\qquad\text{in}\qquad\mathsf{LCA}_{F}\text{,}%
\]
we get%
\begin{equation}%
\xymatrix{
A \ar@{^{(}->}[r] \ar@{-->}[d] & V\oplus\widehat{C} \ar@{->>}[r]^-{f \hat{\pi}
+ f} \ar[d]^{\psi} & H \ar@{=}[d] \\
\operatorname{ker}(f) \ar@{^{(}->}[r] & G \ar@{->>}[r]_-{f} & H,
}%
\label{lcx}%
\end{equation}
where $\psi$ is the sum of $V\overset{\hat{\pi}}{\rightarrow}f^{-1}%
(C)\hookrightarrow G$ and the inclusion of $\widehat{C}$ into $G$ as defined
in Equation \ref{ld10}. We point out that the entire upper row lies in
$\mathsf{LCA}_{F,qab+fd}$: The terms $V\oplus\widehat{C}$ and $H$ lie in
$\mathsf{LCA}_{F,qab}$, and for the kernel $A$ this follows from the Kernel
Theorem (Theorem \ref{thm_KernelOfqabfds}).\newline\textit{(Step 2) }So far we
had assumed that $H$ is a vector-free object in $\mathsf{LCA}_{F,qab}$.
However, in order to prove our claim, we really need to handle the general
case of $H\in\mathsf{LCA}_{F,qab+fd}$. This means that%
\[
H\cong H_{0}\oplus\bigoplus_{\gamma\in I}\mathbb{R}_{\gamma}\oplus D\text{,}%
\]
where $I$ is a finite list of real and complex places, $D$ is a
finite-dimensional discrete $F$-vector space, and $H_{0}$ is a vector-free
object in $\mathsf{LCA}_{F,qab}$. Next, observe that each $\mathbb{R}_{\gamma
}$ as well as any discrete vector space is a projective object in
$\mathsf{LCA}_{F}$ by Lemma \ref{Lem_InjProjInLCAF}. Thus, under the
admissible epic%
\[
f:G\twoheadrightarrow H\text{,}%
\]
the direct summands of $H$ possess a section to $G$. We may therefore rewrite
$f$ as%
\[
f:G^{\prime}\oplus\bigoplus_{\gamma\in I}\mathbb{R}_{\gamma}\oplus
D\twoheadrightarrow H_{0}\oplus\bigoplus_{\gamma\in I}\mathbb{R}_{\gamma
}\oplus D\text{,}%
\]
with $f$ being induced from the identity on the part $\bigoplus_{\gamma\in
I}\mathbb{R}_{\gamma}\oplus D$ and some $\tilde{f}:G^{\prime}%
\twoheadrightarrow H_{0}$ on the remaining direct summand. Thus, we may apply
Step 1 of the proof to the morphism $\tilde{f}$, resulting in a Diagram of the
shape of Diagram \ref{lcx} for $\tilde{f}$, and then compatibly add the
further projective direct summands. For example, for a summand of the shape
$\mathbb{R}_{\gamma}$, the section allows us to augment Diagram \ref{lcx} to%
\[%
\xymatrix{
A \ar@{^{(}->}[r] \ar@{-->}[d] & V\oplus\widehat{C} {\oplus{\mathbb{R}%
_{\gamma}}} \ar@{->>}[r]^-{f \hat{\pi} + f} \ar[d]^{\psi} & H_0 {\oplus
{\mathbb{R}_{\gamma}}} \ar@{=}[d] \\
\operatorname{ker}(f) \ar@{^{(}->}[r] & G^{\prime} {\oplus{\mathbb{R}_{\gamma
}}} \ar@{->>}[r]_-{f} & H_0 {\oplus{\mathbb{R}_{\gamma}}},
}%
\]
Since adding $\bigoplus_{\gamma\in I}\mathbb{R}_{\gamma}\oplus D$ to an object
in $\mathsf{LCA}_{F,qab+fd}$ yields an object in $\mathsf{LCA}_{F,qab+fd}$
(recall that $D$ is finite-dimensional!), the upper row still lies entirely in
$\mathsf{LCA}_{F,qab+fd}$. This finishes the proof.
\end{proof}

\begin{example}
The full subcategory $\mathsf{LCA}_{F,qab}$ is left filtering, but not left
special in $\mathsf{LCA}_{F}$. This follows from a variation of previous
counter-examples: Observe that in the ad\`{e}le sequence $F\hookrightarrow
\mathbb{A}\twoheadrightarrow\mathbb{A}/F$ the middle term $\mathbb{A}$ is
quasi-adelic. We already know that $\mathsf{LCA}_{F,qab}$ is left filtering in
$\mathsf{LCA}_{F}$ by Proposition \ref{prop_LCAFqab_leftfilt}. Hence, if
$\mathsf{LCA}_{F,qab}$ was left special, then it would be closed under
subobjects (by \cite[Proposition A.2]{MR3510209} and the resulting property
\cite[Definition A.1, (1)]{MR3510209}). Thus, we could conclude that $F$ was
quasi-adelic, which by Lemma \ref{lemma_nomapsfromqabtodiscrete} (applied to
the identity map) is clearly false.
\end{example}

\begin{theorem}
Let $G^{\prime}\hookrightarrow G\twoheadrightarrow G^{\prime\prime}$ be an
exact sequence in $\mathsf{LCA}_{F}$. Then $G\in\mathsf{LCA}_{F,qab+fd}$ if
and only if $G^{\prime}$ and $G^{\prime\prime}$ both lie in $\mathsf{LCA}%
_{F,qab+fd}$.
\end{theorem}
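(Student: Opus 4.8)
The plan is to read this statement as a corollary that simply repackages results already proven in this section: it asserts that $\mathsf{LCA}_{F,qab+fd}$ is closed under extensions, subobjects and quotients in $\mathsf{LCA}_F$, and the first two of these are literally Proposition~\ref{prop_lcafqab_extensionclosed} and the Kernel Theorem. So I would split into the two implications. For the ``if'' direction, suppose $G',G''\in\mathsf{LCA}_{F,qab+fd}$; then $G\in\mathsf{LCA}_{F,qab+fd}$ is immediate from Proposition~\ref{prop_lcafqab_extensionclosed}, which says precisely that $\mathsf{LCA}_{F,qab+fd}$ is extension-closed in $\mathsf{LCA}_F$.

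For the ``only if'' direction, assume $G\in\mathsf{LCA}_{F,qab+fd}$. Since $G'\hookrightarrow G\twoheadrightarrow G''$ is exact, $G'$ is, up to isomorphism, the kernel of the admissible epic $G\twoheadrightarrow G''$, so $G'\in\mathsf{LCA}_{F,qab+fd}$ by the Kernel Theorem (Theorem~\ref{thm_KernelOfqabfds}), i.e. by closedness under subobjects. It remains to handle the quotient $G''$. Apply the crude decomposition of Theorem~\ref{thm_StructLCAF} to $G''$: there is a clopen quasi-adelic block $\tilde Q\hookrightarrow G''$ with $G''\cong\tilde Q\oplus\tilde D$, where $\tilde D$ is a discrete $F$-vector space. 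The only thing to verify is that $\tilde D$ is finite-dimensional. To see this, consider the composite
\[
G\twoheadrightarrow G''\twoheadrightarrow\tilde D,
\]
a morphism in $\mathsf{LCA}_F$ out of $G\in\mathsf{LCA}_{F,qab+fd}$ into a discrete $F$-vector space. By Lemma~\ref{lemma_nomapsfromqabtodiscrete}(2) its image is finite-dimensional, and being a composite of (admissible) epics it is surjective, so $\tilde D$ is finite-dimensional. Hence $G''\cong\tilde Q\oplus\tilde D\in\mathsf{LCA}_{F,qab+fd}$, completing the argument.

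There is no genuine obstacle here: all the substance — extension-closedness, the Kernel Theorem, and the ``no nonzero maps from quasi-adelic blocks to discrete vector spaces'' lemma — has already been established, and this theorem merely records their combination. If one wants to highlight the structural point, one could add a sentence observing that the statement says $\mathsf{LCA}_{F,qab+fd}$ behaves like a Serre subcategory, which together with the left $s$-filtering property of Theorem~\ref{thm_qabfdIsLeftSpecial} is exactly what makes it a usable model for a localization quotient later on.
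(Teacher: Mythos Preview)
Your proof is correct. The paper, however, takes a much shorter route: it simply observes that the statement is exactly the property recorded in \cite[Definition~A.1,~(1)]{MR3510209}, and that by \cite[Proposition~A.2]{MR3510209} this property is a formal consequence of being left $s$-filtering, which is Theorem~\ref{thm_qabfdIsLeftSpecial}. So the paper's argument is a one-line citation, relying on the heavy lifting already done in the left-special part of Theorem~\ref{thm_qabfdIsLeftSpecial}.

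Your approach is different and in a sense more economical in its prerequisites: you bypass Theorem~\ref{thm_qabfdIsLeftSpecial} entirely and instead assemble the three closure properties directly from Proposition~\ref{prop_lcafqab_extensionclosed} (extensions), Theorem~\ref{thm_KernelOfqabfds} (subobjects), and Theorem~\ref{thm_StructLCAF} together with Lemma~\ref{lemma_nomapsfromqabtodiscrete}(2) (quotients). This has the virtue of not invoking the rather technical proof of left specialness, and it makes transparent exactly which structural ingredients are responsible for each closure property. The paper's route, on the other hand, is the natural one once Theorem~\ref{thm_qabfdIsLeftSpecial} is in hand and illustrates that the Serre-type behaviour is not an accident but a general feature of left $s$-filtering subcategories.
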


\begin{proof}
Our claim is the property \cite[Definition A.1, (1)]{MR3510209} and by
\cite[Proposition A.2]{MR3510209} it follows from being left $s$-filtering.
\end{proof}

\section{A theorem of Braconnier--Vilenkin type, and $P$-adic finiteness}

Let $F$ be a number field and $\mathcal{O}$ its ring of integers. For every
place $P$, finite or infinite, write $\widehat{F}_{P}$ for the local field at
$P$, so this will be a finite extension of some $\mathbb{Q}_{p}$, or of
$\mathbb{R}$, depending on whether the place is finite or infinite. If finite,
we denote by $\widehat{\mathcal{O}}_{P}$ the ring of integers in $\widehat
{F}_{P}$.

\begin{definition}
[Restricted product]Let $I$ be an index set. Let $(J_{i})_{i\in I}$ be objects
in $\mathsf{LCA}$, and $K_{i}\subseteq J_{i}$ a clopen compact subgroup for
each $i\in I$. We write $\left.  \prod\nolimits_{i\in I}^{\prime}\right.
(J_{i},K_{i})$ to denote the \emph{restricted product} over $I$, topologized
relative to the subgroups $K_{i}\subseteq J_{i}$. In the group-theoretical
literature, the same concept is also known as the `\emph{local direct
product}'. See \cite[P.14]{MR637201}.
\end{definition}

Given this datum, the restricted product is always an object in $\mathsf{LCA}
$, and it always has the product of the compacta $\prod_{i\in I}K_{i}%
\subseteq\left.  \prod\nolimits_{i\in I}^{\prime}\right.  (J_{i},K_{i})$ as a
compact clopen subgroup.

We recall the classical Braconnier--Vilenkin theorem.

\begin{theorem}
[Braconnier--Vilenkin, original version]If $G\in\mathsf{LCA}$ is a topological
torsion group and $C\hookrightarrow G $ an arbitrary compact clopen subgroup,
then there exists an isomorphism to a restricted product%
\[
G\cong\left.  \prod\nolimits^{\prime}\right.  (G^{p},G^{p}\cap C)\text{,}%
\]
where the restricted product runs over all prime numbers $p$ and $(-)^{p}$
denotes the topological $p$-torsion elements.
\end{theorem}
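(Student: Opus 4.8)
The plan is to reduce the statement to the classical primary decomposition of discrete torsion abelian groups by means of Pontryagin duality, and then to reconstruct $G$ from its ``compact'' and ``discrete'' constituents. First I would extract the combinatorial data. Since $C$ is open and $G$ is topological torsion, every $g\in G$ satisfies $n!\,g\in C$ for $n\gg0$, so the discrete quotient $D:=G/C$ is torsion and decomposes as $D\cong\bigoplus_{p}D_{p}$ into its $p$-primary parts. Dually, $C$ is a compact topological torsion group, so $C^{\vee}$ is a discrete torsion group; writing $C^{\vee}\cong\bigoplus_{p}(C^{\vee})_{p}$ and applying $(-)^{\vee}$ again yields a canonical decomposition $C\cong\prod_{p}C_{p}$ with each $C_{p}$ a pro-$p$ group. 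I record two consequences: for a prime $q\neq p$, multiplication by $q$ is a topological automorphism of $C_{p}$ (and of $D_{p}$); and, since each $C_{p}$ is profinite, the subgroups $\prod_{p\in S}U_{p}\times\prod_{p\notin S}C_{p}$ with $S$ finite and $U_{p}\subseteq C_{p}$ an open subgroup form a neighbourhood basis of $0$ in $C$, hence --- as $C$ is open --- in $G$; in particular $G$ has a neighbourhood basis of the identity by compact open subgroups $V_{i}$, with each $\pi_{i}\colon G\twoheadrightarrow G/V_{i}$ onto a discrete torsion group.

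Next I would introduce the subgroups occurring in the restricted product. Put $G^{p}:=\{g\in G:p^{n}g\to0\}$. One checks that $p^{n}g\to0$ holds precisely when, for every $i$, the image $\bar g\in G/V_{i}$ is $p$-power torsion; hence $G^{p}=\bigcap_{i}\pi_{i}^{-1}\big((G/V_{i})_{p}\big)$, which exhibits $G^{p}$ as a \emph{closed} subgroup of $G$. From $C\cong\prod_{q}C_{q}$ one gets $G^{p}\cap C=C_{p}$, a compact open subgroup of $G^{p}$; and the composite $G^{p}\hookrightarrow G\twoheadrightarrow D$ lands in $D_{p}$. I claim it is onto: given $\bar g\in D_{p}$ with $p^{k}\bar g=0$, write $p^{k}g=(c_{q})_{q}\in\prod_{q}C_{q}$, and for $q\neq p$ put $c_{q}=p^{k}c_{q}'$ (multiplication by $p^{k}$ being invertible on $C_{q}$); then replacing $g$ by $g-\sum_{q\neq p}c_{q}'$ --- a genuine element of $C$ --- gives $p^{k}(g-\sum_{q\neq p}c_{q}')=c_{p}\in C_{p}$, so $g-\sum_{q\neq p}c_{q}'\in G^{p}$ still lifts $\bar g$. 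Thus there are short exact sequences $C_{p}\hookrightarrow G^{p}\twoheadrightarrow D_{p}$, and $G^{p}\cap C=C_{p}$ is a compact clopen subgroup of $G^{p}$, so $\prod\nolimits^{\prime}(G^{p},G^{p}\cap C)$ is a well-defined object of $\mathsf{LCA}$. The same reduction to the $G/V_{i}$ shows the $G^{p}$ are independent: every finite sum $\sum_{p\in S}G^{p}$ is direct.

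Then I would assemble the isomorphism. Independence produces a homomorphism $\Phi\colon\prod\nolimits^{\prime}(G^{p},G^{p}\cap C)\to G$, $(x_{p})_{p}\mapsto\sum_{p}x_{p}$, where the almost-all-in-$C_{p}$ tail is read off through the identification $\prod_{p}C_{p}\cong C$. By construction $\Phi$ restricts to a homeomorphism between the open subgroup $\prod_{p}C_{p}$ of the source and the open subgroup $C$ of $G$; a group homomorphism with this property is automatically continuous and open, so it only remains to prove bijectivity. For injectivity: if $\sum_{p}x_{p}=0$, projecting to $D$ kills the $C$-part and leaves a finite vanishing sum of elements $\bar x_{p}\in D_{p}$, forcing every $\bar x_{p}=0$, i.e.\ every $x_{p}\in G^{p}\cap C=C_{p}$; then $(x_{p})_{p}\in\prod_{p}C_{p}$ and injectivity of $\prod_{p}C_{p}\cong C$ gives $(x_{p})_{p}=0$. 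For surjectivity: write $\bar g=\sum_{p\in S}\gamma_{p}$ in $D=\bigoplus_{p}D_{p}$, lift each $\gamma_{p}$ to $x_{p}\in G^{p}$ as above so that $g-\sum_{p\in S}x_{p}=:c=(c_{p})_{p}\in\prod_{p}C_{p}$, and check that $(x_{p}+c_{p})_{p\in S}$ together with $(c_{p})_{p\notin S}$ lies in the restricted product and is sent to $g$. This yields $G\cong\prod\nolimits^{\prime}(G^{p},G^{p}\cap C)$.

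I expect the main obstacle to be the structural input behind the second paragraph: the canonical pro-$p$ decomposition $C\cong\prod_{p}C_{p}$, obtained from Pontryagin duality and the primary decomposition of the discrete torsion group $C^{\vee}$, together with the two facts resting on it --- the closedness of $G^{p}$ and the surjectivity $G^{p}\twoheadrightarrow D_{p}$, the latter being exactly where invertibility of the ``wrong'' primes on the profinite factors $C_{q}$ enters. By comparison the assembly in the third paragraph is essentially formal; the one point there needing care is that $\Phi$ is a \emph{homeomorphism}, not merely a continuous algebraic isomorphism, which follows from $\Phi$ being already a homeomorphism on the open subgroup $\prod_{p}C_{p}$.
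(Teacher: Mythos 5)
Your proof is correct, and it follows what is essentially the standard route: deduce from topological torsion (hence total disconnectedness and van Dantzig) that $C$ is profinite, split $C$ and $D:=G/C$ into primary pieces via Pontryagin duality and the primary decomposition of discrete torsion groups, identify $G^p$ as a closed subgroup sandwiched between $C_p$ and $D_p$, and assemble the restricted-product isomorphism by hand. Note, however, that the paper does not prove this theorem itself; it is quoted as the classical Braconnier--Vilenkin result with a pointer to Armacost \cite[(3.13) Theorem]{MR637201}, whose argument is of the same flavour as yours, so there is no genuinely distinct ``paper proof'' to compare against.

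Two small points you might tighten before relying on the writeup. First, where you invoke the primary decomposition $C\cong\prod_p C_p$ you should say explicitly that $C$, being an open (hence topological torsion) subgroup of $G$, is totally disconnected, so that $C^\vee$ is a \emph{discrete torsion} group --- that is the precise input that makes its primary decomposition available and dualizes to the product of pro-$p$ groups. Second, your independence claim is stated as a one-liner; the clean version is: for each $i$ the composite $G^p\hookrightarrow G\twoheadrightarrow G/V_i$ lands in $(G/V_i)_p$, and since $\bigoplus_p(G/V_i)_p$ is direct while $\bigcap_i V_i=0$, any vanishing finite sum $\sum_{p\in S}x_p=0$ forces each $x_p=0$. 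Neither point is a gap, just a place where the reader has to fill in a step you left implicit.
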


See \cite[(3.13) Theorem]{MR637201}. Levin has extended this type of result to
objects in $\mathsf{LCA}_{\mathcal{O}}$. Firstly, he generalized the concept
of topological $p$-torsion elements to prime ideals:

\begin{definition}
[{Levin \cite[\S 3]{MR0310125}}]\label{def_TopTorsALaLevin}Suppose
$G\in\mathsf{LCA}_{\mathcal{O}}$. Let $P$ be a non-zero prime ideal of
$\mathcal{O}$. We say that $g\in G$ is\emph{\ topological }$P$\emph{-torsion}
if for every neighbourhood of zero, $0\in U\subset G$, there exists some
$N_{U}\geq1$ such that we have the inclusion of sets%
\begin{equation}
P^{N_{U}}\cdot g\subseteq U\text{.}\label{lqa1}%
\end{equation}

\begin{enumerate}
\item We write $G^{P}$ to denote the subset of topological $P$-torsion
elements in $G$.

\item $G$ is a \emph{topological }$P$\emph{-torsion} group if all its elements
are topological $P$-torsion.
\end{enumerate}
\end{definition}

\begin{proposition}
\label{prop_TopTorsionIsVectorFreeAdelicBlock}Given $G\in\mathsf{LCA}_{F}$, it
is topologically torsion in the sense of Definition \ref{def_TopTors} if and
only if it is a vector-free adelic block.
\end{proposition}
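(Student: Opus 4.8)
The plan is to prove the two implications separately; the forward implication is already in hand, and the converse is a short consequence of the structure results of Section~2. For ``vector-free adelic block $\Rightarrow$ topologically torsion'' there is nothing new to do: this is exactly part~(1) of Lemma~\ref{lemma_StructVectorFreeAdelicBlock}.

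For the converse, suppose $G\in\mathsf{LCA}_{F}$ is topologically torsion in the sense of Definition~\ref{def_TopTors}. First I would invoke Theorem~\ref{thm_StructLCAF} to write $G\cong Q\oplus D$ with $Q$ a clopen quasi-adelic block and $D$ a discrete $F$-vector space, and then expand $Q=\bigoplus_{\sigma\in I}\mathbb{R}_{\sigma}\oplus H$ with $H=\bigcup_{n\geq1}\frac{1}{n}C$ for some compact clopen $\mathcal{O}$-submodule $C$. Now I would use the first half of the hypothesis, that $G$ is the union of its compact $\mathcal{O}$-submodules: composing with the $\mathcal{O}$-linear projection $G\twoheadrightarrow D$, the image of any compact $\mathcal{O}$-submodule of $G$ is a compact, hence finite, $\mathcal{O}$-submodule of the discrete $F$-vector space $D$, and since $D$ is torsion-free this image vanishes; as $D$ is the union of these images, $D=0$. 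Running the same argument with the projection $G\twoheadrightarrow\bigoplus_{\sigma\in I}\mathbb{R}_{\sigma}$, together with the fact that a finite-dimensional real vector space has no non-trivial compact subgroups, forces $\bigoplus_{\sigma\in I}\mathbb{R}_{\sigma}=0$. At this stage $G\cong H=\bigcup_{n\geq1}\frac{1}{n}C$, so $G$ is already a vector-free quasi-adelic block.

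It remains to improve ``quasi-adelic'' to ``adelic'', i.e.\ to show $\bigcap_{n\geq1}nC=0$. Here I would apply Proposition~\ref{prop_DecomposeQuasiAdelicBlock}, which identifies $V:=\bigcap_{n\geq1}nC$ with a compact $F$-vector space occurring as a direct summand of $G$. By Lemma~\ref{lemma_LCAFCIsConnected} the object $V$ is connected; since an open subgroup of a topological group is also closed, a connected subset containing $0$ meets any open subgroup in a relatively clopen nonempty set and is therefore contained in it, so $V$ lies inside every open $\mathcal{O}$-submodule of $G$. But the second half of the topological torsion hypothesis amounts precisely to the statement that the open $\mathcal{O}$-submodules of $G$ intersect trivially (this is exactly how the condition is used in the proof of Lemma~\ref{lemma_StructVectorFreeAdelicBlock}), whence $V=0$. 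Then $C$ satisfies $\bigcap_{n\geq1}nC=0$, and the presentation $G=\bigcup_{n\geq1}\frac{1}{n}C$ exhibits $G$ as a vector-free adelic block. The step that needs the most care is this last one: one must apply the ``trivial intersection'' clause of Definition~\ref{def_TopTors} to the correct family, namely the open $\mathcal{O}$-submodules, and one must be sure the summand $V$ returned by Proposition~\ref{prop_DecomposeQuasiAdelicBlock} really is connected, so that it is genuinely absorbed into every open submodule; everything else is the bookkeeping of pushing the crude decomposition of Theorem~\ref{thm_StructLCAF} through the two projections.
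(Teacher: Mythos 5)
Your proof is correct, and it takes a genuinely different, more elementary route than the paper for the step that kills the connected pieces. The paper, after disposing of $D$ exactly as you do, invokes Robertson's theorem \cite[(3.15) Theorem]{MR0217211}: a topological torsion group (in the sense $\lim_{n\to\infty}n!g=0$) is totally disconnected; since compact $F$-vector spaces are connected (Lemma~\ref{lemma_LCAFCIsConnected}), this annihilates the vector $\mathcal{O}$-module summand and the compact summand $V$ of Proposition~\ref{prop_DecomposeQuasiAdelicBlock} in one stroke. You replace this external input with two elementary observations: a compact subgroup of $\mathbb{R}^{n}$ is trivial, which kills $\bigoplus_{\sigma\in I}\mathbb{R}_{\sigma}$ after projecting, and a connected set through $0$ is absorbed into every open subgroup, which kills $V$ once combined with the ``trivial intersection'' clause of Definition~\ref{def_TopTors} applied to open $\mathcal{O}$-submodules. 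That spares you Robertson's theorem entirely, which is a real simplification. The one point deserving care, and you rightly flag it, is that Definition~\ref{def_TopTors} as printed speaks of the intersection of \emph{compact} $\mathcal{O}$-submodules being trivial, which read literally is vacuous since the zero module is compact; your reading ``open'' is clearly the intended one, being precisely what the forward implication in Lemma~\ref{lemma_StructVectorFreeAdelicBlock} actually verifies and what the deduction there of the classical $\lim_{n}n!g=0$ condition requires. Because your route leans on that clause directly rather than via Robertson, it is more exposed to the exact wording of the definition, and this dependence should be made explicit if your proof is adopted.
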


\begin{proof}
By Lemma \ref{lemma_StructVectorFreeAdelicBlock} all vector-free adelic blocks
are topologically torsion, so we only need to prove the converse. Let
$G\in\mathsf{LCA}_{F}$ be topologically torsion. We apply Theorem
\ref{thm_StructLCAF} and obtain a decomposition $Q\hookrightarrow
G\twoheadrightarrow D$, with $Q$ quasi-adelic and $D$ a discrete $F$-vector
space. Following Definition \ref{def_TopTors} the $F$-vector space $G$ is the
union of its compact $\mathcal{O}$-submodules, and thus the same is true for
$D$. However, as $D$ is discrete, its compact subsets are finite, thus must be
torsion $\mathcal{O}$-modules, which is impossible inside an $F$-vector space,
unless they are all zero. We conclude that $D=0$. Hence, $G=Q$ is
quasi-adelic, and $Q$ is topologically torsion. Next, we apply Proposition
\ref{prop_DecomposeQuasiAdelicBlock} and get a decomposition $\bigoplus
_{\sigma\in I}\mathbb{R}_{\sigma}\oplus V\hookrightarrow Q\twoheadrightarrow
A$, where $I$ is a finite list of real and complex places, $V$ a compact
$F$-vector space and $A$ a vector-free adelic block. Since $V$ is compact, it
is connected (Lemma \ref{lemma_LCAFCIsConnected}), so the entire left hand
side term in the above sequence is connected. However, $Q$ is topologically
torsion, so by Robertson \cite[(3.15)\ Theorem]{MR0217211}, $Q$ is totally
disconnected. This forces the entire left hand side term to be zero. Hence,
$Q=A$, proving the claim.
\end{proof}

\begin{corollary}
\label{Cor_AdelicBlocksAreClosedUnderDuality}The Pontryagin dual of an adelic
block is an adelic block.
\end{corollary}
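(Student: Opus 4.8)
The plan is to reduce the statement to the two structural results just proved. Let $A\in\mathsf{LCA}_{F,ab}$ be an adelic block. By Definition \ref{def_QuasiAdelicBlock}(1), or more explicitly by Proposition \ref{prop_DecomposeQuasiAdelicBlock}(2), we may write
\[
A\cong\bigoplus_{\sigma\in I}\mathbb{R}_{\sigma}\oplus H,
\]
where $I$ is a finite list of real and complex places and $H=\bigcup_{n\geq 1}\frac{1}{n}C$ is a vector-free adelic block for a suitable compact clopen $\mathcal{O}$-submodule $C$ with $\bigcap_{n\geq 1}nC=0$.

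Now apply Pontryagin duality $(-)^{\vee}$. By Proposition \ref{prop_LCARisQuasiAbelianExact} this is an exact anti-equivalence of $\mathsf{LCA}_{F}$ with itself; in particular it is additive, so it commutes with the finite direct sum above, giving
\[
A^{\vee}\cong\bigoplus_{\sigma\in I}\mathbb{R}_{\sigma}^{\vee}\oplus H^{\vee}.
\]
For the vector summands, the underlying-group duality $\mathbb{R}^{\vee}\cong\mathbb{R}$ (resp. $\mathbb{C}^{\vee}\cong\mathbb{C}$) together with a quick check of the induced scalar action identifies $\mathbb{R}_{\nu}^{\vee}$ with $\mathbb{R}_{\overline{\nu}}$, so $\bigoplus_{\sigma\in I}\mathbb{R}_{\sigma}^{\vee}$ is again a vector $\mathcal{O}$-module. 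The substantive point is $H^{\vee}$: since $H$ is a vector-free adelic block, Lemma \ref{lemma_StructVectorFreeAdelicBlock}(1) shows $H^{\vee}$ is a topological torsion $\mathcal{O}$-module in the sense of Definition \ref{def_TopTors}, and then Proposition \ref{prop_TopTorsionIsVectorFreeAdelicBlock} shows that $H^{\vee}$ is itself a vector-free adelic block.

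Putting this together, $A^{\vee}$ is the direct sum of a vector $\mathcal{O}$-module and a vector-free adelic block; writing $\widehat{C}\subseteq H^{\vee}$ for the compact clopen $\mathcal{O}$-submodule witnessing that $H^{\vee}$ is a vector-free adelic block (so $\bigcap_{n\geq 1}n\widehat{C}=0$), we obtain a presentation of $A^{\vee}$ exactly of the shape required in Definition \ref{def_QuasiAdelicBlock} satisfying the extra condition, i.e. $A^{\vee}$ is an adelic block. I do not anticipate a real obstacle here: the only care needed is the bookkeeping of the scalar action on the dual vector summands and invoking the two cited results in the correct order — all the genuine content sits in Proposition \ref{prop_TopTorsionIsVectorFreeAdelicBlock} and Lemma \ref{lemma_StructVectorFreeAdelicBlock}.
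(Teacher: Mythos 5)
Your proof is correct and follows essentially the same route as the paper: decompose the adelic block into a vector $\mathcal{O}$-module and a vector-free part, dualize term by term, and use Lemma \ref{lemma_StructVectorFreeAdelicBlock} together with Proposition \ref{prop_TopTorsionIsVectorFreeAdelicBlock} to conclude the dual of the vector-free part is again a vector-free adelic block. The only cosmetic difference is that you explicitly identify $\mathbb{R}_{\nu}^{\vee}\cong\mathbb{R}_{\overline{\nu}}$, whereas the paper just invokes the (non-canonical) self-duality of $\mathbb{R}_{\sigma}$; these are equivalent since $\mathbb{R}_{\nu}\cong\mathbb{R}_{\overline{\nu}}$.
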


\begin{proof}
If $G$ is an adelic block, it has a presentation%
\[
G=\bigoplus_{\sigma\in I}\mathbb{R}_{\sigma}\oplus H\qquad\text{with}\qquad
H=\bigcup_{n\geq1}\frac{1}{n}C\text{,}%
\]
where $C$ is a compact clopen $\mathcal{O}$-module in $G$ with $\bigcap
_{n\geq1}nC=0$, and $I$ is a finite list of real and complex places. In
particular, $H$ is a vector-free adelic block. By Lemma
\ref{lemma_StructVectorFreeAdelicBlock} the dual $H^{\vee}$ is topologically
torsion and it follows from Proposition
\ref{prop_TopTorsionIsVectorFreeAdelicBlock} that $H^{\vee}$ is a vector-free
adelic block. Thus, since finite direct sums commute with Pontryagin duality,
we obtain%
\[
G^{\vee}\simeq\bigoplus_{\sigma\in I}\mathbb{R}_{\sigma}^{\vee}\oplus H^{\vee}%
\]
and since $\mathbb{R}_{\sigma}^{\vee}$ is (non-canonically) self-dual, we
deduce that $G^{\vee}$ is an adelic block.
\end{proof}

The following is a variation of the Braconnier--Vilenkin Theorem in the
context of $\mathsf{LCA}_{F}$. Levin has already given such a result in
\cite[Theorem 3]{MR0310125} for $\mathsf{LCA}_{\mathcal{O}}$, but our
variation for $\mathsf{LCA}_{F}$ differs by a crucial additional finiteness property:

\begin{theorem}
[Braconnier--Vilenkin type result]\label{thm_StrongBracVilenkin}Every object
$G\in\mathsf{LCA}_{F,ab}$ is isomorphic to a restricted product%
\[
G\cong\bigoplus_{\sigma\in I}\mathbb{R}_{\sigma}\oplus\left.  \prod
\nolimits^{\prime}\right.  (G^{P},G^{P}\cap C)\text{,}%
\]
where $I$ is a finite list of real and complex places, $P$ runs over all
finite places of $F$, and $C$ is an arbitrary compact clopen $\mathcal{O}%
$-submodule of $G$. Moreover,

\begin{enumerate}
\item each $G^{P}$ is a topological $P$-torsion group;

\item $G^{P}\hookrightarrow G$ is an admissible monic in $\mathsf{LCA}_{F,ab}
$;

\item $G^{P}\cap C$ is a compact clopen $\mathcal{O}$-submodule of $G^{P}$;

\item each $G^{P}$ is a finite-dimensional $\widehat{F}_{P}$-vector space,
where the vector space structure is canonically given by $P$-adically
completing the $F$-vector space structure of $G^{P}$;

\item In particular, $G^{P}\cap C$ is a full rank $\widehat{\mathcal{O}}_{P}%
$-lattice inside $G^{P}$;

\item $G$ is canonically a topological $\mathbb{A}$-module, where $\mathbb{A}
$ denotes the ad\`{e}les of $F$ as a locally compact ring;

\item $G$ is (non-canonically) isomorphic to its dual $G^{\vee}$.
\end{enumerate}
\end{theorem}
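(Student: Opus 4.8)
The plan is to reduce everything to the structure theory already developed in the excerpt, bootstrapping from Levin's analogue in $\mathsf{LCA}_{\mathcal{O}}$ and the characterization of vector-free adelic blocks as topological torsion objects (Proposition~\ref{prop_TopTorsionIsVectorFreeAdelicBlock}). First I would use Proposition~\ref{prop_DecomposeQuasiAdelicBlock} to split off the vector $\mathcal{O}$-module summand $\bigoplus_{\sigma\in I}\mathbb{R}_{\sigma}$, so that we are reduced to the case of a vector-free adelic block $H=\bigcup_{n\geq 1}\frac{1}{n}C$ with $\bigcap_{n\geq 1}nC=0$. Now regard $H$ as an object of $\mathsf{LCA}_{\mathcal{O}}$; it is topologically torsion there, so Levin's Braconnier--Vilenkin theorem \cite[Theorem 3]{MR0310125} applies and gives a restricted product decomposition $H\cong\left.\prod^{\prime}\right.(H^{P},H^{P}\cap C)$ over the finite places (= nonzero prime ideals) $P$, where $H^{P}$ is the subgroup of topological $P$-torsion elements in the sense of Definition~\ref{def_TopTorsALaLevin}, and $H^{P}\cap C$ is a compact clopen $\mathcal{O}$-submodule. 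Since scalar multiplication by $1/n$ is a homeomorphism on $H$ (as $H\in\mathsf{LCA}_{F}$), each $H^{P}$ is visibly a sub-$F$-vector-space, and the restricted product is one of $F$-modules; this already yields the displayed decomposition together with items (1), (2), (3).

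The heart of the theorem is item (4): each $G^{P}$ is a \emph{finite-dimensional} $\widehat{F}_{P}$-vector space. The key point --- and this is exactly the finiteness improvement over Levin --- is that $G^{P}$ is an $F$-vector space that is topologically $P$-torsion, hence the $F$-action extends to a continuous $\widehat{F}_{P}$-action: indeed, for a uniformizer $\pi$ of $\widehat{F}_{P}$ lying over $P$, the condition $P^{N_U}\cdot g\subseteq U$ for every neighbourhood $U$ of $0$ means precisely that the $\mathcal{O}$-action on $G^{P}$ is continuous for the $P$-adic topology on $\mathcal{O}$, so it extends by continuity to $\widehat{\mathcal{O}}_{P}$, and then to $\widehat{F}_{P}=\widehat{\mathcal{O}}_{P}[1/\pi]$ since $G^{P}$ is already $F$-divisible (multiplication by any nonzero element of $\mathcal{O}$ is a homeomorphism). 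Now $G^{P}\cap C$ is a compact clopen $\widehat{\mathcal{O}}_{P}$-submodule of $G^{P}$, and $G^{P}=\bigcup_{n\geq 1}\frac{1}{n}(G^{P}\cap C)=\bigcup_{m\geq 0}\pi^{-m}(G^{P}\cap C)$; a compact module over the complete DVR $\widehat{\mathcal{O}}_{P}$ with trivial intersection of its $\pi^{m}$-multiples is, by the (classical, non-archimedean) structure theory, a finite free $\widehat{\mathcal{O}}_{P}$-module, because it has no small subgroups iff it is finitely generated --- here I would invoke the no-small-subgroups argument used elsewhere in the paper, noting $\bigcap_m \pi^m(G^P\cap C)=0$ forces $G^{P}\cap C\cong\widehat{\mathcal{O}}_{P}^{\,d_P}$, hence $G^{P}\cong\widehat{F}_{P}^{\,d_P}$, giving (4) and (5) at once (the vector space structure matches by continuity, as claimed). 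I expect this finiteness --- showing the compact clopen $\widehat{\mathcal{O}}_{P}$-lattice is actually finitely generated over the complete DVR --- to be the main obstacle, since it is precisely what fails in Example~\ref{example_Robertson} for $\mathsf{LCA}_{\mathbb{Z}}$ and only works because continuity of the $\widehat{F}_{P}$-scalar action is forced by the $F$-module structure.

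Items (6) and (7) are then formal consequences. For (6): the $\widehat{F}_{P}$-actions on the factors $G^{P}$, together with the $\mathbb{R}$- (resp. $\mathbb{C}$-) action on the archimedean summands, assemble into an action of the restricted product of topological rings $\prod_{\sigma}\widehat{F}_{\sigma}\times\left.\prod^{\prime}\right.(\widehat{F}_{P},\widehat{\mathcal{O}}_{P})=\mathbb{A}$, and continuity is checked on the compact clopen $\prod_{\sigma}(\text{unit ball})\times\prod_{P}(G^{P}\cap C)$ where it is the obvious $\prod_{\sigma}\widehat{\mathcal{O}}_{\sigma}\times\prod_P\widehat{\mathcal{O}}_{P}$-action --- and $\mathbb{A}$ acts compatibly with the transition maps since $\widehat{\mathcal{O}}_{P}\cdot(G^{P}\cap C)\subseteq G^{P}\cap C$. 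For (7): by (4)--(5) we have $G^{P}\cong\widehat{F}_{P}^{\,d_P}$ with $G^{P}\cap C\cong\widehat{\mathcal{O}}_{P}^{\,d_P}$, and $\widehat{F}_{P}$ is self-dual in $\mathsf{LCA}$ with $\widehat{\mathcal{O}}_{P}$ (up to scaling by the different) self-dual as its compact clopen, so each factor $(G^{P},G^{P}\cap C)$ is isomorphic to its Pontryagin dual as a pair; since Pontryagin duality interchanges restricted products $\left.\prod^{\prime}\right.(J_i,K_i)^{\vee}\cong\left.\prod^{\prime}\right.(J_i^{\vee},K_i^{\perp})$ and $\mathbb{R}_{\sigma}$ is self-dual, we get $G\cong G^{\vee}$ non-canonically, recovering also Corollary~\ref{Cor_AdelicBlocksAreClosedUnderDuality} in a more explicit form. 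The only care needed is that the dual $\widehat{\mathcal{O}}_{P}$-lattice $(G^{P}\cap C)^{\perp}$ inside $G^{P\vee}$ is again a full-rank lattice, which holds since the pairing is perfect and $\widehat{F}_{P}$ is locally compact.
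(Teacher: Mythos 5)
Your Steps~1--2 (split off the vector $\mathcal{O}$-module summand via Proposition~\ref{prop_DecomposeQuasiAdelicBlock}, reduce to a vector-free adelic block, then invoke Levin's Braconnier--Vilenkin theorem in $\mathsf{LCA}_{\mathcal{O}}$ and promote the $\widehat{\mathcal{O}}_P$-module structure via the topological $P$-torsion condition) match the paper's Steps~1--2 closely, and your treatment of items (1)--(3) is fine. Your item (7) via factor-wise self-duality is a legitimate shortcut compared to the paper, which outsources it to the later equivalence $\psi^{(\infty)}$.

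However, there is a genuine gap in your argument for the crucial item~(4). You assert that a compact $\widehat{\mathcal{O}}_{P}$-module with $\bigcap_{m}\pi^{m}(G^{P}\cap C)=0$ is a finite free $\widehat{\mathcal{O}}_{P}$-module ``because it has no small subgroups iff it is finitely generated.'' This is false: $\prod_{\omega}\widehat{\mathcal{O}}_{P}$ for any infinite cardinal $\omega$ is compact, has $\bigcap_{m}\pi^{m}\prod_{\omega}\widehat{\mathcal{O}}_{P}=0$, yet is not finitely generated. (And ``no small subgroups'' is not the right criterion here anyway -- every compact $\widehat{\mathcal{O}}_{P}$-module, including $\widehat{\mathcal{O}}_{P}$ itself, is profinite and hence has small subgroups.) You correctly flag that the finiteness must ultimately come from the continuity of the $F$-scalar action, exactly what fails in Example~\ref{example_Robertson}, but your argument never actually deploys that continuity: the hypotheses you use (compactness, trivial intersection) are satisfied in the counterexample.

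The paper closes this gap differently: since $V\in\mathsf{LCA}_{F}$ and the category is stable under Pontryagin duality, both $V$ and $V^{\vee}$ are uniquely divisible LCA groups (``bi-divisible''), and a theorem of S.~L.~Wu then gives a decomposition $V\cong\left.\prod^{\prime}\right.(\mathbb{Q}_{p}^{m_{p}},\mathbb{Z}_{p}^{m_{p}})$ in $\mathsf{LCA}$ with each $m_{p}$ \emph{finite}; finite-dimensionality of $V^{P}$ over $\widehat{F}_{P}$ then follows a fortiori because $V^{P}$ is already finite-dimensional over the smaller field $\mathbb{Q}_{p}$. An alternative route you could have taken -- which would have made your strategy work -- is to show that the extended scalar multiplication $\widehat{F}_{P}\times G^{P}\to G^{P}$ is \emph{jointly} continuous (not merely separately continuous as the definition of $\mathsf{LCA}_{F}$ provides), and then invoke the classical theorem that a locally compact Hausdorff topological vector space over a non-discrete locally compact field is finite-dimensional. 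Either way, some additional input is needed; as written, the proposal does not establish item~(4).
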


\begin{remark}
Property (7) generalizes the self-duality of the ad\`{e}les of a number field.
\end{remark}

We stress that the finite dimension of each $G^{P}$ is absolutely crucial for
the remaining computations. If $\sigma^{\prime}$ denotes an infinite place of
$F$, let us agree to write%
\begin{equation}
G^{\sigma^{\prime}}:=\bigoplus_{\sigma\in I\text{ with }\sigma=\sigma^{\prime
}}\mathbb{R}_{\sigma}\text{,}\label{lqa4}%
\end{equation}
just so that we have a uniform notation, resembling how we can isolate the
topological $P$-torsion parts in the above decomposition.

\begin{proof}
\textit{(Step 1)} Firstly, using Proposition
\ref{prop_DecomposeQuasiAdelicBlock} we split off the vector $\mathcal{O}%
$-module part of $G$, so that we have $G\cong\bigoplus_{\sigma\in I}%
\mathbb{R}_{\sigma}\oplus V$, where $V$ is a vector-free adelic block. We must
have $C\subseteq V$ since the image of $C$ under the projection to the vector
$\mathcal{O}$-module summand must still be compact, and thus must be zero.
Thus, we prove our claim if we prove it for all $G$ in the special situation
of $G=V$.\newline\textit{(Step 2)} We first intepret such a vector-free part
$V$ as an object in $\mathsf{LCA}_{\mathcal{O}}$. By a slight variation of
\cite[(3.8) Lemma]{MR637201} it follows that each $V^{P}$ is closed in $V$,
and thus itself an object in $\mathsf{LCA}_{\mathcal{O}}$ at all; in
particular its subspace topology is indeed locally compact. This already
proves Claims (1) and (2). Now apply Levin's version of the
Braconnier--Vilenkin theorem, \cite[Theorem 3]{MR0310125}, noting that
$M_{0}=0$, $R=0$, $M_{1}=V$, so $M_{3}\cong V$ in the notation \textit{loc.
cit}. This yields an isomorphism in $\mathsf{LCA}_{\mathcal{O}}$, namely for
any compact clopen $\mathcal{O}$-submodule $C\hookrightarrow V$ we get an
isomorphism%
\begin{equation}
V\cong\left.  \prod\nolimits_{P\in\left.  \text{finite places}\right.
}^{\prime}\right.  (V^{P},V^{P}\cap C)\label{lqa3}%
\end{equation}
in $\mathsf{LCA}_{\mathcal{O}}$. We remark that $V^{P}\cap C$ is compact. As
$C$ is open in $V$, $V^{P}\cap C$ is open in $V^{P}$, and thus compact clopen
in $V^{P}$. This shows that the restricted product on the right hand side
makes sense, and also confirms Claim (3). Next, we need to argue that the
right hand side of Equation \ref{lqa3} lies not just in $\mathsf{LCA}$, but in
$\mathsf{LCA}_{F}$: We claim that $V^{P}$ is an $\mathcal{O}$-submodule of
$V$. This is easy: By definition for every element $x\in V^{P}$ and every
neighbourhood of zero, $0\in U\subset V^{P}$, there exists some $N_{U}\geq1$
such that%
\begin{equation}
P^{N_{U}}\cdot x\subseteq U\text{.}\label{lqa2}%
\end{equation}
Since $P$ and thus $P^{N_{U}}$ is an ideal in $\mathcal{O}$, for any
$\alpha\in\mathcal{O}$ we have $P^{N_{U}}\cdot(\alpha\cdot x)=(P^{N_{U}}%
\cdot\alpha)\cdot x\subseteq P^{N_{U}}\cdot x\subseteq U$, so the defining
property of $V^{P}$ still holds for all $\alpha x$ with $\alpha\in\mathcal{O}%
$. Further, we claim that $V^{P}$ is even an $\widehat{\mathcal{O}}_{P}%
$-module. To this end note that the $\mathcal{O}$-module action gives us a
scalar multiplication%
\[
\mathcal{O}\times V^{P}\longrightarrow V^{P}\text{,}%
\]
but since $V^{P}$ is topological $P$-torsion, this canonically extends to a
scalar multiplication%
\begin{equation}
\underset{n\geq1}{\underleftarrow{\lim}}\,\mathcal{O}/P^{n}\times
V^{P}\longrightarrow V^{P}\label{lqa2a}%
\end{equation}
by employing the continuity condition of Equation \ref{lqa2}. However, the
projective limit of rings on the left agrees with $\widehat{\mathcal{O}}_{P}$.
Next, we claim that $V^{P}$ is also closed under division by $p$. Using the
valuation of the discrete valuation ring $\widehat{\mathcal{O}}_{P}$, we just
need to observe that for the prime number $p$ such that $(p)=P\cap\mathbb{Z}$
we have\footnote{We would not have to add $[F:\mathbb{Q}]$. It would be
sufficient to add $e$, where $e$ is the ramification index of $p$ in the
extension $F/\mathbb{Q}$.}%
\[
P^{N_{U}+[F:\mathbb{Q}]}\cdot\frac{x}{p}\subseteq P^{N_{U}}\cdot x\subseteq
U\text{.}%
\]
However, combining divisibility by $p$ and the $\widehat{\mathcal{O}}_{P}%
$-module structure, we learn that $V^{P}$ is canonically an $\widehat
{\mathcal{O}}_{P}[\frac{1}{p}]$-module, but $\widehat{\mathcal{O}}_{P}%
[\frac{1}{p}]=\widehat{F}_{P}$ is already the field of fractions. Thus, each
$V^{P}$ is a $\widehat{F}_{P}$-vector space in a canonical fashion, and the
$F$-scalar multiplication of elements of $V^{P}$ inside $V$ is compatible with
the $\widehat{F}_{P}$-scalar multiplication under the canonical inclusion
$F\subseteq\widehat{F}_{P}$. This proves Claim (4), except for the
finite-dimensionality.\newline\textit{(Step 3) }The previous step has
constructed the isomorphism of Equation \ref{lqa3} in the category
$\mathsf{LCA}_{\mathcal{O}}$. We lift it to $\mathsf{LCA}_{F}$: As we have
seen in the previous step, the right hand side is not just an object in
$\mathsf{LCA}_{\mathcal{O}}$, but is algebraically an $F$-vector space. Since
we already have an isomorphism of the level of the underlying additive groups,
it follows that the right hand side is also $\sigma$-compact. Now any
$\alpha\in\mathcal{O}-\{0\}$ induces a surjective continuous endomorphism,
which by Pontryagin's Open Mapping Theorem must be an open map and thus a
homeomorphism. It follows that the right hand side is also a topological
$F$-vector space and thus an object in $\mathsf{LCA}_{F}$. Now Lemma
\ref{lemma_LCAFtoLCAO} implies that our isomorphism in $\mathsf{LCA}%
_{\mathcal{O}}$ even gives an isomorphism in $\mathsf{LCA}_{F}$. Since
$V\in\mathsf{LCA}_{F}$, and the category is closed under Pontryagin duality,
the dual $V^{\vee}$ also lies in $\mathsf{LCA}_{F}$, and thus both $V$ and
$V^{\vee}$ are divisible as locally compact abelian groups.\ One also says
that $V$ is \textquotedblleft bi-divisible\textquotedblright. By a result of
Sheng L. Wu \cite[Lemma 6.12, (3)$\Rightarrow$(1)]{MR1173767}, it follows that
in $\mathsf{LCA}$ there exists an isomorphism to a restricted product%
\[
V\cong\left.  \prod\nolimits_{p\text{ prime number}}^{\prime}\right.
(\mathbb{Q}_{p}^{m_{p}},\mathbb{Z}_{p}^{m_{p}})\qquad\text{in}\qquad
\mathsf{LCA}%
\]
for some $m_{p}\in\mathbb{Z}_{\geq0}$. So, in particular, each $m_{p}$ is
\textit{finite}. Since, under the forgetful functor $\mathsf{LCA}%
_{F}\rightarrow\mathsf{LCA}$, we must have%
\[
V^{P}\subseteq(\mathbb{Q}_{p}^{m_{p}},\mathbb{Z}_{p}^{m_{p}})\text{,}%
\]
and under $\mathbb{Q}_{p}\subseteq\widehat{F}_{P}$ the vector space structures
are compatible, we conclude that the $\widehat{F}_{P}$-vector space $V^{P}$
must also be finite-dimensional (as it is even finite-dimensional over the
deeper base $\mathbb{Q}_{p}$). This proves Claim (4) in full. As both $V^{P}$
and $C$ are $\mathcal{O}$-modules, so is $V^{P}\cap C$. The same argument as
around Equation \ref{lqa2a} proves that $V^{P}\cap C$ is even a topological
$\widehat{\mathcal{O}}_{P}$-module, and since it is contained in the
finite-dimensional $\widehat{F}_{P}$-vector space $V^{P}$ it can only be
finitely generated free. As $V^{P}\cap C$ is clopen in $V^{P}$, and thus has
discrete quotient, it can only be a full rank lattice. This proves Claim (5).
Claim (6) follows from combining the respective $\widehat{F}_{P}$-vector space
structures for all places. Claim (7) follows, on the level of the underlying
locally compact additive group, from Wu's result that bi-divisibility implies
self-duality \cite[Lemma 6.12, (3)$\Rightarrow$(2)]{MR1173767}. It can be
shown by a direct argument that this self-duality holds in $\mathsf{LCA}_{F}$
and not just in $\mathsf{LCA}$. We leave this to the reader, because a more
elegant proof is to wait since we prove Proposition \ref{prop_ProductCatLCAS}
and Lemma \ref{lemma_psiInftyIsEquivalence} (whose proofs do not rely on
self-duality) and which give an exact equivalence of the category
$\mathsf{LCA}_{F,ab}$ with an alternative categorical description where the
self-duality is evident).
\end{proof}

We need to comment on how arbitrary the choice of $C$ in the previous theorem was.

\begin{lemma}
\label{lemma_LatticeSys1}Suppose $G\in\mathsf{LCA}_{F,ab}$ is vector-free, say%
\begin{equation}
G=\bigcup_{n\geq1}\frac{1}{n}C\label{lqa5b}%
\end{equation}
for $C$ a compact clopen $\mathcal{O}$-submodule, then every compact
$\mathcal{O}$-submodule $\tilde{C}\subseteq G$ is contained in $\frac{1}{m}C$
for a suitable $m\geq1$.
\end{lemma}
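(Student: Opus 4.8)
The plan is to run a simple compactness argument against the directed family of clopen $\mathcal{O}$-submodules $\frac{1}{n}C$. The first step is to observe that each $\frac{1}{n}C$ is clopen in $G$: since $G\in\mathsf{LCA}_{F}$, multiplication by the integer $n$ is a continuous endomorphism of $G$, and $\frac{1}{n}C=\{\,x\in G:nx\in C\,\}$ is exactly its preimage of the clopen subset $C$, hence itself clopen. (Equivalently, as already exploited in the proof of Theorem \ref{thm_StructLCAF}, multiplication by $n$ is a homeomorphism of $G$ because $G$ is an $F$-vector space, so it carries the clopen $C$ to the clopen $nC$ and its inverse carries $C$ to $\frac{1}{n}C$.)

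The second step is to note that the family $\{\frac{1}{n}C\}_{n\geq1}$ is directed under inclusion: if $n\mid m$, write $m=nk$; since $C$ is an $\mathcal{O}$-submodule it is in particular closed under multiplication by $k\in\mathbb{Z}$, so $kC\subseteq C$ and therefore $\frac{1}{n}C=\frac{1}{m}(kC)\subseteq\frac{1}{m}C$. Any two indices $n,n'$ are dominated by their least common multiple, so the family is directed, and by Equation \ref{lqa5b} its union is all of $G$. Now $\tilde{C}$ is a compact subset of $G=\bigcup_{n\geq1}\frac{1}{n}C$, so this open cover admits a finite subcover, and by directedness a single member $\frac{1}{m}C$ already contains $\tilde{C}$, which is the assertion. (Only compactness of $\tilde{C}$ is used; its $\mathcal{O}$-submodule structure is irrelevant here.)

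I do not anticipate a genuine obstacle: the only two points deserving a word are the openness of $\frac{1}{n}C$, which rests entirely on the $F$-vector space structure of $G$ --- precisely the property whose failure is illustrated in Example \ref{example_Robertson} --- and the directedness of the family, which uses only that $C$ is a $\mathbb{Z}$-submodule. Should one prefer to avoid invoking directedness, an alternative is to use that the image of the compactum $\tilde{C}$ in the discrete quotient $G/C$ is finite, choose representatives $g_{1},\dots,g_{r}\in\tilde{C}$, observe that $g_{i}\in\frac{1}{n_{i}}C$ for suitable $n_{i}$ by Equation \ref{lqa5b}, and take $m$ to be a common multiple of the $n_{i}$, so that $\tilde{C}\subseteq C+\{g_{1},\dots,g_{r}\}\subseteq\frac{1}{m}C$; but the directed-cover argument is the shorter of the two.
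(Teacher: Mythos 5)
Your argument is essentially the paper's own: the $\frac{1}{n}C$ form a directed open cover of $G$, compactness of $\tilde{C}$ gives a finite subcover, and directedness reduces to a single member. You merely spell out the openness of $\frac{1}{n}C$ and the directedness, which the paper leaves implicit, and append an optional alternative via $G/C$; both are fine.
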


\begin{proof}
Immediate: $\tilde{C}$ is compact and Equation \ref{lqa5b} defines an open
cover whose opens are arranged as a filtering poset. In order to cover the
compactum $\tilde{C}$, finitely many opens suffice. By the filtering property,
we find a single open which is sufficiently big.
\end{proof}

\begin{lemma}
\label{lemma_LatticeSys2}Suppose $G\in\mathsf{LCA}_{F,ab}$ and $C,C^{\prime}$
are compact clopen $\mathcal{O}$-submodules. Then $C^{P}=C^{\prime P}$ for all
finite places with at most finitely many exceptions.
\end{lemma}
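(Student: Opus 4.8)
The plan is to show that $C$ and $C^{\prime}$ are \emph{commensurable} as $\mathcal{O}$-modules, i.e.\ that each is contained in a fixed integer multiple of the other, and then to observe that at all but finitely many finite places that integer becomes a $P$-adic unit, so the corresponding topological $P$-torsion parts cannot differ.

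First I would record, for an arbitrary compact clopen $\mathcal{O}$-submodule $D$ of $G$ and any finite place $P$, that the topological $P$-torsion module $D^{P}$ of $D$ coincides with $D\cap G^{P}$: since $D$ is clopen in $G$, a neighbourhood basis of $0$ in $D$ is the one induced from $G$, so the defining condition of Definition \ref{def_TopTorsALaLevin} holds for an element of $D$ whether tested in $D$ or in $G$. Applying Theorem \ref{thm_StrongBracVilenkin} to $G$ with the compact clopen submodule $C$, and again with $C^{\prime}$, we learn that $G^{P}$ is a finite-dimensional $\widehat{F}_{P}$-vector space and that both $C\cap G^{P}$ and $C^{\prime}\cap G^{P}$ are full-rank $\widehat{\mathcal{O}}_{P}$-lattices in it; in particular each is an $\widehat{\mathcal{O}}_{P}$-submodule of $G^{P}$.

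Next, note that $C\cap C^{\prime}$ is again a compact clopen $\mathcal{O}$-submodule of $G$ (it is closed in the compactum $C$ and open as an intersection of two open subgroups), hence it has finite index in each of $C$ and $C^{\prime}$; put $a:=[C:C\cap C^{\prime}]$ and $b:=[C^{\prime}:C\cap C^{\prime}]$, so that $aC\subseteq C\cap C^{\prime}\subseteq C^{\prime}$ and, symmetrically, $bC^{\prime}\subseteq C$. Now let $P$ be any finite place lying over a rational prime $p$ with $p\nmid ab$; then $a$ and $b$ are units in $\widehat{\mathcal{O}}_{P}$. Intersecting $aC\subseteq C^{\prime}$ with the $\mathcal{O}$-submodule $G^{P}$ gives $a\,(C\cap G^{P})\subseteq C^{\prime}\cap G^{P}$, and since $C\cap G^{P}$ is an $\widehat{\mathcal{O}}_{P}$-module and $a$ acts invertibly there, $a\,(C\cap G^{P})=C\cap G^{P}$, whence $C^{P}=C\cap G^{P}\subseteq C^{\prime}\cap G^{P}=C^{\prime P}$. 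The reverse inclusion follows in the same way from $bC^{\prime}\subseteq C$, so $C^{P}=C^{\prime P}$ for every such $P$. Since $ab$ is a positive integer it has only finitely many prime divisors, and only finitely many primes of $\mathcal{O}$ lie over each of them, so the set of exceptional places is finite.

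There is no serious obstacle here; the two points that deserve a moment's care are the identification $C^{P}=C\cap G^{P}$ and the fact that $C\cap G^{P}$ is genuinely an $\widehat{\mathcal{O}}_{P}$-module rather than just an additive subgroup — both supplied by Theorem \ref{thm_StrongBracVilenkin} — since the whole argument rests on multiplication by a $P$-adic unit being an automorphism of these lattices.
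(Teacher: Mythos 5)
Your proof is correct, and it takes a slightly different route from the paper's. The paper reduces to a common \emph{superset}: it writes $G=\bigcup_{n\geq 1}\frac{1}{n}\tilde{C}$ using the defining presentation of an adelic block, invokes Lemma \ref{lemma_LatticeSys1} to squeeze $C$ and $C'$ into a single $\frac{1}{n}\tilde{C}$, and then shows $(\frac{1}{n}\tilde{C})^P=C^P$ and $(\frac{1}{n}\tilde{C})^P=C'^{P}$ away from the primes dividing the respective finite quotient orders. You instead compare through the common \emph{subset} $C\cap C'$, which is compact and clopen, hence of finite index $a$ in $C$ and $b$ in $C'$, and then invert $a$ and $b$ $P$-adically. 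Both arguments are of the commensurability type and both ultimately rest on the same nontrivial input, namely that each $C\cap G^P$ is an $\widehat{\mathcal{O}}_P$-module so that prime-to-$P$ integers act invertibly; the paper phrases this as ``$N$ acts as a unit in the $\widehat{\mathcal{O}}_P$-module structure,'' while you cite Theorem \ref{thm_StrongBracVilenkin}(5) directly. Your version is marginally more self-contained in that it dispenses with Lemma \ref{lemma_LatticeSys1} and the choice of an auxiliary $\tilde{C}$; the paper's version has the small advantage of also establishing, en route, a statement ($C^P=(\frac1n\tilde C)^P$ for almost all $P$) relating an arbitrary compact clopen to the presenting one, which fits its bookkeeping in the surrounding lemmas. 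One further nit: like the paper, you could first reduce to $G$ vector-free (projecting a compact $C$ to the connected $\bigoplus\mathbb{R}_\sigma$ summand kills it), though since you only ever look at $G^P$ for finite $P$ and invoke Theorem \ref{thm_StrongBracVilenkin} as stated for general adelic blocks, nothing actually goes wrong.
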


\begin{proof}
We may assume that $G$ is vector-free, as before. \textit{(Step 1)} Say
$G=\bigcup_{n\geq1}\frac{1}{n}\tilde{C}$ for $\tilde{C}$ a compact clopen
$\mathcal{O}$-submodule. By the previous lemma, $C\subseteq\frac{1}{n}%
\tilde{C}$ for some $n$. As $C$ is open in $G$, it is open in $\frac{1}%
{n}\tilde{C}$ and thus the quotient $\frac{1}{n}\tilde{C}/C$ is discrete.
However, $\tilde{C}$ is compact, and so this quotient is also compact and
therefore finite. Let $N$ be its cardinality. Then for all $g\in\frac{1}%
{n}\tilde{C}$ we have $Ng\in C$. However, $N$ has only finitely many prime
factors, so for all but finitely many $P$, if $g\in\frac{1}{n}\tilde{C}^{P}$,
$N$ acts as unit in the $\widehat{\mathcal{O}}_{P}$-module structure. Hence,
except for finitely many $P$, we have $\left(  \frac{1}{n}\tilde{C}/C\right)
^{P}=0$ and therefore $(\frac{1}{n}\tilde{C})^{P}=C^{P}$. \textit{(Step 2)}
Now, for the given $C,C^{\prime}$ in our claim, pick $n$ large enough that we
jointly have $C,C^{\prime}\subseteq\frac{1}{n}\tilde{C}$ and apply Step 1 to
each of these inclusions separately. The set of places where possibly
$C^{P}\neq C^{\prime P}$ then is the union of the possible sets of exceptions
for both steps, but that is still a finite set.
\end{proof}

With these theorems available, we may now summarize all our structural results
as follows.

\begin{theorem}
[Principal Structure Theorem]\label{thm_PrincipalStructureTheoremForLCAF}Let
$G\in\mathsf{LCA}_{F}$ be arbitrary. Then there exists a direct sum
decomposition%
\[
G\cong V\oplus A\oplus D\text{,}%
\]
where $V$ is a compact $F$-vector space, $A$ is an adelic block, and $D$ is a
discrete $F$-vector space. Pontryagin duality exchanges the full subcategories
of compact vs. discrete $F$-vector spaces, while adelic blocks are closed
under Pontryagin duality. Even more strongly, all adelic blocks are self-dual,
i.e. $A\simeq A^{\vee}$ (non-canonically).
\end{theorem}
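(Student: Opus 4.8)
The plan is to obtain this statement by simply packaging results already in hand. Given an arbitrary $G\in\mathsf{LCA}_{F}$, I would first invoke Theorem \ref{thm_StructLCAF} to produce a clopen quasi-adelic block $Q\hookrightarrow G$ together with a splitting $G\cong Q\oplus D$, where $D$ is a discrete $F$-vector space. Next I would apply Proposition \ref{prop_DecomposeQuasiAdelicBlock} to $Q$, which yields a canonical decomposition $Q\cong V\oplus A$ in $\mathsf{LCA}_{F}$ with $V=\bigcap_{n\geq1}nC$ a compact $F$-vector space and $A$ an adelic block. Splicing the two decompositions gives $G\cong V\oplus A\oplus D$, which is the first assertion. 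It is worth one sentence to note that $V$ and $D$ are genuinely $F$-vector spaces and not merely $\mathcal{O}$-modules: this is automatic because both Theorem \ref{thm_StructLCAF} and Proposition \ref{prop_DecomposeQuasiAdelicBlock} are formulated and proven inside $\mathsf{LCA}_{F}$, so all the splittings are $F$-linear by construction.

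For the duality statements I would argue as follows. The exchange of compact and discrete $F$-vector spaces under $(-)^{\vee}$ is immediate from Proposition \ref{prop_LCARisQuasiAbelianExact}: that functor carries compact $F$-modules to discrete ones and conversely, and since $\mathsf{LCA}_{F}$ is closed under $(-)^{\vee}$, the dual of a compact $F$-vector space is again an $F$-vector space, hence a discrete $F$-vector space, and symmetrically. That adelic blocks are closed under Pontryagin duality is exactly Corollary \ref{Cor_AdelicBlocksAreClosedUnderDuality}. The non-canonical self-duality $A\simeq A^{\vee}$ of every adelic block is part (7) of Theorem \ref{thm_StrongBracVilenkin}; alternatively one can reprove it by splitting off the vector summands via Proposition \ref{prop_DecomposeQuasiAdelicBlock}(2), using $\mathbb{R}_{\sigma}^{\vee}\cong\mathbb{R}_{\overline{\sigma}}\cong\mathbb{R}_{\sigma}$, and invoking the bi-divisibility/self-duality criterion of Wu cited in the proof of Theorem \ref{thm_StrongBracVilenkin} for the vector-free part.

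Since every ingredient has already been established, there is no real obstacle here; the content of the theorem is entirely in the earlier structural results, and I would present the proof in just a few lines citing Theorem \ref{thm_StructLCAF}, Proposition \ref{prop_DecomposeQuasiAdelicBlock}, Proposition \ref{prop_LCARisQuasiAbelianExact}, Corollary \ref{Cor_AdelicBlocksAreClosedUnderDuality}, and Theorem \ref{thm_StrongBracVilenkin}. If anything, the mild subtlety is bookkeeping: making sure the various decompositions ($G=Q\oplus D$, then $Q=V\oplus A$, then optionally $A=\bigoplus_{\sigma}\mathbb{R}_{\sigma}\oplus A'$) are compatible and that one does not accidentally claim canonicity where only the $V$-summand (as $\bigcap_{n}nC$) and the choice of $D$ up to the section are canonical.
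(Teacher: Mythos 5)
Your proposal is correct and follows essentially the same route as the paper: both invoke Theorem \ref{thm_StructLCAF} to get $G\cong Q\oplus D$, then Proposition \ref{prop_DecomposeQuasiAdelicBlock} to split $Q\cong V\oplus A$, and then settle the duality claims via Theorem \ref{thm_StrongBracVilenkin}(7) and Corollary \ref{Cor_AdelicBlocksAreClosedUnderDuality}. The only cosmetic difference is that you make explicit the appeal to Proposition \ref{prop_LCARisQuasiAbelianExact} for the compact/discrete exchange, which the paper leaves implicit.
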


\begin{proof}
By Theorem \ref{thm_StructLCAF} we have $G\cong Q\oplus D$ with $Q$
quasi-adelic, and by Proposition \ref{prop_DecomposeQuasiAdelicBlock} we may
write $Q\cong V\oplus A$ as in the claim. Finally, by Theorem
\ref{thm_StrongBracVilenkin} all adelic blocks are self-dual individually as
objects. Hence, the entire category $\mathsf{LCA}_{F,ab}$ is closed under
Pontryagin duality. Alternatively, the last claim also follows from Corollary
\ref{Cor_AdelicBlocksAreClosedUnderDuality} of course.
\end{proof}

\section{$K$-theory of adelic blocks}

We recall that if $(\mathsf{C}_{i})_{i\in I}$ (for some index set $I$) are
exact categories, we can define a \emph{product exact category} $\prod_{i\in
I}\mathsf{C}_{i}$ as follows: Its objects are tuples $\underline{X}%
:=(X_{i})_{i\in I}$ with $X_{i}\in\mathsf{C}_{i}$, and its morphisms
$\underline{f}:\underline{X}\longrightarrow\underline{Y}$ are morphisms
$\underline{f}=(f_{i})_{i\in I}$ with $f_{i}\in\operatorname*{Hom}%
\nolimits_{\mathsf{C}_{i}}(X_{i},Y_{i})$. Finally, a kernel-cokernel sequence
$\underline{X^{\prime}}\longrightarrow\underline{X}\longrightarrow
\underline{X^{\prime\prime}}$ is defined to be exact if and only if all
levelwise sequences $X_{i}^{\prime}\hookrightarrow X_{i}\twoheadrightarrow
X_{i}^{\prime\prime}$ are exact with respect to the exact structure of
$\mathsf{C}_{i}$. We leave it to the reader to check that this defines an
exact structure on the category $\prod_{i\in I}\mathsf{C}_{i}$. Clearly all
the projection functors $p_{j}:\prod_{i\in I}\mathsf{C}_{i}\longrightarrow
\mathsf{C}_{j}$ (for $j\in I$) are exact.

We will now define a functor (at first only on objects)%
\begin{equation}
\Xi:\mathsf{LCA}_{F,ab}\longrightarrow\prod_{P\in\text{places}}\mathsf{Vect}%
_{fd}(\widehat{F}_{P})\text{,}\qquad\qquad V\longmapsto(V^{P})_{P}\label{lqa5}%
\end{equation}
where $P$ runs over all places of $F$, both finite and infinite (so if $P$ is
a finite place, $V^{P}$ is the topological $P$-torsion part as in Theorem
\ref{thm_StrongBracVilenkin}, and if $P$ is an infinite place, we follow the
convention of Equation \ref{lqa4}).\ Equation \ref{lqa5} describes a mapping
on objects. Note that it is only well-defined since we know that the vector
$\mathcal{O}$-module part in $V$ has only finitely many summands, and since
Theorem \ref{thm_StrongBracVilenkin} assures us that the topological
$P$-torsion part is indeed a finite-dimensional $\widehat{F}_{P}$-vector space.

Furthermore, observe that $\Xi$ is compatible under duality: For this it
suffices to observe the following agreement:%
\[%
\begin{array}
[c]{c}%
\operatorname*{Hom}\nolimits_{cts}(\widehat{F}_{P},\mathbb{T})\simeq
\widehat{F}_{P}\\
\qquad\\
\text{\textit{(Pontryagin dual)}}%
\end{array}
\qquad\qquad\text{versus}\qquad\qquad%
\begin{array}
[c]{c}%
\operatorname*{Hom}\nolimits_{\widehat{F}_{P}}(\widehat{F}_{P},\widehat{F}%
_{P})\simeq\widehat{F}_{P}\\
\qquad\\
\text{\textit{(linear dual)}}%
\end{array}
\text{,}%
\]
i.e. both duality functors do the same (despite their very different definitions!).

\begin{remark}
[Local self-duality]In many ways this is a very remarkable fact. It exploits
the Pontryagin self-duality of local fields. This fact plays a big r\^{o}le in
number theory, e.g. in Tate's thesis. Note that for other types of fields,
e.g. the number field $F$ itself, $\operatorname*{Hom}\nolimits_{cts}%
(F,\mathbb{T})$ and $\operatorname*{Hom}\nolimits_{F}(F,F)$ are drastically
different. The former is uncountable, while the latter is countable.
\end{remark}

We still need to define $\Xi$ on morphisms: Let $f:V\rightarrow V^{\prime}$ be
a morphism of objects in $\mathsf{LCA}_{F,ab}$, say%
\[
V\cong\bigoplus_{\sigma\in I}\mathbb{R}_{\sigma}\oplus\left.  \prod
\nolimits^{\prime}\right.  (V^{P},V^{P}\cap C)\qquad V^{\prime}\cong%
\bigoplus_{\sigma\in I^{\prime}}\mathbb{R}_{\sigma}\oplus\left.
\prod\nolimits^{\prime}\right.  (V^{\prime P},V^{\prime P}\cap C^{\prime
})\text{.}%
\]
Note that the vector $\mathcal{O}$-module parts are the connected components
of $V$ (resp. $V^{\prime}$) and thus $f$ maps the vector $\mathcal{O}$-module
summand of $V$ to the one of $V^{\prime}$. Compatibility with the $F $-vector
space structure implies that for all infinite places $P$ the $\widehat{F}_{P}%
$-vector space summand among these can only map to the $\widehat{F}_{P}%
$-vector space summand in $V^{\prime}$ for the same place $P$. Similarly,
using that all non-zero primes in $\mathcal{O}$ are pairwise coprime, we see
that $f$ can only map $V^{P}$ to $V^{\prime P}$ for the same prime $P$. Thus,
$f$ must actually be a diagonal morphism, we may (formally) write
$f=\sum_{P\in\text{places}}f^{P}$, and note that by projection and inclusion
each $f^{P}$ is uniquely determined, and defines a morphism%
\begin{equation}
f^{P}:V^{P}\longrightarrow V^{\prime P}\qquad\text{in}\qquad\mathsf{LCA}%
_{F}\text{.}\label{lqa6}%
\end{equation}
A continuity argument exactly as in the proof of Theorem
\ref{thm_StrongBracVilenkin} (cf. the argument around Equation \ref{lqa2a})
shows that each $f^{P}$ can be promoted to a continuous $\widehat{F}_{P}%
$-vector space morphism. Moreover, since we know that $V^{P}$, $V^{\prime P}$
are finite-dimensional $\widehat{F}_{P}$-vector spaces, $f^{P}$ indeed defines
a morphism in $\mathsf{Vect}_{fd}(\widehat{F}_{P})$. This completes the
definition of the functor $\Xi$.

Next, note that the same argument shows that if $f$ is an admissible monic,
then each $f^{P}$ is an admissible monic (i.e. injective) in $\mathsf{Vect}%
_{fd}(\widehat{F}_{P})$, and correspondingly if $f$ is an admissible epic,
then each $f^{P}$ is an admissible epic (i.e. surjective) in $\mathsf{Vect}%
_{fd}(\widehat{F}_{P})$. Since by definition the exact sequences of the
product exact category are just the levelwise exact ones, we obtain:

\begin{lemma}
The functor $\Xi$ of Equation \ref{lqa5} is exact and preserves duality.
\end{lemma}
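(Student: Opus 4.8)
The plan is to prove the two assertions separately, exactness first. Take a short exact sequence $G'\overset{f}{\hookrightarrow}G\overset{g}{\twoheadrightarrow}G''$ in $\mathsf{LCA}_{F,ab}$. By Theorem \ref{thm_StrongBracVilenkin} each of the three objects decomposes as a vector $\mathcal{O}$-module part plus a restricted product over the finite places, and — as already spelled out above the statement — $f$ and $g$ are necessarily \emph{diagonal}: they carry the $P$-component into the $P$-component for every place $P$, restricting to morphisms $f^{P}\colon (G')^{P}\to G^{P}$ and $g^{P}\colon G^{P}\to (G'')^{P}$ in $\mathsf{Vect}_{fd}(\widehat{F}_{P})$. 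I would then check levelwise exactness of $(G')^{P}\to G^{P}\to (G'')^{P}$. Since admissible monics in the quasi-abelian category $\mathsf{LCA}_{F}$ are closed injections, $f^{P}$ is injective, hence an admissible monic in the abelian category $\mathsf{Vect}_{fd}(\widehat{F}_{P})$. For surjectivity of $g^{P}$: given $y\in (G'')^{P}$ pick a preimage $x=(x^{Q})_{Q}\in G$ using that $g$ is an open surjection; since $g$ is diagonal, $g^{Q}(x^{Q})$ is the $Q$-component of $y$, which vanishes for $Q\neq P$, so $x^{P}\in G^{P}$ already maps onto $y$, whence $g^{P}$ is surjective. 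Finally $\ker(g^{P})=\ker(g)\cap G^{P}=\operatorname*{im}(f)\cap G^{P}=(G')^{P}$, because the topological $P$-torsion part of a closed submodule equipped with the subspace topology is the intersection with the ambient $P$-torsion part. For the infinite places the same argument applies verbatim, with the $P$-torsion part replaced by the connected component as in Equation \ref{lqa4}. Since the exact sequences of the product exact category are by definition the levelwise exact ones, $\Xi$ is exact.

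For duality I would invoke the self-duality of local fields recorded above: fixing a nontrivial additive character $\widehat{F}_{P}\to\mathbb{T}$ (the composite of the trace to $\mathbb{Q}_{p}$ with the standard character) identifies $\operatorname*{Hom}\nolimits_{cts}(\widehat{F}_{P},\mathbb{T})\simeq\widehat{F}_{P}$, and the $\widehat{F}_{P}$-module structure thereby acquired is exactly that of the linear dual $\operatorname*{Hom}\nolimits_{\widehat{F}_{P}}(\widehat{F}_{P},\widehat{F}_{P})$. Concretely, Pontryagin duality sends a restricted product $\left.\prod\nolimits^{\prime}\right.(G^{P},G^{P}\cap C)$ to $\left.\prod\nolimits^{\prime}\right.((G^{P})^{\vee},(G^{P}\cap C)^{\perp})$, the annihilator of a full rank $\widehat{\mathcal{O}}_{P}$-lattice again being a full rank lattice, and $(G^{P})^{\vee}$ is canonically the $\widehat{F}_{P}$-linear dual of $G^{P}$; together with $\mathbb{R}_{\sigma}^{\vee}\simeq\mathbb{R}_{\sigma}$ at the infinite places (and the same place $\sigma$) this yields a natural isomorphism $\Xi\circ(-)^{\vee}\cong(-)^{\vee}\circ\Xi$, where the right-hand $(-)^{\vee}$ denotes the levelwise $\widehat{F}_{P}$-linear dual. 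One then checks this is compatible with the respective double-dual (reflexivity) isomorphisms, so $\Xi$ preserves duality in the sense of \cite[Definition 2.1]{MR2600285}.

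The main obstacle — small, but it is where the genuine content sits — is precisely this compatibility of the two dualities at each finite place: one must verify not merely that $\operatorname*{Hom}\nolimits_{cts}(\widehat{F}_{P},\mathbb{T})$ and $\operatorname*{Hom}\nolimits_{\widehat{F}_{P}}(\widehat{F}_{P},\widehat{F}_{P})$ are abstractly $\widehat{F}_{P}$-isomorphic, but that the identification is natural enough to upgrade to a natural isomorphism of functors. Naturality is in fact automatic, since any endomorphism of $\widehat{F}_{P}$ in $\mathsf{Vect}_{fd}(\widehat{F}_{P})$ is multiplication by a scalar and both dual functors turn it into multiplication by the \emph{same} scalar; once this is in place the compatibility extends over restricted products by the same continuity argument as in the proof of Theorem \ref{thm_StrongBracVilenkin}. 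Everything else is bookkeeping with the diagonal decomposition of morphisms established above, and the exactness half is essentially formal.
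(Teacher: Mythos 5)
Your proof is correct and follows the same essential route as the paper: decompose morphisms diagonally as $f=\sum_{P}f^{P}$ (using Theorem \ref{thm_StrongBracVilenkin} and the preceding discussion) and verify everything place by place. The paper leaves the lemma unproved as a consequence of the paragraph above it, which only records that admissible monics go to levelwise injections and admissible epics to levelwise surjections. Strictly speaking that alone does not give exactness of an additive functor to an abelian category (e.g. $A\mapsto A/\mathrm{tors}(A)$ on abelian groups preserves monics and epics but not exactness), so the step you add — $\ker(g^{P})=\ker(g)\cap G^{P}=\operatorname*{im}(f)\cap G^{P}=\operatorname*{im}(f^{P})$, justified by the fact that the topological $P$-torsion condition is a condition on the subspace topology and $f$ is a closed embedding — is genuine content that tightens the paper's sketch. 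A slightly slicker packaging of the same idea: for each $P$, the $P$-component $G^{P}$ is a direct summand of $G$ inside $\mathsf{LCA}_{F}$ and, because morphisms are diagonal, the whole short exact sequence $(G')^{P}\hookrightarrow G^{P}\twoheadrightarrow (G'')^{P}$ is a direct summand of the original one, hence exact; this avoids element-chasing in the restricted product. Two small remarks on your wording: you invoke that $g$ is an \emph{open} surjection to lift $y$, but plain surjectivity already suffices; and it is cleanest to phrase the natural isomorphism for duality directly as $\phi\mapsto\psi\circ\phi\colon\operatorname*{Hom}_{\widehat{F}_{P}}(V,\widehat{F}_{P})\to\operatorname*{Hom}_{cts}(V,\mathbb{T})$ for a fixed nontrivial additive character $\psi$, which is manifestly natural in $V$ and not only on endomorphisms of the generator $\widehat{F}_{P}$. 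Your identification of the compatibility with reflexivity isomorphisms as the place where real content sits is a fair observation; the paper does not spell it out either.
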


Moreover, we claim:

\begin{lemma}
\label{Lemma_XiIsFaithful}The functor $\Xi$ is faithful.
\end{lemma}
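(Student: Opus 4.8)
The plan is to reduce faithfulness to a density argument inside a restricted product. Since $\Xi$ is additive and all the relevant $\operatorname{Hom}$-sets are abelian groups, it is enough to show that a morphism $f\colon V\to V'$ in $\mathsf{LCA}_{F,ab}$ with $f^{P}=0$ for every place $P$ is already the zero morphism.

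First I would apply Theorem \ref{thm_StrongBracVilenkin} to write $V\cong\bigoplus_{\sigma\in I}\mathbb{R}_{\sigma}\oplus\left.\prod\nolimits^{\prime}\right.(V^{P},V^{P}\cap C)$ and likewise for $V'$. The discussion preceding the lemma already establishes that $f$ is ``diagonal'': it sends the connected component $\bigoplus_{\sigma\in I}\mathbb{R}_{\sigma}$ of $V$ into the connected component of $V'$ (in fact each $\mathbb{R}_{\sigma}$ into the matching infinite-place summand), and it sends each topological $P$-torsion subgroup $V^{P}\subseteq V$ into $V^{\prime P}\subseteq V'$, using pairwise coprimality of the non-zero primes of $\mathcal{O}$. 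Hence on the internal algebraic direct sum $D:=\bigoplus_{\sigma\in I}\mathbb{R}_{\sigma}\oplus\bigoplus_{P}V^{P}\subseteq V$ — in which every element has all but finitely many components zero — the map $f$ restricts to $\bigoplus_{\sigma}f^{\sigma}\oplus\bigoplus_{P}f^{P}$. Under our hypothesis every summand here vanishes, so $f|_{D}=0$.

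It then remains to see that $D$ is dense in $V$; this is the only substantive point, and it is the classical fact that the algebraic direct sum is dense in a restricted product. Concretely, the archimedean part is a genuine topological summand and can be ignored, while for the restricted product factor one takes $v=(v_{P})_{P}$ with $v_{P}\in V^{P}\cap C$ for all $P$ outside a finite set $S_{0}$, and observes that the finite truncations of $v$ (agreeing with $v$ on a finite set $T\supseteq S_{0}$ and zero elsewhere) lie in $D$ and converge to $v$, since a basic open neighbourhood of $v$ in the restricted product topology constrains only finitely many coordinates and lets the remaining ones run over $V^{P}\cap C$. Finally, $V'$ is Hausdorff — being a locally compact abelian group — and $f$ is continuous, so $f|_{D}=0$ forces $f=0$. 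No serious obstacle is expected here: once diagonality is granted (as it is in the text), everything reduces to this density observation together with continuity and the Hausdorff property.
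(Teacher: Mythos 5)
Your argument is correct, but it takes a mildly different route than the paper. The paper's proof is purely algebraic and pointwise: it invokes the existence of $P$-coordinates, i.e.\ the set-theoretic fact that a restricted product embeds into the plain product, so an element of $V'$ vanishes iff all its coordinate projections vanish. Given the diagonality of $f$ established before the lemma, one has $\pi_{P}(f(v))=f^{P}(\pi_{P}(v))=0$ for every $v\in V$ and every place $P$, hence $f(v)=0$ for each $v$ individually — no topology is used at all, which is exactly the point the paper makes in its footnote. You instead observe that $f$ vanishes on the algebraic direct sum $D=\bigoplus_{\sigma}\mathbb{R}_{\sigma}\oplus\bigoplus_{P}V^{P}$ and then invoke density of $D$ in the restricted product together with continuity of $f$ and Hausdorffness of $V'$. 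This is also valid, and the density and continuity facts you use are all available; it is simply a longer chain of reasoning where the paper gets away with a one-line algebraic computation. Both arguments rest on the same key input — the diagonality of $f$ across places — so you have correctly identified where the real content lies.
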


\begin{proof}
Since the $\operatorname*{Hom}$-sets are abelian groups, it suffices to prove
that if some $f:V\rightarrow V^{\prime}$ has the property that $f^{P}=0$ for
all places $P$, then $f$ must have been zero. This result follows immediately
from the existence of $P$-coordinates:\footnote{The idea of $P$-coordinates is
easy to explain: We just exploit that a restricted product can always be
realized as a subset of the na\"{\i}ve plain product.\ The projection to the
individual factors are the $P$\emph{-coordinates}. While this is not a useful
technique to handle topological aspects, it is sufficient to test whether an
element is zero.} For $F=\mathbb{Q}$ this is proven in
\cite[(3.11)\ Definition]{MR637201} and our assertion is \cite[(3.12)\ Lemma]%
{MR637201}. We leave it to the reader to adapt the construction of such
coordinates to primes of $\mathcal{O}$ as opposed to the primes of
$\mathbb{Z}$.
\end{proof}

However, the functor $\Xi$ is not full. We refer the reader to Example
\ref{example_XiNotFull} for a concrete case.

\begin{definition}
We write $\mathsf{LCA}_{F,vfab}$ for the full subcategory of vector-free
adelic blocks inside $\mathsf{LCA}_{F,ab}$.
\end{definition}

\begin{lemma}
$\mathsf{LCA}_{F,vfab}$ is extension-closed in $\mathsf{LCA}_{F,ab}$ (and
$\mathsf{LCA}_{F}$). In particular, it is a fully exact subcategory of
$\mathsf{LCA}_{F,ab}$ (and $\mathsf{LCA}_{F}$).
\end{lemma}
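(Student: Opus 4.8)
The plan is to prove the stronger statement that $\mathsf{LCA}_{F,vfab}$ is closed under extensions inside $\mathsf{LCA}_F$; since $\mathsf{LCA}_{F,vfab}\subseteq\mathsf{LCA}_{F,ab}\subseteq\mathsf{LCA}_F$ and exactness in all three categories is detected on the underlying locally compact groups, this automatically gives closure under extensions in $\mathsf{LCA}_{F,ab}$ as well, and the ``fully exact subcategory'' assertion then follows formally from \cite[Lemma 10.20]{MR2606234}. So let $G'\hookrightarrow G\twoheadrightarrow G''$ be an exact sequence in $\mathsf{LCA}_F$ with $G',G''\in\mathsf{LCA}_{F,vfab}$; I must show $G\in\mathsf{LCA}_{F,vfab}$. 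The strategy is to run the coarse structure theory on $G$ and then argue that every summand which is \emph{not} a vector-free adelic block must vanish, because it would witness either a nonzero map into a discrete object or a nonzero map from a connected object into a totally disconnected one.

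First I would apply Theorem \ref{thm_StructLCAF} to write $G\cong Q\oplus D$ with $Q$ quasi-adelic and $D$ a discrete $F$-vector space, and show $D=0$. Consider the projection $G\twoheadrightarrow D$. Its restriction to $G'\subseteq G$ is a morphism from a quasi-adelic block (every vector-free adelic block is quasi-adelic) to a discrete object, hence zero by Lemma \ref{lemma_nomapsfromqabtodiscrete}(1); therefore $G\twoheadrightarrow D$ annihilates $G'$ and descends to a surjection $G''\twoheadrightarrow D$, which for the same reason (now $G''$ is quasi-adelic) is also zero. Thus $D=0$, so $G=Q$ is quasi-adelic, and Proposition \ref{prop_DecomposeQuasiAdelicBlock} gives a decomposition $G\cong V\oplus\bigoplus_{\sigma\in I}\mathbb{R}_\sigma\oplus A'$ with $V$ a compact $F$-vector space, $I$ a finite list of real and complex places, and $A'$ a vector-free adelic block. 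It remains to prove $V=0$ and $I=\varnothing$.

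Here I would pass to connected components. By Lemma \ref{lemma_LCAFCIsConnected} the compact $F$-vector space $V$ is connected, each $\mathbb{R}_\sigma$ is connected, and $A'$ is totally disconnected by Lemma \ref{lemma_StructVectorFreeAdelicBlock}; hence the connected component of the identity of $G$ is exactly $G_0=V\oplus\bigoplus_{\sigma\in I}\mathbb{R}_\sigma$, and the conditions $V=0$, $I=\varnothing$ are equivalent to $G_0=0$. Now look at the admissible epic $\pi\colon G\twoheadrightarrow G''$: the image $\pi(G_0)$ is a connected subgroup of $G''$, which is totally disconnected (again Lemma \ref{lemma_StructVectorFreeAdelicBlock}), so $\pi(G_0)=0$, i.e. $G_0\subseteq\ker\pi=G'$ with the subspace topology (admissible monics in $\mathsf{LCA}_F$ being closed embeddings). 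But $G'$ is totally disconnected, so its subgroup $G_0$ is simultaneously connected and totally disconnected, forcing $G_0=0$. Therefore $G\cong A'$ is a vector-free adelic block, which completes the proof.

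Everything is routine once the structure theorems (Theorem \ref{thm_StructLCAF}, Proposition \ref{prop_DecomposeQuasiAdelicBlock}) and the vanishing inputs (Lemma \ref{lemma_nomapsfromqabtodiscrete} for discrete targets, Lemma \ref{lemma_StructVectorFreeAdelicBlock} for total disconnectedness) are available; the only points needing a little care are the legitimacy of the connected-component step — that $\ker\pi$ carries the subspace topology, so that ``$G_0$ is a connected totally disconnected group'' really is a contradiction — and the descent of $G\twoheadrightarrow D$ through $G''$, both of which are standard in the quasi-abelian category $\mathsf{LCA}_F$. (An alternative but less self-contained route would combine Proposition \ref{prop_TopTorsionIsVectorFreeAdelicBlock} with the classical fact that topologically torsion locally compact groups are closed under extensions; the route above stays within results already established in the paper.)
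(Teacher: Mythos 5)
Your proof is correct, and it takes a genuinely different (and more self-contained) route than the paper's. The paper's own argument is a one-liner: it applies Lemma \ref{lemma_StructVectorFreeAdelicBlock} to see that $G'$ and $G''$ are topological torsion $\mathcal{O}$-modules, invokes the external result \cite[(3.17)]{MR637201} that topological torsion locally compact groups are closed under extensions to conclude that $G$ is topological torsion, and then applies Proposition \ref{prop_TopTorsionIsVectorFreeAdelicBlock} to identify $G$ as a vector-free adelic block. This is precisely the alternative you dismiss in your parenthetical. Your route instead reruns the paper's internal structure theory (Theorem \ref{thm_StructLCAF} to split off a discrete part $D$, then Proposition \ref{prop_DecomposeQuasiAdelicBlock} to split off the compact $F$-vector space and the vector $\mathcal{O}$-module summands) and kills each unwanted summand by a vanishing argument: $D=0$ because $G\twoheadrightarrow D$ kills $G'$ and so factors through $G''$, and both factor maps vanish by Lemma \ref{lemma_nomapsfromqabtodiscrete}; and the connected component $G_0=V\oplus\bigoplus_{\sigma}\mathbb{R}_\sigma$ vanishes because its image in the totally disconnected $G''$ is trivial, so it is a connected subgroup of the totally disconnected $G'$. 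The trade-off: the paper's version is shorter and leans on an external reference plus Proposition \ref{prop_TopTorsionIsVectorFreeAdelicBlock} (which itself has content); yours is a bit longer but exposes the structural mechanism directly and needs no outside input. One small polish: when you say exactness in all three categories ``is detected on the underlying locally compact groups,'' it is cleaner to simply cite that $\mathsf{LCA}_{F,ab}$ is a fully exact subcategory of $\mathsf{LCA}_F$ (so conflations agree), but this is cosmetic.
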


\begin{proof}
If $G^{\prime}\hookrightarrow G\twoheadrightarrow G^{\prime\prime}$ is an
exact sequence with $G^{\prime},G^{\prime\prime}\in\mathsf{LCA}_{F,vfab}$,
then $G^{\prime},G^{\prime\prime}$ are topological torsion $\mathcal{O}%
$-modules by Lemma \ref{lemma_StructVectorFreeAdelicBlock}, and by
\cite[(3.17)]{MR637201} it follows that $G$ is topological torsion. Thus, by
Proposition \ref{prop_TopTorsionIsVectorFreeAdelicBlock} it follows that $G$
is also a vector-free adelic block. Now \cite[Lemma 10.20]{MR2606234}.
\end{proof}

Let $S$ be a finite set of places of $F$. Define a category%
\begin{equation}
\mathsf{LCA}_{F,vfab}^{(S)}:=\left(
\begin{array}
[c]{l}%
\text{\textsf{objects:} }(G,H)\mid G\in\mathsf{LCA}_{F,vfab}\text{,}\\
\qquad H\subseteq G\text{ a compact clopen }\mathcal{O}\text{-submodule}\\
\text{\textsf{morphisms:} }f:(G,H)\rightarrow(G^{\prime},H)\text{ in
}\mathsf{LCA}_{F,vfab}\text{ are morphisms}\\
\qquad f:G\rightarrow G^{\prime}\text{ in }\mathsf{LCA}_{F}\text{ such that we
have}\\
\qquad f^{P}(H^{P})\subseteq H^{\prime P}\text{ for all }P\notin S\text{.}%
\end{array}
\right) \label{def_LCAS}%
\end{equation}
As before, here $H^{P}$ (resp. $H^{\prime P}$) denotes the topological
$P$-torsion part for $P$ any place of $F$; and $f^{P}$ the induced morphism on
topological $P$-torsion parts (as in Equation \ref{lqa6}). There is a functor%
\[
\psi^{(S)}:\mathsf{LCA}_{F,vfab}^{(S)}\longrightarrow\mathsf{LCA}%
_{F,vfab}\text{,}\qquad\qquad(G,H)\longmapsto G\text{.}%
\]
We call a sequence in $\mathsf{LCA}_{F,vfab}^{(S)}$ exact if its image under
$\psi^{(S)}$ is exact. Analogously, we make it an exact category with duality
(in the sense of \cite[Definition 2.1]{MR2600285}) by sending $(G,H)$ to
$(G^{\vee},H^{\perp})$ with $H^{\perp}:=\ker(G^{\vee}\twoheadrightarrow
H^{\vee})$. Then $\psi^{(S)}$ is a duality preserving exact functor.

We observe that if $S\subseteq S^{\prime}$ is an inclusion of finite sets of
finite places of $F$, then there is a commutative diagram%
\begin{equation}%
\xymatrix{
{\mathsf{LCA}_{F,vfab}^{(S)}} \ar[rr] \ar[dr]_{\psi^{(S)}} & & {\mathsf
{LCA}_{F,vfab}^{(S^{\prime})}} \ar[dl]^{\psi^{(S^{\prime})}} \\
& {\mathsf{LCA}_{F,vfab}}.
}%
\label{lqa7}%
\end{equation}
As the exact structure on each $\mathsf{LCA}_{F,vfab}^{(S)}$ is just defined
so that it reflects exactness under the $\psi^{(-)}$, it is clear that all
functors in this diagram are exact. As the collection of finite subsets of
finite places is a filtering poset under inclusion, we can form the colimit of
exact categories%
\[
\mathsf{LCA}_{F,vfab}^{(\infty)}:=\underset{S}{\underrightarrow
{\operatorname*{colim}}}\,\mathsf{LCA}_{F,vfab}^{(S)}%
\]
and by the commutativity of Diagram \ref{lqa7} we obtain an exact functor%
\begin{equation}
\psi^{(\infty)}:\mathsf{LCA}_{F,vfab}^{(\infty)}\longrightarrow\mathsf{LCA}%
_{F,vfab}\label{lqa7b}%
\end{equation}
commuting with all $\psi^{(S)}$ along the natural functors in the inductive
diagram. This colimit exact category is explicitly given as follows:%
\begin{equation}
\mathsf{LCA}_{F,vfab}^{(\infty)}=\left(
\begin{array}
[c]{l}%
\text{\textsf{objects:} }(G,H)\mid G\in\mathsf{LCA}_{F,vfab}\text{,}\\
\qquad H\subseteq G\text{ a compact clopen }\mathcal{O}\text{-submodule}\\
\text{\textsf{morphisms:} }f:(G,H)\rightarrow(G^{\prime},H)\text{ in
}\mathsf{LCA}_{F,vfab}\text{ are morphisms}\\
\qquad f:G\rightarrow G^{\prime}\text{ in }\mathsf{LCA}_{F}\text{ such that
there exists a finite set}\\
\qquad S\text{ of finite places such that }f^{P}(H^{P})\subseteq H^{\prime
P}\text{ for all }P\notin S\text{.}%
\end{array}
\right) \label{lqa10}%
\end{equation}
However, this description can be simplified:

\begin{lemma}
\label{lemma_LatticeSys3}Let $(G,H)$ resp. $(G^{\prime},H^{\prime})$ be
objects in $\mathsf{LCA}_{F,vfab}$, and $H$ resp. $H^{\prime}$ compact clopen
$\mathcal{O}$-submodules. Then for \textsl{any} morphism $f:G\rightarrow
G^{\prime}$ in $\mathsf{LCA}_{F}$ there exists a finite set $S$ of finite
places of $F$ such that $f^{P}(H^{P})\subseteq H^{\prime P}$ for all $P\notin
S$.
\end{lemma}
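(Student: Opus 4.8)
The plan is to deduce the statement from Lemmas~\ref{lemma_LatticeSys1} and~\ref{lemma_LatticeSys2}, together with the elementary fact that continuous $\mathcal{O}$-linear maps send topological $P$-torsion elements to topological $P$-torsion elements. First I would observe that $f(H)$ is a compact $\mathcal{O}$-submodule of $G^{\prime}$, being the continuous image of the compactum $H$ under the $\mathcal{O}$-linear map $f$. Since $G^{\prime}$ is a vector-free adelic block, its underlying locally compact group is topologically torsion (Lemma~\ref{lemma_StructVectorFreeAdelicBlock}, via Proposition~\ref{prop_TopTorsionIsVectorFreeAdelicBlock}), so $n!\,g\to 0$ for every $g\in G^{\prime}$; as $H^{\prime}$ is a neighbourhood of $0$ this yields the presentation $G^{\prime}=\bigcup_{n\ge 1}\frac1n H^{\prime}$. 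Hence Lemma~\ref{lemma_LatticeSys1}, applied with $C:=H^{\prime}$, gives an integer $m\ge 1$ with $f(H)\subseteq \frac1m H^{\prime}$. Note that $\frac1m H^{\prime}$ is again a compact clopen $\mathcal{O}$-submodule of $G^{\prime}$, since every element of $F^{\times}$ acts on $G^{\prime}$ by a homeomorphism.

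Next I would apply Lemma~\ref{lemma_LatticeSys2} to the two compact clopen $\mathcal{O}$-submodules $\frac1m H^{\prime}$ and $H^{\prime}$ of $G^{\prime}$: this produces a finite set $S$ of finite places with $\left(\frac1m H^{\prime}\right)^{P}=H^{\prime P}$ for all $P\notin S$. This $S$ will be the required set. It then remains to check that $f^{P}(H^{P})\subseteq \left(\frac1m H^{\prime}\right)^{P}$ for every $P$. On the one hand, $f$ preserves topological $P$-torsion: if $x\in G$ satisfies $P^{N}x\subseteq U$ for suitable $N$ and every neighbourhood $0\in U\subseteq G$, then for a neighbourhood $0\in U^{\prime}\subseteq G^{\prime}$ the preimage $f^{-1}(U^{\prime})$ is a neighbourhood of $0$, so $P^{N}f(x)=f(P^{N}x)\subseteq U^{\prime}$ for a suitable $N$; hence $f(x)\in G^{\prime P}$. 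Therefore $f^{P}(H^{P})=f(H^{P})\subseteq f(H)\subseteq \frac1m H^{\prime}$, and simultaneously $f^{P}(H^{P})\subseteq G^{\prime P}$ by the very construction of $f^{P}$ around Equation~\ref{lqa6}. Since $\frac1m H^{\prime}$ is open in $G^{\prime}$, an element of $\frac1m H^{\prime}$ is topological $P$-torsion in $\frac1m H^{\prime}$ if and only if it is topological $P$-torsion in $G^{\prime}$ (the two neighbourhood bases of $0$ are cofinal in one another along the open inclusion), so $\left(\frac1m H^{\prime}\right)^{P}=G^{\prime P}\cap \frac1m H^{\prime}$. Combining these inclusions gives $f^{P}(H^{P})\subseteq G^{\prime P}\cap \frac1m H^{\prime}=\left(\frac1m H^{\prime}\right)^{P}$, and for $P\notin S$ the right-hand side is $H^{\prime P}$, which is exactly what we want.

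I do not expect a genuine obstacle here: the argument is essentially an assembly of the two preceding lemmas with the functoriality of $(-)^{P}$. The only point requiring a moment's care is the identification $\left(\frac1m H^{\prime}\right)^{P}=G^{\prime P}\cap \frac1m H^{\prime}$, i.e.\ that the notion of topological $P$-torsion is insensitive to passing to an open subgroup; but this is immediate from Definition~\ref{def_TopTorsALaLevin} once one notes that an open subgroup both inherits and co-generates a cofinal family of neighbourhoods of zero from the ambient group. Everything else reduces to the facts that continuous $\mathcal{O}$-linear maps preserve compactness and topological $P$-torsion, and that $G^{\prime}$ is topologically torsion so that $H^{\prime}$ already exhibits it as $\bigcup_{n\ge 1}\frac1n H^{\prime}$.
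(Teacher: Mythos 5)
Your proof is correct and takes essentially the same approach as the paper's: observe that $f(H)$ is compact, use Lemma~\ref{lemma_LatticeSys1} to locate it inside $\frac{1}{m}$ of a compact clopen, and conclude with Lemma~\ref{lemma_LatticeSys2} together with the fact that $f$ preserves topological $P$-torsion. The only cosmetic difference is that you use $H'$ itself to exhibit the presentation $G'=\bigcup_{n\ge 1}\frac1n H'$ (justified via topological torsion), whereas the paper picks an arbitrary $C'$ from the definition of adelic block; both are equally valid.
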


\begin{proof}
Write $G^{\prime}$ as $\bigcup_{n\geq1}\frac{1}{n}C^{\prime}$ for $C^{\prime}$
a compact clopen, as exists by the definition of an adelic block. The image
$f(H)$ in $G^{\prime}$ is compact and thus by Lemma \ref{lemma_LatticeSys1}
lies inside some $\frac{1}{n}C^{\prime}$. We now have $f^{P}(H^{P}%
)\subseteq\left(  \frac{1}{n}C^{\prime}\right)  ^{P}$ by construction since a
topological $P$-torsion element can only be mapped to a topological
$P$-torsion element (if it had a non-zero image $x$ in the projection to any
other, say topological $P^{\prime}$-torsion, since $P$ and $P^{\prime}$ are
different, they are necessarily coprime, so $1\in P^{N}+P^{\prime N}$ for all
$N\geq1$, and then $1\cdot x$ lies in every open neighbourhood of zero in
$G^{\prime}$. Since $G^{\prime}$ is Hausdorff, this implies $x=0$ and gives a
contradiction). Finally, since $\frac{1}{n}C^{\prime}$ is a compact clopen
$\mathcal{O}$-submodule in $G^{\prime}$, just like $H^{\prime}$, we have
$\left(  \frac{1}{n}C^{\prime}\right)  ^{P}=H^{\prime P}$ for all but finitely
many places by Lemma \ref{lemma_LatticeSys2}. This proves our claim.
\end{proof}

Hence, we see that the last condition in the description of the morphisms of
$\mathsf{LCA}_{F,vfab}^{(\infty)}$ in Equation \ref{lqa10} is superfluous. We
record%
\[
\mathsf{LCA}_{F,vfab}^{(\infty)}=\left(
\begin{array}
[c]{l}%
\text{\textsf{objects:} }(G,H)\mid G\in\mathsf{LCA}_{F,vfab}\text{,}\\
\qquad H\subseteq G\text{ a compact clopen }\mathcal{O}\text{-submodule}\\
\text{\textsf{morphisms:} }f:(G,H)\rightarrow(G^{\prime},H^{\prime})\text{ in
}\mathsf{LCA}_{F,vfab}\text{ are morphisms}\\
\qquad f:G\rightarrow G^{\prime}\text{ in }\mathsf{LCA}_{F}\text{.}%
\end{array}
\right)  \text{.}%
\]

\begin{lemma}
\label{lemma_psiInftyIsEquivalence}The functor $\psi^{(\infty)}:\mathsf{LCA}%
_{F,vfab}^{(\infty)}\rightarrow\mathsf{LCA}_{F,vfab}$ is a duality preserving
exact equivalence of exact categories.
\end{lemma}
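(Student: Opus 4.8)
The plan is to check that $\psi^{(\infty)}$ is fully faithful, essentially surjective, exact with exact quasi-inverse, and compatible with the dualities; by the simplified description of $\mathsf{LCA}_{F,vfab}^{(\infty)}$ recorded just above, most of this is bookkeeping, and the only non-formal input is Lemma \ref{lemma_LatticeSys3}.

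First I would dispatch full faithfulness. Since a morphism $(G,H)\to(G',H')$ in $\mathsf{LCA}_{F,vfab}^{(\infty)}$ is, after the simplification, literally a morphism $G\to G'$ in $\mathsf{LCA}_F$ (equivalently in the full subcategory $\mathsf{LCA}_{F,vfab}$), and $\psi^{(\infty)}$ acts as the identity on morphism sets, faithfulness is immediate and fullness holds for the same reason. The content that makes this description of the morphisms correct --- i.e.\ that the transition condition ``$f^P(H^P)\subseteq H'^P$ for all but finitely many $P$'' is vacuous --- is precisely Lemma \ref{lemma_LatticeSys3} (built in turn on the lattice comparisons in Lemmas \ref{lemma_LatticeSys1} and \ref{lemma_LatticeSys2}); I expect this to be the only place where any real argument was needed, and it is already done. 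For essential surjectivity, given $G\in\mathsf{LCA}_{F,vfab}$, Definition \ref{def_QuasiAdelicBlock} hands us a compact clopen $\mathcal{O}$-submodule $C\subseteq G$ with $\bigcap_{n\ge1}nC=0$, so $(G,C)$ is an object with $\psi^{(\infty)}(G,C)=G$.

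For exactness I would argue as follows. By construction the exact structure on each $\mathsf{LCA}_{F,vfab}^{(S)}$, and hence on the filtered colimit $\mathsf{LCA}_{F,vfab}^{(\infty)}$, consists of exactly those sequences whose image under $\psi^{(S)}$ (resp.\ $\psi^{(\infty)}$) is exact in $\mathsf{LCA}_{F,vfab}$; thus $\psi^{(\infty)}$ is exact and reflects exactness. Combined with the equivalence of underlying categories this also makes any quasi-inverse $\Phi$ exact: if $E$ is exact in $\mathsf{LCA}_{F,vfab}$ then $\psi^{(\infty)}\Phi(E)\cong E$ is exact, so $\Phi(E)$ is exact by the very definition of the exact structure on the source. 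Hence $\psi^{(\infty)}$ is an exact equivalence of exact categories.

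Finally, for duality compatibility I would note that $\psi^{(\infty)}$ sends $(G^\vee,H^\perp)$, the dual of $(G,H)$, to $G^\vee=(\psi^{(\infty)}(G,H))^\vee$. Here $G^\vee\in\mathsf{LCA}_{F,vfab}$ by Lemma \ref{lemma_StructVectorFreeAdelicBlock} together with Proposition \ref{prop_TopTorsionIsVectorFreeAdelicBlock}, and $H^\perp=(G/H)^\vee$ is again a compact clopen $\mathcal{O}$-submodule of $G^\vee$ since Pontryagin duality turns $H\hookrightarrow G\twoheadrightarrow G/H$ (with $H$ compact clopen and $G/H$ discrete) into $(G/H)^\vee\hookrightarrow G^\vee\twoheadrightarrow H^\vee$ with $(G/H)^\vee$ compact and clopen. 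Moreover $\psi^{(\infty)}$ carries the double-dual natural isomorphism of $\mathsf{LCA}_{F,vfab}^{(\infty)}$, which is by definition pulled back from the reflexivity map $\eta$ of Proposition \ref{prop_LCARisQuasiAbelianExact} along $(G,H)\mapsto G$, to that same $\eta$; so $\psi^{(\infty)}$ is duality preserving in the sense of \cite[Definition 2.1]{MR2600285}. The main obstacle in the whole statement is therefore not in this lemma at all but lies in Lemma \ref{lemma_LatticeSys3}; the remaining verification is routine, the one small point worth pausing on being the claim that $H^\perp$ stays compact clopen under the duality.
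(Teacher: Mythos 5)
Your proof is correct and follows essentially the same approach as the paper's: the paper likewise deduces full faithfulness from the simplified morphism description (itself a consequence of Lemma \ref{lemma_LatticeSys3}), takes essential surjectivity from the existence of compact clopen $\mathcal{O}$-submodules by definition, and regards exactness as immediate from the way the exact structure on $\mathsf{LCA}_{F,vfab}^{(\infty)}$ is defined by reflection along $\psi^{(\infty)}$. You are actually somewhat more careful than the printed proof, which does not explicitly verify the duality-preservation part of the claim; your check that $(G^\vee,H^\perp)$ is again a legitimate object and that the reflexivity isomorphisms match up is a worthwhile addition.
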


\begin{proof}
It is clear that the functor is exact. Next, we observe from the above
description that the morphisms do not depend on the extra data of
$H,H^{\prime}$ at all, so the functor is fully faithful. Finally, it is
essentially surjective:\ This just amounts to the statement that every
vector-free adelic block possesses a compact clopen $\mathcal{O}$-submodule at
all, and this is part of the definition of a vector-free adelic block.
\end{proof}

Next, we describe the categories $\mathsf{LCA}_{F,vfab}^{(S)}$ for $S$ finite.
For any ring $R$, let us write $P_{f}(R)$ for the exact category of finitely
generated projective $R$-modules. It is an exact category with duality by
letting $P^{\vee}:=\operatorname*{Hom}\nolimits_{R}(P,R)$.

\begin{proposition}
\label{prop_ProductCatLCAS}There is a duality preserving exact equivalence of
exact categories%
\[
\Xi^{(S)}:\mathsf{LCA}_{F,vfab}^{(S)}\longrightarrow\prod_{P\in S}%
\mathsf{Vect}_{fd}(\widehat{F}_{P})\times\prod_{P\notin S}P_{f}(\widehat
{\mathcal{O}}_{P})\text{.}%
\]

\end{proposition}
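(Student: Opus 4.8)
The plan is to define $\Xi^{(S)}$ explicitly from the structure theory already in place and then check the four ingredients of a duality-preserving exact equivalence: well-definedness, full faithfulness, essential surjectivity, and compatibility with the exact structures and dualities. On objects set $\Xi^{(S)}(G,H):=\bigl((G^{P})_{P\in S},(H^{P})_{P\notin S}\bigr)$ with $G^{P}$ the topological $P$-torsion part and $H^{P}=H\cap G^{P}$; Theorem~\ref{thm_StrongBracVilenkin} guarantees that each $G^{P}$ is a finite-dimensional $\widehat{F}_{P}$-vector space and that $H^{P}$ is a full rank $\widehat{\mathcal{O}}_{P}$-lattice in it, hence a finitely generated free $\widehat{\mathcal{O}}_{P}$-module, so the target is correct. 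On morphisms send $f$ to $\bigl((f^{P})_{P\in S},(f^{P}|_{H^{P}})_{P\notin S}\bigr)$, where $f^{P}$ is the induced map on $P$-torsion parts of Equation~\ref{lqa6}, promoted to a $\widehat{F}_{P}$-linear map by the continuity argument around Equation~\ref{lqa2a}; the defining condition $f^{P}(H^{P})\subseteq H^{\prime P}$ for $P\notin S$ makes the restricted entries land in $P_{f}(\widehat{\mathcal{O}}_{P})$, and functoriality is immediate from the $P$-coordinate description.

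Faithfulness follows from Lemma~\ref{Lemma_XiIsFaithful}: the datum $\Xi^{(S)}(f)$ recovers every $f^{P}$ (for $P\notin S$ because $H^{P}$ spans $G^{P}$ over $\widehat{F}_{P}$ and $f^{P}$ is $\widehat{F}_{P}$-linear), and a morphism with all $f^{P}=0$ vanishes. For fullness I would extend each prescribed component $\phi_{P}$, $P\notin S$, to the $\widehat{F}_{P}$-linear map $\widetilde{\phi}_{P}\colon G^{P}\to G^{\prime P}$ it determines and assemble $f:=\prod_{P}^{\prime}\widetilde{\phi}_{P}$ through the restricted product presentations $G\cong\prod_{P}^{\prime}(G^{P},H^{P})$, $G^{\prime}\cong\prod_{P}^{\prime}(G^{\prime P},H^{\prime P})$ of Theorem~\ref{thm_StrongBracVilenkin} (with $C=H$, $C^{\prime}=H^{\prime}$); that $f$ is well defined on the restricted product, continuous and $F$-linear all follow from $\widetilde{\phi}_{P}(H^{P})\subseteq H^{\prime P}$ for $P\notin S$ together with the continuity of each $\widetilde{\phi}_{P}$. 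For essential surjectivity, given a tuple $\bigl((W_{P})_{P\in S},(L_{P})_{P\notin S}\bigr)$ I choose any lattice $K_{P}\subseteq W_{P}$ for $P\in S$, set $V_{P}:=W_{P}$, $K_{P}$ for $P\in S$ and $V_{P}:=L_{P}\otimes_{\widehat{\mathcal{O}}_{P}}\widehat{F}_{P}$, $K_{P}:=L_{P}$ for $P\notin S$, and check that $G:=\prod_{P}^{\prime}(V_{P},K_{P})$ with $C:=\prod_{P}K_{P}$ is locally compact, a topological $F$-vector space, satisfies $\bigcap_{n\geq1}nC=0$, and is topological torsion, hence a vector-free adelic block by Proposition~\ref{prop_TopTorsionIsVectorFreeAdelicBlock}; since the topological $P$-torsion part of a restricted product is its $P$-th factor, $\Xi^{(S)}(G,C)$ is canonically the given tuple.

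The real obstacle is matching the exact structures, where one must exploit that a conflation in $\mathsf{LCA}_{F,vfab}^{(S)}$ is a genuine kernel--cokernel pair rather than merely a sequence whose $\psi^{(S)}$-image is exact. A direct computation of kernels and cokernels shows that the kernel of $(G,H)\overset{p}{\twoheadrightarrow}(G^{\prime\prime},H^{\prime\prime})$ is $(\ker p,\,H\cap\ker p)$ and the cokernel of $(G^{\prime},H^{\prime})\overset{i}{\hookrightarrow}(G,H)$ is $(\operatorname{coker}i,\,p(H))$, with the evident underlying maps. Hence in any conflation $(G^{\prime},H^{\prime})\overset{i}{\hookrightarrow}(G,H)\overset{p}{\twoheadrightarrow}(G^{\prime\prime},H^{\prime\prime})$, for every $P\notin S$ the lattices satisfy (up to isomorphism) $H^{\prime P}=H^{P}\cap G^{\prime P}$ and $H^{\prime\prime P}=p^{P}(H^{P})$; combined with the exactness of $(-)^{P}$ on $\mathsf{LCA}_{F}$ (admissible monos/epis induce injections/surjections on $P$-torsion parts, and $\ker(p^{P})=(\ker p)^{P}$), this shows that $H^{\prime P}\hookrightarrow H^{P}\twoheadrightarrow H^{\prime\prime P}$ is short exact in $P_{f}(\widehat{\mathcal{O}}_{P})$ and $G^{\prime P}\hookrightarrow G^{P}\twoheadrightarrow G^{\prime\prime P}$ short exact in $\mathsf{Vect}_{fd}(\widehat{F}_{P})$, so $\Xi^{(S)}$ is exact. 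For the converse, a levelwise short exact sequence reassembles componentwise, through the restricted product presentations, into an admissible monic and an admissible epic of $\mathsf{LCA}_{F}$ (the relevant lattices matching at all $P\notin S$, hence away from finitely many places), so $\Xi^{(S)}$ reflects conflations; an equivalence of underlying categories that is exact and reflects conflations is an equivalence of exact categories. Finally, duality preservation reduces to the local self-duality of $\widehat{F}_{P}$ already used for $\Xi$ — the Pontryagin dual agrees compatibly with the linear dual — together with its integral refinement that the Pontryagin annihilator of an $\widehat{\mathcal{O}}_{P}$-lattice corresponds to its $\widehat{\mathcal{O}}_{P}$-linear dual; matching $(G,H)^{\vee}=(G^{\vee},H^{\perp})$ against the product duality completes the proof.
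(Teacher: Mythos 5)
Your proof is correct and follows essentially the same route as the paper: define $\Xi^{(S)}$ exactly as you do, use Lemma~\ref{Lemma_XiIsFaithful} for faithfulness, the restricted-product construction for essential surjectivity, and a local argument for fullness. Your treatment of exactness is in fact more explicit than the paper's Step~2 — by computing that the kernel of $(G,H)\twoheadrightarrow(G'',H'')$ is $(\ker p,\,H\cap\ker p)$ and the cokernel of $(G',H')\hookrightarrow(G,H)$ is $(\operatorname{coker}i,\,p(H))$, you make visible why the lattice components of a conflation are forced to form a short exact sequence of free $\widehat{\mathcal{O}}_P$-modules, which is the precise content needed (mere injectivity of $H'^P\to H^P$ would not by itself give an admissible monic in $P_f(\widehat{\mathcal{O}}_P)$ since the cokernel could a priori have torsion); the paper only asserts this as a quick check, so your version fills in a small but genuine gap in the exposition.
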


\begin{proof}
\textit{(Step 1)} First of all, we just define the functor at all. As the
notation suggests, we define this as a variation of $\Xi$: On objects we
define $\Xi^{(S)}$ as%
\[
(G,H)\longmapsto(G^{P})_{P\in S}\times(H^{P})_{P\notin S}\text{,}%
\]
i.e. for those $P$ in $S$, we take the entire topological $P$-torsion part. By
our variant of the Braconnier--Vilenkin Theorem (Theorem
\ref{thm_StrongBracVilenkin}), $G^{P}$ is a finite-dimensional $\widehat
{F}_{P}$-vector space, so this makes sense. Moreover, for all $P\notin S$ we
take $H^{P}:=G^{P}\cap H$, which is a full rank $\widehat{\mathcal{O}}_{P}%
$-lattice in $G^{P}$ by the same\ Theorem, so in particular $H^{P}$ is
finitely rank free $\widehat{\mathcal{O}}_{P}$-module, so it defines an object
in $P_{f}(\widehat{\mathcal{O}}_{P})$ as required. This defines $\Xi^{(S)}$ on
objects. Moreover, we send morphisms in $\mathsf{LCA}_{F,vfab}^{(S)}$ just to
their topological $P$-torsion parts $f^{P}$. On the $\mathsf{Vect}%
_{fd}(\widehat{F}_{P})$-factors it easily follows from continuity and the
$\widehat{F}_{P}$-vector space structure that $f^{P}$ is indeed $\widehat
{F}_{P}$-linear. On the $P_{f}(\widehat{\mathcal{O}}_{P})$ summands recall
that it was part of the definition of $\mathsf{LCA}_{F,vfab}^{(S)}$
(Definition \ref{def_LCAS}) that we have%
\[
f^{P}(H^{P})\subseteq H^{\prime P}\qquad\text{for all}\qquad P\notin S\text{,}%
\]
so $f^{P}$ is a well-defined map of $\widehat{\mathcal{O}}_{P}$%
-modules\ (again, first we get $\mathcal{O}$-linearity alone, and the
continuity and the $\widehat{\mathcal{O}}_{P}$-module structures allow us to
deduce that it must be an $\widehat{\mathcal{O}}_{P}$-module
homomorphism).\newline\textit{(Step 2)} Next, one checks that if $f$ is an
admissible monic (resp. epic), then all $f^{P}$ are injective (resp.
surjective). This suffices to know that the $f^{P}$ are levelwise admissible
monics (resp. epics) on the right hand side for all $P$, and thus admissible
monics (resp. epics) in the exact structure of the product exact category.
Thus, $\Xi^{(S)}$ is a well-defined exact functor.\newline\textit{(Step 3)} We
claim that $\Xi^{(S)}$ is also fully faithful. Faithfulness is easy to see; it
follows from the existence of $P$-coordinates and can be shown exactly as
Lemma \ref{Lemma_XiIsFaithful}. For fullness note that any morphism on the
right hand side can be written as a sum%
\[
f^{P}=(f^{P})^{S\text{-part}}+(f^{P})^{\text{not-}S\text{-part}}\text{,}%
\]
where $(f^{P})^{\text{not-}S\text{-part}}$ is zero on the factors with $P\in
S$, and reversely $(f^{P})^{S\text{-part}}$ is zero for the factors with
$P\notin S$. As $H,H^{\prime}$ are compact, we can prescribe any morphism on
the factors, so we can easily realize $(f^{P})^{\text{not-}S\text{-part}}$,
and $(f^{P})^{S\text{-part}}$ can be realized as $S$ is finite.\newline%
\textit{(Step 4)\ }Finally, we claim that $\Xi^{(S)}$ is essentially
surjective. To this end, given an object in the product exact category, say%
\[
G:=(G_{P})_{P\in S}\times(H_{P})_{P\notin S}\qquad\text{with}\qquad G_{P}%
\in\mathsf{Vect}_{fd}(\widehat{F}_{P})\text{,\quad}H_{P}\in P_{f}%
(\widehat{\mathcal{O}}_{P})\text{,}%
\]
define for $P\notin S$ the objects $G_{P}:=H_{P}\otimes_{\widehat{\mathcal{O}%
}_{P}}\widehat{F}_{P}$, and for the $P\in S$ choose a full rank $\widehat
{\mathcal{O}}_{P}$-lattice in $G_{P}$. Then take the restricted product%
\[
\tilde{G}:=\left.  \prod\nolimits^{\prime}\right.  (G_{P},H_{P})\text{,}%
\]
where $P$ runs over all finite places of $F$. This is easily checked to be an
object in $\mathsf{LCA}_{F,vfab}$ and we leave it to the reader to confirm
that $\Xi^{(S)}$ sends $\tilde{G}$ to an object isomorphic to $G$ in the
product exact category. Finally, as $\Xi^{(S)}$ is an essentially surjective,
fully faithful and exact functor between the two categories in our claim, we
conclude that $\Xi^{(S)}$ is an exact equivalence of exact categories.
\end{proof}

\begin{example}
\label{example_XiNotFull}Consider the vector-free part of the ad\`{e}les of
the rationals, $\left.  \prod\nolimits^{\prime}\right.  (\mathbb{Q}%
_{p},\mathbb{Z}_{p})$, where the restricted product runs over all prime
numbers $p $. In the product category $\prod_{p}\mathsf{Vect}_{fd}%
(\mathbb{Q}_{p})$ the image of this object has an endomorphism given by
multiplication by $p^{-1}$ in the $p$-th factor. If this was a morphism in
$\mathsf{LCA}_{\mathbb{Q}}$, it would have%
\[
\frac{1}{p}\mathbb{Z}_{p}\nsubseteq\mathbb{Z}_{p}%
\]
for all primes $p$, whereas by Lemma \ref{lemma_LatticeSys3} this property is
only possible for finitely many primes $p$.
\end{example}

\begin{theorem}
\label{thm_MainStructureThmOnKLCAFab}Let $F$ be a number field. Suppose $r$ is
the number of real places, and $s$ the number of complex places.

\begin{enumerate}
\item There is a canonically defined equivalence of $K$-theory spectra%
\[
K(\mathsf{LCA}_{F,ab})\overset{\sim}{\longrightarrow}\underset{S}%
{\underrightarrow{\operatorname*{colim}}}\prod_{P\in S}K(\widehat{F}%
_{P})\times\prod_{P\notin S}K(\widehat{\mathcal{O}}_{P})\times K(\mathbb{R}%
)^{r}\times K(\mathbb{C})^{s}\text{,}%
\]
where $S$ runs over the filtering poset of all finite sets of finite places of
$F$, ordered by inclusion.

\item For all integers $n$, there is a canonically defined isomorphism of
$K$-theory groups%
\[
K_{n}(\mathsf{LCA}_{F,ab})\cong\left\{  \left.  (\alpha_{P})_{P}\in\prod
_{P}K_{n}(\widehat{F}_{P})\right\vert
\begin{array}
[c]{l}%
\alpha_{P}\in K_{n}(\widehat{\mathcal{O}}_{P})\text{ for all but finitely}\\
\text{many among the finite places}%
\end{array}
\right\}  \text{.}%
\]
Here the product is taken over all places $P$, both finite and infinite, and
the condition makes sense since $K_{n}(\widehat{\mathcal{O}}_{P})\subseteq
K_{n}(\widehat{F}_{P})$ is naturally a subgroup.
\end{enumerate}
\end{theorem}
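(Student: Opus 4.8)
The plan is to assemble the asserted description from a short chain of exact equivalences and then apply $K$-theory, the only genuinely nontrivial ingredient being the fact, emphasised in the introduction, that $K$ — and indeed any localizing invariant — commutes with arbitrary products of exact categories; I will combine this with the routine facts that such invariants commute with filtering colimits and with finite products. First I would split off the archimedean places. By Proposition~\ref{prop_DecomposeQuasiAdelicBlock}(2) (or Theorem~\ref{thm_StrongBracVilenkin}) every adelic block decomposes canonically as $\bigoplus_{\sigma\in I}\mathbb{R}_{\sigma}\oplus A'$ with $A'$ a vector-free adelic block, the first summand being its connected component. Since a vector $\mathcal{O}$-module is connected while a vector-free adelic block is totally disconnected (Lemma~\ref{lemma_StructVectorFreeAdelicBlock}; cf.\ Lemma~\ref{lemma_LCAFCIsConnected}) and has no nontrivial compact subgroups, there are no nonzero morphisms in either direction between these two kinds of object; hence all morphisms between such decomposed objects are block-diagonal and short exact sequences split accordingly. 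Consequently $(V,A')\mapsto V\oplus A'$ is an exact equivalence from the product exact category $\mathsf{Vec}\times\mathsf{LCA}_{F,vfab}$ onto $\mathsf{LCA}_{F,ab}$, where $\mathsf{Vec}$ is the category of vector $\mathcal{O}$-modules. Grouping the summands $\mathbb{R}_{\sigma}$ by their place and using that a continuous $F$-linear map between finite sums of copies of $\mathbb{R}_{\sigma}$ is automatically $\widehat{F}_{\sigma}$-linear (density of the image of $F$) while $\operatorname{Hom}(\mathbb{R}_{\sigma},\mathbb{R}_{\sigma'})=0$ for $\sigma\neq\sigma'$, one gets $\mathsf{Vec}\simeq\prod_{v\mid\infty}\mathsf{Vect}_{fd}(\widehat{F}_{v})$, a finite product with $r$ real and $s$ complex factors.

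Next I would handle the vector-free part. By Lemma~\ref{lemma_psiInftyIsEquivalence}, $\psi^{(\infty)}$ is an exact equivalence $\mathsf{LCA}_{F,vfab}^{(\infty)}\xrightarrow{\sim}\mathsf{LCA}_{F,vfab}$, and by construction $\mathsf{LCA}_{F,vfab}^{(\infty)}=\operatorname*{colim}_{S}\mathsf{LCA}_{F,vfab}^{(S)}$ over the filtering poset of finite sets of finite places. Proposition~\ref{prop_ProductCatLCAS} identifies each term with the product exact category $\prod_{P\in S}\mathsf{Vect}_{fd}(\widehat{F}_{P})\times\prod_{P\notin S}P_{f}(\widehat{\mathcal{O}}_{P})$, and under these identifications the transition functor attached to an inclusion $S\subseteq S'$ is base change $P_{f}(\widehat{\mathcal{O}}_{P})\to\mathsf{Vect}_{fd}(\widehat{F}_{P})$ on the finitely many newly-adjoined factors and the identity on all the others.

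Now applying $K$: as $K$ is an additive invariant it sends the finite product of the first paragraph to a product, so $K(\mathsf{LCA}_{F,ab})\simeq K(\mathbb{R})^{r}\times K(\mathbb{C})^{s}\times K(\mathsf{LCA}_{F,vfab})$; as $K$ commutes with filtering colimits, $K(\mathsf{LCA}_{F,vfab})\simeq\operatorname*{colim}_{S}K(\mathsf{LCA}_{F,vfab}^{(S)})$; and as $K$ commutes with arbitrary products of exact categories, $K(\mathsf{LCA}_{F,vfab}^{(S)})\simeq\prod_{P\in S}K(\widehat{F}_{P})\times\prod_{P\notin S}K(\widehat{\mathcal{O}}_{P})$, the transition maps being induced by $K(\widehat{\mathcal{O}}_{P})\to K(\widehat{F}_{P})$. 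Pulling the finite (constant) archimedean factor inside the filtering colimit gives exactly the equivalence of part~(1), and since every equivalence in the chain is canonical, so is the composite. For part~(2) I would take $\pi_{n}$ and use that homotopy groups of spectra commute with filtering colimits and with arbitrary products, obtaining $K_{n}(\mathsf{LCA}_{F,ab})\cong K_{n}(\mathbb{R})^{r}\times K_{n}(\mathbb{C})^{s}\times\operatorname*{colim}_{S}\bigl(\prod_{P\in S}K_{n}(\widehat{F}_{P})\times\prod_{P\notin S}K_{n}(\widehat{\mathcal{O}}_{P})\bigr)$. The filtering colimit of abelian groups is then computed explicitly: its canonical map into $\prod_{P\text{ finite}}K_{n}(\widehat{F}_{P})$ has image precisely the tuples $(\alpha_{P})$ with $\alpha_{P}$ lying in the image of $K_{n}(\widehat{\mathcal{O}}_{P})\to K_{n}(\widehat{F}_{P})$ for all but finitely many finite $P$, and it is injective once each $K_{n}(\widehat{\mathcal{O}}_{P})\to K_{n}(\widehat{F}_{P})$ is injective; adjoining the unrestricted archimedean factors $K_{n}(\mathbb{R})^{r}\times K_{n}(\mathbb{C})^{s}=\prod_{v\mid\infty}K_{n}(\widehat{F}_{v})$ then yields exactly the restricted product over all places of $F$ in the statement.

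Apart from the product-commutation input, the entire argument is formal, so the one point demanding real work — and also the justification for the parenthetical claim in the statement — is the injectivity of $K_{n}(\widehat{\mathcal{O}}_{P})\hookrightarrow K_{n}(\widehat{F}_{P})$. I would get this from the localization (d\'evissage) fiber sequence $K(\mathbb{F}_{q})\to K(\widehat{\mathcal{O}}_{P})\to K(\widehat{F}_{P})$, with $\mathbb{F}_{q}$ the residue field: it is equivalent to show that the boundary map $K_{n+1}(\widehat{F}_{P})\to K_{n}(\mathbb{F}_{q})$ is surjective, equivalently that the transfer $K_{n}(\mathbb{F}_{q})\to K_{n}(\widehat{\mathcal{O}}_{P})$ vanishes. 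Surjectivity of the boundary map holds because it is split by cup product with a uniformizer applied to a lift of a class from $K_{n}(\mathbb{F}_{q})$ to $K_{n}(\widehat{\mathcal{O}}_{P})$, using that reduction $K_{n}(\widehat{\mathcal{O}}_{P})\twoheadrightarrow K_{n}(\mathbb{F}_{q})$ is surjective (the target is finite of order prime to the residue characteristic, where Gabber rigidity applies). In negative degrees both groups vanish and in degree $0$ the map is the identity on $\mathbb{Z}$, so injectivity holds in every degree, completing the identification of the colimit with the restricted product.
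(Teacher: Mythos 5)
Your proposal follows essentially the same route as the paper's proof: split off the archimedean factor, pass through $\psi^{(\infty)}$ (Lemma~\ref{lemma_psiInftyIsEquivalence}), use that $K$-theory commutes with filtering colimits, identify each $\mathsf{LCA}_{F,vfab}^{(S)}$ with a product exact category via Proposition~\ref{prop_ProductCatLCAS}, invoke Carlsson's theorem on $K$-theory of infinite products, and conclude by the injectivity of $K_n(\widehat{\mathcal{O}}_{P})\hookrightarrow K_n(\widehat{F}_{P})$. The only deviation is that you flesh out Step~1 (which the paper leaves to the reader) and reprove the injectivity via the tame-symbol splitting of the DVR localization sequence plus Gabber rigidity, whereas the paper simply cites Weibel's $K$-book \cite[Chapter V, Corollary 6.9.2]{MR3076731}; your sketch is sound but somewhat compressed at the point where Gabber rigidity is made to yield surjectivity of reduction $K_n(\widehat{\mathcal{O}}_{P})\twoheadrightarrow K_n(\mathbb{F}_q)$ (one has to thread this through the universal coefficient sequence, using that $K_{n-1}(\mathbb{F}_q)$ has no torsion prime to $p$ for the relevant parities).
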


\begin{proof}
\textit{(Step 1)\ }We leave it to the reader to show that%
\[
K(\mathsf{LCA}_{F,ab})\overset{\sim}{\longrightarrow}K(\mathsf{LCA}%
_{F,vfab})\times K(\mathbb{R})^{r}\times K(\mathbb{C})^{s}\text{,}%
\]
which reduces the proof to handling the vector-free adelic blocks.\newline%
\textit{(Step 2) }By Lemma \ref{lemma_psiInftyIsEquivalence} we have an
equivalence in $K$-theory%
\begin{equation}
\psi^{(\infty)}:K(\mathsf{LCA}_{F,vfab}^{(\infty)})\overset{\sim
}{\longrightarrow}K(\mathsf{LCA}_{F,vfab})\label{lqa11}%
\end{equation}
since this morphism is induced from an exact equivalence of exact categories.
However, by the definition of the left hand side we had%
\[
\mathsf{LCA}_{F,vfab}^{(\infty)}:=\underset{S}{\underrightarrow
{\operatorname*{colim}}}\,\mathsf{LCA}_{F,vfab}^{(S)}%
\]
and since $K$-theory commutes with filtering colimits of exact categories
\cite[Chapter IV, Elem. Properties 6.4]{MR3076731}, this promotes the
equivalence of Equation \ref{lqa11} to%
\begin{equation}
\underset{S}{\underrightarrow{\operatorname*{colim}}}\,K(\mathsf{LCA}%
_{F,vfab}^{(S)})\overset{\sim}{\longrightarrow}K(\mathsf{LCA}_{F,vfab}%
)\text{.}\label{lqa12}%
\end{equation}
However, by Proposition \ref{prop_ProductCatLCAS} we have both on the level of
spectra,%
\[
K(\mathsf{LCA}_{F,vfab}^{(S)})\overset{\sim}{\longrightarrow}\prod_{P\in
S}K(\widehat{F}_{P})\times\prod_{P\notin S}K(\widehat{\mathcal{O}}_{P})
\]
and on the level of homotopy groups,%
\[
K_{n}(\mathsf{LCA}_{F,vfab}^{(S)})\cong\prod_{P\in S}K_{n}(\widehat{F}%
_{P})\times\prod_{P\notin S}K_{n}(\widehat{\mathcal{O}}_{P})
\]
since $K$-theory commutes with (even infinite!) products of exact categories,
see Carlsson's Theorem \cite[Theorem 3.13]{MR1351941} (or \cite{kaspwinges}
for a different proof). The latter is not at all obvious, and rather
surprising. Now, consider the colimit in Equation \ref{lqa12}. It is easy to
see that for an inclusion $S\subseteq S^{\prime}$ of finite sets of finite
places, the induced morphism%
\begin{equation}
K_{n}(\mathsf{LCA}_{F,vfab}^{(S)})\longrightarrow K_{n}(\mathsf{LCA}%
_{F,vfab}^{(S^{\prime})})\label{lqa21}%
\end{equation}
induced from the exact functor given by the horizontal arrow in Diagram
\ref{lqa7} is factor-wise induced from the functor%
\[
P_{f}(\widehat{\mathcal{O}}_{P})\longrightarrow\mathsf{Vect}_{fd}(\widehat
{F}_{P})\text{,}\qquad M\longmapsto M\otimes_{\widehat{\mathcal{O}}_{P}%
}\widehat{F}_{P}\text{,}%
\]
and thus from the standard map $K_{n}(\widehat{\mathcal{O}}_{P})\rightarrow
K_{n}(\widehat{F}_{P})$, induced by the inclusion into the field of fractions.
However, since the residue field of $\widehat{\mathcal{O}}_{P}$ is finite,
this morphism is known to be injective, see \cite[Chapter V, Corollary
6.9.2]{MR3076731} (this is a very special case of the Gersten Conjecture). It
follows that all the transition morphisms as in Equation \ref{lqa21} are injective.
\end{proof}

\begin{example}
[$K(\mathbb{A})\neq K(\mathsf{LCA}_{F,ab})$]%
\label{example_KAdifferentFromKAdelicBlocks}There is a natural functor%
\[
G:P_{f}(\mathbb{A})\rightarrow\mathsf{LCA}_{F,ab}%
\]
sending the projective generator $\mathbb{A}$ to itself, but now regarded as a
locally compact $F$-module. For many rings $R$, one has $K_{1}(R)\cong
R^{\times}$, so in view of $K_{1}(\mathsf{LCA}_{F,ab})\cong\mathbb{A}^{\times
}$ it is tempting to hope that the functor $G$ might induce an equivalence on
the level of $K$-theory. This hope is plausible both if we consider arbitrary
algebraic $\mathbb{A}$-modules on the left, or topological $\mathbb{A}%
$-modules. However, it is false in either case. To see this, recall that
$P_{f}(\mathbb{A})$ is the idempotent completion of the exact category of free
finitely generated $\mathbb{A}$-modules. Thus, $G$ is the idempotent
completion of the functor%
\[
P_{f}^{\operatorname*{free}}(\mathbb{A})\rightarrow\mathsf{LCA}_{F,ab}\text{.}%
\]
On the level of $K_{0}$-groups, this functor induces the map going to%
\begin{equation}
K_{0}(\mathsf{LCA}_{F,ab})\cong\left\{  (\alpha_{P})_{P}\in\prod_{P}%
\mathbb{Z}\right\}  \text{,}\label{lint_252}%
\end{equation}
using the identification of Theorem \ref{thm_MainStructureThmOnKLCAFab} and
the fact that $K_{0}(\widehat{\mathcal{O}}_{P})\cong K(\widehat{F}_{P}%
)\cong\mathbb{Z}$. The free rank one module $\mathbb{A}$ gets sent to the
constant vector $(1,1,\ldots)$. As such, irrespective what object $X\in
P_{f}^{\operatorname*{free}}(\mathbb{A})$ we consider, the output is a vector
which admits a uniform upper bound $B_{X}$ such that $\left\vert \alpha
_{P}\right\vert \leq B_{X}$ for all places $B$. Now, the idempotent completion
$P_{f}(\mathbb{A})$ may have a much larger image, but nonetheless whenever
$X\in P_{f}(\mathbb{A})$ is a direct summand of a free finite rank
$\mathbb{A}$-module, the image will be a corresponding direct summand. Hence,
since the image of $K_{0}(P_{f}^{\operatorname*{free}}(\mathbb{A}))$ only
consists of vectors with entries uniformly bounded among all places, this
property is still true for all direct summands. Thus, no vector in the right
hand side of Equation \ref{lint_252} with unbounded entries can lie in the image.
\end{example}

\begin{remark}
The problem raised in Example \ref{example_KAdifferentFromKAdelicBlocks} is
similar to the fact that $K$-theory does not commute with infinite products of
rings. For example, let $R$ be some ring. While $K$-theory commutes with
infinite products of exact categories by \cite{MR1351941}, the category of
finitely generated projective modules over the product ring $\prod_{\aleph
_{0}}R$ is much smaller than the product exact category $\prod_{\aleph_{0}%
}P_{f}(R)$. The reason is analogous to the one underlying Example
\ref{example_KAdifferentFromKAdelicBlocks}: In the latter category there is no
uniform bound of ranks, while there is such a bound for the former.
\end{remark}

\begin{example}
\label{example8}Instead of restricting to finitely generated projective
modules as in Example \ref{example_KAdifferentFromKAdelicBlocks}, one might
also dream about a functor $\mathsf{Mod}_{\mathbb{A}}\rightarrow
\mathsf{LCA}_{F,ab}$, sending finitely generated (or finitely presented, or
coherent,\ldots) $\mathbb{A}$-modules to themselves regarded as locally
compact $F$-modules. While such an association might work on the level of
objects, this will not give an exact functor. To see this, consider
$F:=\mathbb{Q}$ and the category $\mathsf{Mod}_{\mathbb{A}}$ of algebraic
$\mathbb{A}$-modules. As $\mathbb{A}$ is a free module, it is clearly
projective. Next, define an ad\`{e}le%
\[
r:=(2,3,5,\ldots,p,\ldots,1)\text{,}%
\]
i.e. with the prime $p$ at $\mathbb{Q}_{p}$, and $1$ at the real place
$\mathbb{R}$. This is an integral ad\`{e}le. Note that it is not an id\`{e}le,
so it is not a unit in the ring. We get a short exact sequence%
\begin{equation}
\mathbb{A}\overset{\cdot r}{\hookrightarrow}\mathbb{A}\twoheadrightarrow
Q\qquad\text{in}\qquad\mathsf{Mod}_{\mathbb{A}}\label{lint_535}%
\end{equation}
and in fact $Q=\left(  \prod\limits_{p}\mathbb{F}_{p}\right)  \otimes
_{\mathbb{Z}}\mathbb{Q}$, which is an infinite-dimensional $\mathbb{Q}$-vector
space. Now, our conjectural functor would send this sequence to itself, so to
be exact, we would need $r\mathbb{A}\subset\mathbb{A}$ to be a closed
injection. However, this is not the case: The complement has the description%
\begin{align*}
U:=\mathbb{A}\setminus r\mathbb{A}  & =\left\{  (\alpha_{p})_{p}\in
\mathbb{A}\mid\text{there are infinitely many }p\text{ such that }\alpha
_{p}\notin p\mathbb{Z}_{p}\right\} \\
& =\left\{  (\alpha_{p})_{p}\in\mathbb{A}\mid\text{there are infinitely many
}p\text{ such that }\alpha_{p}\in\mathbb{Z}_{p}^{\times}\right\}  \text{.}%
\end{align*}
Clearly $1_{\mathbb{A}}=(1,1,1\ldots)\in U$. If $U$ is open, some open
neighbourhood of $1_{\mathbb{A}}$ is contained in $U$. If $\mathbb{O}$ denotes
the integral ad\`{e}les, the sets $1_{\mathbb{A}}+\frac{1}{n}\mathbb{O}$ form
an open neighbourhood basis. However, any choice of $n\in\mathbb{Z} $ has only
finitely many prime factors, so for any choice%
\[
1_{\mathbb{A}}+\frac{1}{n}\mathbb{O}=(1+\frac{1}{n}\mathbb{Z}_{2}%
,\ldots,1+\frac{1}{n}\mathbb{Z}_{p},\ldots,1+\mathbb{Z}_{p^{\prime}}%
,\ldots\mathbb{)}\text{,}%
\]
where $p^{\prime}$ runs through all primes strictly larger than any divisor of
$n$. However, $1+\mathbb{Z}_{p}=\mathbb{Z}_{p}$ and so there are at most
finitely many components whose elements $\alpha_{p}$ lie in the units
$\mathbb{Z}_{p}^{\times}$. As a result, none of these sets is contained in $U
$.
\end{example}

\begin{example}
\label{example_Proj}There is a further philosophical point which stresses the
difference between the exact categories $\mathsf{Mod}_{\mathbb{A}}$ and
$\mathsf{LCA}_{F,ab}$, at least when $\mathbb{A}$ is regarded merely as an
abstract ring without topology. In the former category, the ad\`{e}les
$\mathbb{A}$ are a projective object. However, they are not an injective
module: If $\mathbb{A}$ were an injective module, the injection of Equation
\ref{lint_535} would have to split, which would force all ad\`{e}les to be
divisible by $r$. However, clearly there is no ad\`{e}le $\alpha$ with
$r\cdot\alpha=(1,1,1\ldots)$. Thus, there is a projective--injective asymmetry
in the category $\mathsf{Mod}_{\mathbb{A}}$. On the other hand, in
$\mathsf{LCA}_{F}$ the ad\`{e}les are self-dual (one of the cornerstones of
Tate's thesis). Thus, they are either both injective and projective, or neither.
\end{example}

We could not settle the following:

\begin{conjecture}
Every adelic block $G\in\mathsf{LCA}_{F}$ is both an injective and projective object.
\end{conjecture}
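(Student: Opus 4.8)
The plan is to proceed by a chain of reductions and then a local-to-global argument. First, by the self-duality of adelic blocks (Theorem \ref{thm_StrongBracVilenkin}(7), see also Theorem \ref{thm_PrincipalStructureTheoremForLCAF}) and the exactness of Pontryagin duality it suffices to prove that every adelic block is \emph{injective}; projectivity then follows by dualising (this is the dichotomy already observed in Example \ref{example_Proj}). Since $\mathsf{LCA}_F$ is quasi-abelian, admissible monics are stable under pushout, so ``$A$ is injective'' is equivalent to ``every admissible monic $A\hookrightarrow G$ splits''. Using Proposition \ref{prop_DecomposeQuasiAdelicBlock} write $A\cong\bigoplus_{\sigma\in I}\mathbb{R}_{\sigma}\oplus A^{\prime}$ with $A^{\prime}$ vector-free; as each $\mathbb{R}_{\sigma}$ is injective (Lemma \ref{Lem_InjProjInLCAF}) and a finite direct sum of injectives is injective, we reduce to vector-free adelic blocks $A^{\prime}$. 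Finally, given an admissible monic $A^{\prime}\hookrightarrow G$, decompose $G\cong V_{G}\oplus A_{G}\oplus D_{G}$ by Theorem \ref{thm_PrincipalStructureTheoremForLCAF}; the composite $A^{\prime}\to D_{G}$ vanishes by Lemma \ref{lemma_nomapsfromqabtodiscrete}, so $A^{\prime}$ factors through the quasi-adelic block $V_{G}\oplus A_{G}$, whose quotient by $A^{\prime}$ is again quasi-adelic by Lemma \ref{lemma_Fqab_closed_under_cokernels}. In other words, everything comes down to proving
\[
\operatorname{Ext}^{1}_{\mathsf{LCA}_{F}}(Q,A^{\prime})=0\qquad\text{for every quasi-adelic block }Q\text{ and every vector-free adelic block }A^{\prime},
\]
and the place decomposition should reduce this Ext-group to local contributions, one for each place $P$.

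The heart of the matter is local. By Theorem \ref{thm_StrongBracVilenkin} each topological $P$-torsion part $A^{\prime P}$ is a \emph{finite-dimensional} $\widehat{F}_{P}$-vector space, and the continuity argument around Equation \ref{lqa2a} shows that for \emph{any} $G\in\mathsf{LCA}_{F}$ the topological $P$-torsion part $G^{P}$ carries a canonical continuous $\widehat{F}_{P}$-scalar action, i.e.\ lies in $\mathsf{LCA}_{\widehat{F}_{P}}$. The key local statement I would prove is that $\widehat{F}_{P}$, hence any finite-dimensional $\widehat{F}_{P}$-vector space, is an injective object of $\mathsf{LCA}_{F}$. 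The proposed route mimics the proof of Lemma \ref{Lem_InjProjInLCAF}(1): first establish that $\widehat{F}_{P}$ is injective in the larger category $\mathsf{LCA}_{\widehat{F}_{P}}$ of locally compact $\widehat{F}_{P}$-vector spaces (where a complete structure theory is available), then descend a lift back to $\mathsf{LCA}_{F}$ by the usual $F$-linearity and continuity argument, exactly as in Lemma \ref{lemma_LCAFtoLCAO} and Step 2 of the proof of Theorem \ref{thm_StrongBracVilenkin}. A crucial warning, and a place where extra care is needed: unlike for $\mathbb{R}_{\nu}$, one \emph{cannot} obtain this by appealing to $\mathsf{LCA}_{\mathcal{O}}$, since $\widehat{F}_{P}$ is \emph{not} injective in $\mathsf{LCA}_{\mathcal{O}}$ (nor in $\mathsf{LCA}$); the injectivity is a genuinely new phenomenon that appears only once the $F$- (equivalently the $P$-adic) scalar action is imposed, just as with $\mathbb{R}_{\nu}$.

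The main obstacle is the globalisation: passing from the individual local factors $\widehat{F}_{P}$ to the restricted product $A^{\prime}\cong\left.\prod\nolimits_{P}^{\prime}\right.(A^{\prime P},A^{\prime P}\cap C)$. A naive componentwise extension of a morphism $g\colon G_{1}\to A^{\prime}$ along an admissible monic $G_{1}\hookrightarrow G_{2}$ fails, because the componentwise extensions are uncoordinated and need not jointly land in the restricted product: for a fixed $x\in G_{2}$ there is no reason its image in $A^{\prime P}$ should lie in $A^{\prime P}\cap C$ for all but finitely many $P$. This is precisely the ``no uniform bound on the local ranks and lattices'' phenomenon, the $\omega_{p}$-issue of Examples \ref{example_Robertson} and \ref{example_KAdifferentFromKAdelicBlocks}, that already separates $K(\mathbb{A})$ from $K(\mathsf{LCA}_{F,ab})$. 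To handle it one would have to extend the morphism ``relative to the compact clopen $C$'' in a single stroke, exploiting that $g^{-1}(C)$ is open in $G_{1}$ and that $C$ is compact, after first using Proposition \ref{prop_ProductCatLCAS} and Lemma \ref{lemma_psiInftyIsEquivalence} to isolate the finitely many ``bad'' places where the lattice condition can be violated and to treat the remaining cofinitely many places uniformly. I expect a successful proof of the conjecture to stand or fall with this last step rather than with the purely local one.
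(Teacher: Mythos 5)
This statement is explicitly labeled a \emph{conjecture} in the paper, immediately prefaced by ``We could not settle the following,'' so there is no proof of it in the paper to compare against. The only reasoning the paper itself records is the observation (in the sentence after the conjecture, and in Example \ref{example_Proj}) that injectivity suffices, because adelic blocks are self-dual; your opening reduction reproduces exactly this. Beyond that you go further than the paper does: the reduction to vector-free blocks via Proposition \ref{prop_DecomposeQuasiAdelicBlock}(2) and Lemma \ref{Lem_InjProjInLCAF}, and then to the vanishing of $\operatorname{Ext}^{1}$ from quasi-adelic blocks into vector-free adelic blocks via the Principal Structure Theorem, Lemma \ref{lemma_nomapsfromqabtodiscrete}, and Lemma \ref{lemma_Fqab_closed_under_cokernels}, are all sound and genuinely narrow the problem. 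You also correctly, and honestly, flag that the argument is incomplete; nothing you write closes the gap, and indeed the authors could not close it either.

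Two places deserve more scepticism than you give them. First, the proposed local step does not actually parallel Lemma \ref{Lem_InjProjInLCAF}(1) the way you suggest: there the trick is that $\mathbb{R}_{\nu}$ is injective in the \emph{larger} category $\mathsf{LCA}_{\mathcal{O}}$ and the lift descends by fullness of $\mathsf{LCA}_{F}\hookrightarrow\mathsf{LCA}_{\mathcal{O}}$. By contrast, $\mathsf{LCA}_{\widehat{F}_{P},\operatorname{top}}\simeq\mathsf{Vect}_{fd}(\widehat{F}_{P})$ (Proposition \ref{prop_LCA_F_overlocalfield}) is a \emph{smaller} category, not a larger one, and a general admissible monic $G_{1}\hookrightarrow G_{2}$ in $\mathsf{LCA}_{F}$ does not land in it. Moreover a morphism $g\colon G_{1}\to\widehat{F}_{P}$ is not recovered from $g|_{G_{1}^{P}}$: already the discrete $F$-vector space $F$ has $F^{P}=0$ but $\operatorname{Hom}_{\mathsf{LCA}_{F}}(F,\widehat{F}_{P})\neq 0$. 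So the local injectivity of $\widehat{F}_{P}$ in $\mathsf{LCA}_{F}$ is itself a nontrivial sub-conjecture, not a routine descent. Second, you locate the essential difficulty in the globalisation (coordinating the lattice conditions over all places), and I agree this is the crux; but as the $K$-theoretic discrepancy $K(\mathbb{A})\neq K(\mathsf{LCA}_{F,ab})$ and Examples \ref{example_Robertson}, \ref{example_KAdifferentFromKAdelicBlocks}, \ref{example8} show, this kind of ``no uniform bound'' pathology is exactly where naive expectations fail in this category, so it is a priori unclear whether the conjecture is even true. Your reduction chain is a useful contribution, but it should be presented as reformulating the conjecture rather than as a proof.
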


As explained in Example \ref{example_Proj}, it suffices to prove that they are
all injective (or all projective) since the dual of an adelic block is again
an adelic block\ by Corollary \ref{Cor_AdelicBlocksAreClosedUnderDuality}, but
adelic blocks are also isomorphic to their duals by Theorem
\ref{thm_StrongBracVilenkin}.

\begin{dream}
We conclude this section with a vague dream. Suppose $X/\mathbb{F}_{q}$ is a
smooth variety. One could hope that there are exact categories of
\textquotedblleft higher adelic blocks\textquotedblright\ attached to $X$ such
that the restricted products appearing in \cite{MR3291352} for the cycle
module $M:=K(-)$ being chosen to be $K$-theory, for example in Equation 1.7.
\textit{loc. cit.}, or \S 4 \textit{loc. cit.}, naturally would appear as the
$K$-theory of these higher adelic blocks. Theorem
\ref{thm_MainStructureThmOnKLCAFab} then could be regarded as the
one-dimensional case of an arithmetic variant. The finiteness conditions
\textit{loc. cit.} were made rather ad hoc, so perhaps some modifications will
be necessary, possibly resulting in a slightly different flasque resolution.
\end{dream}

\section{Main theorems}

We write $\mathsf{Vect}_{F}$ for the abelian category of all $F$-vector spaces
(without any topology), and $\mathsf{Vect}_{F,fd}$ for the abelian full
subcategory of all finite-dimensional $F$-vector spaces.

We have already shown that $\mathsf{LCA}_{F,qab+fd}$ is left $s$-filtering in
$\mathsf{LCA}_{F}$ (Theorem \ref{thm_qabfdIsLeftSpecial}), so the quotient
exact category in the following claim exists.

\begin{lemma}
\label{lemma_fmodfdisallmodqabfd}There is an exact equivalence of exact
categories%
\[
\Phi:\mathsf{Vect}_{F}/\mathsf{Vect}_{F,fd}\overset{\sim}{\longrightarrow
}\mathsf{LCA}_{F}/\mathsf{LCA}_{F,qab+fd}\text{.}%
\]

\end{lemma}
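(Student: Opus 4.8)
The plan is to exhibit $\Phi$ as the functor induced on quotients by the inclusion $\iota\colon\mathsf{Vect}_F\hookrightarrow\mathsf{LCA}_F$ which equips an $F$-vector space with the discrete topology, and then to produce an explicit quasi-inverse built from a canonical ``discretization'' functor on $\mathsf{LCA}_F$.

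First I would record the elementary features of $\iota$. It is fully faithful (any $F$-linear map out of a discrete module is continuous), exact, and reflects exactness, since a sequence of discrete $F$-vector spaces is exact in $\mathsf{LCA}_F$ precisely when it is algebraically exact. It carries $\mathsf{Vect}_{F,fd}$ into $\mathsf{LCA}_{F,qab+fd}$ (a finite-dimensional discrete space is $0\oplus D$). Conversely, a discrete $F$-vector space $V$ lying in $\mathsf{LCA}_{F,qab+fd}$ must be finite-dimensional: in a presentation $V\cong Q\oplus D$ with $Q$ quasi-adelic and $D$ finite-dimensional discrete, the summand $Q$ is discrete, so $\mathrm{id}_Q=0$ by Lemma \ref{lemma_nomapsfromqabtodiscrete}(1), forcing $Q=0$. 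Hence $\iota^{-1}(\mathsf{LCA}_{F,qab+fd})=\mathsf{Vect}_{F,fd}$ and, the relevant quotients existing (Theorem \ref{thm_qabfdIsLeftSpecial} for the target, a Serre subcategory for the source), $\iota$ descends to the claimed exact functor $\Phi$.

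Next I would set up the discretization functor. The crucial observation is that the clopen quasi-adelic submodule $Q\hookrightarrow G$ produced by Theorem \ref{thm_StructLCAF} is \emph{unique}: if $Q_1,Q_2\subseteq G$ are two clopen quasi-adelic submodules, then $Q_1+Q_2$ is again clopen, the kernel inclusion $Q_1\cap Q_2\hookrightarrow Q_1$ is an admissible monic, so $Q_1/(Q_1\cap Q_2)\cong(Q_1+Q_2)/Q_2$ is quasi-adelic by Lemma \ref{lemma_Fqab_closed_under_cokernels}; but it also embeds into the discrete module $G/Q_2$, hence is discrete, hence $0$ by Lemma \ref{lemma_nomapsfromqabtodiscrete}(1), so $Q_1\subseteq Q_2$ and by symmetry $Q_1=Q_2$. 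Write $Q(G)$ for this submodule; then $D(G):=G/Q(G)$ is a discrete $F$-vector space, and $G\mapsto D(G)$ is functorial because for $f\colon G_1\to G_2$ the restriction $f|_{Q(G_1)}$ factors, by Proposition \ref{prop_LCAFqab_leftfilt}, through a clopen quasi-adelic submodule of $G_2$, necessarily $Q(G_2)$ by uniqueness. Composing $D$ with the Serre projection gives a functor $\Psi\colon\mathsf{LCA}_F\to\mathsf{Vect}_F/\mathsf{Vect}_{F,fd}$ which annihilates $\mathsf{LCA}_{F,qab+fd}$ (for such $G$ one finds $Q(G)=Q$ and $D(G)$ finite-dimensional), hence descends to $\overline{\Psi}\colon\mathsf{LCA}_F/\mathsf{LCA}_{F,qab+fd}\to\mathsf{Vect}_F/\mathsf{Vect}_{F,fd}$. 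Finally one checks $\overline{\Psi}$ and $\Phi$ are mutually inverse: $D\circ\iota=\mathrm{id}_{\mathsf{Vect}_F}$ because the only quasi-adelic discrete module is $0$, so $\overline{\Psi}\circ\Phi=\mathrm{id}$; and the functorial exact sequence $Q(G)\hookrightarrow G\twoheadrightarrow D(G)$ with $Q(G)\in\mathsf{LCA}_{F,qab+fd}$ becomes an isomorphism $G\cong D(G)$ in the quotient, so $\Phi\circ\overline{\Psi}\cong\mathrm{id}$. (The same sequence, applied to an arbitrary $G$, also reproves essential surjectivity of $\Phi$ directly.)

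The step I expect to be the main obstacle is matching the exact structures, i.e. showing that $\Psi$ (equivalently $\overline{\Psi}$, equivalently that $\Phi$ reflects exactness) is exact: one must verify that a short exact sequence $G'\hookrightarrow G\twoheadrightarrow G''$ in $\mathsf{LCA}_F$ is carried to a sequence $D(G')\to D(G)\to D(G'')$ that is short exact in the Serre quotient, i.e. has finite-dimensional kernel, cokernel and homology. Here I would track $Q(-)$ along the sequence --- one gets $Q(G')\subseteq Q(G)$ and that the image of $Q(G)$ in $G''$ lands in $Q(G'')$, both from Proposition \ref{prop_LCAFqab_leftfilt} and the uniqueness of $Q(-)$ --- and then use the Kernel Theorem \ref{thm_KernelOfqabfds} together with Lemma \ref{lemma_nomapsfromqabtodiscrete} to see that the various discrepancies between $Q(G')$, $Q(G)\cap G'$, $Q(G'')$ and the image of $Q(G)$ are quasi-adelic up to a finite-dimensional discrete correction, hence disappear in $\mathsf{Vect}_F/\mathsf{Vect}_{F,fd}$. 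This bookkeeping, while not conceptually deep, is the only genuinely delicate part; everything else is formal.
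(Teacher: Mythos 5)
Your route is genuinely different from the paper's, and it is the more ambitious one: you set out to build an explicit quasi-inverse $\overline{\Psi}$ by producing a ``discretization'' functor $D(G)=G/Q(G)$, which requires you to first prove that the clopen quasi-adelic submodule $Q(G)$ of Theorem \ref{thm_StructLCAF} is \emph{unique}. That uniqueness is a nice structural observation not stated in the paper and, once established, gives something cleaner than the bare statement of the lemma. The paper instead bypasses a quasi-inverse entirely: it identifies both quotients with localizations at a class $\Sigma_e$ of admissible epics (via \cite[Prop.\ 2.19]{MR3510209}), checks directly that the localized $\operatorname{Hom}$-sets between discrete objects coincide on the nose (full faithfulness), and reads essential surjectivity straight off Theorem \ref{thm_StructLCAF}.

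Two issues in your argument. First, a small misstep in the uniqueness claim: you invoke Lemma \ref{lemma_Fqab_closed_under_cokernels} to conclude $Q_1/(Q_1\cap Q_2)$ is quasi-adelic, but that lemma requires \emph{both} ends of the admissible monic to be quasi-adelic, and you have not established that $Q_1\cap Q_2$ is quasi-adelic (the category is not closed under kernels, cf.\ Example \ref{Example_LCAFqabDoesNotHaveKernels}). The fix is to skip the cokernel lemma entirely: the composite $Q_1\hookrightarrow G\twoheadrightarrow G/Q_2$ is a morphism from a quasi-adelic block to a discrete module (as $Q_2$ is open), hence zero by Lemma \ref{lemma_nomapsfromqabtodiscrete}(1), so $Q_1\subseteq Q_2$ directly, and symmetry finishes it.

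Second, and this is the genuine gap, you flag exactness of $\Psi$ as ``the main obstacle'' and only sketch it. As stated, your proof does not prove the lemma: without exactness of $\overline{\Psi}$ (equivalently, without $\Phi$ reflecting exactness) you have an equivalence of categories with exact $\Phi$, not an \emph{exact} equivalence of exact categories. The tracking you sketch --- comparing $Q(G')$, $Q(G)\cap G'$, the image of $Q(G)$, and $Q(G'')$ using Proposition \ref{prop_LCAFqab_leftfilt}, the Kernel Theorem, and Lemma \ref{lemma_nomapsfromqabtodiscrete} --- is plausible but is exactly the kind of diagram chase where things go wrong in a non-abelian exact category, and it must be carried out. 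The paper's approach avoids this bookkeeping entirely by working with the localization description of the quotients' morphisms, which is why it is short; you are paying for the extra structural information (a concrete discretization functor and the uniqueness of $Q(G)$) with a nontrivial exactness verification that you have not yet done.
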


\begin{proof}
We define the functor $\Phi$ by sending a vector space $V$ to itself, equipped
with the discrete topology. This functor is fully faithful: Given
$V,V^{\prime}\in\mathsf{Vect}_{F}$, the map%
\begin{equation}
\operatorname*{Hom}(V,V^{\prime})\longrightarrow\operatorname*{Hom}%
(\Phi(V),\Phi(V^{\prime}))\label{laa7}%
\end{equation}
goes to the localization $\mathsf{LCA}_{F}/\mathsf{LCA}_{F,qab+fd}%
:=\mathsf{LCA}_{F}[\Sigma_{e}^{-1}]$ on the right, where $\Sigma_{e}$ is the
collection of admissible epics with kernel in $\mathsf{LCA}_{F,qab+fd}$ (see
\cite[Prop. 2.19]{MR3510209}). Since $\Phi(V)$, $\Phi(V^{\prime})$ are
discrete, it suffices to consider $K\hookrightarrow D\twoheadrightarrow
D^{\prime}$ with $D,D^{\prime}$ discrete and $K\in\mathsf{LCA}_{F,qab+fd}$.
However, as this sequence is exact, $K$ must also be discrete. However, an
object which is both discrete and in $\mathsf{LCA}_{F,qab+fd}$ must be a
discrete finite-dimensional $F$-vector space (apply Lemma
\ref{lemma_nomapsfromqabtodiscrete} to the identity map), so we localize by
the system of arrows $D\twoheadrightarrow D^{\prime}$ with finite-dimensional
kernel. Of course, for forming the quotient $\mathsf{Vect}_{F}/\mathsf{Vect}%
_{F,fd}$, we also localize at the morphisms with finite-dimensional kernel, so
the Hom-sets are the same. It follows that the map in Equation \ref{laa7} is
an isomorphism of abelian groups, and thus $\Phi$ is fully faithful. Moreover,
$\Phi$ is essentially surjective:\ Given any object $G\in\mathsf{LCA}_{F}$, by
Proposition \ref{thm_StructLCAF} there is an exact sequence $Q\hookrightarrow
G\overset{f}{\twoheadrightarrow}D$ with $Q\in\mathsf{LCA}_{F,qab}$ and $D$
discrete. However, this is an admissible epic with quasi-adelic kernel, so
$f\in\Sigma_{e}$, i.e. $G\cong D$ in the quotient $\mathsf{LCA}_{F}%
/\mathsf{LCA}_{F,qab+fd}$, but $D$ lies in the image of the functor $\Phi$.
\end{proof}

Moreover, we need the following similar computation. Again, we have already
shown that $\mathsf{LCA}_{F,C}$ is left $s$-filtering in $\mathsf{LCA}%
_{F,qab}$ (Proposition \ref{prop_LCAFCLeftFilteringInLCAFQab}).

\begin{lemma}
\label{lemma_Fad_equivalent_to_FqabmodFC}The natural functor%
\[
\mathsf{LCA}_{F,ab}\overset{\sim}{\longrightarrow}\mathsf{LCA}_{F,qab}%
/\mathsf{LCA}_{F,C}\text{,}%
\]
sending an adelic block to itself, induces an exact equivalence of exact categories.
\end{lemma}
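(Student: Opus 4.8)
The plan is to run the same strategy as for Lemma~\ref{lemma_fmodfdisallmodqabfd}. First I would record that the functor in the statement is the composite of the inclusion $\mathsf{LCA}_{F,ab}\hookrightarrow\mathsf{LCA}_{F,qab}$ with the quotient functor $\mathsf{LCA}_{F,qab}\to\mathsf{LCA}_{F,qab}/\mathsf{LCA}_{F,C}$, the latter being available because $\mathsf{LCA}_{F,C}$ is left $s$-filtering in $\mathsf{LCA}_{F,qab}$ (Proposition~\ref{prop_LCAFCLeftFilteringInLCAFQab}). The inclusion is exact, since adelic blocks are extension-closed in $\mathsf{LCA}_{F,qab}$: a compact $F$-vector space occurring as a subobject of an adelic block must vanish, being connected (Lemma~\ref{lemma_LCAFCIsConnected}) and hence contained in the vector $\mathcal{O}$-module part, which has no nontrivial compact subgroups. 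As the quotient functor is exact too, the composite is exact. As in the proof of Lemma~\ref{lemma_fmodfdisallmodqabfd}, and by \cite[Prop.~2.19]{MR3510209}, I would use that the quotient is the localization $\mathsf{LCA}_{F,qab}[\Sigma_e^{-1}]$ at the class $\Sigma_e$ of admissible epics with kernel in $\mathsf{LCA}_{F,C}$, and that it admits a calculus of right fractions.

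For essential surjectivity I would invoke Proposition~\ref{prop_DecomposeQuasiAdelicBlock}: an arbitrary quasi-adelic block $Q$ decomposes canonically as $Q\cong V\oplus A$ with $V$ a compact $F$-vector space and $A$ an adelic block, and the projection $Q\twoheadrightarrow A$ has kernel $V\in\mathsf{LCA}_{F,C}$, hence lies in $\Sigma_e$. Therefore $Q\cong A$ already in the quotient, so every object of $\mathsf{LCA}_{F,qab}/\mathsf{LCA}_{F,C}$ is isomorphic to the image of an adelic block.

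The decisive point for full faithfulness is that every morphism in $\Sigma_e$ with adelic target splits: if $s\colon Q\twoheadrightarrow A$ lies in $\Sigma_e$ and $A$ is an adelic block, then $\ker(s)$ is a compact $F$-vector space, hence an injective object by Lemma~\ref{Lem_InjProjInLCAF}, so there is a section $\iota\colon A\to Q$ with $s\iota=\operatorname{id}_A$; in the localization $s$ is invertible and $s\iota=\operatorname{id}$, so $s^{-1}=\iota$. Consequently any right fraction $A\xleftarrow{\,s\,}Q\xrightarrow{\,f\,}A'$ between adelic blocks represents the genuine morphism $f\iota\colon A\to A'$ of $\mathsf{LCA}_{F,qab}$, which lies in the full subcategory $\mathsf{LCA}_{F,ab}$; this yields fullness. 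For faithfulness, if $g\colon A\to A'$ in $\mathsf{LCA}_{F,ab}$ maps to zero in the quotient, the fraction calculus provides $t\in\Sigma_e$ with target $A$ and $g\circ t=0$; but $t$, being an admissible epic, is an epimorphism, so $g=0$. Exactness together with essential surjectivity and full faithfulness then give the claimed exact equivalence.

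I expect the only step requiring genuine care to be this full-faithfulness bookkeeping, and within it the splitting of $\Sigma_e$-epics onto adelic blocks; once that is in hand everything else is formal, in complete parallel to Lemma~\ref{lemma_fmodfdisallmodqabfd}. In particular no structure theory beyond Propositions~\ref{prop_LCAFCLeftFilteringInLCAFQab} and~\ref{prop_DecomposeQuasiAdelicBlock} and the injectivity of compact $F$-vector spaces should be required.
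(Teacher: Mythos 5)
Your proof is correct, and it handles the full-faithfulness step by a genuinely different mechanism from the paper. The paper observes that a compact $F$-vector subspace of an adelic block must vanish — it runs the argument directly with the presentation $V=\bigcup_{n\geq1}\tfrac{1}{n}\tilde C$ and $\bigcap_{n\geq 1}n\tilde C=0$ to force $C=0$ (and offers a second version via duality, using Lemma~\ref{lemma_nomapsfromqabtodiscrete} applied to $V^\vee\twoheadrightarrow C^\vee$) — and then concludes that on the image of the inclusion one is only inverting isomorphisms. You instead exploit injectivity of compact $F$-vector spaces (Lemma~\ref{Lem_InjProjInLCAF}) to \emph{split} any $\Sigma_e$-epic onto an adelic block and then run the calculus-of-fractions bookkeeping to get fullness and faithfulness. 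The paper's observation is sharper, since it shows $\Sigma_e$ becomes a class of isomorphisms when restricted to adelic blocks, but your version is more robust — it never needs the kernel to be zero, only that it splits off — and it is more explicit about how morphisms in the localization are actually represented, which the paper elides with the phrase ``on the strict image.'' Your aside that $\mathsf{LCA}_{F,ab}$ is extension-closed in $\mathsf{LCA}_{F,qab}$ should be fleshed out a touch (from $G\cong V\oplus A$ one should note that the compact summand $V$ maps to zero in the adelic quotient $G''$, hence lands in the adelic $G'$, and so vanishes), but the idea you state is the right one; the paper itself takes this for granted via the definition of $\mathsf{LCA}_{F,ab}$ as a fully exact subcategory.
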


\begin{proof}
(1)\ Exactness is clear since a sequence in $\mathsf{LCA}_{F,qab}%
/\mathsf{LCA}_{F,C}$ is exact if it comes from an exact sequence in
$\mathsf{LCA}_{F,qab} $, and this is clearly true coming from $\mathsf{LCA}%
_{F,ab}$, (2) essential surjectivity is clear since by Proposition
\ref{prop_DecomposeQuasiAdelicBlock} each quasi-adelic block has the shape
$V\oplus C$ with $C$ compact and $V$ an adelic block, so in the quotient
category on the right each object is isomorphic to an adelic block, (3) full
faithfulness follows from the fact $\mathsf{LCA}_{F,qab}/\mathsf{LCA}%
_{F,C}=\mathsf{LCA}_{F,qab}[\Sigma_{e}^{-1}]$, where $\Sigma_{e}$ is the
collection of admissible epics with kernel in $\mathsf{LCA}_{F,C}$, that is on
the strict image of the functor it suffices to consider sequences%
\[
C\hookrightarrow V\twoheadrightarrow V^{\prime}%
\]
with $V,V^{\prime}$ adelic blocks and $C$ a compact $F$-space. However,
presenting $V=\bigcup_{n\geq1}\frac{1}{n}\tilde{C}$ with $\tilde{C}$ a clopen
$\mathcal{O}$-submodule, the latter defines an open cover of $V$. The image of
$C$ in $V$ is compact, so there exists some $n$ such that $C\subseteq\frac
{1}{n}\tilde{C}$ by compactness, and then $\bigcap_{m\geq1}\frac{1}%
{m}C\subseteq\bigcap_{m\geq1}\frac{1}{nm}\tilde{C}=0$, where we used that $V$
is an adelic block. However, as $C$ is an $F$-vector space, $\frac{1}{m}C=C$
and thus the left hand side is just $C$ itself. The argument thus shows that
$C=0$. This shows that on the strict image we are only inverting isomorphisms,
proving full faithfulness. (There is also an alternative argument: For
$C\hookrightarrow V$ consider its dual morphism $V^{\vee}\twoheadrightarrow
C^{\vee}$. As the dual $V^{\vee}$ is still an adelic block by Corollary
\ref{Cor_AdelicBlocksAreClosedUnderDuality}, but $C^{\vee}$ is discrete, Lemma
\ref{lemma_nomapsfromqabtodiscrete} forces this map to be zero. Being an epic,
this forces $C^{\vee}=0$ and thus $C=0$.)
\end{proof}

Next, we shall employ the defining properties of localizing invariants, as
induced from the following crucial fact due to Schlichting.

\begin{theorem}
[Schlichting Localization]\label{thm_SchlichtingLocalizationThm}Suppose
$\mathsf{C}$ is an idempotent complete exact category and $\mathsf{C}%
\hookrightarrow\mathsf{D}$ a left $s$-filtering full subcategory of an exact
category $\mathsf{D}$. Then%
\[
D^{b}(\mathsf{C})\hookrightarrow D^{b}(\mathsf{D})\twoheadrightarrow
D^{b}(\mathsf{D}/\mathsf{C})
\]
is an exact sequence of triangulated categories.
\end{theorem}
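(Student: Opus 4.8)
The plan is to deduce this from Verdier's localization theorem for triangulated categories, the one genuinely new input being a resolution lemma for bounded complexes that is calibrated to the left $s$-filtering hypothesis. We already know from \cite[Proposition 1.16]{MR2079996} (recalled above) that the exact quotient category $\mathsf{D}/\mathsf{C}$ exists, with an exact quotient functor $q\colon\mathsf{D}\to\mathsf{D}/\mathsf{C}$ that annihilates $\mathsf{C}$. By ``exact sequence of triangulated categories'' I mean: the functor $D^{b}(\mathsf{C})\to D^{b}(\mathsf{D})$ is fully faithful with thick essential image $\mathcal{T}$, and $D^{b}(q)$ induces an equivalence $D^{b}(\mathsf{D})/\mathcal{T}\xrightarrow{\ \sim\ }D^{b}(\mathsf{D}/\mathsf{C})$. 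So there are three things to verify.

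\emph{Full faithfulness of $D^{b}(\mathsf{C})\to D^{b}(\mathsf{D})$.} Let $D^{b}_{\mathsf{C}}(\mathsf{D})\subseteq D^{b}(\mathsf{D})$ be the full subcategory of complexes isomorphic to bounded complexes with entries in $\mathsf{C}$; it suffices to show $D^{b}(\mathsf{C})\to D^{b}_{\mathsf{C}}(\mathsf{D})$ is fully faithful (it is essentially surjective by construction). A morphism in $D^{b}(\mathsf{D})$ between complexes over $\mathsf{C}$ is a roof $X\xleftarrow{\,s\,}W\to Y$ with $s$ a quasi-isomorphism, and the task is to replace $W$ by a complex over $\mathsf{C}$. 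This is a descending induction on the top nonzero degree of $W$: the \emph{left filtering} clause factors maps out of objects of $\mathsf{C}$ through subobjects lying in $\mathsf{C}$, and the \emph{left special} clause completes an admissible epic with target in $\mathsf{C}$ to an exact sequence with all three terms in $\mathsf{C}$; assembling these degree by degree yields a subcomplex $W'\hookrightarrow W$ over $\mathsf{C}$ with $W'\to X$ still a quasi-isomorphism. Idempotent completeness of $\mathsf{C}$ is what forces $\mathcal{T}$ to be closed under direct summands, hence thick.

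\emph{Identification of $\mathcal{T}$ and passage to the quotient.} Since $q$ kills $\mathsf{C}$, clearly $\mathcal{T}\subseteq\ker D^{b}(q)$; the converse says a bounded complex over $\mathsf{D}$ that is acyclic in $\mathsf{D}/\mathsf{C}$ is quasi-isomorphic to one over $\mathsf{C}$, proved by the same kind of degreewise resolution applied to its cycle and boundary subquotients. With the kernel identified, Verdier's theorem produces a fully faithful functor $D^{b}(\mathsf{D})/\mathcal{T}\hookrightarrow D^{b}(\mathsf{D}/\mathsf{C})$ provided $D^{b}(q)$ admits a calculus of fractions compatible with the one presenting the exact quotient $\mathsf{D}/\mathsf{C}$; essential surjectivity up to summands is easy, as every object of $\mathsf{D}/\mathsf{C}$ is a retract of some $q(G)$ and bounded complexes lift termwise, and ``up to summands'' is upgraded to a genuine equivalence by idempotent-completeness bookkeeping on the two derived categories.

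The main obstacle is exactly this last compatibility of calculi of fractions, i.e. showing that a fraction over $\mathsf{D}/\mathsf{C}$ lifts to $\mathsf{D}$ after introducing denominators whose cones lie in $\mathcal{T}$ itself — not merely in some larger thick subcategory — and dually that a morphism of $\mathsf{D}$ that dies in $D^{b}(\mathsf{D}/\mathsf{C})$ factors through an object of $\mathcal{T}$. This is where the precise interaction of the left filtering and left special properties is indispensable and where a careless argument fails: one must check that the resolutions built one degree at a time glue into an honest morphism of complexes while keeping the denominators ``small''. Everything else — Verdier localization, the thickness criterion, the idempotent-completeness shuffling — is formal, so I would organize the whole proof around a single resolution lemma for bounded complexes over $\mathsf{D}$ relative to $\mathsf{C}$, proved by induction on amplitude, and derive all three assertions from it.
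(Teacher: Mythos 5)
Your plan is organized correctly: the three assertions you isolate are exactly what needs proving, the roles you assign to left filtering and left special in building degreewise resolutions are right, and using idempotent completeness of $\mathsf{C}$ to ensure thickness of the image (because $D^b(\mathsf{C})$ is then itself idempotent complete) is the correct move. Note, however, that the paper gives no in-house proof of this theorem; it simply points to \cite[Prop.~2.6]{MR2079996}, so the honest comparison is with Schlichting's source rather than with anything in the paper.

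The genuine gap is that the resolution lemma around which you say the proof should be organized is stated but never proved, and you yourself flag the compatibility of the two calculi of fractions as the main obstacle without resolving it. That compatibility---that a roof in $D^b(\mathsf{D}/\mathsf{C})$ lifts to one in $D^b(\mathsf{D})$ whose denominator has cone genuinely in $\mathcal{T}$ and not merely in some larger thick subcategory, and that the degree-by-degree resolutions extracted from the left filtering and left special clauses assemble into an actual chain map rather than a family of squares commuting only up to homotopy---is precisely the non-formal content of Schlichting's argument. Without carrying that out, what you have is a faithful restatement of the strategy in the cited reference, which is useful exposition, but it is not an independent proof.
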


We refer to \cite[Prop. 2.6]{MR2079996} for the proof.

\begin{theorem}
\label{thm_MainKOfLCAF}Let $F$ be a number field and let
$K:\operatorname*{Cat}_{\infty}^{\operatorname*{ex}}\rightarrow\mathsf{A}$ be
a localizing invariant with values in a stable $\infty$-category $\mathsf{A}$
(in the sense of \cite{MR3070515}). Then there is a canonical fiber sequence%
\[
K(F)\longrightarrow K(\mathsf{LCA}_{F,ab})\longrightarrow K(\mathsf{LCA}%
_{F})\text{,}%
\]
and the first arrow is induced from the exact functor sending $F$ to the
ad\`{e}les $\mathbb{A}$.
\end{theorem}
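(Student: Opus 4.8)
The plan is to deduce the fiber sequence from two applications of Schlichting's Localization Theorem (Theorem~\ref{thm_SchlichtingLocalizationThm}), together with the Eilenberg swindle and the Additivity Theorem, the decisive point being a comparison of connecting maps.

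\emph{Step 1: two localization sequences.} Since $\mathsf{LCA}_{F,qab+fd}$ is left $s$-filtering in $\mathsf{LCA}_{F}$ (Theorem~\ref{thm_qabfdIsLeftSpecial}) and idempotent complete (it even has all kernels by Theorem~\ref{thm_KernelOfqabfds}), Theorem~\ref{thm_SchlichtingLocalizationThm} combined with Lemma~\ref{lemma_fmodfdisallmodqabfd} yields, after applying $K$, a fiber sequence $K(\mathsf{LCA}_{F,qab+fd})\to K(\mathsf{LCA}_{F})\to K(\mathsf{Vect}_{F}/\mathsf{Vect}_{F,fd})$. Applying the same theorem to the left $s$-filtering inclusion $\mathsf{Vect}_{F,fd}\hookrightarrow\mathsf{Vect}_{F}$ and using that $\mathsf{Vect}_{F}$ is flasque (it admits countable coproducts, so $K(\mathsf{Vect}_{F})\simeq 0$ for every additive, hence every localizing, invariant) identifies $K(\mathsf{Vect}_{F}/\mathsf{Vect}_{F,fd})\simeq\Sigma K(F)$. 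Rotating, we obtain a fiber sequence
\[
K(F)\overset{\partial}{\longrightarrow}K(\mathsf{LCA}_{F,qab+fd})\longrightarrow K(\mathsf{LCA}_{F}).
\]
In the same way, $\mathsf{LCA}_{F,C}\hookrightarrow\mathsf{LCA}_{F,qab}$ is left $s$-filtering (Proposition~\ref{prop_LCAFCLeftFilteringInLCAFQab}) with quotient $\mathsf{LCA}_{F,ab}$ (Lemma~\ref{lemma_Fad_equivalent_to_FqabmodFC}), and $\mathsf{LCA}_{F,C}$ is anti-equivalent to $\mathsf{Vect}_{F}$ (Pontryagin duality) and admits countable products, so again $K(\mathsf{LCA}_{F,C})\simeq 0$; hence $K(\mathsf{LCA}_{F,qab})\overset{\sim}{\longrightarrow}K(\mathsf{LCA}_{F,ab})$.

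\emph{Step 2: the adèle functor is canonically null on $K(\mathsf{LCA}_{F})$.} Let $\mathbf a\colon\mathsf{Vect}_{F,fd}\to\mathsf{LCA}_{F,ab}$ be the exact functor $V\mapsto\mathbb{A}\otimes_{F}V$, so $\mathbf a(F)=\mathbb{A}$. The short exact sequences $V_{\mathrm{disc}}\hookrightarrow\mathbb{A}\otimes_{F}V\twoheadrightarrow(\mathbb{A}/F)\otimes_{F}V$ (the adèle sequence of $F$, tensored up) constitute an exact sequence of exact functors $\mathsf{Vect}_{F,fd}\to\mathsf{LCA}_{F}$, whose outer two functors factor through the full subcategories $\mathsf{Vect}_{F}$ and $\mathsf{LCA}_{F,C}$, both of which have vanishing $K$-theory. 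By the Additivity Theorem the composite $K(F)\overset{\mathbf a_{\ast}}{\longrightarrow}K(\mathsf{LCA}_{F,ab})\longrightarrow K(\mathsf{LCA}_{F})$ acquires a canonical null-homotopy and therefore induces a map $\Phi\colon\operatorname{cofib}(\mathbf a_{\ast})\to K(\mathsf{LCA}_{F})$; the task is to show $\Phi$ is an equivalence.

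\emph{Step 3: comparison of connecting maps.} Running the additivity argument of Step~2 \emph{inside} $\mathsf{LCA}_{F,qab+fd}$ — where the adèle sequence still lives, since $F_{\mathrm{disc}}$, $\mathbb{A}$ and the compact $F$-vector space $\mathbb{A}/F$ are all objects of $\mathsf{LCA}_{F,qab+fd}$, and the $(\mathbb{A}/F)$-term contributes $0$ — shows that $(\iota)_{\ast}\circ\mathbf a_{\ast}=d$, where $\iota\colon\mathsf{LCA}_{F,ab}\hookrightarrow\mathsf{LCA}_{F,qab+fd}$ and $d\colon K(F)\to K(\mathsf{LCA}_{F,qab+fd})$ is induced by $V\mapsto V_{\mathrm{disc}}$. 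On the other hand, the commuting square relating $\mathsf{Vect}_{F,fd}\hookrightarrow\mathsf{Vect}_{F}$ to $\mathsf{LCA}_{F,qab+fd}\hookrightarrow\mathsf{LCA}_{F}$ via the "discrete vector space" functors induces on quotients precisely the equivalence of Lemma~\ref{lemma_fmodfdisallmodqabfd}, so naturality of connecting maps gives $\partial=d=(\iota)_{\ast}\circ\mathbf a_{\ast}$. Thus the fiber sequence of Step~1 is a map, with identity on the $K(F)$ factors, out of the cofiber sequence defining $\Phi$, and passing to cofibers shows that $\Phi$ is an equivalence \emph{as soon as} the middle comparison $(\iota)_{\ast}\colon K(\mathsf{LCA}_{F,ab})\to K(\mathsf{LCA}_{F,qab+fd})$ is an equivalence.

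\emph{Main obstacle.} Everything therefore reduces to proving that $\iota$ is a $K$-equivalence, equivalently (by Step~1, since $\mathsf{LCA}_{F,ab}\hookrightarrow\mathsf{LCA}_{F,qab}$ is a retract of the quotient equivalence $K(\mathsf{LCA}_{F,qab})\simeq K(\mathsf{LCA}_{F,ab})$) that the inclusion $\mathsf{LCA}_{F,qab}\hookrightarrow\mathsf{LCA}_{F,qab+fd}$ induces an equivalence on $K$-theory. This is genuinely delicate: $\mathsf{LCA}_{F,qab}$ is \emph{not} left $s$-filtering in $\mathsf{LCA}_{F,qab+fd}$ — the non-split adèle sequence $F\hookrightarrow\mathbb{A}\twoheadrightarrow\mathbb{A}/F$ obstructs left specialness, as in the counterexamples of Section~2 — so no localization theorem applies directly. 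Instead one must show that the finite-dimensional discrete $F$-vector spaces acquired in passing from $\mathsf{LCA}_{F,qab}$ to $\mathsf{LCA}_{F,qab+fd}$ are redundant on the derived level: each $F_{\mathrm{disc}}$ is the fiber, in $D^{b}(\mathsf{LCA}_{F,qab})$, of the morphism $\mathbb{A}\to\mathbb{A}/F$ between quasi-adelic blocks, so $D^{b}(\mathsf{LCA}_{F,qab})$ embeds into $D^{b}(\mathsf{LCA}_{F,qab+fd})$ as a thick subcategory that generates it, forcing the two $K$-theories to agree. Establishing this embedding cleanly (full faithfulness, thickness, generation), and then checking that tracking the boundary maps through the two stacked localizations really produces the map induced by $F\mapsto\mathbb{A}$, is where the substantive work lies.
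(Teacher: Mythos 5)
Your overall architecture matches the paper's: two Schlichting localizations with the Eilenberg swindle killing $K(\mathsf{Vect}_F)$ and $K(\mathsf{LCA}_{F,C})$, plus the additivity theorem applied to the ad\`{e}le sequence $F\hookrightarrow\mathbb{A}\twoheadrightarrow\mathbb{A}/F$ to identify the first arrow. Your Step~1 is the paper's Steps~1 and~3, and your Steps~2--3 (comparing the connecting map to the functor $F\mapsto\mathbb{A}$ using that $\mathbb{A}/F$ factors through the $K$-acyclic compact $F$-vector spaces) is precisely the paper's Step~4, phrased as $p_{1\ast}+p_{3\ast}=p_{2\ast}$ with $p_{3\ast}=0$.

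The one place where your argument is incomplete is exactly the place you flag: the $K$-equivalence $K(\mathsf{LCA}_{F,qab})\overset{\sim}{\longrightarrow}K(\mathsf{LCA}_{F,qab+fd})$. You correctly observe that localization cannot apply because the inclusion is not left special. But your proposed fix --- that $D^{b}(\mathsf{LCA}_{F,qab})$ sits inside $D^{b}(\mathsf{LCA}_{F,qab+fd})$ as a generating thick subcategory --- silently assumes that the induced derived functor is fully faithful, which is not automatic for an exact inclusion of exact categories; the ad\`{e}le sequence alone gives you generation, not full faithfulness. The paper closes this gap with Keller's resolution theorem \cite[Theorem 12.1]{MR1421815}: condition~C1 (every object of $\mathsf{LCA}_{F,qab+fd}$ admits a two-term resolution by objects of $\mathsf{LCA}_{F,qab}$) is supplied by tensoring the ad\`{e}le sequence, and condition~C2 reduces, by the remark following the cited theorem, to $\mathsf{LCA}_{F,qab}$ being closed under cokernels of admissible monics inside $\mathsf{LCA}_{F}$ --- which is Lemma \ref{lemma_Fqab_closed_under_cokernels}. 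With that lemma in hand, the resolution theorem delivers the derived equivalence (hence the $K$-equivalence for any localizing invariant) in one stroke, and your remaining steps then go through as you describe.
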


\begin{proof}
The proof is modelled after the analogous computation in \cite{obloc}. Only
the ingredient categories as well as some computational details
change.\newline\textit{(Step 1)} By Theorem \ref{thm_qabfdIsLeftSpecial} the
full subcategory $\mathsf{LCA}_{F,qab+fd}$ is left $s$-filtering in
$\mathsf{LCA}_{F}$. Thus, by the work of Schlichting \cite{MR2079996} there
exists a quotient exact category $\mathsf{LCA}_{F}/\mathsf{LCA}_{F,qab+fd}$
and we have an exact sequence of exact categories%
\[
\mathsf{LCA}_{F,qab+fd}\hookrightarrow\mathsf{LCA}_{F}\twoheadrightarrow
\mathsf{LCA}_{F}/\mathsf{LCA}_{F,qab+fd}\text{.}%
\]
We get a commutative diagram of exact categories with exact functors,%
\[%
\xymatrix{
{\mathsf{Vect}_{F,fd}} \ar@{^{(}->}[r] \ar[d] & \mathsf{Vect}_{F} \ar@
{->>}[r] \ar[d] & {\mathsf{Vect}_{F}}/{\mathsf{Vect}_{F,fd}} \ar[d]_{\sim} \\
\mathsf{LCA}_{F,qab+fd} \ar@{^{(}->}[r] & \mathsf{LCA}_{F} \ar@{->>}%
[r] & {\mathsf{LCA}_{F}/{\mathsf{LCA}_{F,qab+fd}},
}}%
\]
where the downward arrows just send an untopologized $F$-vector space to
itself, equipped with the discrete topology. This is an exact functor.
Moreover, the functor on the right is an exact equivalence by Lemma
\ref{lemma_fmodfdisallmodqabfd}. Using Schlichting's Localization Theorem
(Theorem \ref{thm_SchlichtingLocalizationThm}), the rows induce fiber
sequences in the localizing theory $K$, and the downward exact functors induce
morphisms of the output of $K$. We get a commutative diagram in $\mathsf{A}$,%
\[%
\xymatrix{
K({\mathsf{Vect}_{F,fd}}) \ar[r] \ar[d] & K(\mathsf{Vect}_{F}) \ar
[r] \ar[d] & K({\mathsf{Vect}_{F}}/{\mathsf{Vect}_{F,fd}}) \ar[d]_{\sim} \\
K(\mathsf{LCA}_{F,qab+fd}) \ar[r] & K(\mathsf{LCA}_{F}) \ar[r] & K({\mathsf
{LCA}_{F}/{\mathsf{LCA}_{F,qab+fd}}),
}}%
\]
where the right downward arrow now has become an equivalence. It follows that
the left square is bi-Cartesian in $\mathsf{A}$. Furthermore, by the Eilenberg
swindle we have $K(\mathsf{Vect}_{F})=0$. Thus, we obtain a fiber sequence%
\begin{equation}
K(F)\longrightarrow K(\mathsf{LCA}_{F,qab+fd})\longrightarrow K(\mathsf{LCA}%
_{F})\text{.}\label{l_bstep1}%
\end{equation}
\textit{(Step 2) }Next, we shall show that every object in $\mathsf{LCA}%
_{F,qab+fd}$ has a two-term resolution by objects in the full subcategory
$\mathsf{LCA}_{qab}$. To this end, note that by definition any object can be
written as a direct sum $Q\oplus D$ with $Q\in\mathsf{LCA}_{F,qab}$ and $D$
finite-dimensional discrete. In other words, $D$ is a finite direct sum of
copies of the number field $F$, equipped with the discrete topology. We have
the ad\`{e}le sequence%
\[
F\hookrightarrow\mathbb{A}\twoheadrightarrow\mathbb{A}/F\text{,}%
\]
where $\mathbb{A}$, $\mathbb{A}/F\in\mathsf{LCA}_{F,qab}$. Taking direct sums,
this proves Condition C1 of \cite[Theorem 12.1]{MR1421815}. By the Remark
following the Theorem \textit{loc. cit.}, in order to check Condition C2
\textit{loc. cit.}, it suffices to check that $\mathsf{LCA}_{F,qab}$ is closed
under cokernels of admissible monics in $\mathsf{LCA}_{F}$. We have already
shown this in Lemma \ref{lemma_Fqab_closed_under_cokernels}. Applying the
Theorem, we get that the exact functor%
\[
\mathsf{LCA}_{F,qab}\longrightarrow\mathsf{LCA}_{F,qab+fd}%
\]
induces a derived equivalence, so $K(\mathsf{LCA}_{F,qab})\overset{\sim
}{\longrightarrow}K(\mathsf{LCA}_{F,qab+fd})$ is an equivalence in $\mathsf{A}
$.\newline\textit{(Step 3) }We had shown in Proposition
\ref{prop_LCAFCLeftFilteringInLCAFQab} that $\mathsf{LCA}_{F,C}$ is left
$s$-filtering in $\mathsf{LCA}_{F,qab}$, so again the quotient exact category
exists and we get the exact sequence%
\[
\mathsf{LCA}_{F,C}\hookrightarrow\mathsf{LCA}_{F,qab}\twoheadrightarrow
\mathsf{LCA}_{F,qab}/\mathsf{LCA}_{F,C}%
\]
of exact categories. Using Schlichting's Localization Theorem once more, and
$K(\mathsf{LCA}_{F,C})=0$ by the Eilenberg swindle, as $\mathsf{LCA}_{F,C}$ is
closed under infinite products by Tychonoff's Theorem, we deduce the first
equivalence in%
\[
K(\mathsf{LCA}_{F,qab})\overset{\sim}{\longrightarrow}K(\mathsf{LCA}%
_{F,qab}/\mathsf{LCA}_{F,C})\overset{\sim}{\longrightarrow}K(\mathsf{LCA}%
_{F,ab})\text{,}%
\]
and the second equivalence comes from Lemma
\ref{lemma_Fad_equivalent_to_FqabmodFC}. Finally, take the fiber sequence in
Equation \ref{l_bstep1} and combining it with the equivalences from Step 2 and
Step 3, it transforms into%
\[
K(F)\longrightarrow K(\mathsf{LCA}_{F,ab})\longrightarrow K(\mathsf{LCA}%
_{F})\text{.}%
\]
\newline\textit{(Step 4)} In order to prove our claim, it remains to identify
that the arrow%
\[
K(F)\longrightarrow K(\mathsf{LCA}_{F,ab})
\]
is indeed induced from sending the projective generator $F$ of $\mathsf{Vect}%
_{F,fd}$ to the ad\`{e}les $\mathbb{A}$ in $\mathsf{LCA}_{F,ab}$. To this end,
we need to trace through our constructions. In Step 1 the arrow
$K(F)\rightarrow K(\mathsf{LCA}_{F,qab+fd})$ comes from sending $F$ to itself,
equipped with the discrete topology, call this map $p_{1}$. Next, as our
localizing invariant $K$ is also additive (see \cite[Section 8]{MR3070515}),
we know that the functor%
\[
\mathsf{Vect}_{F,fd}\longrightarrow\mathcal{E}\mathsf{LCA}_{F,qab+fd}\text{,}%
\]
where $\mathcal{E}\mathsf{LCA}_{F,qab+fd}$ denotes the exact category of short
exact sequence in $\mathsf{LCA}_{F,qab+fd}$,%
\[
(p_{1},p_{2},p_{3}):F\longmapsto\left[  F\hookrightarrow\mathbb{A}%
\twoheadrightarrow\mathbb{A}/F\right]
\]
has the property that the induced morphisms%
\[
p_{i\ast}:K(F)\longrightarrow K(\mathsf{LCA}_{F,qab+fd})\qquad\text{(for
}i=1,2,3\text{)}%
\]
in $\mathsf{A}$ satisfy $p_{1\ast}+p_{3\ast}=p_{2\ast}$. However, since
$\mathbb{A}/F\simeq F^{\vee}$ is compact, the functor $p_{3}$ can be factored
as%
\[
p_{3}:\mathsf{Vect}_{F,fd}\longrightarrow\mathsf{LCA}_{F,C}\longrightarrow
\mathsf{LCA}_{F,qab+fd}%
\]
and since $K(\mathsf{LCA}_{F,C})=0$ by the Eilenberg swindle, we have
$p_{3\ast}=0$. It follows that $p_{1\ast}=p_{2\ast}$, but $p_{1\ast}$ is the
map induced from Step 1, and $p_{2\ast}$ is the functor sending $F$ to
$\mathbb{A}$, so the functor of our claim. This proves the theorem.
\end{proof}

\section{Agreement theorems}

We return to the discussion of the introduction and Clausen's theory
\cite{clausen}. For his generalization of the Artin reciprocity map, Clausen
constructs canonical maps $\psi$ as in
\[%
\begin{array}
[c]{rcl}%
\mathbb{Z}\overset{\psi}{\longrightarrow}\pi_{1}K(\mathsf{LCA}_{F}%
)\longrightarrow\operatorname*{Gal}(F^{\operatorname*{ab}}/F) & \qquad &
\text{if }F\text{ is a finite field,}\\
F^{\times}\overset{\psi}{\longrightarrow}\pi_{1}K(\mathsf{LCA}_{F}%
)\longrightarrow\operatorname*{Gal}(F^{\operatorname*{ab}}/F) & \qquad &
\text{if }F\text{ is a local field,}\\
\mathbb{A}^{\times}/F^{\times}\overset{\psi}{\longrightarrow}\pi
_{1}K(\mathsf{LCA}_{F})\longrightarrow\operatorname*{Gal}%
(F^{\operatorname*{ab}}/F) & \qquad & \text{if }F\text{ is a number field,}%
\end{array}
\]
and shows that the compositions agree with the usual reciprocity map. See
\cite[Corollary 3.29]{clausen} for the construction of $\psi$ via a universal
mapping property. The global reciprocity map is glued from the local ones, as
usual, so proving agreement reduces to showing local agreement as well as
vanishing on the principal id\`{e}les. This is no different from the classical
local-to-global approach. We prove statements of the shape as predicted in
Remark 3.30 \textit{loc. cit.}, with a slight correction in the case of number
fields (see Example \ref{example_KAdifferentFromKAdelicBlocks}), and a
topologized version in the case of $p$-adic local fields:

\begin{theorem}
[Agreement with classical class field theory]\label{thm_Main_Agreement}Let $K$
denote non-connective $K$-theory.

\begin{enumerate}
\item If $F$ is a finite field, then%
\[
K_{1}(\mathsf{LCA}_{F})\cong\mathbb{Z}\text{.}%
\]

\item If $F$ is a $p$-adic local field, then%
\[
K_{1}(\mathsf{LCA}_{F,\operatorname*{top}})\cong F^{\times}\text{.}%
\]

\item If $F$ is a number field,%
\[
K_{1}(\mathsf{LCA}_{F})\cong\mathbb{A}^{\times}/\iota(F^{\times})\text{,}%
\]
i.e. there is a canonical isomorphism to Chevalley's id\`{e}le class group.
\end{enumerate}
\end{theorem}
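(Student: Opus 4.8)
The plan is to treat the three cases separately, each time reducing the $K_1$-statement to a spectrum-level computation established above.

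For a finite field $F$, Proposition~\ref{prop_KLCA_for_finite_fields} provides an equivalence $K(\mathsf{LCA}_F)\xrightarrow{\sim}\Sigma K(F)$; passing to homotopy groups gives $K_1(\mathsf{LCA}_F)\cong\pi_0K(F)=K_0(F)\cong\mathbb{Z}$, which is~(1). For a $p$-adic local field $F$, Corollary~\ref{cor_Main_KThy_LocalCase} provides an equivalence $K(\mathsf{LCA}_{F,\operatorname{top}})\xrightarrow{\sim}K(F)$, so that $K_1(\mathsf{LCA}_{F,\operatorname{top}})\cong K_1(F)\cong F^{\times}$, which is~(2); here I use that $K_1$ of a commutative field is its group of units. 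Both statements follow at once from the cited results.

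For a number field $F$ I would start from the fiber sequence
\[
K(F)\longrightarrow K(\mathsf{LCA}_{F,ab})\longrightarrow K(\mathsf{LCA}_F)
\]
of Theorem~\ref{thm_MainKOfLCAF}, whose first map is induced by the exact functor $F\mapsto\mathbb{A}$. Its long exact sequence of homotopy groups in the relevant range reads
\[
K_1(F)\xrightarrow{\ \iota_{\ast}\ }K_1(\mathsf{LCA}_{F,ab})\longrightarrow K_1(\mathsf{LCA}_F)\xrightarrow{\ \partial\ }K_0(F)\longrightarrow K_0(\mathsf{LCA}_{F,ab}).
\]
Now I would insert the known identifications: $K_1(F)\cong F^{\times}$ and $K_0(F)\cong\mathbb{Z}$, since $F$ is a field; $K_1(\mathsf{LCA}_{F,ab})\cong\mathbb{A}^{\times}$, the id\`{e}le group, by Theorem~\ref{thm_MainStructureThmOnKLCAFab} in degree one (the restricted product of the groups $\widehat{F}_P^{\times}$ relative to the $\widehat{\mathcal{O}}_P^{\times}$); and, by Example~\ref{example_KAdifferentFromKAdelicBlocks}, $K_0(\mathsf{LCA}_{F,ab})\cong\prod_P\mathbb{Z}$, with the free rank-one module $\mathbb{A}$ sent to the constant tuple $(1,1,\dots)$. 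In particular the map $K_0(F)\to K_0(\mathsf{LCA}_{F,ab})$ is injective, hence $\partial=0$, and the long exact sequence collapses to an exact sequence $K_1(F)\xrightarrow{\iota_{\ast}}\mathbb{A}^{\times}\to K_1(\mathsf{LCA}_F)\to 0$.

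The one genuinely delicate point is to identify $\iota_{\ast}$ with the diagonal embedding $\iota\colon F^{\times}\hookrightarrow\mathbb{A}^{\times}$. For this I would trace the functor $F\mapsto\mathbb{A}$ through the chain of equivalences underlying Theorem~\ref{thm_MainStructureThmOnKLCAFab} (the Braconnier--Vilenkin functor $\Xi$, the colimit category $\mathsf{LCA}_{F,vfab}^{(\infty)}$, and the local equivalences $\Xi^{(S)}$). Concretely, the object $\mathbb{A}$ has topological $P$-torsion part $\widehat{F}_P$ at every finite place, together with the archimedean factors at the infinite places, and multiplication by $\alpha\in F^{\times}$ acts on each local factor $\widehat{F}_P$ through the image of $\alpha$ under $F\hookrightarrow\widehat{F}_P$; under $K_1(\mathsf{LCA}_{F,ab})\cong\mathbb{A}^{\times}$ this is exactly the principal id\`{e}le $\iota(\alpha)$. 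Since $\iota$ is injective ($F\hookrightarrow\mathbb{A}$ being injective), the exact sequence above yields $K_1(\mathsf{LCA}_F)\cong\operatorname{coker}\bigl(\iota\colon F^{\times}\hookrightarrow\mathbb{A}^{\times}\bigr)=\mathbb{A}^{\times}/\iota(F^{\times})$, Chevalley's id\`{e}le class group. I expect this last identification of the first map of the fiber sequence to be the main obstacle; no further hard input is needed, only a careful chase through the equivalences already in place.
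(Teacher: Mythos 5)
Your proposal is correct and follows essentially the same route as the paper: (1) and (2) are the immediate spectrum-level reductions via Proposition~\ref{prop_KLCA_for_finite_fields} and Proposition~\ref{prop_LCA_F_overlocalfield} / Corollary~\ref{cor_Main_KThy_LocalCase}, and (3) is exactly the long exact sequence of the fiber sequence of Theorem~\ref{thm_MainKOfLCAF}, with $K_0(F)\to K_0(\mathsf{LCA}_{F,ab})$ shown injective to kill the boundary map and with $\alpha_1$ unravelled as the diagonal $F^\times\hookrightarrow\mathbb{A}^\times$ via Theorem~\ref{thm_MainStructureThmOnKLCAFab}. The only cosmetic difference is that you spell out in slightly more detail the chase identifying $\alpha_1$ with the principal-idèle embedding, which the paper states rather tersely; the underlying argument is the same.
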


\begin{proof}
\textit{(Claim 1)} We just use Proposition \ref{prop_KLCA_for_finite_fields}.
In particular, $K_{1}(\mathsf{LCA}_{\mathbb{F}_{q}})=K_{0}(\mathbb{F}%
_{q})\cong\mathbb{Z}$.\newline\textit{(Claim 2)} By Proposition
\ref{prop_LCA_F_overlocalfield} there is an equivalence in non-connective
$K$-theory%
\[
K(\mathsf{LCA}_{F,\operatorname*{top}})\overset{\sim}{\longrightarrow
}K(F)\text{,}%
\]
where $F$ is the local field. Thus, $K_{1}(\mathsf{LCA}_{F,\operatorname*{top}%
})\cong K_{1}(F)\cong F^{\times}$, as claimed.\newline\textit{(Claim 3)}
Theorem \ref{thm_MainKOfLCAF}, applied where $K$ refers to non-connective
$K$-theory, gives us a fiber sequence in spectra. The long exact sequence in
homotopy groups of this sequence yields the excerpt%
\begin{equation}
K_{1}(F)\overset{\alpha_{1}}{\longrightarrow}K_{1}(\mathsf{LCA}_{F,ab}%
)\longrightarrow K_{1}(\mathsf{LCA}_{F})\longrightarrow K_{0}(F)\overset
{\alpha_{0}}{\longrightarrow}K_{0}(\mathsf{LCA}_{F,ab})\label{lqa16}%
\end{equation}
and we know that $\alpha_{i}$ is induced from the functor%
\[
\mathsf{Vect}_{fd}(F)\longrightarrow\mathsf{LCA}_{F,ab}\text{,}\qquad\qquad
F\longmapsto\mathbb{A}\text{,}%
\]
where the functor is described by its function on a projective generator of
the category on the left. Using the isomorphism of Theorem
\ref{thm_MainStructureThmOnKLCAFab} on the level of $K_{0}$, we obtain that
$K_{0}(F)\rightarrow K_{0}(\mathsf{LCA}_{F,ab})$ unravels as%
\[
\mathbb{Z}\longrightarrow\prod\nolimits_{P}\mathbb{Z}\text{,}\qquad
\qquad1\longmapsto(1,1,\ldots)
\]
since $K_{0}(\widehat{\mathcal{O}}_{P})=K_{0}(\widehat{F}_{P})\cong\mathbb{Z}%
$, and we deduce that $\alpha_{0}$ is injective. Similarly, for $\alpha_{1}$
we obtain that $K_{1}(F)\rightarrow K_{1}(\mathsf{LCA}_{F,ab})$ unravels as%
\[
F^{\times}\longrightarrow\left\{  \left.  (\alpha_{P})_{P}\in\prod_{P}%
\widehat{F}_{P}^{\times}\right\vert \text{ }%
\begin{array}
[c]{l}%
\alpha_{P}\in\widehat{\mathcal{O}}_{P}^{\times}\text{ for all but finitely}\\
\text{many finite places}%
\end{array}
\right\}  \cong\mathbb{A}^{\times}%
\]
and the map is the diagonal, call it $\iota$. Again, this is clearly
injective. Thus, the exact sequence in Equation \ref{lqa16} implies the claim.
\end{proof}

Just out of curiosity, let us also state what the same proof shows for $K_{2}
$:

\begin{example}
If $F$ is a finite field, then $K_{2}(\mathsf{LCA}_{F})\cong K_{1}(F)\cong
F^{\times}$.
\end{example}

\begin{example}
If $F$ is a finite extension of a $p$-adic field, then $K_{2}(\mathsf{LCA}%
_{F,\operatorname*{top}})\cong\mathbf{\mu}(F)\oplus\mathbb{Q}^{I}$ for some
uncountable cardinal $I$, by Moore's Theorem identifying the $K_{2}$-group of
a local field with its group of units $\mathbf{\mu}$ and a uniquely divisible
part \cite[Chapter III, Theorem 6.2.4]{MR3076731}.
\end{example}

\begin{example}
If $F$ is a number field, $K_{2}(\mathsf{LCA}_{F})$ is isomorphic to the
cokernel of the diagonal embedding%
\[
\mathbf{\mu}(F)\overset{\iota}{\longrightarrow}\left\{  \left.  (\alpha
_{P})_{P}\in\prod_{P}\mathbf{\mu}(\widehat{F}_{P})\right\vert
\begin{array}
[c]{l}%
\alpha_{P}\text{ is prime-to-}\operatorname*{char}(\kappa(P))\text{ torsion
}\\
\text{for all but finitely many places}%
\end{array}
\right\}  \oplus\mathbb{Q}^{I}\text{,}%
\]
where $\kappa(P):=\mathcal{O}/P$ denotes the residue field at $P$, and $I$
some uncountable cardinal. This is shown by combining Theorem
\ref{thm_MainStructureThmOnKLCAFab}, Moore's Theorem to identify the torsion
of $K_{2}(\widehat{F}_{P})$ with all roots of unity of the local field, and
the localization sequence of $\operatorname*{Spec}\widehat{\mathcal{O}}%
_{P}/P\hookrightarrow\operatorname*{Spec}\widehat{\mathcal{O}}_{P}%
\hookleftarrow\operatorname*{Spec}\widehat{F}_{P}$ along with the observation
that it is splits \cite[Chapter V, Corollary 6.9.2]{MR3076731}, and the image
of the splitting are precisely the prime-to-$\operatorname*{char}(\kappa(P))$
torsion roots of unity. As the isomorphism of Moore's Theorem is the Hilbert
symbol, the finiteness condition in Example (3) is philosophically consistent
with ramification only occurring at finitely many places.
\end{example}

\begin{conjecture}
The morphisms $K_{n}(F)\rightarrow K_{n}(\mathsf{LCA}_{F,ab})$ are injective
for all $n\in\mathbb{Z}$.
\end{conjecture}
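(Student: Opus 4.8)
The plan is to make the map completely explicit, recast the conjecture as a Hasse principle in higher $K$-theory, and then reduce it by dévissage to a vanishing statement for Tate–Shafarevich groups of Tate twists. First I would trace through Step~4 of the proof of Theorem~\ref{thm_MainKOfLCAF}, together with the functor $\Xi$ and Proposition~\ref{prop_ProductCatLCAS}: the exact functor $\mathsf{Vect}_{F,fd}\to\mathsf{LCA}_{F,ab}$, $F\mapsto\mathbb{A}$, corresponds under the identifications of Theorem~\ref{thm_MainStructureThmOnKLCAFab} to the family of base-change functors $V\mapsto V\otimes_F\widehat{F}_P$ over all places $P$ of $F$ (with $\widehat{F}_P=\mathbb{R}$ or $\mathbb{C}$ at the infinite places, following the convention of Equation~\ref{lqa4}), since $\mathbb{A}^P=\widehat{F}_P$ for every finite place. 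Hence the morphism in the conjecture is the total localisation map
\[
K_n(F)\longrightarrow\Big\{(\alpha_P)_P\in\prod_P K_n(\widehat{F}_P)\ \Big|\ \alpha_P\in K_n(\widehat{\mathcal{O}}_P)\text{ for all but finitely many finite }P\Big\},
\]
whose $P$-component is the usual base-change map $K_n(F)\to K_n(\widehat{F}_P)$. As the image automatically satisfies the integrality condition, injectivity is equivalent to the \emph{Hasse principle} $\ker\big(K_n(F)\to\prod_P K_n(\widehat{F}_P)\big)=0$. For $n\le 1$ this is elementary ($K_{<0}(F)=0$; $K_0(F)=\mathbb{Z}\hookrightarrow\prod_P\mathbb{Z}$; $K_1(F)=F^\times\hookrightarrow\mathbb{A}^\times$, which is precisely the diagonal $\iota$ of Theorem~\ref{thm_Main_Agreement}), so the substance is $n\ge 2$.

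Next I would dévissage off the ``tame'' contributions. Given $x$ in the kernel, its image under Quillen's localisation boundary $\partial\colon K_n(F)\to\bigoplus_P K_{n-1}(\kappa(P))$ vanishes: by naturality of the localisation sequence in the scheme, $\partial_P$ factors through $K_n(F)\to K_n(\widehat{F}_P)\to K_{n-1}(\kappa(P))$, and the second arrow is split surjective with kernel $K_n(\widehat{\mathcal{O}}_P)$ (the splitting already invoked in the paper, \cite[Chapter V, Corollary 6.9.2]{MR3076731}). Therefore $x$ lifts to some $y\in K_n(\mathcal{O})$, and since $K_n(\mathcal{O})\to K_n(F)\to K_n(\widehat{F}_P)$ factors through $K_n(\widehat{\mathcal{O}}_P)\hookrightarrow K_n(\widehat{F}_P)$ for finite $P$, that $y$ maps to $0$ in every $K_n(\widehat{\mathcal{O}}_P)$ and in $K_n(\widehat{F}_P)$ for $P\mid\infty$. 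So the conjecture reduces to the \emph{integral} Hasse principle
\[
K_n(\mathcal{O})\hookrightarrow\prod_{P\mid\infty}K_n(\widehat{F}_P)\times\prod_{P\text{ finite}}K_n(\widehat{\mathcal{O}}_P),
\]
where now $K_n(\mathcal{O})$ is finitely generated by Quillen.

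The free part of $K_{2i-1}(\mathcal{O})$ is detected at the archimedean places (the Borel regulator $K_{2i-1}(\mathcal{O})\otimes\mathbb{R}\to\bigoplus_{P\mid\infty}K_{2i-1}(\widehat{F}_P)\otimes\mathbb{R}$ is injective modulo torsion, and $K_{2i}(\mathcal{O})$ is torsion for $i\ge 1$), so it remains to control torsion $\ell$-adically. By the Quillen–Lichtenbaum conjecture — now a theorem via Voevodsky–Rost and the norm residue isomorphism — for $n\ge 1$ the group $K_n(\mathcal{O})^{\wedge}_\ell$ is computed by the étale descent spectral sequence with $E_2^{s,t}=H^s_{\mathrm{et}}(\mathcal{O}[1/\ell],\mathbb{Z}_\ell(t))$; as $\operatorname{cd}_\ell\le 2$ (away from $\ell=2$, which is handled separately via real topological $K$-theory) and $H^0(\mathbb{Z}_\ell(t))=0$ for $t\neq 0$, the only term in total degree $n$ is $H^1(\mathcal{O}[1/\ell],\mathbb{Z}_\ell(t))$ for $n=2t-1$ odd and $H^2(\mathcal{O}[1/\ell],\mathbb{Z}_\ell(t))$ for $n=2t-2$ even $\ge 2$, with no differentials in or out; the same holds at each local place. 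For $n$ odd, the torsion of $H^1(\mathbb{Z}_\ell(t))$ equals $H^0(\mathcal{O}[1/\ell],\mathbb{Q}_\ell/\mathbb{Z}_\ell(t))=(\mathbb{Q}_\ell/\mathbb{Z}_\ell(t))^{G_F}$, which injects into $(\mathbb{Q}_\ell/\mathbb{Z}_\ell(t))^{G_{\widehat{F}_P}}$ at \emph{every} place, so the odd case is done. For $n$ even, injectivity of $H^2(\mathcal{O}[1/\ell],\mathbb{Z}_\ell(t))\to\prod_{P\mid\ell\infty}H^2(\widehat{F}_P,\mathbb{Z}_\ell(t))$ is, by definition, the vanishing of the Tate–Shafarevich group $\operatorname{Sha}^2(F,\mathbb{Z}_\ell(t))$.

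Thus the conjecture is equivalent to $\operatorname{Sha}^2(F,\mathbb{Z}_\ell(t))=0$ for all primes $\ell$ and all $t\ge 2$, together with the elementary bookkeeping at $\ell=2$ (where the étale $K$-theory of a real place is $2$-complete real topological $K$-theory and the relevant finite groups are standard). By Poitou–Tate duality $\operatorname{Sha}^2(F,\mathbb{Z}_\ell(t))\cong\operatorname{Sha}^1(F,\mathbb{Q}_\ell/\mathbb{Z}_\ell(1-t))^{\vee}$, i.e.\ a Hasse principle for $H^1$ of negative Tate twists — a statement in the circle of conjectures of Schneider and Soulé, known in cases (for instance $F/\mathbb{Q}$ abelian, via the Iwasawa Main Conjecture) but open in general. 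This is the genuine obstacle, and presumably the reason the authors state the result only as a conjecture; everything preceding it is the explicit identification of the map together with routine dévissage and the Borel regulator.
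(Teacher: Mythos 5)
The statement you were asked to prove is, in the paper, genuinely a conjecture: the authors offer no proof, only the remark that rational injectivity (indeed finiteness of the kernel) follows by mapping to the archimedean places as in \cite[Section~5]{obloc}. So there is no ``paper's own proof'' to compare against. Your proposal does not prove the conjecture either --- and you correctly say so at the end --- but it does something more useful than the paper: it identifies the map explicitly as the diagonal local-global map into the restricted product $\prod'_P K_n(\widehat{F}_P)$ (this identification matches what the paper proves in low degrees in the proof of Theorem~\ref{thm_Main_Agreement}), and then performs the standard reduction (boundary-map d\'{e}vissage to $K_n(\mathcal{O})$, Borel regulator for the free part, Quillen--Lichtenbaum and the \'{e}tale descent spectral sequence for the $\ell$-adic torsion) to the vanishing of $\operatorname{Sha}^2(F,\mathbb{Z}_\ell(t))$, i.e.\ to the Schneider--Soul\'{e} circle of conjectures. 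That is the correct ``final form'' of the obstruction, and it is genuinely open in general.

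Two places in your writeup need more care if this were to be turned into a rigorous argument. First, the Borel regulator lands in continuous (Lie algebra) cohomology, not in $K_n(\mathbb{R})$ or $K_n(\mathbb{C})$ regarded as algebraic $K$-theory of a \emph{discrete} field, which is what $K_n(\widehat{F}_P)$ means here; the bridge is Suslin's comparison with topological $K$-theory, and since $K_n(\mathbb{C})$ has a huge uniquely divisible summand one has to check that the regulator is actually detected by the map to $K_n(\mathbb{C})$ and not lost in the comparison. This is what the cited argument in \cite[Section~5]{obloc} handles and should be referenced rather than re-derived. Second, the d\'{e}vissage at places $P\mid\ell$ is delicate: $K_n(\widehat{\mathcal{O}}_P)^\wedge_\ell$ in residue characteristic $\ell$ is not directly computed by the \'{e}tale spectral sequence over $\widehat{\mathcal{O}}_P$, so one should pass through $K_n(\widehat{F}_P)$ (using Gersten injectivity $K_n(\widehat{\mathcal{O}}_P)\hookrightarrow K_n(\widehat{F}_P)$, which is available and already cited in the paper) before invoking the spectral sequence; as stated, your sentence slightly blurs this. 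Neither issue affects the shape of the reduction, and the conclusion --- that the conjecture is equivalent to a Hasse principle for $H^2_{\mathrm{et}}$ of Tate twists over $\mathcal{O}[1/\ell]$, known in the abelian-over-$\mathbb{Q}$ case via the Iwasawa Main Conjecture and open in general --- is the accurate assessment of what would be needed.
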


This conjecture is certainly true rationally, and more broadly the kernel can
at worst be a finite abelian group. This already follows from mapping only to
the infinite places and using the same kind of argument as in \cite[Section
5]{obloc}.%

\appendix

\section{Auxiliary computations}

\subsection{Locally compact modules over finite fields}

\begin{proposition}
\label{prop_KLCA_for_finite_fields}Let $\mathsf{A}$ be any stable $\infty
$-category and $K:\operatorname*{Cat}_{\infty}^{\operatorname*{ex}}%
\rightarrow\mathsf{A} $ be a localizing invariant (in the sense of
\cite{MR3070515}). Then there is an equivalence $K(\mathsf{LCA}_{\mathbb{F}%
_{q}})\overset{\sim}{\longrightarrow}\Sigma K(\mathbb{F}_{q})$.
\end{proposition}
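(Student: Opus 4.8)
The plan is to run a degenerate version of the argument behind Theorem~\ref{thm_MainKOfLCAF}: in the absence of infinite places there is no ``vector part'', and ``adelic blocks over $\mathbb{F}_q$'' collapse to ordinary compact modules, whose $K$-theory vanishes; so all of $K(\mathsf{LCA}_{\mathbb{F}_q})$ will come from a single boundary map, which accounts for the suspension.

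First I would record the elementary structure theory of $\mathsf{LCA}_{\mathbb{F}_q}$. Any $G\in\mathsf{LCA}_{\mathbb{F}_q}$ has a compact open subgroup; replacing it by the \emph{finite} sum of its $\mathbb{F}_q$-translates we obtain a compact open $\mathbb{F}_q$-submodule $C\subseteq G$. Then $G/C$ is a discrete $\mathbb{F}_q$-vector space, hence a projective object of $\mathsf{LCA}_{\mathbb{F}_q}$ (lift a basis set-theoretically; any linear map out of a discrete space is continuous), so the sequence splits and $G\cong C\oplus D$ with $C$ compact and $D$ discrete. Dually, compact and discrete $\mathbb{F}_q$-vector spaces are interchanged by Pontryagin duality, and compact ones are injective. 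Let $\mathsf{LCA}_{\mathbb{F}_q,C}$ denote the full subcategory of compact $\mathbb{F}_q$-modules; it is closed under extensions and under direct summands in $\mathsf{LCA}_{\mathbb{F}_q}$ (a summand of a compact module is compact), hence an idempotent complete fully exact subcategory.

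The one genuinely technical point is that $\mathsf{LCA}_{\mathbb{F}_q,C}$ is left $s$-filtering in $\mathsf{LCA}_{\mathbb{F}_q}$. Left filtering is immediate: the image of a compact module is compact, hence closed. For left special, let $\phi\colon G\twoheadrightarrow C$ be an admissible epic with $C$ compact and pick a compact open $\mathbb{F}_q$-submodule $K\subseteq G$. Since $\phi(K)$ is a closed subgroup of $C$ and the open surjection $\phi$ factors through the discrete group $G/K$, the quotient $C/\phi(K)$ is simultaneously a compact and a discrete group, hence \emph{finite}. Choosing $g_1,\dots,g_m\in G$ whose $\phi$-images represent its cosets, the $\mathbb{F}_q$-subspace $Y:=K+\langle g_1,\dots,g_m\rangle$ is a \emph{finite} union of translates of $K$ (here the finiteness of $\mathbb{F}_q$ is essential), hence compact and clopen in $G$; moreover $\phi|_Y\colon Y\to C$ is a surjective continuous homomorphism of compact groups, therefore open, i.e.\ an admissible epic, and its kernel $X$ is a closed subgroup of the compactum $Y$, hence compact. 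The diagram with top row $X\hookrightarrow Y\twoheadrightarrow C$ in $\mathsf{LCA}_{\mathbb{F}_q,C}$ and the obvious downward maps witnesses left specialness. I expect this compactness bookkeeping to be the main obstacle; everything else is formal.

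Granting this, Schlichting's Localization Theorem (Theorem~\ref{thm_SchlichtingLocalizationThm}) applied to $\mathsf{LCA}_{\mathbb{F}_q,C}\hookrightarrow\mathsf{LCA}_{\mathbb{F}_q}$ produces, for the given localizing invariant $K$, a fiber sequence $K(\mathsf{LCA}_{\mathbb{F}_q,C})\to K(\mathsf{LCA}_{\mathbb{F}_q})\to K(\mathsf{LCA}_{\mathbb{F}_q}/\mathsf{LCA}_{\mathbb{F}_q,C})$. As $\mathsf{LCA}_{\mathbb{F}_q,C}$ is closed under arbitrary products (Tychonoff), the same Eilenberg swindle used in the proof of Theorem~\ref{thm_MainKOfLCAF} gives $K(\mathsf{LCA}_{\mathbb{F}_q,C})=0$, whence $K(\mathsf{LCA}_{\mathbb{F}_q})\overset{\sim}{\longrightarrow}K(\mathsf{LCA}_{\mathbb{F}_q}/\mathsf{LCA}_{\mathbb{F}_q,C})$. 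Exactly as in Lemma~\ref{lemma_fmodfdisallmodqabfd}, sending a vector space to itself with the discrete topology induces an exact equivalence $\mathsf{Vect}_{\mathbb{F}_q}/\mathsf{Vect}_{\mathbb{F}_q,fd}\overset{\sim}{\longrightarrow}\mathsf{LCA}_{\mathbb{F}_q}/\mathsf{LCA}_{\mathbb{F}_q,C}$: essential surjectivity is the splitting $G\cong C\oplus D$ above (every object becomes discrete in the quotient), and on $\operatorname{Hom}$-groups both sides compute the filtered colimit of $\operatorname{Hom}_{\mathbb{F}_q}(V,V'/W)$ over the finite-dimensional subspaces $W\subseteq V'$, since an admissible epic out of a discrete space with compact kernel has finite, hence finite-dimensional, kernel. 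Finally, the localization sequence for $\mathsf{Vect}_{\mathbb{F}_q,fd}\hookrightarrow\mathsf{Vect}_{\mathbb{F}_q}$ together with $K(\mathsf{Vect}_{\mathbb{F}_q})=0$ (Eilenberg swindle over countable coproducts) yields $K(\mathsf{Vect}_{\mathbb{F}_q}/\mathsf{Vect}_{\mathbb{F}_q,fd})\simeq\Sigma K(\mathbb{F}_q)$, and combining the two equivalences gives $K(\mathsf{LCA}_{\mathbb{F}_q})\simeq\Sigma K(\mathbb{F}_q)$, as claimed.
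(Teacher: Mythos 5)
Your proposal is correct, and it takes a genuinely different route from the paper's Appendix proof. The paper builds an exact functor $\Gamma\colon\mathsf{Tate}(\mathsf{Vect}_{fd}(\mathbb{F}_q))\to\mathsf{LCA}_{\mathbb{F}_q}$, shows it is an equivalence using Levin's structure theory (applied after pulling $\mathbb{F}_q$ back to $\mathcal{O}/P$ for some number ring), and then invokes Saito's delooping theorem $K(\mathsf{Tate}(\mathsf{C}))\simeq\Sigma K(\mathsf{C})$ as a black box. You instead run a scaled-down version of the number-field argument from Theorem~\ref{thm_MainKOfLCAF}: two applications of Schlichting localization (for $\mathsf{LCA}_{\mathbb{F}_q,C}\hookrightarrow\mathsf{LCA}_{\mathbb{F}_q}$ and for $\mathsf{Vect}_{\mathbb{F}_q,fd}\hookrightarrow\mathsf{Vect}_{\mathbb{F}_q}$), two Eilenberg swindles, and an identification of the quotients as in Lemma~\ref{lemma_fmodfdisallmodqabfd}. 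In effect you re-prove the relevant special case of Saito's delooping inside the paper's own toolkit rather than citing it. Your approach is more self-contained and makes the origin of the $\Sigma$ visible as the boundary map of a localization sequence with vanishing middle term; the paper's approach is shorter given Saito's result as external input.

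The one technical point you supply that the paper does not have (and indeed could not have in the general setting) is the verification that $\mathsf{LCA}_{\mathbb{F}_q,C}$ is left $s$-filtering inside all of $\mathsf{LCA}_{\mathbb{F}_q}$. Note that the analogous statement for number fields is \emph{false}: Example~\ref{Example_LCAFCNotLeftSpecial} shows $\mathsf{LCA}_{F,C}$ is not left special in $\mathsf{LCA}_{F}$, which is why the paper's number-field proof must first pass through $\mathsf{LCA}_{F,qab}$. Your argument for left specialness -- that $C/\phi(K)$ is compact and discrete hence finite, so one can span the missing cosets by a \emph{finite} $\mathbb{F}_q$-subspace and keep $Y$ compact -- correctly isolates the finiteness of $\mathbb{F}_q$ as the reason the counterexample does not carry over (all objects are $p$-torsion, hence totally disconnected, so the connectedness obstruction used in Example~\ref{Example_LCAFCNotLeftSpecial} evaporates). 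You also correctly note that $\mathsf{LCA}_{\mathbb{F}_q,C}$ is closed under summands, hence idempotent complete, which is a hypothesis of Theorem~\ref{thm_SchlichtingLocalizationThm}. The argument is sound.
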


\begin{proof}
\textit{(Step 1)} Let $G$ be in $\mathsf{LCA}_{\mathbb{F}_{q}}$. Pick a number
field $F$, denote its ring of integers by $\mathcal{O}$ and choose a prime $P$
such that $\mathcal{O}/P\cong\mathbb{F}_{q}$. Then via $\mathcal{O}%
\rightarrow\mathcal{O}/P\cong\mathbb{F}_{q}$ we may also regard $G$ as an
object in $\mathsf{LCA}_{\mathcal{O}}$ and Levin's structure theory applies,
\cite[Theorem 2 and Theorem 3]{MR0310125}. As $G$ is $p$-torsion (genuinely,
not just topologically), there cannot be a vector $\mathcal{O}$-module
contribution, i.e. there exists a compact clopen $\mathcal{O}$-submodule $C$
such that%
\begin{equation}
C\hookrightarrow G\twoheadrightarrow D\label{lw_TatelikeSequence}%
\end{equation}
in exact in $\mathsf{LCA}_{\mathcal{O}}$ with $D$ discrete. As $G$ is an
$\mathcal{O}$-module annihilated by $P$, so are $C$ and $D$. Thus, $D$ is a
discrete $\mathbb{F}_{q}$-vector space, and $C$ the Pontryagin dual of a
discrete $\mathbb{F}_{q}$-vector space. \textit{(Step 2)} There is an exact
functor%
\[
\Gamma:\mathsf{Tate}(\mathsf{Vect}_{fd}(\mathbb{F}_{q}))\longrightarrow
\mathsf{LCA}_{\mathbb{F}_{q}}\text{,}%
\]
sending a formal ind-pro limit of finite-dimensional $\mathbb{F}_{q}$-vector
spaces to its evaluation in $\mathsf{LCA}$. The construction is a mild
variation of the analogous functor $\gamma:\mathsf{Tate}(\mathsf{Mod}%
_{fin}(\mathcal{O}))\rightarrow\mathsf{LCA}_{\mathcal{O}}$ which was set up in
\cite[Section 6]{obloc}, so we shall not repeat it here. It suffices to
observe that $\mathsf{Vect}_{fd}(\mathbb{F}_{q})$ can be regarded as a fully
exact subcategory of $\mathsf{Mod}_{fin}(\mathcal{O})$ and that the
topological $\mathcal{O}$-module structure on $\gamma(G)$ is still annihilated
by $P$, so we may regard it as an $\mathcal{O}/P\cong\mathbb{F}_{q}$-vector
space structure. The functor $\Gamma$ is fully faithful. It is also
essentially surjective since by Step 1 every $G$ has a presentation as in
Equation \ref{lw_TatelikeSequence}, but this corresponds precisely to the
property to have a lattice in the sense of \cite{MR3510209}, see Definition
5.5 and Theorem 5.6 \textit{loc. cit.} Thus, the computation of the invariant
$K$ reduces to the corresponding computation for the Tate category. By Saito's
delooping theorem \cite{MR3317759}, we obtain%
\[
K(\mathsf{LCA}_{\mathbb{F}_{q}})\underset{\Gamma}{\overset{\sim}%
{\longrightarrow}}K(\mathsf{Tate}(\mathsf{Vect}_{fd}(\mathbb{F}_{q}%
)))\overset{\sim}{\longrightarrow}\Sigma K(\mathsf{Vect}_{fd}(\mathbb{F}%
_{q}))=\Sigma K(\mathbb{F}_{q})\text{,}%
\]
giving the claim. Saito's result is only stated for non-connective $K$-theory,
but the proof generalizes.
\end{proof}

\subsection{Locally compact modules over local fields}

In this section, exceptionally, let $F$ be a finite extension of the $p$-adics
$\mathbb{Q}_{p}$ for some prime number $p$.

\begin{definition}
Let $\mathsf{LCA}_{F,\operatorname*{top}}$ be the category of locally compact
topological $F$-modules. Unlike in the rest of this text, we demand that the
scalar multiplicaton%
\[
F\times M\longrightarrow M
\]
gives a topological $F$-module structure, where $F$ is equipped with its
valuation topology (rather than the discrete one!).
\end{definition}

\begin{lemma}
The category $\mathsf{LCA}_{F,\operatorname*{top}}$ is quasi-abelian, and in
particular naturally an exact category. If $\mathsf{LCA}_{F}$ denotes (as
usual in this paper) the category of locally compact $F$-modules, but where
$F$ is read with the discrete topology, then there is an exact functor%
\[
\mathsf{LCA}_{F,\operatorname*{top}}\longrightarrow\mathsf{LCA}_{F}\text{.}%
\]

\end{lemma}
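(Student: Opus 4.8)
The plan is to exhibit $\mathsf{LCA}_{F,\operatorname*{top}}$ as a full subcategory of $\mathsf{LCA}_{F}$ which is closed under the formation of kernels, cokernels and finite biproducts, and then to import quasi-abelianness from $\mathsf{LCA}_{F}$ by a purely formal permanence argument. First I would note that if $M$ is an object of $\mathsf{LCA}_{F,\operatorname*{top}}$, then forgetting that the scalar action $F\times M\to M$ is continuous for the valuation topology on $F$ — and retaining only that each individual multiplication $r\cdot\colon M\to M$ is continuous — exhibits $M$ as an object of $\mathsf{LCA}_{F}$; moreover a continuous $F$-linear map between two such modules is literally the same datum whichever topology we put on $F$. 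Hence there is a fully faithful functor
\[
\iota\colon\mathsf{LCA}_{F,\operatorname*{top}}\longrightarrow\mathsf{LCA}_{F}\text{,}
\]
and this will be the functor claimed in the statement. Its target is quasi-abelian by Proposition \ref{prop_LCARisQuasiAbelianExact} (the case $R=F$ with $F$ read discretely, an observation of Hoffmann--Spitzweck), with kernels given by closed $F$-submodules equipped with the subspace topology, cokernels by $N/\overline{f(M)}$ equipped with the quotient topology, and biproducts by products.

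Next I would verify that these constructions do not leave $\mathsf{LCA}_{F,\operatorname*{top}}$. For a kernel this is immediate, the scalar action on a closed submodule being the restriction of the continuous map $F\times M\to M$. For a cokernel $N/\overline{f(M)}$ one argues that $F\times N\to F\times\bigl(N/\overline{f(M)}\bigr)$ is the product of $\operatorname*{id}_{F}$ with a quotient homomorphism of topological groups, hence an open map and therefore a topological quotient map, so that the continuous composite $F\times N\to N\to N/\overline{f(M)}$ factors through a continuous action on the quotient. A finite biproduct carries the product topology with the diagonal scalar action, which is visibly continuous. Thus $\mathsf{LCA}_{F,\operatorname*{top}}$ is a full subcategory of $\mathsf{LCA}_{F}$ closed under kernels, cokernels and finite biproducts; in particular it is additive and pre-abelian, and its kernels, cokernels and biproducts coincide with those of $\mathsf{LCA}_{F}$.

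The only remaining input is the formal principle that a full subcategory $\mathsf{C}$ of a quasi-abelian category $\mathsf{D}$ closed under $\mathsf{D}$-kernels, $\mathsf{D}$-cokernels and finite biproducts is itself quasi-abelian, with admissible monics (resp. epics) of $\mathsf{C}$ being exactly those of $\mathsf{D}$ that happen to be morphisms of $\mathsf{C}$, and with the inclusion exact. The one point beyond pre-abelianness is stability of admissible monics under pushout (dually of admissible epics under pullback): given an admissible monic $A\hookrightarrow B$ in $\mathsf{C}$ and a map $A\to C$ in $\mathsf{C}$, the pushout is $\operatorname*{coker}\bigl(A\to B\oplus C\bigr)$, which lies in $\mathsf{C}$ by the closure hypotheses and, being a cokernel, also solves the pushout problem inside the full subcategory $\mathsf{C}$; by quasi-abelianness of $\mathsf{D}$ its cobase change is again an admissible monic in $\mathsf{D}$, and since being an admissible monic is detected by the comparison map from the coimage to the image — which $\mathsf{C}\hookrightarrow\mathsf{D}$ preserves and reflects — it is an admissible monic in $\mathsf{C}$ as well. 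Applying this with $\mathsf{C}=\mathsf{LCA}_{F,\operatorname*{top}}$ and $\mathsf{D}=\mathsf{LCA}_{F}$ yields that $\mathsf{LCA}_{F,\operatorname*{top}}$ is quasi-abelian, hence carries its canonical exact structure in the sense of \cite{MR2606234}; and $\iota$ is exact because a conflation in $\mathsf{LCA}_{F,\operatorname*{top}}$ is, term for term and arrow for arrow, a conflation in $\mathsf{LCA}_{F}$.

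I expect the only genuinely non-formal step to be the verification in the second paragraph that the cokernel $N/\overline{f(M)}$ remains a topological $F$-module for the \emph{valuation} topology on $F$, i.e. continuity of the induced scalar action; this is dispatched by the open-mapping remark about quotient maps of topological groups. The explicit description of kernels and cokernels in $\mathsf{LCA}_{F}$ (where the Hoffmann--Spitzweck analysis enters) and the abstract permanence lemma for full subcategories of quasi-abelian categories are then pure bookkeeping.
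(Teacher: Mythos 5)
Your proof is correct but takes a genuinely different route from the paper. The paper's own proof is a one-liner: it asserts that the Hoffmann--Spitzweck argument establishing quasi-abelianness for $\mathsf{LCA}$ (and its $R$-module variants) can be run verbatim for $\mathsf{LCA}_{F,\operatorname*{top}}$, i.e. one directly checks that kernels and cokernels exist and that strict monos/epis are stable under pushout/pullback using the standard structure theory of LCA groups. You instead factor the problem: you first identify $\mathsf{LCA}_{F,\operatorname*{top}}$ as a full, replete subcategory of $\mathsf{LCA}_{F}$ closed under kernels, cokernels and finite biproducts, and then invoke a formal permanence principle to inherit quasi-abelianness from the ambient category (whose quasi-abelianness the paper already has). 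Each step of your argument holds up: the forgetful functor is full and faithful because a jointly continuous action is in particular separately continuous and morphisms are in either case just continuous $F$-linear maps; closure under kernels is by restricting the joint action to a closed submodule; closure under cokernels follows from the open-mapping observation that $\operatorname{id}_{F}\times q$ is again a topological quotient map when $q\colon N\twoheadrightarrow N/\overline{f(M)}$ is the quotient by a closed submodule; and the permanence lemma goes through because pullbacks and pushouts in a pre-abelian category are computed from biproducts, kernels and cokernels, so they agree in the two categories and strictness of a mono/epi is detected by the coimage-to-image comparison, which the full inclusion preserves and reflects. The trade-off is clear: the paper's route is shorter but opaque (it punts the real work to an external reference applied by analogy), while yours is self-contained modulo the already-established quasi-abelianness of $\mathsf{LCA}_{F}$, at the cost of stating and justifying the permanence lemma, which is standard folklore but not quoted in the paper's bibliography. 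Both routes also yield, essentially for free, that the inclusion reflects exactness, which your write-up makes explicit and which is useful elsewhere in the paper.
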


\begin{proof}
Being quasi-abelian can be proven as by Hoffmann and Spitzweck \cite[Prop.
1.2]{MR2329311} for $\mathsf{LCA}$ plain, and the exact structure then stems
from \cite[Prop. 4.4]{MR2606234}.
\end{proof}

\begin{proposition}
\label{prop_LCA_F_overlocalfield}Suppose $F$ is a finite extension of the
$p$-adics $\mathbb{Q}_{p}$ for some prime number $p$. Then there is an exact
equivalence of exact categories%
\[
\mathsf{LCA}_{F,\operatorname*{top}}\overset{\sim}{\longrightarrow
}\mathsf{Vect}_{fd}(F)\text{.}%
\]

\end{proposition}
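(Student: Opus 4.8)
The plan is to write down the obvious functor and check that it is an exact equivalence, with essential surjectivity being the only real content. First I would define
\[
\Gamma:\mathsf{Vect}_{fd}(F)\longrightarrow\mathsf{LCA}_{F,\operatorname*{top}}
\]
by sending a finite-dimensional $F$-vector space $V$ to $V$ equipped with its canonical locally compact topology: pick a basis to identify $V\cong F^{n}$ and take the product topology. This is independent of the basis because a finite-dimensional vector space over the complete field $F$ carries a unique Hausdorff topology making it a topological $F$-module (equivalently, all norms on it are equivalent). For the same reason every $F$-linear map between such spaces is automatically continuous, and conversely, so one gets at once
\[
\operatorname{Hom}_{\mathsf{LCA}_{F,\operatorname*{top}}}(\Gamma V,\Gamma W)=\operatorname{Hom}_{F}(V,W)\text{,}
\]
i.e. $\Gamma$ is fully faithful.

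Next I would check exactness. Given a short exact sequence $V'\hookrightarrow V\twoheadrightarrow V''$ in $\mathsf{Vect}_{fd}(F)$, the inclusion is a closed embedding inducing the subspace topology on $V'$ (a finite-dimensional subspace of a Hausdorff $F$-vector space is closed, again by the uniqueness just mentioned), and choosing an $F$-linear splitting shows $V\cong V'\oplus V''$ topologically, so the quotient map is an open surjection. Hence $\Gamma$ takes short exact sequences to strict short exact sequences in the quasi-abelian category $\mathsf{LCA}_{F,\operatorname*{top}}$, and the very same description shows $\Gamma$ reflects exactness.

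The main point is essential surjectivity, which is the classical theorem that a Hausdorff, locally compact topological vector space over a non-discrete locally compact (hence complete) field is finite-dimensional and then carries its canonical topology (Bourbaki, \emph{Espaces vectoriels topologiques}, Ch.~I; Weil, \emph{Basic Number Theory}, Ch.~I). I would recall the argument: let $M\in\mathsf{LCA}_{F,\operatorname*{top}}$, fix $\lambda\in F$ with $0<|\lambda|<1$ and a compact neighbourhood $U$ of $0$. A compact set is bounded, so $\{\lambda^{n}U\}_{n\geq0}$ is a neighbourhood basis at $0$; covering $U$ by finitely many translates $U\subseteq\bigcup_{i=1}^{k}(x_{i}+\lambda U)$ and letting $N$ be the (finite-dimensional, hence closed) $F$-span of the $x_{i}$, one gets $U\subseteq N+\lambda U\subseteq\cdots\subseteq N+\lambda^{m}U$ for all $m$, and since $\lambda^{m}U\to0$ and $N$ is closed this forces $U\subseteq N$, hence $M=\bigcup_{n\geq0}\lambda^{-n}U\subseteq N$. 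Thus $M=N$ is finite-dimensional and carries its canonical topology, so $M\cong\Gamma(M)$. Since an exact, fully faithful, essentially surjective functor of exact categories that reflects exactness is an exact equivalence, this proves the proposition. The hard part is genuinely this finiteness theorem; the rest is formal. (An alternative route is to first extract a compact open $\mathcal{O}_{F}$-submodule $L\subseteq M$, note $M=\bigcup_{n}\lambda^{-n}L$ is $\sigma$-compact, and then bound the rank using Levin's structure theory for $\mathsf{LCA}_{\mathcal{O}_{F}}$ together with the $F$-action; but the direct argument above is cleaner.)
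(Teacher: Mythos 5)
Your proof is correct, but it follows a genuinely different route from the paper's. The paper itself notes that a direct proof is possible and then deliberately chooses the indirect one: it picks a number field $F_0$ with $F_0\otimes_{\mathbb Q}\mathbb Q_p\cong F$ (using Krasner's Lemma to arrange this), chooses $\pi\in F_0$ becoming a uniformizer of $F$, observes that continuity of the $F$-action forces every $G\in\mathsf{LCA}_{F,\operatorname*{top}}$ to be a topological $P$-torsion $\mathcal O_{F_0}$-module, and then invokes Proposition \ref{prop_TopTorsionIsVectorFreeAdelicBlock} and a one-place variant of Theorem \ref{thm_StrongBracVilenkin} to conclude. That reduction is slick given that the whole global machinery (Levin's theory, quasi-adelic blocks, Braconnier--Vilenkin) has already been built, and it reuses the same structural tools that drive the number-field computation. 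Your argument is instead the classical, self-contained one: you write down the obvious functor $\Gamma$, note full faithfulness and exactness are formal consequences of the uniqueness of the Hausdorff vector-space topology on a finite-dimensional space over a complete field, and reduce everything to the Bourbaki/Weil finiteness theorem, whose proof you correctly sketch (compactness plus the contracting homothety $\lambda$ gives $U\subseteq N+\lambda^m U$ with $N$ finite-dimensional and closed, whence $U\subseteq N$ and then $M=\bigcup\lambda^{-n}U\subseteq N$). The trade-off: your proof is more elementary and avoids invoking anything from the rest of the paper, at the cost of appealing to an external classical theorem; the paper's proof is longer in absolute terms but costs nothing extra once the global structure theory is in place, and it makes the local case appear as a genuine specialization of the global one, which is philosophically in keeping with the paper's aims.
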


\begin{proof}
\textit{(Step 1)} Although a direct proof is possible, we will prove this by
reducing it to our formalism for number fields. Let $F_{0}$ be a number field
such that%
\[
F_{0}\otimes_{\mathbb{Q}}\mathbb{Q}_{p}\cong F\text{.}%
\]
(\textit{Proof:}\ Such a number field always exists. Let $f$ be a minimal
polynomial generating the extension $F/\mathbb{Q}_{p}$. Then $f\in
\mathbb{Q}_{p}[X]$ and we may pick a sequence of polynomials $f_{n}%
\in\mathbb{Q}[X]$ of the same degree with rational coefficients such that
$\lim_{n\rightarrow\infty}f_{n}=f$ in the $p$-adic topology on the
coefficients. Such a sequence exists as $\mathbb{Q}$ is dense in
$\mathbb{Q}_{p}$ and the space of coefficients is $\mathbb{Q}_{p}^{\deg f}$.
Even though $(f_{n})_{n}$ need not become stationary as a sequence of
polynomials, nor the sequence of number fields $T_{n}$ obtained by adjoining
all roots of $f_{n}$, the sequence $T_{n}\cdot\mathbb{Q}_{p}$ must become
stationary by\ Krasner's\ Lemma, and by convergence the limit must be $F$;
take $F_{0}$ to be any number field $T_{n}$ such that $T_{n}\cdot
\mathbb{Q}_{p}=F$). Next, pick an element $\pi\in F_{0}$ which has the
property that under $F_{0}\hookrightarrow F$ it becomes a uniformizer with
respect to the natural valuation on $F$. \textit{(Step 2)} Let $G\in
\mathsf{LCA}_{F,\operatorname*{top}}$ be an arbitrary object. Since for
$\mathsf{LCA}_{F,\operatorname*{top}}$ we had assumed the $F$-module structure
to be continuous with regards to the natural valuation topology on $F$
(instead of just the discrete topology), we have $\lim_{n\rightarrow\infty}%
\pi^{n}=0$ in $F$, and thus for all $g\in G$ we get%
\[
\lim_{n\rightarrow\infty}\pi^{n}\cdot g=0\text{.}%
\]
Since $\pi\in F_{0}$, it follows that $G$ is a topological $P$-torsion group,
for some prime $P$ of $\mathcal{O}_{F_{0}}$ (and more concretely, a prime such
that $P\cap\mathbb{Z}=(p)$, and $P\cdot F=(\pi)F$, i.e. after going to our
local field $F$, the prime becomes principal and is generated by the image of
$\pi$). By Proposition \ref{prop_TopTorsionIsVectorFreeAdelicBlock} we
conclude that $G$ is a vector-free adelic block. As $G$ was arbitrary, we
learn that all objects in $\mathsf{LCA}_{F,\operatorname*{top}}$ are
topological $P$-torsion modules for this concrete $P$. We will not repeat the
proof, but a mild variant of our version of the Braconnier--Vilenkin Theorem
(Theorem \ref{thm_StrongBracVilenkin}) then produces the claimed equivalence
of categories (The only necessary changes in the proof are to remove the
contributions from all infinite places and all other finite places except for
$P$, since the above argument rules out that any of these occur in
$\mathsf{LCA}_{F,\operatorname*{top}}$. The rest of the proof carries over verbatim).
\end{proof}

\begin{corollary}
\label{cor_Main_KThy_LocalCase}Let $\mathsf{A}$ be any stable $\infty
$-category and $K:\operatorname*{Cat}_{\infty}^{\operatorname*{ex}}%
\rightarrow\mathsf{A} $ be a localizing invariant (in the sense of
\cite{MR3070515}). Then there is an equivalence $K(\mathsf{LCA}%
_{F,\operatorname*{top}})\overset{\sim}{\longrightarrow}K(F)$.
\end{corollary}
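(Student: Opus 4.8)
The plan is to deduce this as an essentially immediate consequence of Proposition \ref{prop_LCA_F_overlocalfield}. That proposition already supplies an exact equivalence of exact categories
\[
\mathsf{LCA}_{F,\operatorname*{top}}\overset{\sim}{\longrightarrow}\mathsf{Vect}_{fd}(F)\text{,}
\]
and every exact equivalence induces an equivalence of the associated bounded derived $\infty$-categories, hence of the underlying objects of $\operatorname*{Cat}_{\infty}^{\operatorname*{ex}}$ to which a localizing invariant is applied. Since any localizing invariant $K$ sends equivalences in $\operatorname*{Cat}_{\infty}^{\operatorname*{ex}}$ to equivalences in $\mathsf{A}$ (this is part of being a functor, before any localization or additivity axioms even enter), we obtain an equivalence
\[
K(\mathsf{LCA}_{F,\operatorname*{top}})\overset{\sim}{\longrightarrow}K(\mathsf{Vect}_{fd}(F))\text{.}
\]

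It then remains only to identify the right-hand side with $K(F)$. This is by convention: $\mathsf{Vect}_{fd}(F)$ is the exact category of finite-dimensional $F$-vector spaces, which is precisely $P_{f}(F)$ since $F$ is a field, so $K(\mathsf{Vect}_{fd}(F))=K(F)$ by the very definition of the $K$-theory (or any localizing invariant) of the ring $F$. Composing the two displayed equivalences gives the asserted equivalence $K(\mathsf{LCA}_{F,\operatorname*{top}})\overset{\sim}{\longrightarrow}K(F)$.

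There is essentially no obstacle here; the only point requiring a word of care is that one should make sure the statement of Proposition \ref{prop_LCA_F_overlocalfield} is interpreted correctly as an equivalence of exact categories (so that it descends to $D^{b}$ and thus to $\operatorname*{Cat}_{\infty}^{\operatorname*{ex}}$), rather than merely an equivalence of underlying categories. Since both sides are quasi-abelian, hence idempotent complete, and the equivalence is exact in both directions, this is automatic, and no appeal to localization sequences, Eilenberg swindles, or the additivity of $K$ is needed — all the genuine work has already been carried out in proving Proposition \ref{prop_LCA_F_overlocalfield}, which in turn rested on the Braconnier--Vilenkin type analysis of Theorem \ref{thm_StrongBracVilenkin}. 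I would therefore present the proof in two short sentences, citing \cite{MR3070515} for the fact that localizing invariants are in particular functorial on equivalences.
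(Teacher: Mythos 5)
Your argument is correct and matches the paper's (unwritten) reasoning exactly: the paper states the corollary with no proof precisely because it is an immediate consequence of Proposition \ref{prop_LCA_F_overlocalfield} plus functoriality of localizing invariants on equivalences. Your added remarks about idempotent completeness and passing to $D^{b}$ are sound and simply spell out why an exact equivalence of exact categories induces an equivalence in $\operatorname*{Cat}_{\infty}^{\operatorname*{ex}}$.
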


\subsection{Grothendieck--Witt theory}

The following conjecture seems plausible:

\begin{conjecture}
\label{conj3}If $(\mathsf{C}_{i})_{i\in I}$ (for some index set $I$) are exact
categories with duality, then there is a canonical equivalence of spectra%
\[
GW\left(
{\textstyle\prod\nolimits_{i}}
\mathsf{C}_{i}\right)  \overset{\sim}{\longrightarrow}%
{\textstyle\prod\nolimits_{i}}
GW\left(  \mathsf{C}_{i}\right)  \text{.}%
\]

\end{conjecture}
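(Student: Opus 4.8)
The plan is to deduce this from the corresponding statement for algebraic $K$-theory, which is already invoked in the body of the paper (Carlsson's theorem \cite{MR1351941}), by presenting $GW$ as a finite limit assembled from $K$-theory, its duality $C_{2}$-action, and $L$-theory. The key input is the \emph{fundamental theorem of hermitian $K$-theory} (Schlichting for exact categories with duality; in $\infty$-categorical form, Calmès, Dotto, Harpaz, Hebestreit, Land, Moi, Nardin, Nikolaus and Steimle): for an exact category with duality $\mathsf{C}$ there is a natural cofiber sequence of spectra
\[
K(\mathsf{C})_{hC_{2}}\overset{\mathrm{hyp}}{\longrightarrow}GW(\mathsf{C})\longrightarrow\mathbb{L}(\mathsf{C})\text{,}
\]
where $C_{2}$ acts on $K(\mathsf{C})$ through the duality and $\mathbb{L}(\mathsf{C})$ is the $L$-theory spectrum; equivalently, $GW(\mathsf{C})$ is the pullback of $K(\mathsf{C})^{hC_{2}}\to K(\mathsf{C})^{tC_{2}}\leftarrow\mathbb{L}(\mathsf{C})$. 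All of $K$, $\mathbb{L}$, $(-)_{hC_{2}}$, $(-)^{hC_{2}}$, $(-)^{tC_{2}}$, hence this whole diagram, are functorial for duality-preserving exact functors. The projections $\prod_{i}\mathsf{C}_{i}\to\mathsf{C}_{j}$ are duality-preserving and exact, so they assemble to a comparison map $GW(\prod_{i}\mathsf{C}_{i})\to\prod_{i}GW(\mathsf{C}_{i})$ underlying a map of cofiber sequences. Since a product of cofiber sequences of spectra is again a cofiber sequence (equivalently, a fiber sequence, which is preserved by the limit $\prod$), it suffices to prove that the two outer comparison maps $K(\prod_{i}\mathsf{C}_{i})_{hC_{2}}\to\prod_{i}(K(\mathsf{C}_{i})_{hC_{2}})$ and $\mathbb{L}(\prod_{i}\mathsf{C}_{i})\to\prod_{i}\mathbb{L}(\mathsf{C}_{i})$ are equivalences.

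The $L$-theory comparison is the routine half and should be isolated as a lemma: the Poincaré objects computing $\mathbb{L}_{n}$ have a fixed bounded amplitude depending only on $n$, so such an object over $\prod_{i}\mathsf{C}_{i}$ is precisely a compatible tuple of such objects over the $\mathsf{C}_{i}$, and the cobordism relation is levelwise; hence $\mathbb{L}(\prod_{i}\mathsf{C}_{i})\simeq\prod_{i}\mathbb{L}(\mathsf{C}_{i})$ essentially by inspection. Crucially there is no ``unbounded rank'' failure here of the kind in Example \ref{example_KAdifferentFromKAdelicBlocks}, because we work with the product \emph{category} $\prod_{i}\mathsf{C}_{i}$, exactly as in Carlsson's theorem. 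For the first comparison map, Carlsson's theorem supplies $K(\prod_{i}\mathsf{C}_{i})\simeq\prod_{i}K(\mathsf{C}_{i})$, and by naturality this is $C_{2}$-equivariant for the duality actions; what remains is the purely homotopy-theoretic assertion that $C_{2}$-homotopy orbits commute with this infinite product of spectra, i.e.\ that $(\prod_{i}K(\mathsf{C}_{i}))_{hC_{2}}\to\prod_{i}(K(\mathsf{C}_{i})_{hC_{2}})$ is an equivalence.

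This last point is where I expect the genuine difficulty to lie, and is the reason the statement is only a conjecture. Homotopy orbits are a colimit over $BC_{2}$ and so are a left adjoint; they do not commute with infinite products a priori. Via the norm cofiber sequence $(-)_{hC_{2}}\to(-)^{hC_{2}}\to(-)^{tC_{2}}$, and since $(-)^{hC_{2}}$ is a limit and hence does commute with products, the assertion is equivalent to the $C_{2}$-Tate construction commuting with the product $\prod_{i}K(\mathsf{C}_{i})$. On the level of spectral sequences there is no obstruction — Tate cohomology of $C_{2}$ (built from norm kernels and cokernels) commutes with products of abelian groups, so the Tate spectral sequences agree from $E_{2}$ onward — but the Tate spectral sequence of an unbounded spectrum is only conditionally convergent, and $\lim^{1}$-type terms may obstruct the comparison on abutments. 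If the $\mathsf{C}_{i}$ are such that the $K(\mathsf{C}_{i})$ are uniformly bounded below — in particular for \emph{connective} Grothendieck--Witt theory, or whenever the relevant categories are regular — the spectral sequences converge strongly and the argument concludes; the obstruction is genuinely about the non-connective (Karoubi--Grothendieck--Witt) case, where $K$-theory of a general exact category need not be bounded below. The alternative, and probably the way one would actually settle the conjecture, is to bypass the Tate construction entirely and adapt Carlsson's proof of the $K$-theory product theorem directly to the hermitian $S_{\bullet}$-construction (or the genuine $GW$-space construction); that is where the non-formal content of Carlsson's theorem resides, and carrying it over to the hermitian setting is the main obstacle.
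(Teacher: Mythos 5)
The paper does not prove this statement: it appears as Conjecture \ref{conj3}, left open with only the remark that it ``seems plausible'' as a natural analogue of Carlsson's $K$-theory product theorem \cite{MR1351941}. You correctly recognize this and, rather than pretend to close the gap, you put your finger on exactly where it is -- which is the right thing to do. Your reduction is sound: the Karoubi/fundamental cofiber sequence $K(\mathsf{C})_{hC_{2}}\to GW(\mathsf{C})\to\mathbb{L}(\mathsf{C})$, equivalently the presentation of $GW$ as the pullback of $K^{hC_{2}}\to K^{tC_{2}}\leftarrow\mathbb{L}$, is a theorem (Schlichting in the exact-category setting, Calm\`es--Dotto--Harpaz--Hebestreit--Land--Moi--Nardin--Nikolaus--Steimle in the Poincar\'e $\infty$-category setting), and the $L$-theoretic half commutes with products for the finiteness reason you give (bounded-amplitude Poincar\'e objects, levelwise cobordism relation, no rank-unboundedness issue since one takes the product \emph{category}). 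The genuine obstruction is the one you isolate: whether $C_{2}$-homotopy orbits (equivalently, via the norm sequence, the $C_{2}$-Tate construction) commute with the infinite product $\prod_{i}K(\mathsf{C}_{i})$. For bounded-below inputs the Tate spectral sequence converges strongly and the argument closes; but $K$-theory of a general exact category need not be bounded below, the spectral sequence is then only conditionally convergent, and a $\lim^{1}$-type obstruction on the abutment cannot be ruled out formally. Your alternative route -- adapting Carlsson's argument directly to the hermitian $S_{\bullet}$-construction -- is likely the path a real proof would take, but it is precisely where the non-formal content of \cite{MR1351941} lives, which is why this remains open.

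Two small points worth flagging in any eventual write-up: first, one must check that Carlsson's equivalence $K(\prod_{i}\mathsf{C}_{i})\simeq\prod_{i}K(\mathsf{C}_{i})$ is $C_{2}$-equivariant for the duality actions, not merely an equivalence of underlying spectra (this should follow from naturality of the comparison map, but deserves a line). Second, the paper works in Schlichting's exact-categories-with-duality framework while the $\infty$-categorical fundamental theorem is stated for Poincar\'e $\infty$-categories; the translation between the two is by now established, but needs to be invoked with the appropriate genuineness/$\lambda$-form hypotheses (in particular, avoiding characteristic $2$ subtleties) so that the two $GW$-spectra agree.
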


This would be a natural analogue of \cite{MR1351941}.

\begin{theorem}
Let $F$ be a number field. If the Conjecture \ref{conj3} is true, then there
are canonical isomorphisms%
\[
GW_{n}(\mathsf{LCA}_{F,ab})\cong\left\{  \left.  (\alpha_{P})_{P}\in\prod
_{P}GW_{n}(\widehat{F}_{P})\right\vert
\begin{array}
[c]{l}%
\alpha_{P}\in GW_{n}(\widehat{\mathcal{O}}_{P})\text{ for all but finitely}\\
\text{many among the finite places}%
\end{array}
\right\}  \text{.}%
\]

\end{theorem}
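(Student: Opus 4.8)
The plan is to replay the proof of Theorem~\ref{thm_MainStructureThmOnKLCAFab} essentially verbatim, keeping careful track of dualities at every step and invoking Conjecture~\ref{conj3} in place of Carlsson's theorem \cite{MR1351941}. First I would split off the archimedean contribution. The connected-component functor $G\mapsto G^{0}$ and the complementary functor $G\mapsto G/G^{0}$ are exact, and, since a continuous homomorphism between a connected group and a topologically torsion one is forced to vanish, Theorem~\ref{thm_StrongBracVilenkin} (together with Proposition~\ref{prop_TopTorsionIsVectorFreeAdelicBlock}) shows that $\mathsf{LCA}_{F,ab}$ splits, as an exact category with duality, as
\[
\mathsf{LCA}_{F,ab}\;\simeq\;\Bigl(\prod_{v\mid\infty}\mathsf{Vect}_{fd}(\widehat{F}_{v})\Bigr)\times\mathsf{LCA}_{F,vfab}\text{,}
\]
where the first factor is a \emph{finite} product (a number field has only finitely many archimedean places). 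By additivity of Grothendieck--Witt theory this gives $GW(\mathsf{LCA}_{F,ab})\simeq GW(\mathbb{R})^{r}\times GW(\mathbb{C})^{s}\times GW(\mathsf{LCA}_{F,vfab})$, with $r$, resp.\ $s$, the number of real, resp.\ complex, places, so it remains to compute $GW(\mathsf{LCA}_{F,vfab})$.

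For that, the point is that every categorical input of the $K$-theoretic argument has already been recorded as duality-preserving: Lemma~\ref{lemma_psiInftyIsEquivalence} exhibits $\psi^{(\infty)}$ as a duality-preserving exact equivalence, the transition functors in Diagram~\ref{lqa7} are duality-preserving exact functors, and Proposition~\ref{prop_ProductCatLCAS} exhibits $\Xi^{(S)}$ as a duality-preserving exact equivalence onto $\prod_{P\in S}\mathsf{Vect}_{fd}(\widehat{F}_{P})\times\prod_{P\notin S}P_{f}(\widehat{\mathcal{O}}_{P})$. Since $GW$, like $K$, sends duality-preserving exact equivalences to equivalences and, by the same argument as for ordinary $K$-theory, commutes with filtering colimits of exact categories with duality, I would obtain
\[
GW(\mathsf{LCA}_{F,vfab})\;\simeq\;\operatorname*{colim}_{S}\,GW\Bigl(\prod_{P\in S}\mathsf{Vect}_{fd}(\widehat{F}_{P})\times\prod_{P\notin S}P_{f}(\widehat{\mathcal{O}}_{P})\Bigr)\text{.}
\]
This is exactly where Conjecture~\ref{conj3} enters: it identifies the argument of the colimit with $\prod_{P\in S}GW(\widehat{F}_{P})\times\prod_{P\notin S}GW(\widehat{\mathcal{O}}_{P})$, where $GW(R)$ denotes the Grothendieck--Witt spectrum of $P_{f}(R)$ with its standard linear duality.

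Passing to homotopy groups and using that $GW_{n}$ of a filtering colimit of spectra is the filtering colimit of the $GW_{n}$, I would get $GW_{n}(\mathsf{LCA}_{F,vfab})\cong\operatorname*{colim}_{S}\bigl(\prod_{P\in S}GW_{n}(\widehat{F}_{P})\times\prod_{P\notin S}GW_{n}(\widehat{\mathcal{O}}_{P})\bigr)$, and an elementary inspection of this colimit over the filtering poset of finite sets of finite places identifies it with the tuples $(\alpha_{P})$ in $\prod_{P\ \mathrm{finite}}GW_{n}(\widehat{F}_{P})$ such that $\alpha_{P}$ lies in the image of $GW_{n}(\widehat{\mathcal{O}}_{P})$ for all but finitely many $P$. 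Reattaching the archimedean factors $\prod_{v\mid\infty}GW_{n}(\widehat{F}_{v})=GW_{n}(\mathbb{R})^{r}\times GW_{n}(\mathbb{C})^{s}$ then yields the restricted product over all places asserted in the theorem.

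The one genuinely new ingredient beyond ``everything is duality-compatible'' is the injectivity of the transition maps $GW_{n}(\widehat{\mathcal{O}}_{P})\rightarrow GW_{n}(\widehat{F}_{P})$, induced by $M\mapsto M\otimes_{\widehat{\mathcal{O}}_{P}}\widehat{F}_{P}$, for all but finitely many finite places $P$: this is what is needed both to make literal sense of the condition ``$\alpha_{P}\in GW_{n}(\widehat{\mathcal{O}}_{P})$'' and to identify the colimit faithfully with an actual subgroup of $\prod_{P}GW_{n}(\widehat{F}_{P})$ (without it the colimit is strictly larger). I expect this to be the main obstacle. In the $K$-theoretic case it was supplied by the injectivity of $K_{n}(\widehat{\mathcal{O}}_{P})\rightarrow K_{n}(\widehat{F}_{P})$ coming from the Gersten conjecture and the finiteness of the residue field \cite[Chapter~V, Corollary 6.9.2]{MR3076731}; the Hermitian analogue should follow from the localization sequence in Grothendieck--Witt theory for the discrete valuation ring $\widehat{\mathcal{O}}_{P}$ together with d\'{e}vissage to its finite residue field, but one must take care about the prime $2$ and about which shifted duality appears in the residual term, which is precisely why I would regard this step as the delicate point of the argument.
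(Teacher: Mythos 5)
Your route is the same as the paper's: the paper's proof of this theorem is literally the single sentence ``The proof of Theorem~\ref{thm_MainStructureThmOnKLCAFab} generalizes since all underlying exact equivalences preserve duality,'' and your proposal is a careful unwinding of exactly that claim. You peel off the archimedean contribution (the paper leaves this to the reader even in the $K$-theoretic case), replay the filtered colimit over finite sets $S$ of finite places, use that $\Xi^{(S)}$ and $\psi^{(\infty)}$ are duality-preserving exact equivalences, and substitute Conjecture~\ref{conj3} for Carlsson's theorem. All of this matches the intended argument.

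Where you genuinely add something is in flagging the injectivity of the transition maps $GW_{n}(\widehat{\mathcal{O}}_{P})\rightarrow GW_{n}(\widehat{F}_{P})$ for all but finitely many finite $P$. You are right that this is needed: without it, the colimit of the system is only a group mapping \emph{onto} the asserted restricted product, and the displayed condition ``$\alpha_{P}\in GW_{n}(\widehat{\mathcal{O}}_{P})$'' does not even parse as a subset of $\prod_{P}GW_{n}(\widehat{F}_{P})$. In the $K$-theoretic version the paper explicitly records this point and cites the Gersten-type injectivity $K_{n}(\widehat{\mathcal{O}}_{P})\hookrightarrow K_{n}(\widehat{F}_{P})$; the $GW$-statement and its one-line proof silently pass over the analogue. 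Your instinct that this is the delicate step is sound. One mitigating observation: it suffices to have injectivity for all but finitely many $P$, so the dyadic places can be discarded, and for $P$ of odd residue characteristic (hence with $2$ invertible in $\widehat{\mathcal{O}}_{P}$ and in the finite residue field) Schlichting's localization and d\'{e}vissage for higher Grothendieck--Witt groups are available and should yield the required (split) injectivity. Spelling that out is exactly the piece of the argument that the paper's proof ought to have acknowledged, and your proposal is stronger for having isolated it.
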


\begin{proof}
The proof of Theorem \ref{thm_MainStructureThmOnKLCAFab} generalizes since all
underlying exact equivalences preserve duality.
\end{proof}

We should point out that our proof of Theorem \ref{thm_MainKOfLCAF} relies on
several subcategories which are not closed under duality, and in particular it
teaches us nothing about anything relying on duals.

\begin{problem}
What is $GW(\mathsf{LCA}_{F})$?
\end{problem}

\bibliographystyle{amsalpha}
\bibliography{ollinewbib}

\end{document}